\documentclass[a4paper,11pt,reqno]{amsart}
\usepackage{amsthm}
\usepackage{amsmath}
\usepackage{enumerate}
\usepackage{amsfonts}
\usepackage{amssymb}
\usepackage{fullpage}
\usepackage{amsmath,amscd}
\usepackage{graphicx}
\usepackage{array}
\usepackage{amsmath,amssymb,amsfonts}
\usepackage[utf8]{inputenc} 
\usepackage[T1]{fontenc}
\usepackage[english,french]{babel}

   \usepackage{gfsartemisia}

 \usepackage{xspace}
\usepackage{xparse}
\usepackage{graphicx}
\usepackage{float}
\usepackage{pdfpages}
\usepackage[a4paper, margin = 3cm, bottom = 3cm]{geometry}
\usepackage{ifpdf}
\usepackage{marginnote}
\usepackage{hyperref}
\usepackage{tikz}
\usepackage{csquotes}
\usetikzlibrary{calc,matrix,arrows,shapes,decorations.pathmorphing,decorations.markings,decorations.pathreplacing}
\hypersetup{pdfborder=0 0 0, 
	    colorlinks=true,
	    citecolor=black,
	    linkcolor=blue,
	    urlcolor=black,
	    pdfauthor={Guillaume Tahar,Quentin Gendron}
	   }

\usepackage[
	    style=alphabetic,
            isbn=false,
            doi=false,
            url=false,
            backend=bibtex, 
            maxnames = 5,
            sorting=nty,
           ]{biblatex}
\defbibheading{bibliography}[\bibname]{%
  \section*{Références}%
}

\DeclareFieldFormat[article]{title}{{\it #1}}
\DeclareFieldFormat{journaltitle}{{\rm #1}}
\renewbibmacro{in:}{%
  \ifentrytype{article}{}{\printtext{\bibstring{in}\intitlepunct}}}
  \bibliography{res_biblio}

\newcommand{\RR}{\mathbf{R}}

\newcommand{\CC}{\mathbf{C}}
\newcommand{\NN}{\mathbf{N}}

\newcommand{\PP}{{\mathbb{P}}}

\newcommand{\pgcd}{{\rm pgcd}}

\newcommand{\ord}{\operatorname{ord}\nolimits}

\renewcommand{\tilde}{\widetilde}

\newcommand{\rot}{{\rm rot}} 
\newcommand{\ind}{{\rm Ind}}

\newcommand\Res{\operatorname{Res}}
\newcommand{\Resk}[1][k]{\Res^{#1}}
\newcommand{\Resquad}{\Res^{2}}
\DeclareDocumentCommand{\rec}{ O{a} O{n}}{\lbrace #1 \rbrace^{#2}}

\DeclareDocumentCommand{\appresk}{ O{g} O{k}}{\mathfrak{R}_{#1}^{#2}}
\DeclareDocumentCommand{\appresquad}{ O{g}}{\mathfrak{R}_{#1}^{2}}
\DeclareDocumentCommand{\appresquadrho}{ O{g} O{\rho}}{\mathfrak{R}_{#1}^{2,#2}}

\DeclareDocumentCommand{\espresk}{O{g} O{k}}{\mathcal{R}_{#1}^{#2}}
\DeclareDocumentCommand{\espresquad}{O{g}}{\mathcal{R}_{#1}^{2}}

\newcommand{\moduli}[1][g]{{\mathcal M}_{#1}}
\newcommand{\quadomoduli}[1][g]{{\Omega^2\mathcal M}_{#1}}
\newcommand{\omoduli}[1][g]{{\Omega\mathcal M}_{#1}}


\newcommand{\whX}{\widehat{X}}
\newcommand{\whomega}{\widehat{\omega}}

\newcommand{\whz}{\widehat{z}}

\newcommand{\noeuds}{n\oe{}uds\xspace}

\def\={\;=\;} 
\newcommand{\npa}{\mathfrak{p}}
\newcommand{\nim}{\mathfrak{i}}

\newcommand{\Quentin}[1]{{\color{purple}{Quentin: #1}}}
\newtheorem{thm}{Théorème}[section]
\newtheorem{cor}[thm]{Corollaire}
\newtheorem{prop}[thm]{Proposition}
\newtheorem{lem}[thm]{Lemme}

\theoremstyle{definition}
\newtheorem{defn}[thm]{Définition}
\theoremstyle{remark}
\newtheorem{rem}[thm]{Remarque}
\theoremstyle{definition}

\theoremstyle{definition}

\theoremstyle{definition}

\theoremstyle{definition}

\numberwithin{equation}{section} 
\pagestyle{plain} 

  \usepackage{gfsartemisia}

\title{Différentielles quadratiques à singularités prescrites}
\author{Quentin Gendron et Guillaume Tahar}

\address[Quentin Gendron]{Instituto de Matem\'{a}ticas de la UNAM Ciudad Universitaria, CDMX, 04510, M\'{e}xico}
\email{quentin.gendron@im.unam.mx}

\address[Guillaume Tahar]{Beijing Institute of Mathematical Sciences and Applications, Huairou District, Beijing, China}
\email{guillaume.tahar@bimsa.cn}

\date{\today}
\keywords{Quadratic differential, Flat surface, Strata, Residue}
\begin{document}

\selectlanguage{english}

\begin{abstract}
The local invariants of a meromorphic quadratic differential on a compact Riemann surface are the orders of zeros and poles, and the residues at the poles of even orders. The main result of this paper is that with few exceptions, every pattern of local invariants can be obtained by a quadratic differential on some Riemann surface. The exceptions are completely classified and only occur in genera zero and one. Moreover, in the case of a nonconnected stratum, we show that, with three exceptions in genus one, each configuration of invariants can be realized in each non-hyperelliptic connected component of the stratum. In the hyperelliptic components with two poles the residues at both poles coincide. These results are obtained using the flat metric induced by the differentials. We give an application by bounding the number of disjoint cylinders on a primitive quadratic differential.
\end{abstract}

\selectlanguage{french}

\maketitle
\setcounter{tocdepth}{1}
\tableofcontents

\section{Introduction}

Sur une surface de Riemann $X$ de genre $g$, une {\em différentielle quadratique} est une section méromorphe du fibré $K_{X}^{\otimes 2}$, où $K_{X}$ est le fibré en droites canonique de $X$. Localement, une différentielle quadratique s'écrit $f(z)(dz)^{2}$, où $f$ est une fonction méromorphe. Une différentielle quadratique est {\em primitive} si elle n'est pas le carré d'une différentielle abélienne sur~$X$.
Les invariants locaux d'une différentielle quadratique $\xi$ en un point~$P$ sont l'{\em ordre} de la différentielle en~$P$ et le {\em résidu quadratique} $\Resquad_{P}(\xi)$ si $P$ est un pôle d'ordre pair de $\xi$ (voir par exemple \cite{strebel}).

Dans cet article, nous nous proposons de répondre à un problème de type Riemann-Hilbert, consistant à déterminer sous quelles conditions une configuration d'invariants locaux est réalisable par une différentielle quadratique sur une surface de Riemann, c'est-à-dire un objet global. Le problème correspondant pour les différentielles abéliennes a été résolu dans \cite{getaab}. Nous nous concentrerons donc dans cet article sur le cas des différentielles quadratiques primitives.
Plus précisément, nous nous proposons de répondre à la question suivante:
\begin{center}
{\em \'Etant donnés les ordres des zéros et des pôles ainsi que les résidus aux pôles, existe-t-il une différentielle quadratique primitive ayant ces invariants locaux?}
\end{center}

Il s'agit donc de déterminer les obstructions globales. Le degré du fibré canonique est un invariant topologique, ce qui implique que la somme des ordres des zéros et des pôles d'une différentielle quadratique est égale à $4g-4$. Notons que contrairement au cas des différentielles abéliennes, la somme des résidus des différentielles quadratiques ne s'annule pas forcément.

Jusqu'ici, les résultats d'existence les plus fins sont ceux de \cite{masm,Diaz-Marin} sur l'existence de différentielles quadratiques, respectivement holomorphes et méromorphes, dont les ordres des singularités sont prescrits, mais ils ne disent rien des résidus dans le cas méromorphe.

\smallskip
\par
\subsection{Définitions.}

Nous désignons par
$$\mu:=(a_{1},\dots,a_{n};-b_{1},\dots,-b_{p};-c_{1},\dots,-c_{r};\underbrace{-2,\dots,-2}_{s}) \,,$$
une décomposition de $4g-4$ en une somme d'entiers relatifs où les $a_{i}$ sont des entiers supérieurs ou égaux à $-1$, les $b_{i}$ sont des entiers positifs pairs, les~$c_{i}$ sont des entiers positifs impairs supérieurs ou égaux à $3$ et qui contient~$s$ fois~$-2$. Afin de simplifier les notations, si un ordre $a$ apparaît $k$ fois, on le notera~$\rec[a][k]$. Par exemple, la décomposition $\mu$ sera notée $(a_{1},\dots,a_{n};-b_{1},\dots,-b_{p};-c_{1},\dots,-c_{r};\rec[-2][s])$. Plus généralement, une suite $(a,\dots,a)$ de~$k$ nombres complexes tous identiques pourra être notée $\rec[a][k]$.

Nous appelons {\em zéro} d'une différentielle quadratique une singularité d'ordre supérieur ou égal à $-1$ et {\em pôle} une singularité d'ordre inférieur ou égal à $-2$. 
Enfin, nous posons $n=\npa+\nim$ où $\nim$ désigne le nombre de zéros d'ordre impair et $\npa$ celui des zéros d'ordre pair. Dans l'ordre de la séquence $(a_{1},\dots,a_{n})$, les ordres impairs seront toujours écrits avant les pairs. Ainsi, $(a_{1},\dots,a_{\nim})$ désigne les zéros d'ordres impairs.

La {\em strate primitive} $\quadomoduli(\mu)$ paramètre les paires $(X,\xi)$ formées d'une surface de Riemann~$X$ de genre $g$ et d'une différentielle quadratique {\em primitive} $\xi$ de type~$\mu$ sur $X$. Les strates primitives (non vides) de différentielles quadratiques sont des variétés orbifoldes de dimension $2g-2+n+p+r+s$.
 
Rappelons maintenant que pour une différentielle $\xi$ et un point $P$ de $X$, il existe une coordonnée $z$ au voisinage de $P$  telle que $\xi$ s'écrit
\begin{equation}\label{eq:standard_coordinates}
    \begin{cases}
      z^m\, (dz)^{2} &\text{si $m\geq -1$ ou $m$ est impair,}\\
      \left(\frac{r}{z}\right)^{2}(dz)^{2} &\text{ avec $r\in \CC^{\ast}$ si $m = -2$,}\\
        \left(z^{m/2} + \frac{r}{z}\right)^{2}(dz)^{2} &\text{avec $r \in \CC$ si $m < -2$ et $m$ pair.}
    \end{cases}
\end{equation}
Le {\em résidu quadratique} $\Resquad_{P}(\xi)$ de $\xi$ en $P$ est le carré de~$r$. Ainsi, le résidu quadratique est toujours non nul dans le cas des pôles doubles. Toutefois il n'existe pas de théorème des résidus pour les différentielles quadratiques. Ainsi, étant donnée une strate $\quadomoduli(\mu)$, nous définissons l'{\em espace résiduel de type $\mu$} par\begin{equation}
\espresquad(\mu) := \CC^{p}\times(\CC^{\ast})^{s}.
\end{equation}
Cet espace paramètre les configurations de résidus quadratiques que pourrait prendre une différentielle de $\quadomoduli(\mu)$. Afin d'obtenir des constructions plus agréables via la géométrie plate, sans incidence sur les énoncés des résultats, nous multiplions le résidu quadratique par $-4\pi^{2}$, que nous continuerons d'appeler {\em résidu quadratique} ou plus simplement {\em résidu}.

L'{\em application résiduelle} est donnée par
\begin{equation}
\appresquad(\mu) \colon \quadomoduli(\mu) \to \espresquad(\mu):\ (X,\xi) \mapsto (\Resquad_{P_{i}}(\xi)),
\end{equation}
où les $P_{i}$ sont les pôles  d'ordre pair de $\xi$. Insistons sur le fait que par définition, les différentielles quadratiques de $\quadomoduli(\mu)$ sont {\em primitives}. 
Résoudre la question centrale de cet article revient à déterminer l'image de cette application pour chaque strate et chaque composante connexe de la strate quand celle-ci n'est pas connexe.

\smallskip
\par
\subsection{Genre supérieur ou égal à un.}

Nous sommes maintenant en mesure d'énoncer les résultats de cet article. Rappelons que $\quadomoduli(\mu)$ paramètre les différentielles quadratiques {\em primitives} de type $\mu$. En général, les strates de différentielles quadratiques méromorphes ne sont pas connexes. Leurs composantes connexes ont été classifiées dans le théorème~1.3 de \cite{chge} dans le cas où la différentielle possède des pôles d'ordre inférieur ou égal à $-2$.  Les deux résultats suivants décrivent l'image de la restriction de l'application résiduelle à chaque composante connexe pour les strates de différentielles sur les surfaces de Riemann de genre supérieur ou égal à $1$.
\smallskip
\par
Si le genre $g$ est supérieur ou égal à $2$, rappelons qu'il existe quatre familles pour lesquelles il existe une composante hyperelliptique et une composante non-hyperelliptique. Toutefois, nous montrons dans le lemme~\ref{lem:nocomphyp} que seulement deux familles apparaissent si au moins un pôle est d'ordre pair. Dans ce cas, la différentielle possède un unique pôle. Les autres strates sont connexes. 
\begin{thm}\label{thm:ggeq2}
Pour tout $g\geq2$ la restriction à chaque composante connexe de la strate $\quadomoduli(\mu)$ de l'application résiduelle $\appresquad(\mu) \colon \quadomoduli(\mu) \to \espresquad(\mu)$ est surjective.
%
\end{thm}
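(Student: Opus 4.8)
The plan is to recast the realization problem in flat geometry and then to construct, for every prescribed residue configuration and every connected component, a primitive quadratic differential realizing it. A primitive differential $\xi$ endows $X$ with a half-translation structure whose holonomy does not reduce to translations, and in the standard coordinates of \eqref{eq:standard_coordinates} each even pole carries its residue as a genuine geometric parameter. Concretely, a double pole corresponds to a semi-infinite flat cylinder, and its quadratic residue is, up to the normalizing factor $-4\pi^{2}$, the square of the complex circumference of that cylinder; an even pole of order $2k\geq 4$ corresponds to a polar domain assembled from half-planes, whose residue is an asymptotic period that may be prescribed arbitrarily in $\CC$. The zeros and the odd poles carry no residue and are realized by the standard local models of \eqref{eq:standard_coordinates}.

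Concretely, I would fix a target $\rho\in\espresquad(\mu)=\CC^{p}\times(\CC^{\ast})^{s}$ and build independent local models at each pole realizing exactly the prescribed residue: for each double pole a semi-infinite cylinder of circumference $w$ with $w^{2}=\rho_{i}$ (every nonzero value being attained, since squaring is onto $\CC^{\ast}$), and for each higher even pole a polar domain whose period parameter is set to $\rho_{j}$, including the value $0$. These models are then glued to a compact core surface of genus $g$ carrying all the zeros along a system of parallel saddle connections. For $g\geq 2$ there is ample topological room to build such a core: the dimension $2g-2+n+p+r+s$ grows with $g$, and the gluings can be arranged so that the total surface has genus exactly $g$. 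This produces a point of $\quadomoduli(\mu)$ whose image under $\appresquad(\mu)$ is $\rho$, so the residue realization itself is essentially forced by the local models.

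Two points require genuine care, and this is where I expect the real work to lie. The first is primitivity: $\xi$ must not be a global square of an abelian differential. When the stratum contains an odd order (some $c_{i}$, or some odd $a_{i}$) this is automatic, since an odd order forbids $\xi$ from being a square; but when all orders are even one must build the flat structure with genuinely non-orientable holonomy by hand, for instance by choosing the gluing pattern of the core so that some closed curve has holonomy of order two. The second, and main, obstacle is to land in a \emph{prescribed} connected component of $\quadomoduli(\mu)$, whose components are classified in \cite{chge}. Here I would argue that the residue data decouples from the component invariant: the surgeries that set the residues (deforming a cylinder circumference, sliding a polar period) are continuous deformations inside the stratum and hence preserve the component, so it suffices to realize each value of the \cite{chge} invariant by one differential with the correct orders and then transport its residues to $\rho$ along such deformations. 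Showing that this transport never forces us out of the component, and that every component is reached uniformly for all $g\geq 2$, is the crux.

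To keep the combinatorics manageable I would organize the whole construction inductively, reducing a general stratum to a short list of base configurations. A handle-addition surgery raises the genus by one while leaving every residue untouched, so the statement for $g\geq 2$ would follow from finitely many low-genus base cases together with the verification that handle addition respects the component classification of \cite{chge}. Throughout, the delicate ingredient is the control of the component invariant under these surgeries, rather than the residue realization, which the polar models already guarantee.
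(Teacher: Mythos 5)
There is a genuine gap, and it sits exactly where you declare victory. You assert that ``the residue realization itself is essentially forced by the local models'' and that the only delicate point is tracking the connected component. This cannot be right: the local models (a semi-infinite cylinder of circumference $w$ with $w^{2}=\rho_{i}$, a higher-order polar domain with prescribed period) exist in every genus, including $g=0$ and $g=1$, yet the theorem is false there — Théorèmes~\ref{thm:geq1}, \ref{thm:g=0gen1bis}, \ref{thm:r=0sneq0}, \ref{thm:r=0s=0} and~\ref{thm:geq0quad2} exhibit explicit non-realizable configurations. The whole content of the statement is the global assembly problem: whether the prescribed polar pieces can be closed up into a \emph{primitive} differential of genus $g$ with the prescribed zero orders, and nothing in your sketch (``ample topological room'', ``the dimension grows with $g$'') addresses it. Your component argument is moreover circular: continuity indeed guarantees that a deformation inside the stratum never leaves a component, but the question is whether a differential in that component with residues $\rho$ \emph{exists} — that is, whether the path you propose to ``transport along'' has an endpoint. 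That existence is the theorem itself, not an available tool.

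Your last paragraph (handle-sewing induction on the genus from low-genus base cases, with component bookkeeping via \cite{chge}) is in fact the skeleton of the paper's proof: couture d'anse (proposition~\ref{prop:attachanse}) and éclatement de zéros (proposition~\ref{prop:eclatZero}) preserve residues, and propositions~6.3 and~6.4 of \cite{chge} say every component of every stratum is reached by these surgeries. But the missing idea is that the genus-$1$ base cases are themselves defective: the origin is not in the image for $\quadomoduli[1](4a;(-4^{a}))$ and $\quadomoduli[1](2a-1,2a+1;(-4^{a}))$ (lemmes~\ref{lem:geq1cin} and~\ref{lem:geq1six}), and $\CC^{\ast}\cdot(1,\dots,1)$ is missed by $\quadomoduli[1](2s;(-2^{s}))$ and $\quadomoduli[1](s-1,s+1;(-2^{s}))$ for $s$ even (lemmes~\ref{lem:nonsurjunzero} and~\ref{lem:nonsurjdeuxzero}). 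Consequently, sewing a handle onto a genus-$1$ differential can never produce exactly those residue configurations in genus $2$, and the paper must bridge the gap with dedicated genus-$2$ constructions: lemme~\ref{lem:geq1ter} builds zero-residue differentials in $\Omega^{2}\moduli[2](4(p+1);(-4^{p}))$, and lemme~\ref{lem:ggentousres} realizes $(1,\dots,1)$ in $\Omega^{2}\moduli[2](2s+4;(-2^{s}))$ by gluing two double poles of a genus-$1$ differential with residues $(1,\dots,1,-1,-1)$ and smoothing the resulting differentielle entrelacée (lemme~\ref{lem:lisspolessimples}). Without identifying that the induction base must include these ad hoc genus-$2$ cases precisely because the genus-$1$ exceptions obstruct the naive induction, your argument cannot get started, and ``finitely many low-genus base cases'' misdescribes what is needed.
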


Le cas des strates de genre $g=1$ est plus subtil. En effet, il existe des composantes où la restriction de l'application résiduelle n'est pas surjective. Rappelons, voir la section~\ref{sec:rapquad}, que les composantes connexes sont classifiées par le nombre de rotation $\rho$. On note $\quadomoduli[1]^{ \rho}(\mu)$ la composante connexe de nombre de rotation $\rho$ de la strate $\quadomoduli[1](\mu)$.
\begin{thm}\label{thm:geq1}
Soit $\mu$ une décomposition de $0$ avec au moins un élément inférieur ou égal à~$-2$.
\begin{itemize}
\item[i)] Si $\mu=(4a;\rec[-4][a])$ ou $\mu=(2a-1,2a+1;\rec[-4][a])$ pour $a\in\NN^{\ast}$, alors l'image de $\appresk[1][2](\mu)$ est égale à $\espresk[1][2](\mu)\setminus\left\{(0,\dots,0)\right\}$.
\item[ii)] Si $\mu=(2s;\rec[-2][s])$ ou $\mu=(s-1,s+1;\rec[-2][s])$ avec $s$ un entier pair non nul, alors l'image de $\appresk[1][2](\mu)$ est égale à $\espresk[1][2](\mu)\setminus \CC^{\ast}\cdot(1,\dots,1)$.
\item[iii)]  L'image de l'application résiduelle des composantes $\Omega^{2}\moduli[1]^{1}(6;-6)$, $\Omega^{2}\moduli[1]^{1}(3,3;-6)$ et $\Omega^{2}\moduli[1]^{3}(12;-6,-6)$ est le complémentaire de l'origine.
\item[iv)]Dans tout autre cas, l'application résiduelle $\appresquadrho[1](\mu)$ est surjective. 
\end{itemize} 
\end{thm}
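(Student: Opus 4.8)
The plan is to work throughout with the flat (half-translation) metric induced by $\xi$, in which a zero or odd pole of order $m$ is a cone point of angle $(m+2)\pi$, a double pole is a half-infinite cylinder whose core curve has holonomy $v$ with $v^{2}$ the (normalised) residue, and an even pole of order $-2k$ with $k\ge 2$ is an infinite-area end whose residue records the holonomy of a small loop encircling it. Realising a prescribed point of $\espresquad(\mu)$ in a fixed component then amounts to assembling these local models into a closed genus-one half-translation surface carrying a \emph{primitive} differential of rotation number $\rho$. As in the abelian case of \cite{getaab}, I would first settle the surjectivity statement~(iv) by explicit construction and surgery, and then isolate the exceptions~(i)--(iii) by showing that the missing residue values cannot be attained by a \emph{primitive} differential.

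For~(iv) I would induct on the number of poles, the base cases being a handful of small strata constructed by hand. The elementary moves are local: rescaling and shearing a cylinder sweeps the corresponding double-pole residue over all of $\CC^{\ast}$, attaching a suitable polar end realises any value in $\CC$ for a higher even pole, and the standard surgeries splitting a cone point of order $a_{1}+a_{2}$ into two of orders $a_{1},a_{2}$ (or merging them) leave every residue untouched. Since these moves adjust the residue coordinates essentially independently, surjectivity reduces to producing a single surface in each connected component. The rotation number is tracked through the canonical double cover $\whX\to X$, on which $\xi$ becomes the square of an abelian differential $\whomega$; the value of $\rho$ is read off from $\whomega$, and the classification of \cite{chge} tells me precisely which values must be realised. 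Exhibiting one such surface per admissible $\rho$, and checking that each surgery preserves or predictably shifts $\rho$, then yields~(iv) away from the exceptional strata.

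The impossibility halves of~(i)--(iii) are the crux, and I expect them to be the main obstacle; they are of two different natures. In~(ii) the mechanism is squaring: if $\xi=\omega^{2}$ is imprimitive, then $\omega$ carries $s$ simple poles with residues $r_{i}$ summing to zero, so the quadratic residues $r_{i}^{2}$ can all coincide only when $s$ is even and the signs of the $r_{i}$ balance --- which is exactly the hypothesis and exactly the excluded diagonal $\CC^{\ast}\cdot(1,\dots,1)$. The substantive point is the converse: that \emph{every} differential with these orders and all residues equal is forced to be such a square, i.e.\ that its $\ZZ/2$-holonomy is trivial. In~(i) and~(iii) the excluded locus is instead the single point $(0,\dots,0)$, and the obstruction is the rigidity of a higher even pole with \emph{vanishing} residue, whose end develops as an honest infinite cone with no spiralling; I would show that on a genus-one surface such ends cannot be glued into a primitive differential at all (case~(i)), or can do so only with the rotation number forced to the excluded value (case~(iii)).

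Concretely, two things remain to be pinned down, and they are where the real work lies. First, the rigidity implication ``all residues equal (resp.\ all residues zero) $\Rightarrow$ the half-translation structure is a square'' must be proved: since the monodromy around each individual singularity is already trivial, the $\ZZ/2$-holonomy factors through $H_{1}$ of the torus, and the argument has to exploit the global constraint imposed by the half-period translation $\sigma$ defining $\whX$ together with the developing map of the cylinders and polar ends. Second, for the three components in~(iii) I expect no conceptual obstruction but a direct computation: one must determine the rotation number of the zero-residue differentials in $\quadomoduli[1](6;-6)$, $\quadomoduli[1](3,3;-6)$ and $\quadomoduli[1](12;-6,-6)$ and verify that it equals $1$, $1$ and $3$ respectively, so that the origin is absent from exactly those components while remaining realisable in every other component by the constructions of~(iv).
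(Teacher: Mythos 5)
Your constructive half (case~(iv) and the realizability of the complements of the excluded loci) follows essentially the paper's own route — explicit flat pieces, breaking up zeros, handle sewing, rotation numbers tracked through the classification of \cite{chge} — and is fine in outline. The genuine gap is in the non-existence half of (i)--(iii), which you yourself identify as the crux but never prove, and the mechanisms you sketch would not work as stated. For~(i), you propose that a higher even pole with vanishing residue ``cannot be glued into a primitive differential at all'' on a genus-one surface; this is contradicted by the paper's own constructions, which produce primitive genus-one differentials with \emph{all} residues zero, for instance in $\Omega^{2}\moduli[1](10;-4,-6)$ and in $\Omega^{2}\moduli[1](a_{1},a_{2};(-4^{p}))$ for every $(a_{1},a_{2})\neq(2p+1,2p-1)$ (lemmes~\ref{lem:geq1bis} et~\ref{lem:geq1ter}). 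The obstruction is thus not a local rigidity of the polar end: it is specific to the strata $(4a;(-4^{a}))$ and $(2a-1,2a+1;(-4^{a}))$, and any correct proof must detect the zero orders. For~(ii), your premise that the monodromy around each singularity is trivial fails for $\mu=(s-1,s+1;(-2^{s}))$, whose zeros have odd order — such differentials are automatically primitive, so there is no ``square'' to reduce to and the squaring mechanism is vacuous there; moreover, for $\mu=(2s;(-2^{s}))$ your sketch makes no use of the parity of $s$, although for $s$ odd the configuration $(1,\dots,1)$ \emph{is} realizable (lemme~\ref{lem:g=1quadgen}), so parity must enter essentially and no soft holonomy argument of the kind you indicate can suffice. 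For~(iii), your verification is inverted: the zero-residue differentials in $\quadomoduli[1](6;-6)$, $\quadomoduli[1](3,3;-6)$ and $\quadomoduli[1](12;-6,-6)$ have rotation numbers $3$, $3$, $1$ respectively, not $1$, $1$, $3$; with the values you assert, the origin would lie in the image of precisely the three excluded components, the opposite of what the theorem states.

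What is missing, and what the paper supplies, is a two-step mechanism for all of these obstructions. The base cases are settled by forming twisted quadratic differentials (gluing two double poles together, or attaching a torus at a pole) and smoothing them via lemmes~\ref{lem:lisspolessimples} et~\ref{lem:lissdeuxcomp}, thereby reducing non-existence to the classical emptiness of $\Omega^{2}\moduli[2](4)$ and $\Omega^{2}\moduli[2](1,3)$. The general cases are then handled by degenerating to a surface all of whose saddle connections are horizontal (proposition~\ref{prop:coeurdege}), analyzing the incidence graph of the polar domains, performing reduction surgeries that remove valence-two poles, and deriving case-by-case contradictions from saddle-connection lengths, angle counts and primitivity (lemmes~\ref{lem:compexept}, \ref{lem:geq1cin}, \ref{lem:geq1six}, \ref{lem:nonsurjunzero} et~\ref{lem:nonsurjdeuxzero}). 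None of this machinery, nor any substitute for it, appears in your proposal, so the exceptional cases — the actual content of the theorem — remain unproven.
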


\smallskip
\par
\subsection{Différentielles quadratiques sur la sphère de Riemann.}

Le cas des différentielles quadratiques sur la sphère de Riemann présente de nombreuses difficultés. En particulier, il existe des strates pour lesquelles le complémentaire de l'image de l'application résiduelle est de dimension complexe $1$ ou $2$.

Rappelons qu'étant donnée une décomposition $\mu$ de $-4$, l'espace $\quadomoduli[0](\mu)$ paramètre les différentielles quadratiques primitives de type $\mu$. On commence par remarquer (cf lemme~\ref{lem:puissk}) que ces strates sont non vides si et seulement si $\mu$ contient un nombre impair. De plus, le nombre de singularités d'ordres impairs est pair.

Le cas des strates paramétrant des différentielles quadratiques ayant au moins quatre singularités d'ordres impairs est donné par le théorème suivant.

\begin{thm}\label{thm:surjimp4}
L'application résiduelle d'une strate de genre zéro paramétrisant des différentielles quadratiques avec au moins quatre singularités d'ordres impairs est surjective.
\end{thm}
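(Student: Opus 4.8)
Le plan est de construire, pour toute configuration cible $(\rho_1,\dots,\rho_p;\rho'_1,\dots,\rho'_s)\in\espresquad[0](\mu)$, une différentielle quadratique primitive explicite de type $\mu$ sur $\PP^1$ réalisant ces résidus, en travaillant entièrement avec la métrique plate (structure de demi-translation) induite. Je procéderais par récurrence sur la complexité de $\mu$, mesurée par le nombre de points marqués et l'ordre total des pôles, en ramenant toute strate ayant au moins quatre singularités impaires à une courte liste de configurations de base.

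Rappelons d'abord qu'une singularité impaire est exactement un point de branchement du revêtement canonique double $\pi\colon\whX\to\PP^1$ sur lequel $\pi^*\xi$ devient le carré d'une différentielle abélienne $\whomega$ ; avec au moins quatre points de branchement, $\whX$ est de genre $\geq1$. C'est la raison structurelle pour laquelle on s'attend ici à une surjectivité complète, contrairement au cas d'exactement deux singularités impaires où $\whX\cong\PP^1$ et où apparaissent des relations rigides entre les périodes. Le résidu quadratique normalisé en un pôle pair $P$ est précisément le carré de la période de $\whomega$ autour d'un pôle de $\whX$ au-dessus de $P$ (ce qui motive le facteur $-4\pi^2$) ; réaliser les résidus revient donc à prescrire ces périodes.

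Pour les cas de base, je traiterais les strates ayant exactement quatre singularités impaires et un seul pôle pair, puis plusieurs pôles pairs, en exhibant des modèles plats explicites. Un pôle double est réalisé par un demi-cylindre infini dont le vecteur de circonférence est le résidu prescrit (non nul) ; un pôle pair d'ordre $-2k$, $k\geq2$, est réalisé par une chaîne de $k-1$ demi-plans dont la translation de recollement encode le résidu, autorisant en particulier la valeur $0$. Les quatre singularités impaires sont des points coniques d'angle $(m_i+2)\pi$ que j'insère le long de fentes pour refermer la configuration en une sphère ; comme on peut les placer librement, elles fournissent la marge combinatoire nécessaire pour absorber n'importe quelle holonomie prescrite, y compris les configurations dégénérées (tous les résidus nuls, ou proportionnels à $(1,\dots,1)$) qui sont obstruées dans les strates de faible complexité.

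Pour l'étape de récurrence, j'utiliserais des chirurgies locales préservant la réalisabilité : scinder un zéro en deux (ce qui augmente le nombre de singularités impaires), fusionner deux singularités impaires en une singularité paire, ou adjoindre un pôle pair supplémentaire par une somme connexe avec un demi-cylindre ou une chaîne de demi-plans de base portant le résidu additionnel prescrit. Chaque chirurgie est supportée dans une carte plate et ne modifie pas les résidus aux pôles laissés intacts, de sorte que la surjectivité se propage des cas de base à la strate complète. Le principal obstacle est de vérifier que les configurations de résidus dégénérées restent atteignables : il faut s'assurer que les points coniques impairs supplémentaires suppriment réellement l'obstruction sur les périodes au lieu de simplement la déplacer, ce qui exige un choix soigneux de la combinatoire de recollement (l'ordre dans lequel demi-plans et cylindres sont attachés) afin que l'holonomie totale autour des pôles pairs puisse être ajustée à n'importe quelle valeur. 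C'est dans le traitement des cas limites où plusieurs résidus s'annulent simultanément, tout en maintenant la différentielle primitive (c'est-à-dire en garantissant que la construction ne provient pas accidentellement d'un carré global), que réside l'essentiel de la difficulté.
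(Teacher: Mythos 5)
Votre schéma général — briques plates (demi-cylindres, chaînes de demi-plans), puis propagation par chirurgies locales et scindage de zéros — est bien dans l'esprit de la preuve du papier, et l'heuristique sur le genre du revêtement canonique est une motivation correcte. Le trou est que le cœur du théorème est précisément ce que vous affirmez sans le démontrer : que les quatre singularités impaires « fournissent la marge combinatoire » pour absorber les configurations dégénérées. Ce n'est pas automatique, et c'est là que se concentre tout le travail. Deux exemples concrets. (a) Pour réaliser $(0,\dots,0)$ dans une strate à pôles pairs non doubles, la voie par scindage consiste à partir d'une différentielle à résidus nuls ayant deux zéros impairs et un zéro pair ; or une telle différentielle n'existe que si le zéro pair est d'ordre $\geq 2p$, et pour les strates $\Omega^{2}\moduli[0]((p-1)^{4};(-4)^{p})$ avec $p$ pair, aucun appariement des quatre zéros impairs ne satisfait cette borne : le papier doit y exhiber une construction ad hoc (deux parties polaires d'ordre $4$ recollées suivant $(v_{1},v_{2};v_{3},v_{4})$ et $(v_{4},v_{2};v_{3},v_{1})$). (b) Pour les strates dont tous les pôles sont doubles et dont les résidus sont sur un même rayon, les strates à deux zéros portent des obstructions réelles (configurations triangulaires, en crosse, arithmétiques du théorème~\ref{thm:geq0quad2}) ; le scindage à partir de deux zéros ne peut donc pas produire ces configurations, et il faut la construction polygonale spécifique de la proposition~\ref{prop:quadsurjbcpimp}, avec ses quatre groupes de résidus et ses trois paires de côtés identifiés. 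Votre récurrence ne fournit aucun mécanisme de remplacement pour ces deux familles, qui sont l'énoncé lui-même.

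Par ailleurs, votre boîte à outils est trop lâche pour porter la récurrence. « Adjoindre un pôle pair par somme connexe » n'est pas une opération bien définie telle quelle : ajouter un pôle change le degré total, donc les ordres des zéros doivent être modifiés simultanément, et c'est justement la réalisation d'un résidu nul au nouveau pôle qui concentre les obstructions connues ; la version rigoureuse passe par les différentielles entrelacées et les critères de lissage du lemme~\ref{lem:lissdeuxcomp}, dont les hypothèses sont à vérifier à chaque étape. La « fusion de deux singularités impaires en une singularité paire » va dans le mauvais sens pour votre récurrence (elle diminue le nombre de singularités impaires). Seul le scindage de zéros est solide tel quel : toute différentielle à singularité impaire n'étant pas un carré, la condition i) de la proposition~\ref{prop:eclatZero} est automatique et les résidus sont préservés — c'est d'ailleurs exactement l'outil qu'utilise le papier. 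Il reste donc à faire l'essentiel : les cas de base explicites et le traitement des configurations dégénérées.
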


Le cas des strates qui possèdent deux singularités d'ordres impairs est plus subtil et les résultats suivants donnent la réponse complète suivant la décomposition considérée. 

Nous commençons par décrire le cas des strates où les deux singularités d'ordres impairs sont des pôles.
\begin{thm}\label{thm:g=0gen1ter}
Soit $\mu = (a_{1},\dots,a_{n};-b_{1},\dots,-b_{p};-c_{1},-c_{2};\rec[-2][s])$ une décomposition de $-4$ telle que $c_{1},c_{2} \geq 3$. L'application résiduelle de  $\quadomoduli[0](\mu)$ est surjective.
\end{thm}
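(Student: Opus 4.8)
The plan is to deduce the statement from Theorem~\ref{thm:surjimp4} by temporarily manufacturing two additional odd singularities, invoking surjectivity there, and then undoing the operation. First I would record an arithmetic constraint. Since $\mu$ is a partition of $-4$ and $c_{1}+c_{2}\geq 6$, we have $\sum_{i}a_{i}=-4+\sum_{j}b_{j}+c_{1}+c_{2}+2s\geq 2>0$. The number of odd-order singularities is even, and in the situation of this statement the two odd singularities are exactly the poles $-c_{1},-c_{2}$; hence every $a_{i}$ is even. Combined with $\sum_{i}a_{i}\geq 2$, this forces the existence of at least one zero $a_{j}\geq 2$.

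Next I would split this even zero $a_{j}$ into two \emph{odd} zeros of orders $1$ and $a_{j}-1$, producing a partition $\mu'$ of $-4$ in genus zero with exactly four odd singularities (the two poles $-c_{1},-c_{2}$ and the two new zeros), while keeping the same even-order and double poles. In particular the residual spaces coincide, $\espresquad[0](\mu')=\CC^{p}\times(\CC^{\ast})^{s}=\espresquad[0](\mu)$, because they depend only on the even-order poles. By Theorem~\ref{thm:surjimp4}, the residual map $\appresquad[0](\mu')$ is surjective onto this common space.

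It then remains to transfer surjectivity from $\mu'$ back to $\mu$ by \emph{merging} the two odd zeros of orders $1$ and $a_{j}-1$ into a single zero of order $a_{j}$. Fixing a target $R\in\CC^{p}\times(\CC^{\ast})^{s}$, I would choose $(X,\xi)\in\quadomoduli[0](\mu')$ with $\appresquad[0](\mu')(X,\xi)=R$, deform it inside the fibre $\bigl(\appresquad[0](\mu')\bigr)^{-1}(R)$ so that the two auxiliary odd zeros become joined by a short saddle connection bounding an embedded disk disjoint from all poles, and then collapse that saddle connection. In the half-translation structure this fuses the cone angles $(1+2)\pi$ and $(a_{j}-1+2)\pi$ into $(a_{j}+2)\pi$, i.e.\ a single zero of order $a_{j}$, producing a point of $\quadomoduli[0](\mu)$; since the surgery is supported away from the poles, the residues are unchanged and equal to $R$. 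Primitivity is automatic throughout, as the odd-order poles $-c_{1},-c_{2}$ are branch points of the canonical double cover, so $\xi$ is never a global square.

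The main obstacle is precisely this last merging step: I must guarantee that, while keeping the residues at the poles fixed, the two auxiliary odd zeros can be brought together along a saddle connection bounding a pole-free disk. Locality of the surgery keeps the residues constant, and the residual fibres have positive dimension $n+1$, leaving room to maneuver; the delicate point is ensuring such a configuration is reachable in \emph{every} fibre, that is for every value of $R$, including the degenerate residues $0$ allowed at the even-order poles. This flexibility — furnished here by the two residue-free odd poles and absent in genus one — is what eliminates the exceptions of Theorem~\ref{thm:geq1} and yields unconditional surjectivity.
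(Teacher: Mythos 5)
Your reduction runs in the wrong direction, and the step you yourself flag as the main obstacle is a genuine gap, not a technicality. Splitting a zero (the paper's \emph{éclatement d'un zéro}, proposition~\ref{prop:eclatZero}) is a \emph{local} surgery that always preserves residues and can be performed under explicit conditions; merging two zeros is not. It is a degeneration to the boundary of the stratum, and to carry it out inside a prescribed fibre of $\appresquad[0](\mu')$ you would need to know that the closure of that fibre meets the locus where the two auxiliary zeros collide, \emph{and} that the limit lands in the open stratum $\quadomoduli[0](\mu)$ rather than in a deeper degeneration (collapsing one saddle connection can force parallel or homologous ones to collapse simultaneously). Surjectivity of $\appresquad[0](\mu')$, used as a black box, gives you no control whatsoever on the flat geometry of the differentials in the fibre, so "room to maneuver" from the fibre dimension proves nothing: the dimension count says nothing about which boundary strata the fibre accumulates on. Worse, the natural witnesses of mergeability in the fibre over $R$ are precisely the differentials obtained by \emph{splitting} a point of $\quadomoduli[0](\mu)$ with residues $R$ — so establishing that every fibre contains a mergeable configuration is essentially equivalent to the theorem you are trying to prove, and the argument is circular at its core.

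The paper proceeds in the opposite, and logically correct, direction: it first constructs differentials explicitly in the \emph{single-zero} strata $\quadomoduli[0](a;-b_{1},\dots,-b_{p};-c_{1},\dots,-c_{r};(-2^{s}))$ by gluing polar parts (lemme~\ref{lem:g=0gen1nouvbis} for the zero residue configuration, lemme~\ref{lem:g=0gen1nouv} for the complement of the origin, lemme~\ref{lem:g=0gen2} for several zeros and the origin), and then obtains all multi-zero strata by zero splitting. Note also that the paper's own proof of théorème~\ref{thm:surjimp4} in the case where odd poles are present invokes exactly these same lemmas; so the result you want to quote is, for the relevant strata, itself proved via the statement at hand, which confirms that your reduction cannot serve as an independent proof.
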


Nous décrivons maintenant le cas où l'une des deux singularités d'ordres impairs est un pôle tandis que l'autre est un zéro.
\begin{thm}\label{thm:g=0gen1bis}
 Soit $\quadomoduli[0](a_{1},\dots,a_{n};-b_{1},\dots,-b_{p};-c;\rec[-2][s])$ une strate de genre zéro avec un unique zéro $a_{1}$ impair et $c \geq3$. L'image de l'application résiduelle de cette strate est:
 \begin{itemize}
 \item[i)] l'espace résiduel privé de l'origine $\espresquad[0](\mu)\setminus\left\{(0,\dots,0)\right\}$ si la somme des ordres des zéros d'ordres pairs est strictement inférieure à~$2p$,
 \item[ii)] l'espace résiduel $\espresquad[0](\mu)$ sinon.
\end{itemize}  
\end{thm}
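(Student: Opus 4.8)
The plan is to split the statement into a \emph{necessity} part (the origin is missed exactly when the sum of the even zero orders is $<2p$) and a \emph{sufficiency} part (every other point of $\espresquad[0](\mu)$ is attained). Throughout write $A=\sum_{j\ge2}a_j$ for the sum over the even zeros $a_2,\dots,a_n$, and observe first that the dichotomy only has content when $s=0$: a double pole forces a nonzero quadratic residue, so the origin $(0,\dots,0)$ already fails to lie in $\espresquad[0](\mu)=\CC^p\times(\CC^\ast)^s$ as soon as $s\ge1$, and then ``$\espresquad[0](\mu)\setminus\{(0,\dots,0)\}$'' is the whole space. Hence the genuine obstruction lives at $s=0$, while surjectivity for $s\ge1$ is subsumed by the construction of nonzero vectors below.

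The main tool is the canonical double cover $\pi\colon\whX\to X=\PP^1$ with $\pi^\ast\xi=\whomega^2$ and $\sigma^\ast\whomega=-\whomega$, where $\sigma$ is the deck involution. Since $\xi$ is primitive, its only odd-order singularities are $a_1$ and $-c$, so $\pi$ is branched exactly over these two points; Riemann--Hurwitz gives $\whX=\PP^1$ with $\sigma$ conjugate to $w\mapsto-w$, the branch points at $0$ (over $a_1$) and $\infty$ (over $-c$), and $X$ carrying the coordinate $t=w^2$. At each even pole $-b_i$ the two preimages carry opposite residues $\pm\rho_i$ of $\whomega$, with $\rho_i^2$ proportional to $\Resquad_{-b_i}(\xi)$; the residue theorem together with the pairwise cancellation of the $\pm\rho_i$ forces the residue of $\whomega$ over $-c$ to vanish. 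Thus $\xi$ realizes the origin iff all $\rho_i=0$, iff all residues of $\whomega$ vanish, iff $\whomega$ is exact. Writing $\whomega=df$ with $f$ odd and $H:=f^2$ (rational on $X$ because $\sigma$-invariant), this is equivalent to
\begin{equation*}
\xi=\frac{(dH)^2}{4H},\qquad H=t\,h(t)^2 ,
\end{equation*}
so that $H$ has odd order only at $t=0,\infty$ and even order elsewhere.

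For necessity I read off the order dictionary for $\xi=(dH)^2/(4H)$: a zero of $H$ of order $m$ produces a singularity of order $m-2$ (so order-$2$ zeros of $H$ are \emph{regular} points of $\xi$), a pole of order $n$ produces a pole of order $n+2$, and a point where $dH$ vanishes to order $e$ produces a zero of order $2e$. This pins $H$ down to a zero of order $a_1+2$ at $0$, a pole of order $c-2$ at $\infty$, poles of order $b_i-2$ at the even poles; each even zero $a_j$ then appears either as a zero of $H$ of order $a_j+2$ (say $j\in S$) or as a critical point with $e=a_j/2$, and $H$ may carry $q\ge0$ spurious double zeros at regular points. Equating the total zero and pole orders of $H$ and substituting the partition relation $a_1+A=\sum b_i+c-4$ collapses everything to the single identity
\begin{equation*}
\sum_{j\notin S}a_j \;=\; 2\bigl(|S|+q+p\bigr),
\end{equation*}
whose left-hand side is at most $A$; since $|S|,q\ge0$ this forces $A\ge 2p$. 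So when $A<2p$ no such $H$ exists and the origin is missed, which is case~(i); and when $A\ge 2p$ one may take $S=\varnothing$, $q=A/2-p\ge0$, so the identity is solvable and the origin becomes reachable, which is case~(ii).

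It remains to realize every \emph{nonzero} residue vector (needed in both cases) and, when $A\ge2p$, to construct a genuine differential attaining the origin. For the origin the combinatorial solution $S=\varnothing,\ q=A/2-p$ must be promoted to an honest rational function $H$ with the prescribed zeros, poles and critical points; I would obtain it through the flat-geometric surgeries developed earlier in the paper rather than by solving polynomial systems, assembling the half-translation surface from a polar domain for each pole $-b_i$ and $-c$, the local model of the odd singularity $a_1$, and flat cones realizing the even zeros, and then gluing compatibly. For a prescribed nonzero vector the non-exactness of $\whomega$ (some $\rho_i\ne0$) supplies the extra freedom, and I expect to argue by induction on the number of singularities, reducing either to a base case or to the already established surjectivity of Theorem~\ref{thm:g=0gen1ter}. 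The main obstacle is precisely this constructive step: manufacturing, for an arbitrary admissible residue vector, a primitive flat surface with exactly the prescribed orders and residues, and in particular handling the threshold configurations $A=2p$, where the even zeros have no slack and the realizability of the origin is borderline.
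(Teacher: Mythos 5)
Your necessity argument is correct, and it takes a genuinely different route from the paper's. Where the paper's first lemma of Section~\ref{sec:poleimpreszero} passes to the canonical double cover and then invokes Theorem~1.2(i) of \cite{getaab} as a black box, you exploit exactness directly: writing $\whomega=df$ with $f$ odd, setting $H=f^{2}$, and reading the obstruction off the degree of $\divisor{H}$. The order dictionary for $\xi=(dH)^{2}/(4H)$, the structural fact $H=t\,h(t)^{2}$, and the identity $\sum_{j\notin S}a_{j}=2\bigl(|S|+q+p\bigr)$ are all correct, and they do force $\sum_{j\geq2}a_{j}\geq 2p$ whenever the origin is attained; this is a clean, self-contained alternative to the paper's citation. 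Your preliminary remark that the dichotomy has content only when $s=0$ is also correct, since the origin does not lie in $\CC^{p}\times(\CC^{\ast})^{s}$ once $s\geq1$.

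The gap is that the entire constructive half of the theorem is not proved, only announced. Two concrete problems. First, for the origin in case (ii): solvability of the degree identity with $S=\varnothing$, $q=A/2-p$ is a necessary numerical condition, not an existence proof; one must still produce a rational function $H$ (equivalently, a flat surface) with the prescribed zero, pole and critical-point orders, and that is precisely the hard step. The paper does it in Lemma~\ref{lem:g=0gen2}(iii): blow-up of zeros reduces to $n=2$ with one odd zero $a_{1}=2l_{1}-1$ and one even zero $a_{2}\geq 2p$, and then two explicit polar-part gluings are given according to whether $l_{1}\geq p$ or $l_{1}<p$ --- including the threshold case $A=2p$ that you yourself flag as borderline. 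Second, for nonzero residue vectors, your proposed reduction ``to the already established surjectivity of Theorem~\ref{thm:g=0gen1ter}'' cannot work as stated: that theorem concerns strata whose two odd singularities are both poles $-c_{1},-c_{2}$, a family disjoint from the present one (one odd pole, one odd zero), and no mechanism is given for passing from one to the other; in the paper both theorems are in fact proved in parallel by the same toolkit, not one from the other. The paper's actual construction (Lemma~\ref{lem:g=0gen1nouv}) glues one polar part per pole onto a distinguished polar part at the odd pole $-c$, then applies the blow-up of Proposition~\ref{prop:eclatZero} to reach $n\geq2$ zeros without changing residues. Without these constructions, your argument establishes only the obstruction, i.e.\ one inclusion in case (i) and nothing in case (ii).
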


Nous donnons maintenant la description de l'application résiduelle lorsque les deux singularités d'ordres impairs sont des zéros. 
Si les différentielles possèdent des pôles doubles et des pôles d'ordres inférieurs, on a la description suivante.
\begin{thm}\label{thm:r=0sneq0}
L'application résiduelle des strates $\quadomoduli[0](a_{1},\dots,a_{n};-b_{1},\dots,-b_{p};\rec[-2][s])$ avec $p\neq0$, $s\neq0$ et $\nim=2$ zéros impairs est surjective sauf dans les cas exceptionnels suivants :
\begin{enumerate}[i)]
\item L'image de $\appresk[0][2](2s'-1;2s'+1;-4;\rec[-2][2s'])$ avec $s'\geq1$ est $\espresk[0][2](\mu)\setminus\CC^{\ast}\cdot(0;1,\dots,1)$.

\item L'image de $\appresk[0][2](2a-1;2a+1;\rec[-4][a];-2,-2)$ avec $a\geq 1$ est $\espresk[0][2](\mu)\setminus\CC^{\ast}\cdot(0,\dots,0;1,1)$.

\item L'image de $\appresk[0][2](2s'+1;2s'+1;-4;\rec[-2][2s'+1])$ avec $s'\geq0$ est $\espresk[0][2](\mu)\setminus\CC^{\ast}\cdot(1;1,\dots,1)$.

\item L'image de $\appresk[0][2](2a-1,2a-1;\rec[-4][a];-2)$ avec $a \geq 1$ est $\espresk[0][2](\mu)\setminus\CC^{\ast}\cdot(1,0,\dots,0;1) $.
\end{enumerate}
\end{thm}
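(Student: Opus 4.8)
The plan is to work throughout with the flat metric $|\xi|$, viewing $(X,\xi)$ as a half-translation surface. In the normalisation fixed above, the quadratic residue at an even pole $P$ is the square $v_P^{2}$ of the holonomy vector $v_P$ obtained by developing $\sqrt{\xi}$ once around $P$: a double pole is a half-infinite flat cylinder of circumference $|v_P|$, whereas a pole of order $\le -4$ bounds a polar domain whose residue $v_P^{2}$ is allowed to vanish. The statement has two halves: that the four exceptional $\CC^{\ast}$-orbits lie outside the image, and that every other target is realised by a primitive differential of the given type. Because $\nim=2$, any flat surface we build is automatically primitive — an odd-order singularity obstructs a global square root — so the sole bookkeeping during the constructions is to produce exactly the two prescribed odd zeros and no spurious odd singularity.

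For the non-realisability I would pass to the canonical orientation double cover $\pi\colon\whX\to X$, where $\pi^{\ast}\xi=\whomega^{2}$ and $\sigma^{\ast}\whomega=-\whomega$ for the deck involution $\sigma$. The cover is branched exactly at the two odd zeros, so $\chi(\whX)=2\chi(X)-2$ forces $\widehat g=0$; thus $\whX=\PP^{1}$ and $\whomega=g(z)\,dz$ with $g$ even. Each even pole of $\xi$ lifts to a pair of poles of $\whomega$ with opposite residues $\pm v_P$, so the abelian residue theorem is satisfied identically and yields no relation — consistent with the absence of a residue theorem for quadratic differentials. The obstruction must therefore come from the simultaneous prescription of the zero divisor: after normalising by the $\CC^{\ast}$-action so that every non-vanishing residue is real and equal, the corresponding horizontal periods all coincide while the remaining higher poles carry no horizontal period, and I would show, by counting the horizontal separatrices issuing from the two odd zeros, that closing such a configuration up on $\PP^{1}$ forces either a spurious odd zero or a vanishing circumference, contradicting the prescribed type $\mu$. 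Proving that this incompatibility arises for exactly the four families i)--iv), and not on a larger locus, is the delicate point.

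For surjectivity onto the complement I would induct on the number $s$ of double poles and the number $p$ of higher even poles. The key flexibility is that inserting a half-infinite cylinder of holonomy $v$ along a horizontal slit creates one further double pole of residue $v^{2}$, for any $v\neq 0$, so the double-pole residues can be prescribed freely; attaching a polar domain along a slit likewise adds a pole of order $\le -4$ with an arbitrary (possibly zero) residue. Starting from a small base surface carrying the two odd zeros — assembled by hand from a core polygon — these two surgeries realise any residue vector as long as the gluing data stay non-degenerate, that is, as long as the target avoids the excluded orbit. Alternatively, whenever the target lies off that orbit one can split an even zero into two odd zeros to land in a stratum with at least four odd singularities, invoke the full surjectivity of Théorème~\ref{thm:surjimp4}, and reverse the surgery while tracking residues. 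In either route the freedom in the holonomies $v_P=\pm\sqrt{\rho_P}$ and in the slit lengths makes the image open, so it is enough to realise one representative near each wall of the excluded orbit and to propagate by continuity of $\appresk[0][2](\mu)$.

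The main obstacle is concentrated in the small-parameter cases $s'=0$ and $a\in\{0,1\}$, where the surface has so few pieces that the flat structure is rigid. There one must simultaneously prove that the excluded orbit is genuinely missed (sharpness of the obstruction) and that every residue vector adjacent to it \emph{is} attained, so that nothing beyond the four orbits is lost. Matching the algebraic description of families i)--iv) to the geometric degeneration of the horizontal cylinder decomposition, and ruling out any further hidden obstruction, is where the bulk of the effort will lie.
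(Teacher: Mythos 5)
There is a genuine gap, and one step that is not merely incomplete but fails. On the surjectivity half, your fallback route --- split an even zero into two odd zeros, invoke Théorème~\ref{thm:surjimp4} in the resulting stratum with four odd singularities, then \og reverse the surgery while tracking residues \fg{} --- proves too much: Théorème~\ref{thm:surjimp4} has no exceptional cases, so if merging the two odd zeros back into an even zero while holding the residues fixed were always possible, the same argument would realise the four excluded orbits i)--iv) as well, contradicting the statement you are proving. The inverse of the éclatement (proposition~\ref{prop:eclatZero}) is precisely the deformation that degenerates on the exceptional loci; it cannot be treated as an unproblematic \og reversal \fg. There is also a circularity relative to the logical structure: the case of Théorème~\ref{thm:surjimp4} relevant here (all poles even, at least one double pole) is itself deduced from the lemma~\ref{lem:surjr=0sneq05}, i.e.\ from the present theorem. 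Your primary route (induction on $s$ and $p$ by inserting half-infinite cylinders and polar domains) carries the same defect in milder form: the assertion that \og the double-pole residues can be prescribed freely \dots as long as the gluing data stay non-degenerate \fg{} is exactly what must be proven --- cases iii) and iv) are instances where a cylinder of prescribed circumference cannot be attached --- and the closing argument \og the image is open, so it is enough to realise one representative near each wall and propagate by continuity \fg{} is not valid: membership in the image does not propagate by continuity, and openness of the image is nowhere established. The paper instead gives explicit case-by-case constructions (lemmes~\ref{lem:sec05CGpolesimple} à~\ref{lem:surjr=0sneq05}), splitting according to how many residues vanish and whether the non-zero ones are proportional, with ad hoc polygonal gluings in each regime.

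On the non-realisability half, your double-cover setup is correct ($\nim=2$ forces $\widehat{g}=0$ and the abelian residue theorem is vacuous), but from there you only announce the conclusion: \og I would show, by counting the horizontal separatrices \dots, that closing such a configuration up on $\PP^{1}$ forces either a spurious odd zero or a vanishing circumference \fg, and you concede that pinning this to exactly the four families \og is the delicate point \fg. That concession is the entire content of the obstruction statements. For comparison, the paper proves i) and ii) not by any computation on the double cover but by a cross-genus degeneration: attach an elliptic tail at the order $-4$ pole (resp.\ glue the two double poles together) to form a smoothable twisted differential (lemme~\ref{lem:lissdeuxcomp}), and contradict the genus-one non-existence results (lemmes~\ref{lem:nonsurjdeuxzero} et~\ref{lem:geq1six}); cases iii) and iv) are proven by normalising all saddle connections to be horizontal (proposition~\ref{prop:coeurdege}) and running a length-and-parity case analysis on the incidence graph of the polar domains. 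None of this material --- neither the degeneration to genus one nor the incidence-graph analysis --- appears in your proposal, so both halves of the theorem remain unproven as written.
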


Dans le cas des différentielles dont tous les pôles sont d'ordres pairs inférieurs ou égaux à~$-4$, l'image de l'application résiduelle est donnée par le résultat suivant.
\begin{thm}\label{thm:r=0s=0}
Soit $\mu =(a_{1},\ldots,a_{n};-b_{1},\dots,-b_{p})$ une décomposition de $-4$ avec $\nim=2$ zéros d'ordres impairs. Notons $\tilde a_{\npa} = \sum_{i=3}^{n}a_{i}$ la somme des zéros d'ordres pairs.
\begin{enumerate}[i)]
 \item L'image de l'application résiduelle des strates $\Omega^{2}\moduli[0](2p+b-5,2p+b-5;-b,-b-2,\rec[-4][p-2])$ et $\Omega^{2}\moduli[0](2p+b-7,2p+b-5;-b,-b,\rec[-4][p-2])$ avec $p\geq2$ et $b\geq4$ pair est égale à $\espresquad[0](\mu)\setminus\CC \cdot(1,1,\rec[0][p-2])$.
 \item L'image de l'application résiduelle des strates $\quadomoduli[0](\mu)$ telles que $\tilde a_{\npa} < 2p$ (non considérées dans le point~i)) est le complémentaire de l'origine $\espresquad[0](\mu)\setminus\lbrace(0,\dots,0)\rbrace$.
 \item L'image de l'application résiduelle des strates avec $\tilde a_{\npa} \geq 2p$ est l'espace résiduel.
\end{enumerate}
\end{thm}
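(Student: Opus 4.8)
The plan is to pass to the canonical double cover and translate the problem into one about abelian differentials on $\PP^1$, where the quadratic residues become honest periods. Since the two odd-order zeros are the only ramification points, the cover $\whX$ is again $\PP^1$, and on it we obtain $\whomega$ with $\whomega^{2}=\pi^{*}\xi$, anti-invariant under the deck involution $\sigma$ (with $\sigma(w)=-w$ fixing the two branch points). Primitivity is then automatic: a global square has only even orders, whereas $a_{1}$ is odd, so every $\xi$ we produce is primitive for free. Under this dictionary the quadratic residue $\Resquad$ at an even pole of order $-b_{i}$ is the square of the residue $r_{i}$ of $\whomega$ at either preimage $\pm\beta_{i}$, and $\sigma$ forces these to be $\pm r_{i}$, so the abelian residue theorem is vacuous — matching the absence of a residue theorem downstairs.

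First I would settle the obstruction at the origin. If every $\Resquad$ vanishes then $\whomega$ has no residues, hence is exact, $\whomega=dF$ with $F$ an odd rational function on $\PP^{1}$. Placing the odd zeros at $0$ and $\infty$, oddness gives $F(0)=F(\infty)=0$, while the prescribed orders make $F$ have local degrees $a_{1}+2$ and $a_{2}+2$ there and poles of order $b_{i}/2-1$ at the $\pm\beta_{i}$. Counting, $\deg F=\sum_{i}(b_{i}-2)=a_{1}+a_{2}+\tilde a_{\npa}+4-2p$, so the fibre $F^{-1}(0)$ carries, beyond $0$ and $\infty$, extra preimages of total multiplicity $\tilde a_{\npa}-2p$. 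This being nonnegative forces $\tilde a_{\npa}\geq 2p$ for the origin to be attainable, which excludes it in ii) and permits it (pending construction) in iii).

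Next, for the sharper obstruction of i), I would examine the two distinguished families, where $\tilde a_{\npa}=0$ and the forbidden set is the line $\CC\cdot(1,1,(0^{p-2}))$. There the configuration forces $r_{1}^{2}=r_{2}^{2}$ with all other residues zero; subtracting from $\whomega$ the unique $\sigma$-anti-invariant form with simple poles and exactly these residues yields an exact $dG$ whose fibre over $0$ is again over-determined by the special orders $2p+b-5,2p+b-5$ (resp. $2p+b-7,2p+b-5$) together with the poles of order $b/2$ and $(b+2)/2$ (resp. $b/2,b/2$). I expect this to reproduce a negative multiplicity precisely on the line, and to become solvable the moment the two distinguished residues differ or some $-4$ pole is given a nonzero residue. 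Matching the excluded set exactly — no more, no less — is the most delicate point, since it must exploit the precise numerics of these strata rather than the mere inequality $\tilde a_{\npa}<2p$.

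Finally, for realisability I would argue directly with the flat metric, assembling the half-translation surface by gluing a polar domain for each pole — a neighbourhood realising the prescribed order and residue out of half-planes and, when the residue is nonzero, a cylinder — onto a core carrying the zeros, inducting on $p$ and seeding the induction with the constructions behind Theorem~\ref{thm:r=0sneq0} and Theorem~\ref{thm:surjimp4}. A nonzero residue always furnishes a cylinder of freely adjustable width, which is exactly what makes \emph{every} nonzero vector reachable in ii) regardless of $\tilde a_{\npa}$ versus $2p$. The hard part will be iii): realising the \emph{origin}, where no cylinder is available and one must build a surface whose polar ends are all residue-free yet whose zeros have exactly the prescribed orders — the flat-geometric shadow of solving the Hurwitz problem for $F$ above, feasible precisely because $\tilde a_{\npa}\geq 2p$ leaves slack in the core — together with threading the sharp line in i) while still covering everything off it.
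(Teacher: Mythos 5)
Your double-cover argument for the origin is correct and is exactly the paper's proof of the necessary condition (lemme~\ref{lem:condnecngeq3}): $\whomega=dF$ with $F$ odd, zeros of orders $a_{1}+2$ and $a_{2}+2$ at the branch points, $\deg F=\sum b_{i}-2p$, hence $\tilde a_{\npa}\geq 2p$; the remark that primitivity is automatic from the odd zero is also right. But part i) is a genuine gap, and it is the heart of the theorem. The mechanism you propose --- subtract the anti-invariant form $\eta$ with simple poles carrying the residues and run the same count on a primitive $G$ of the exact form $\whomega-\eta$ --- produces no count at all: the zeros of $\whomega=dG+\eta$ are no longer critical points of $G$, so fibre multiplicities of $G$ do not encode the prescribed orders, and no ``negative multiplicity'' materialises. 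Worse, any argument of this shape would have to separate $\Omega^{2}\moduli[0](2p+b-7,2p+b-5;-b,-b,(-4^{p-2}))$ (obstructed) from, say, $\Omega^{2}\moduli[0](2p+b-9,2p+b-3;-b,-b,(-4^{p-2}))$ (same pole orders, same residue configuration, realizable): the obstruction depends on how the zero order is split between the two odd zeros, which no degree count on the residue side can see. The paper's proof (lemme~\ref{lem:sec04csobstru}) is of a completely different nature: by proposition~\ref{prop:coeurdege} one degenerates in the ${\rm GL}^{+}(2,\mathbb{R})$-orbit closure to a surface whose saddle connections are all horizontal, then classifies the incidence graphs (cycle, one leaf, two leaves) and kills the bad cases via incompatible length equations ($\lambda+\mu=1$ against $\mu-\lambda=1$) and angle counts; the same analysis simultaneously yields realizability of $(1,1,(0^{p-2}))$ in all non-exceptional strata, which your part ii) also needs in order to be \emph{exactly} the complement of the origin.

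On the construction side your plan is only gestured at, and one heuristic is wrong: in these strata every pole has order at most $-4$, so there are no cylinders anywhere on the surface; a nonzero quadratic residue corresponds to a slit in a multi-sheeted planar end (a non-trivial polar part in the sense of section~\ref{sec:briques}), not to ``a cylinder of freely adjustable width''. The actual realizations occupy most of section~\ref{sec:pasdenondiv}: the origin when $\tilde a_{\npa}\geq 2p$ (lemmes~\ref{lem:unpolesdivzero}--\ref{lem:pairdivzero}, with a construction split according to $a_{1}+a_{2}\geq 2p-4$ or not), the all-equal-residue configurations (lemmes~\ref{lm:realquadcolib}--\ref{lem:reszerounpluszeros}, themselves built on incidence graphs), and the general case (lemme~\ref{lem:polesdivksanszero}, with distinct polygonal assemblies according to whether $l_{1}\leq\sum(\ell_{i}-1)$), followed by zero splitting for $n\geq 3$. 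None of these case distinctions, which is where the actual work lies, appears in your sketch.
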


Enfin nous donnons la description de l'image par l'application résiduelle des strates dont tous les pôles sont d'ordre~$-2$. 
Dans ces strates, le concept suivant est important pour décrire l'application résiduelle.

\begin{defn}\label{def:triangulaire}
 Des nombres $R_{1},R_{2},R_{3}$ sont {\em triangulaires} s'il existe des racines carrées $r_{1},r_{2},r_{3}$ de ces nombres telles que $r_{1}+r_{2}+r_{3}=0$.
\end{defn} 

La description de l'image de l'application résiduelle des strates dont tous les pôles sont doubles et qui possède deux zéros d'ordres impairs est donnée dans le résultat suivant.
\begin{thm}\label{thm:geq0quad2}
L'application résiduelle de $\Omega^{2}\moduli[0](a_{1},\dots,a_{n};\rec[-2][s])$ avec $\nim=2$ zéros d'ordres impairs $a_{1},a_{2}$ est surjective sauf dans les quatre cas suivants.
\begin{enumerate}[i)]
\item L'image de l'application résiduelle des strates $\Omega^{2}\moduli[0](2s-1,2s+1;\rec[-2][2s'+2])$ ne contient pas les configurations de la forme $(R,\dots,R,R',R')$ pour tout $R,R'\in\CC^{\ast}$.

\item L'image de l'application résiduelle des strates $\Omega^{2}\moduli[0](2s'-1,2s'-1;\rec[-2][2s'+1])$ ne contient pas les configurations de la forme $(R_{1},R_{2},R_{3},\dots,R_{3})$ où les $R_{i}$ sont triangulaires.

\item L'image de l'application résiduelle des strates $\Omega^{2}\moduli[0](a_{1},\dots,a_{n};\rec[-2][s])$ ne contient pas les configurations de la forme $\lambda(r_{1}^{2},\dots,r_{s}^{2})$ où $\lambda \in \CC^{\ast}$, $s \geq 2$ et $r_{1},\dots,r_{n}$ entiers premiers entre eux avec $\sum r_{i}$ impair telle que
\begin{equation}\label{eq:sumimpintro}
 \sum r_{i} < \max(a_{1},a_{2})+2 \,.
\end{equation}
\item L'image de l'application résiduelle des strates $\Omega^{2}\moduli[0](a_{1},\dots,a_{n};\rec[-2][s])$ ne contient pas les configurations de la forme $\lambda(r_{1}^{2},\dots,r_{s}^{2})$ où $\lambda \in \CC^{\ast}$, $s \geq 2$ et $r_{1},\dots,r_{n}$ entiers premiers entre eux avec $\sum r_{i}$ pair telle que
\begin{equation}\label{eq:sumpairintro}
 \sum r_{i} < a_{1}+a_{2}+4 \,.
\end{equation}
\end{enumerate}
\end{thm}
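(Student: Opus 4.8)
The plan is to reduce the problem to flat geometry, and for the rigidity statements to the canonical double cover. Since both odd singularities are zeros and every pole has order $-2$, the cover $\pi\colon\hat X\to\mathbb{P}^{1}$ branched at the two odd zeros is, by Riemann--Hurwitz, again a sphere, and $\pi^{*}\xi=\omega^{2}$ for an abelian differential $\omega$ with $\sigma^{*}\omega=-\omega$, where $\sigma$ is the deck involution. Each double pole is unramified and lifts to a pair of simple poles of $\omega$ with opposite residues $\pm r_{i}$, where $r_{i}^{2}$ is the quadratic residue up to the fixed normalization; the even zeros lift to $\sigma$-pairs of zeros, while the two odd zeros $a_{1},a_{2}$ become the fixed points of $\sigma$, at which $\omega$ vanishes to the even orders $a_{1}+1$ and $a_{2}+1$. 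Realizing a configuration $(R_{1},\dots,R_{s})$ is thus equivalent to producing such a $\sigma$-anti-invariant $\omega$ on the sphere with prescribed residues $\sqrt{R_{i}}$; primitivity of $\xi$ then holds automatically, since $\sigma^{*}\omega=-\omega\neq\omega$ shows that $\omega$ is not a pull-back from $\mathbb{P}^{1}$.

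\textbf{Surjectivity.} For the surjective part I would build the half-translation surface directly. After rotating and rescaling, one glues $s$ half-infinite cylinders of circumference proportional to $\lvert\sqrt{R_{i}}\rvert$ along a critical graph carrying the two odd cone points of angle $(a_{j}+2)\pi$ together with the even cone points. As long as the residues are not all proportional to squares of integers, the circumferences are sufficiently independent that the attaching lengths can be chosen freely, and the configuration is realized by a direct construction, bootstrapping from small base strata by the operations of breaking up a zero and inserting a double pole already used in the proof of Theorem~\ref{thm:surjimp4} and in \cite{getaab}. The points requiring care are keeping the genus equal to zero and the two distinguished cone angles odd throughout the surgeries.

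\textbf{The exceptions (main obstacle).} The heart of the argument is the necessity of the four exceptional families. When the residues are proportional to the squares of coprime integers $(r_{1}^{2},\dots,r_{s}^{2})$ all the $\sqrt{R_{i}}$ share a common direction, so after rotation the differential is of Jenkins--Strebel type: the horizontal foliation cuts $\mathbb{P}^{1}$ into the $s$ cylinders, and the critical graph is an integral metric ribbon graph on the sphere whose faces have perimeters proportional to $r_{i}$ and whose vertices have valences $a_{j}+2$. The delicate step is that the edge lengths solve a linear system with right-hand side $(r_{i})$, so their positivity imposes a sharp lower bound on $\sum r_{i}$; the parity of $\sum r_{i}$ decides whether the two odd-valence vertices may be joined by a single saddle connection or must be separated, and a careful accounting of the separatrices issuing from these two vertices is meant to produce exactly the thresholds $\max(a_{1},a_{2})+2$ and $a_{1}+a_{2}+4$ of~iii) and~iv). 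The sporadic strata~i) and~ii) are the borderline cases in which the ribbon graph is rigid: in~ii) the closing-up of the three cylinders meeting the doubled odd vertex is precisely the triangular relation of Definition~\ref{def:triangulaire}, while in~i) the two-value pattern $(R,\dots,R,R',R')$ is obstructed by the same rigidity for every $R,R'\in\CC^{\ast}$.

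\textbf{Converse and the crux.} Conversely, whenever $\sum r_{i}$ lies on or above the relevant threshold I would exhibit an explicit integral ribbon graph --- equivalently, an explicit even rational function on $\hat X$ with the prescribed residues --- and verify that all edge lengths are positive, the boundary of the inequality being exactly where such a graph first appears. I expect the genuine difficulty to be the necessity direction for~iii) and~iv): extracting the sharp bound means controlling the entire combinatorics of admissible ribbon graphs (dually, the integrality of the relative period of $\omega$ between the two fixed points of $\sigma$) and excluding every graph below the threshold, with the parity of $\sum r_{i}$ intervening through the evenness of $\omega$ forced by $\sigma$.
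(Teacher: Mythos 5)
There is a genuine gap — in fact two, and the first is an internal inconsistency. Your surjectivity paragraph claims that any configuration whose residues are ``not all proportional to squares of integers'' can be realized by a direct gluing construction; but exceptions i) and ii) of the very statement you are proving concern arbitrary $R,R'\in\CC^{\ast}$ and arbitrary triangular triples, which are generically non-arithmetic. Your attempted repair — treating i) and ii) as ``rigid ribbon graph'' cases — cannot work, because the Jenkins--Strebel/ribbon-graph picture you invoke (closed horizontal foliation, faces of perimeter $r_{i}$, integral edge lengths) exists only when all residues lie on a single ray through the origin; for $(1,\dots,1,R,R)$ with $R\notin\RR_{+}$ the differential is not of Strebel type and the rigidity argument says nothing. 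The paper's mechanism for i) is entirely different and is the idea your proposal is missing: glue the two poles of residue $R$ into a node to form a twisted quadratic differential, smooth it without changing the other residues (lemme~\ref{lem:lisspolessimples}), and obtain a differential in $\Omega^{2}\moduli[1](2s'-1,2s'+1;(-2^{2s'}))$ all of whose residues are equal — which a separate and substantial genus-one analysis (lemme~\ref{lem:nonsurjdeuxzero}) shows cannot exist. Obstruction ii) is likewise proved for arbitrary complex triangular triples, by explicit computation in the base stratum $\Omega^{2}\moduli[0](1,1;(-2^{3}))$ followed by a concatenation analysis (lemme~\ref{lem:obstrutri}); your sketch covers at best the real case.

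For iii) and iv) your outline points in the right direction — the integrality and parity of saddle-connection lengths for Strebel-type differentials is exactly lemme~\ref{lemmeC}, and the counting that turns it into the thresholds $a_{2}+2$ and $a_{1}+a_{2}+4$ is corollaires~\ref{cor:C12} and~\ref{cor:C34} — but neither direction is actually carried out. The necessity needs the precise dichotomy (edges on the cycle of the incidence graph are half-integral exactly when $\sum r_{i}$ is odd, tree edges are integral) together with the bound on how many half-integral edges can occur; ``a careful accounting of the separatrices is meant to produce exactly the thresholds'' is a statement of intent, not an argument. More seriously, the sufficiency above the thresholds — which you dispatch in one sentence — is where the bulk of the paper's proof lies: a long induction on $s$ (propositions~\ref{prop:D}, \ref{prop:E}, \ref{prop:F}, \ref{prop:H} and corollaires~\ref{cor:F1}, \ref{cor:H1}) whose engine is the surgery lemma~\ref{lem:G} adding a pair of equal residues, with delicate exceptional cases (e.g.\ lemmes~\ref{lem:G1}, \ref{lem:G345}, \ref{lem:G6}) precisely at the boundary of the inequalities. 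Your double-cover reduction at the start is correct (anti-invariant abelian differential, residues $\pm r_{i}$, even-order zeros at the fixed points, primitivity from connectedness of the branched cover), but it is not used by the paper and does not by itself close any of these gaps; as it stands the proposal is a plan rather than a proof.
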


La preuve du théorème~\ref{thm:geq0quad2} est obtenue en utilisant explicitement la géométrie plate associée à ces différentielles. Toutefois, cette dichotomie entre une borne en $\max(a_{1},a_{2})$ ou en $a_{1}+a_{2}$ selon si $\sum r_{i}$ est impair ou pair peut s'expliquer géométriquement de la façon suivante. La condition arithmétique sur les résidus implique que la différentielle quadratique est un revêtement ramifié de $\frac{\lambda }{z(z-1)}dz^{2}$. La somme $\sum r_{i}$ est le degré de ce revêtement. Les deux zéros d'ordre impair s'envoient impérativement sur les deux pôles simples de $\frac{\lambda }{z(z-1)}dz^{2}$.
Nous obtenons alors une inégalité reliant l'ordre des zéros d'ordre impair et le degré du revêtement. Lorsque le degré est pair, les zéros s'envoient sur le même pôle simple, la contrainte porte sur la somme de leurs degrés. Au contraire, lorsque le degré est impair, les zéros s'envoient sur des pôles différents et nous obtenons deux inégalités qui se combinent en une seule portant sur l'ordre maximum des deux zéros d'ordre impair.

\smallskip
\par
\subsection{Applications}
\label{sec:appliintro}
Nous donnons deux applications de nos résultats. Tout d'abord nous reprouvons que les strates de différentielles quadratiques d'aire finie sont non vides, à l'exception de quelques cas. Ensuite nous donnons la borne optimale du nombre de cylindres disjoints dans une différentielle quadratique d'une strate donnée.
\smallskip
\par
\paragraph{\bf Différentielles quadratiques d'aire finie.}
Le problème de savoir si les strates de différentielles quadratiques primitives $\quadomoduli(a_{1},\dots,a_{n})$ sont vides a été résolu par \cite{masm}. Le résultat est le suivant.
\begin{prop}\label{prop:stratesvides}
Soit $\mu = (a_{1},\dots,a_{n})$ une décomposition de $4g-4$ dont tous les éléments sont  supérieurs ou égaux à $-1$.
 La strate primitive $\quadomoduli(\mu)$ est vide si et seulement si
 \begin{enumerate}[i)]
  \item $g = 1$ et soit  $\mu=(-1,1)$ ou bien $\mu=\emptyset$;
  \item $g=2$ et soit $\mu=(4)$ ou bien $\mu=(1,3)$.
 \end{enumerate}
\end{prop}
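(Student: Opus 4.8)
The statement is an equivalence, and I would handle the two implications with completely different tools: algebraic geometry for the emptiness of the four listed strata, and flat-geometric surgery for the non-emptiness of all the others.

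\emph{Necessity of the exceptional list.} In low genus the bicanonical bundle is very rigid, and this is what produces the exceptions. For $g=1$ the bundle $K_X$ is trivial, so every holomorphic section of $K_X^{\otimes 2}$ is a constant multiple of $(dz)^2$; hence $\quadomoduli[1](\emptyset)$ consists only of squares of abelian differentials and its primitive locus is empty. For $\mu=(-1,1)$ the prescribed divisor $z_1-z_2$ would have to be linearly equivalent to $2K_X=0$, and by Abel's theorem on an elliptic curve this forces $z_1=z_2$, contradicting that the two singularities are distinct. For $g=2$ every curve is hyperelliptic, and I would use that $|2K_X|$ is governed by the hyperelliptic pencil $g^1_2$ and that $2P\sim K_X$ holds exactly at the six Weierstrass points: a differential of type $(4)$ has divisor $4P\sim 2K_X$, which forces $P$ to be a Weierstrass point and the differential to be the square of the abelian differential in $\omoduli[2](2)$ with a double zero at $P$, hence non-primitive; while the odd profile $(1,3)$ cannot be realized at all. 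This recovers the four cases of \cite{masm}.

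\emph{Non-emptiness in the remaining cases.} Here I would argue by induction on the genus using flat surgeries. The key mechanism is that attaching a handle to a half-translation surface along a pair of parallel geodesic slits raises the genus by one and increases the total order by $4$, matching the jump of $\deg K_X^{\otimes 2}=4g-4$; by choosing the slit configuration one can arrange this to create a new zero of order $4$, to create two zeros of order $2$, or to raise the orders of prescribed zeros. Thus, given a non-exceptional target $\mu$ in genus $g\geq 3$, it suffices to exhibit a realizable partition of $4g-8$ in genus $g-1$ from which $\mu$ is obtained by a single handle attachment, and to check that this reduced partition is again non-exceptional. The base of the induction is the collection of non-exceptional strata in genus $0,1,2$: in genus $0$ these are the half-translation spheres (pillowcases and their generalizations), nonempty precisely when an odd singularity is present as in Lemme~\ref{lem:puissk}, and in genus $1$ and $2$ I would realize each remaining partition explicitly by polygon gluings.

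Primitivity must be controlled throughout. If $\mu$ contains an odd part, the differential cannot be a global square and primitivity is automatic, covering the bulk of the cases; when all parts are even I would track the holonomy of the construction and perform the slit surgery so that the square root of $\xi$ has nontrivial monodromy, guaranteeing the output is never a square. I expect the main obstacle to lie exactly in this all-even regime in low genus, that is, in the neighbourhood of the genuine exception $(4)$ in $g=2$: the induction must be seeded so that handle attachment never forces the profile into a forbidden one and never secretly yields a global square. A secondary, more routine, difficulty is the case analysis needed to verify that the reduced partition used at the inductive step remains non-exceptional when the target is close to $(4)$, $(1,3)$, $(-1,1)$ or $\emptyset$.
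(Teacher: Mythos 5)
Your treatment of the emptiness direction is sound and is essentially the paper's own argument: genus $1$ via triviality of $K_{X}$ and Abel's theorem, genus $2$ via the fact that $\mathrm{Sym}^{2}H^{0}(K_{X})\to H^{0}(2K_{X})$ is an isomorphism, so every divisor in $|2K_{X}|$ is the pullback of a degree-$2$ divisor under the hyperelliptic map. One caveat: your inference ``$4P\sim 2K_{X}$ forces $P$ Weierstrass'' is not formal — a priori $2P-K_{X}$ could be a nontrivial $2$-torsion class — and it is exactly the pullback description of $|2K_{X}|$ that excludes this; the paper phrases the same point through Riemann--Roch. This is a fillable, minor gap.

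The non-emptiness direction is where you genuinely diverge from the paper, and where your proposal has a real gap that your own hedging does not repair. The paper does not induct on genus by handle attachment: it produces, via Théorème~\ref{thm:geq1} (resp.\ Théorème~\ref{thm:ggeq2}), a primitive differential of genus $1$ (resp.\ $2$) with $g-1$ (resp.\ $g-2$) poles of order $-4$ and \emph{all quadratic residues zero}, attaches at each pole the square of a differential of $\omoduli[1](\emptyset)$, and smooths the resulting twisted differential by Lemme~\ref{lem:lissdeuxcomp}. Your induction instead must pass through genus $2$, and there it provably breaks. For $\mu=(8)$, $(1,7)$ and $(4,4)$ in genus $3$, every reduction allowed by your listed moves (lower a part by $4$, delete a part equal to $4$, delete two parts equal to $2$) lands in $(4)$, $(1,3)$, or $(4)$ with a marked point — that is, in the empty strata — so the inductive step cannot begin. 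Even granting the natural further variant of couture d'anse that glues a cylinder along a saddle connection and raises two zero orders by $2$ each (which rescues $(1,7)$ from $(-1,5)$ and $(4,4)$ from $(2,2)$), the stratum $\quadomoduli[3](8)$ stays unreachable: any handle surgery producing a single zero must start from a genus-$2$ holomorphic stratum whose zeros all lie in the surgery locus, and the only single-zero candidate is $(4)$, which is empty, while a slit between the two zeros of any $(a,b)$ always outputs two distinct zeros. Since $(4g-4)$ reduces only to $(4(g-1)-4)$, the entire chain $(8),(12),(16),\dots$ hinges on this one missing case.

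This is precisely why the paper gives $(4g-4)$ and $(2g-1,2g-3)$ a separate treatment: the corresponding genus-$1$ strata $\quadomoduli[1](4a;(-4^{a}))$ and $\quadomoduli[1](2a-1,2a+1;(-4^{a}))$ are exactly the exceptions of Théorème~\ref{thm:geq1} where the zero configuration of residues is forbidden, so the seed is taken instead in $\quadomoduli[2](4(g-1);(-4^{g-2}))$ — a construction that essentially uses a pole of order $-4$ with vanishing quadratic residue, something handle attachment alone cannot simulate. To complete your proof you would need either this pole-plus-elliptic-tail gluing, or an explicit genus-$3$ polygonal construction of a primitive differential in $\quadomoduli[3](8)$ added as a seed (after which your induction, with the two-zero variant of the surgery and Proposition~\ref{prop:eclatZero} for splitting zeros, does go through). ``Seeding the induction appropriately'' within genera $0$, $1$, $2$, as you propose, is impossible.
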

 Nous donnons une nouvelle preuve de ce résultat dans la section~\ref{sec:plurifini}.

\smallskip
\par
\paragraph{\bf Cylindres.}
Naveh a montré dans \cite{Na} que le nombre maximal de cylindres disjoints dans une différentielle abélienne holomorphe de la strate $\omoduli(a_{1},\dots,a_{n})$  est $g+n-1$ et que cette borne est toujours atteinte. Nous généralisons ce résultat au cas des différentielles quadratiques. Rappelons que $\nim$ est le nombre de zéros impairs et pôles simples, et $\npa$ celui de zéros pairs.
\begin{prop}\label{prop:cylindresquad}
 Une différentielle de $\quadomoduli(a_{1},\dots,a_{n})$  possède au plus $g+\npa+\tfrac{\nim}{2}-1$ cylindres disjoints. De plus, cette borne est atteinte  dans chaque strate.
\end{prop}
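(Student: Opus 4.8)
The plan is to prove the two halves—the upper bound and its attainment—by quite different means. For the upper bound I would argue directly on the half-translation surface $(X,\lvert\xi\rvert)$ rather than passing to the orientation double cover, because the cover only yields a strictly weaker estimate (a cylinder of nonorientable core lifts to a single cylinder, and Naveh's bound on the cover loses track of exactly this phenomenon). Let $C_{1},\dots,C_{N}$ be pairwise disjoint cylinders, which I may assume maximal, so that each boundary component of each $C_{i}$ carries at least one singularity. Cutting $X$ along the $N$ core curves and capping the $2N$ resulting boundary circles with disks produces a closed surface $X''$; since cutting along circles and capping alters the Euler characteristic in a controlled way, a short computation gives
\[
N \;=\; g-1+\sum_{k}\bigl(1-h_{k}\bigr),
\]
where the sum runs over the connected components of $X''$ and $h_{k}$ is the genus of the $k$-th component. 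Every singularity of $\xi$ lies in the interior of exactly one component, so it remains to bound $\sum_{k}(1-h_{k})$ by $\npa+\tfrac{\nim}{2}$.

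The heart of the matter is a local inequality, which I would isolate as a lemma: writing $p_{k}$ and $i_{k}$ for the numbers of even- and odd-order singularities contained in the $k$-th component, one should have $1-h_{k}\le p_{k}+\tfrac12 i_{k}$. Summing over $k$ and using $\sum_{k}p_{k}=\npa$, $\sum_{k}i_{k}=\nim$ then yields exactly $N\le g+\npa+\tfrac{\nim}{2}-1$. A first input is Gauss--Bonnet with geodesic boundary, which forbids a component without singularities from having genus $0$ (a flat planar piece with geodesic boundary and no cone point is a cylinder, contradicting maximality), so such a piece contributes $1-h_{k}\le 0$. The delicate input concerns the odd-order singularities: they are precisely the branch points of the orientation double cover, hence the $\ZZ/2$-holonomy around the capped boundary circles of a component equals $(-1)^{i_{k}}$. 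When a planar component carries an odd number of odd singularities and no even one, this monodromy is nontrivial, which forces one of the bounding cylinders to have nonorientable core; I expect controlling this case, and thereby justifying the precise weight $\tfrac12$ attached to each odd singularity, to be the main obstacle of the proof and the point where the parities of the $a_{i}$ genuinely enter.

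For the attainment I would construct, in every nonempty stratum, a primitive quadratic differential whose horizontal foliation is completely periodic and decomposes into exactly $g+\npa+\tfrac{\nim}{2}-1$ cylinders. The idea is to assemble the surface from that many flat cylinders glued along horizontal saddle connections so that the cone points realise the prescribed orders $a_{1},\dots,a_{n}$: an even zero of order $2k$ is produced at a point where cylinders meet with total cone angle $(2k+2)\pi$, the odd-order singularities are introduced in pairs, each pair fusing two cylinder boundaries and accounting for one unit of $\tfrac{\nim}{2}$, and the $g$ handles are created by identifying cylinder ends across the configuration. This can be carried out with the flat-geometric surgeries already developed in the paper. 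Primitivity is automatic as soon as the $\ZZ/2$-holonomy is nontrivial, which any odd singularity provides; in the purely even case ($\nim=0$) one arranges the gluings so that $\xi$ is not a global square, the bound then reducing to Naveh's value $g+n-1$.

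The remaining care is in the low-complexity strata: one must verify that the construction indeed reaches the maximum, and not one fewer, in every case, and the statement is to be read over nonempty strata, the empty ones being exactly those listed in Proposition~\ref{prop:stratesvides}.
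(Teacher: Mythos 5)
Your upper bound is, once unwound, the same count as the paper's: your cut-and-cap identity $N=g-1+\sum_{k}(1-h_{k})$ is exactly the paper's dual-graph inequality $t-(c-1)\le g$ (the paper lets the cylinder heights go to infinity, so that the core curves become nodes of a stable curve; your capped components are its irreducible components), and your local inequality $1-h_{k}\le p_{k}+\tfrac{1}{2}i_{k}$ is the paper's observation that each component must carry either an even zero or a pair of odd singularities. However, the step you flag as ``the main obstacle'' is in fact vacuous, and the one-line fact that disposes of it is the one you are missing: the boundary circles of your components are core curves of flat cylinders, i.e.\ closed \emph{regular geodesics}; parallel transport along a closed geodesic fixes its tangent vector, and the holonomy group of a half-translation surface is $\{\pm\mathrm{Id}\}$, so every core curve has trivial holonomy. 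Your own relation (the product of the boundary holonomies of a component equals $(-1)^{i_{k}}$) then forces $i_{k}$ to be even for \emph{every} component, so a planar component with an odd number of odd singularities never occurs; equivalently, Gauss--Bonnet with geodesic boundary gives $2-2h_{k}-b_{k}=-\tfrac{1}{2}\sum a_{i}$ over the singularities inside the component, and integrality of the left-hand side gives the same parity. For the same reason there is no cylinder ``of nonorientable core'' on an orientable half-translation surface---every cylinder lifts to two cylinders of the double cover---so your reason for discarding the double-cover route is wrong, even though your conclusion that it yields a strictly weaker bound when $\nim>0$ is correct (the loss comes from odd zeros lifting to single zeros and from the genus of the cover, not from cylinders lifting to single cylinders). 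So this half of your argument is the paper's argument in flat-geometric clothing, with a hole at the parity step that you left open but that closes in one line.

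The genuine gap is the attainment half, which as written is a plausibility sketch rather than a proof. ``Assemble the surface from that many flat cylinders glued along horizontal saddle connections'' specifies no gluing pattern, and verifies neither that the result lies in $\quadomoduli(a_{1},\dots,a_{n})$, nor that it is primitive, nor that it has exactly $g+\npa+\tfrac{\nim}{2}-1$ cylinders; and the two points you defer are precisely where all the work lies. First, when $\nim=0$ the differential must be primitive with $g+n-1$ cylinders, and Naveh's examples are squares of abelian differentials, hence unusable; ``one arranges the gluings so that $\xi$ is not a global square'' is not an argument---the paper forces primitivity by realizing specific residue configurations (incommensurable circumferences, squares of abelian residue configurations), which rests on Theorem~\ref{thm:geq0quad2} and on theorem~1.2 of \cite{getaab}. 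Second, the natural induction on genus (sew a handle to gain a cylinder) breaks exactly when the smaller stratum is empty---$\quadomoduli[1](-1,1)$, $\quadomoduli[2](1,3)$, $\quadomoduli[2](4)$, by Proposition~\ref{prop:stratesvides}---which is why the paper needs separate constructions for $\quadomoduli[2](-1,5)$, $\quadomoduli[3](1,7)$, $\quadomoduli[3](8)$, $\quadomoduli[2](-1,1,4)$, $\quadomoduli[3](4,4)$, and for the strata all of whose orders lie in $\{-1,1,2\}$. The paper's actual mechanism---a twisted differential on a tree of rational components, one per even zero and one per pair of odd singularities, with double poles and matched residues at the nodes, smoothed by Lemma~\ref{lem:lisspolessimples}, then handle sewing---is the content your sketch would have to reproduce or replace; acknowledging that ``the remaining care is in the low-complexity strata'' does not supply it.
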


\smallskip
\par
\subsection{Liens avec d'autres travaux}
\label{sec:travauxintro}

Cet article est le deuxième d'une série de trois articles consacrés à l'étude des obstructions globales à l'existence de différentielles dont les invariants locaux sont prescrits. Ces trois articles résultent du scindage de l'article \cite{geta} (non destiné à la publication) afin de pouvoir mieux expliquer les spécificités de chaque cas. Ce travail suit l'article \cite{getaab} relatif aux différentielles abéliennes et est suivi du troisième article \cite{getakdiff} traitant les cas des différentielles d'ordre supérieur (différentielles cubiques et au-delà). 
\smallskip
\par
Le travail \cite{SongSpher} associe aux différentielles de Strebel des métriques sphériques à singularités coniques avec une monodromie spéciale. Les angles coniques de ces métriques sont déterminés par les invariants locaux de la différentielle quadratique. Nos résultats nous permettent de caractériser  les distributions d'angles qui peuvent être réalisées par une telle métrique sphérique: voir~\cite{getasphere}.
\smallskip
\par
Le problème d'existence de différentielles quadratiques avec certains comportements est classique. En particulier, il est intéressant de fixer certains invariants de la géométrie plate associée. On pourra par exemple consulter \cite{DGTQuad} pour une avancée récente dans cette direction.
\smallskip
\par
\subsection{Organisation de cet article.}

Le périmètre des théorèmes ne correspond pas nécessairement aux sections de l'article. Celles-ci ont été organisées selon l'enchaînement des démonstrations plutôt qu'en vue de l'usage des résultats. Ainsi, chacune des sections de~\ref{sec:avecnondiv} jusqu'à~\ref{sec:ggeq1} couvre un certain type de strates.

Les preuves reposent sur la philosophie suivante. Dans un premier temps nous utilisons la correspondance entre les différentielles quadratiques et certaines classes de surfaces plates introduites par \cite{strebel}. Cette correspondance nous permet de construire explicitement des différentielles ayant les propriétés souhaitées lorsque le genre et le nombre de singularités sont petits. 

Dans un second temps, nous déduisons de ces résultats les autres cas grâce à deux opérations introduites par \cite{lanneauquad} : l'{\em éclatement d'une singularité} et la {\em couture d'anse}. La première de ces opérations permet d'augmenter le nombre de singularités sans changer le genre d'une différentielle. La seconde préserve le nombre de singularités mais augmente le genre de la surface sous-jacente.

Enfin, dans les cas où l'application résiduelle n'est pas surjective, nous développons des méthodes ad hoc afin de montrer la non-existence de différentielles ayant certains invariants locaux. 
\smallskip
\par
L'article s'organise comme suit. Dans la section~\ref{sec:bao} nous faisons les rappels nécessaires sur les représentations plates des différentielles quadratiques et sur les deux opérations précédemment citées. De plus, nous introduisons dans cette section les briques élémentaires qui nous permettrons de construire les différentielles avec les propriétés souhaitées. Enfin, nous introduisons dans la section~\ref{sec:arrhyp} la stratification résiduelle qui permet dans de nombreux cas de se ramener à des résidus réels positifs.

La section~\ref{sec:EXCEPT} établit de nombreuses obstructions qui sont spécifiques au cas des différentielles quadratiques et dont on ne retrouve pas l'équivalent chez les différentielles abéliennes ou d'ordre supérieur.

Les sections~\ref{sec:avecnondiv} jusqu'à~\ref{sec:juste-k} sont dédiées au cas des strates de genre zéro. La section~\ref{sec:avecnondiv} au cas des différentielles avec au moins un pôle d'ordre impair, montrant les théorèmes~\ref{thm:g=0gen1ter} et~\ref{thm:g=0gen1bis}. La section~\ref{sec:NTD} couvre le cas des différentielles dont les pôles sont pairs mais ne sont pas tous doubles, établissant les théorèmes~\ref{thm:r=0sneq0} et~\ref{thm:r=0s=0}. Enfin, la section~\ref{sec:juste-k} donne la preuve du théorème~\ref{thm:geq0quad2} traitant des différentielles dont tous les pôles sont doubles.
La preuve du théorème~\ref{thm:surjimp4} (que nous avons isolé en tant que résultat tant son énoncé est simple) est répartie dans les sections~\ref{sec:avecnondiv} jusqu'à~\ref{sec:juste-k} en fonction des types de singularités de la strate.

La section~\ref{sec:ggeq1} est dédiée aux cas des différentielles en genre supérieur ou égal à~$1$. Dans cette section, nous montrons les théorèmes~\ref{thm:ggeq2} et~\ref{thm:geq1}. Pour finir, nous donnons les preuves des applications énoncées dans l'introduction dans la section~\ref{sec:appli}.

\smallskip
\par
\subsection{Remerciements.} Nous remercions Corentin Boissy pour des discussions enrichissantes liées à cette série d'articles ainsi que les rapporteurs du texte qui ont amélioré la présentation de celui-ci. Le premier auteur remercie l'{\em Institut für algebraische Geometrie} de la {\em Leibniz Universität Hannover} et le {\em Centro de Ciencias Matemáticas} de la {\em Universidad Nacional Autonoma de México} où il a élaboré une partie de ce texte.

\section{Rappels et boîte à outils}
\label{sec:bao}

Dans cette section, nous introduisons les objets et les opérations de base pour nos constructions. Nous commençons par quelques rappels sur les différentielles quadratiques dans la section~\ref{sec:pluridiffbao}. Ensuite nous introduisons dans la section~\ref{sec:briques} les briques élémentaires de nos surfaces plates. Nous poursuivons par un rappel sur les différentielles entrelacées et les opérations de scindage de zéro et de couture d'anse dans la section~\ref{sec:pluridiffentre}. Enfin, nous discutons un cas spécial de surfaces plates dans la section~\ref{sec:coeur} et expliquons dans la section~\ref{sec:arrhyp} comment s'y ramener dans de nombreux cas.

\subsection{Différentielles quadratiques méromorphes}
\label{sec:pluridiffbao}

Soit $X$ une surface de Riemann compacte de genre~$g$ et $\omega$ une section méromorphe du fibré canonique $K_{X}$. On notera $Z$ les zéros et $P$ les pôles de $\omega$. L'intégration de $\omega$ sur $X\setminus P$ induit une structure plate sur $X\setminus P$. Chaque zéro de $\omega$ d'ordre $a$ correspond à une singularité conique d'angle $2(a+1)\pi$ de la structure plate. Les pôles simples de $\omega$ correspondent à des demi-cylindres infinis. Les pôles d'ordre $-b\leq -2$ correspondent à un revêtement de degré $b-1$ du plan dans lequel on a éventuellement fait une entaille correspondant au résidu.
Inversement, une surface plate obtenue en attachant par translation un nombre fini de demi-cylindres infinis, des revêtements d'ordre $b-1$ du plan entaillé et des polygones correspond à une différentielle abélienne méromorphe sur une surface de Riemann.

Une théorie similaire a été développée dans le cas des sections méromorphes $\xi$ de  $K_{X}^{2}$ (les détails se trouvent par exemple dans  \cite{BCGGM3} et \cite{chge}.). En effet, on peut passer au revêtement canonique $\pi\colon\whX\to X$ et choisir une racine carrée $\whomega$ de $\pi^{\ast}\xi$ sur $\whX$. L'intégration de $\whomega$ le long d'un chemin de $\whX$ nous fournit une structure plate sur $\whX$. La surface plate ainsi obtenue possède une symétrie cyclique d'ordre $2$ provenant de la structure de revêtement. Le quotient de cette surface par ce groupe est une surface plate où l'on autorise les identifications par des translations et rotations d'angle $\pi$.
On dit qu'une différentielle quadratique est  {\em primitive} si de manière équivalente son revêtement canonique est connexe, ou le groupe d'holonomie est non trivial.
Les pôles d'ordre~$-2$ correspondent à des demi-cylindres infinis et les pôles d'ordre $-b<-2$ à un revêtement d'ordre $b-2$ d'un domaine angulaire d'angle $\pi$. Les pôles simples et zéros d'ordres $a\geq-1$ correspondent aux singularités coniques d'angle $(a+2)\pi$.   

Si une différentielle quadratique $\xi$ est le carré d'une différentielle holomorphe $\omega$, on a pour tout pôle~$P$ 
\begin{equation}\label{eq:multiplires}
 \Resk[2]_{P}(\xi)=\left( \Res_{P}(\omega) \right)^{2}\,.
\end{equation}

En genre zéro, toutes les strates sont connexes. Il est facile de caractériser les strates qui ne contiennent que des différentielles quadratiques primitives. 

\begin{lem}\label{lem:puissk}
Soit $\mu=(m_{1},\dots,m_{t})$ un $t$-uplet de nombres pairs $m_{i}$ tel que $\sum m_{i}=-4$. Toutes les différentielles de type $\mu$ sont le carré d'une différentielle abélienne de $\Omega\moduli[0](\mu/2)$.
\end{lem}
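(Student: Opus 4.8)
The plan is to construct the abelian differential $\omega$ directly as a globally well-defined square root of $\xi$, exploiting the fact that in genus zero the fundamental group of a punctured surface is generated by small loops around the punctures. Write $X\cong\PP^{1}$ and let $P_{1},\dots,P_{t}$ be the singularities of $\xi$, of even orders $m_{1},\dots,m_{t}$. Away from these points $\xi$ is a nowhere-vanishing holomorphic section of $K_{X}^{\otimes 2}$, so on $X^{\ast}:=X\setminus\{P_{1},\dots,P_{t}\}$ the two local square roots $\pm\sqrt{\xi}$ are interchanged by a monodromy character $\rho\colon\pi_{1}(X^{\ast})\to\{\pm1\}$, and a global abelian square root of $\xi$ exists precisely when $\rho$ is trivial.

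First I would compute this monodromy locally. Using the standard coordinates of \eqref{eq:standard_coordinates}, near $P_{i}$ the differential reads $z^{m_{i}}(dz)^{2}$ (resp.\ $(z^{m_{i}/2}+r/z)^{2}(dz)^{2}$ when $m_{i}<-2$), whose square root $z^{m_{i}/2}\,dz$ (resp.\ $(z^{m_{i}/2}+r/z)\,dz$) is single-valued because $m_{i}/2\in\ZZ$. Hence the class $\gamma_{i}$ of a small loop around $P_{i}$ satisfies $\rho(\gamma_{i})=(-1)^{m_{i}}=+1$.

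Then I would invoke that $\pi_{1}(X^{\ast})$ of a punctured sphere is generated by the classes $\gamma_{1},\dots,\gamma_{t}$ (subject only to $\prod_{i}\gamma_{i}=1$); since $\rho$ is trivial on each generator, $\rho$ is the trivial character. Consequently one of the two branches of $\sqrt{\xi}$ is single-valued on $X^{\ast}$ and defines there a meromorphic abelian differential $\omega$. The local expressions above show that $\omega$ extends meromorphically across each $P_{i}$ with a zero or pole of order exactly $m_{i}/2$, so $\omega\in\omoduli[0](\mu/2)$ — note that $\sum_{i}m_{i}/2=-2=2\cdot0-2$, so $\mu/2$ is an admissible abelian type — and $\xi=\omega^{2}$ by construction. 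Since every differential of type $\mu$ arises this way, the claim follows.

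I do not expect a serious obstacle: the whole argument reduces to the triviality of a $\{\pm1\}$-valued monodromy on a punctured sphere, and the only point requiring minor care is that the even-order poles of order $<-2$ carry the extra residue term $r/z$; but this term is itself single-valued and hence does not affect the computation of $\rho(\gamma_{i})$. An equivalent way to organize the same idea, which I would mention, is in terms of the canonical double cover $\pi\colon\whX\to X$ of Section~\ref{sec:pluridiffbao}: it is branched exactly over the odd-order singularities, of which there are none here, so $\whX\to\PP^{1}$ is an unramified double cover and is therefore disconnected (a connected one would force $g_{\whX}=\tfrac12$ by Riemann--Hurwitz), which is precisely the statement that $\xi$ is a square.
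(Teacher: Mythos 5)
Your proof is correct, but it takes a genuinely different route from the paper's. The paper's argument is a one-line explicit computation special to genus zero: a differential of type $\mu$ on $\PP^{1}$ is globally $\prod_{i}(z-z_{i})^{m_{i}}(dz)^{2}$, and since every $m_{i}$ is even the square root $\prod_{i}(z-z_{i})^{m_{i}/2}\,dz$ is immediately a well-defined meromorphic abelian differential of type $\mu/2$. You instead encode the obstruction to a global square root as a character $\rho\colon\pi_{1}(X^{\ast})\to\{\pm1\}$, show it vanishes on each loop $\gamma_{i}$ using the local normal forms \eqref{eq:standard_coordinates} (this is where the evenness of $m_{i}$ enters), and conclude because these loops generate $\pi_{1}$ of a punctured sphere. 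Both proofs are complete; the paper's is shorter and purely computational, while yours isolates the actual mechanism (triviality of a $\ZZ/2$ monodromy forced by simple connectivity of $\PP^{1}$) and would generalize verbatim to $k$-th roots of $k$-differentials, where no explicit product formula is needed. One small slip in your closing remark: a connected unramified double cover of $\PP^{1}$ would have genus $-1$ by Riemann--Hurwitz ($2g_{\whX}-2=2(2\cdot 0-2)$), not genus $\tfrac12$; equivalently, $\PP^{1}$ is simply connected and so admits no connected unramified cover at all. The absurdity, and hence your conclusion, is unaffected.
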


\begin{proof}
Une différentielle $\xi$ sur $\PP^{1}$ de type $\mu$ est donnée par la formule 
$$\xi=\prod_{i=1}^{t}(z-z_{i})^{m_{i}}dz^{2}=\left( \prod_{i=1}^{t}(z-z_{i})^{m_{i}/2}dz\right)^{2}. $$
\end{proof}

\subsection{Briques élémentaires}
\label{sec:briques}

Dans ce paragraphe, nous introduisons des surfaces plates à bord qui nous serviront de briques pour construire les différentielles quadratiques ayant les propriétés locales souhaitées.
\smallskip
\par

Étant donnés des vecteurs $(v_{1},\dots,v_{l})$ dans $(\CC^{\ast})^{l}$, nous considérons la ligne brisée $L$ dans~$\CC$ donnée par la concaténation d'une demi-droite correspondant à $\RR_{-}$, des $v_{i}$ pour $i$ croissant et d'une demi-droite correspondant à $\RR_{+}$.
Nous supposerons que les $v_{i}$ sont tels que $L$ ne possède pas de point d'auto-intersection.  Nous donnerons une condition suffisante pour que cela soit possible dans le lemme~\ref{lem:noninter}.

Le {\em domaine basique positif (resp. négatif)} $D^{+}(v_{1},\dots,v_{l})$ (resp. $D^{-}(v_{1},\dots,v_{l})$) est l'adhérence de la composante connexe de $\CC\setminus L$ contenant les nombres complexes au-dessus (resp. en-dessous) de~$L$.
Étant donné un domaine positif $D^{+}(v_{1},\dots,v_{l})$ et un négatif $D^{-}(w_{1},\dots,w_{l'})$, on construit le {\em domaine basique ouvert à gauche (resp. droite)} $D_{g}(v_{1},\dots,v_{l};w_{1},\dots,w_{l'})$ (resp. $D_{d}(v_{1},\dots,v_{l};w_{1},\dots,w_{l'})$) en collant par translation les deux demi-droites correspondant à~$\RR_{+}$ (resp. $\RR_{-}$).

On se donne $b\geq4$ pair et $\tau\in\left\{1,\dots,\tfrac{b}{2}-1\right\}$.
Soient $(v_{1},\dots,v_{l};w_{1},\dots,w_{l'})$ des vecteurs de~$\CC^{\ast}$ tels que la partie réelle de leurs sommes est positive et que l'argument (pris dans $\left]-\pi,\pi\right]$) des $v_{i}$ est décroissant, des $w_{j}$ est croissant.
La partie polaire d'ordre $b$ et de type~$\tau$ associée à $(v_{1},\dots,v_{l};w_{1},\dots,w_{l'})$ est la surface plate à bord obtenue de la façon suivante. Prenons l'union disjointe de $\tau-1$ domaines basiques ouverts à gauche associés à la suite vide, $\tfrac{b}{2}-\tau-1$ domaines basiques ouverts à droite associés à la suite vide. Enfin prenons le domaine positif associé aux $v_{i}$ et le domaine négatif associé aux $w_{j}$. On colle alors par translation la demi-droite  inférieure du $i$-ième domaine polaire ouvert à gauche à la demi-droite supérieure du $(i+1)$-ième. La demi-droite inférieure du domaine $\tau-1$ est identifiée à la demi-droite de gauche du domaine positif. La demi-droite de gauche du domaine négatif est identifiée à la positive du premier domaine ouvert à gauche. On procède de même à droite. La figure~\ref{fig:ordreplusmoins} illustre cette construction.

\begin{figure}[htb]
\center
\begin{tikzpicture}[scale=1]

\begin{scope}[xshift=-6cm]
\fill[fill=black!10] (0,0)  circle (1cm);

\draw[] (0,0) coordinate (Q) -- (-1,0) coordinate[pos=.5](a);

\node[above] at (a) {$\tau$};
\node[below] at (a) {$1$};

\fill (Q)  circle (2pt);

\node at (1.5,0) {$\dots$};
\end{scope}

\begin{scope}[xshift=-3cm]
\fill[fill=black!10] (0,0)  circle (1cm);

\draw[] (0,0) coordinate (Q) -- (-1,0) coordinate[pos=.5](a);

\node[above] at (a) {$2$};
\node[below] at (a) {$3$};

\fill (Q)  circle (2pt);
\end{scope}
\begin{scope}[xshift=0cm]
\fill[fill=black!10] (0,0)  circle (1.5cm);
      \fill[color=white]
      (-.4,0.02) -- (.4,0.02) -- (.4,-0.02) -- (-.4,-0.02) --cycle;

\draw[] (-.4,0.02)  -- (.4,0.02) coordinate[pos=.5](a);
\draw[] (-.4,-0.02)  -- (.4,-0.02) coordinate[pos=.5](b);

\node[above] at (a) {$v$};
\node[below] at (b) {$v$};

\draw[] (-.4,0) coordinate (q1) -- (-1.5,0) coordinate[pos=.5](d);
\draw[] (.4,0) coordinate (q2) -- (1.5,0) coordinate[pos=.5](e);

\node[above] at (d) {$1$};
\node[below] at (d) {$2$};
\node[above] at (e) {$\tau+1$};
\node[below] at (e) {$\tau+2$};

\fill (q1) circle (2pt);
\fill[white] (q2) circle (2pt);
\draw (q2) circle (2pt);

\node at (4.5,0) {$\dots$};
\end{scope}

\begin{scope}[xshift=3cm]
\fill[fill=black!10] (0,0) coordinate (Q) circle (1cm);

\draw[] (0,0) coordinate (Q) -- (1,0) coordinate[pos=.5](a);

\node[above] at (a) {$\tau +2$};
\node[below] at (a) {$\tau+3$};

\fill[white] (Q)  circle (2pt);
\draw (Q)  circle (2pt);
\end{scope}
\begin{scope}[xshift=6cm]
\fill[fill=black!10] (0,0) coordinate (Q) circle (1cm);

\draw[] (0,0) coordinate (Q) -- (1,0) coordinate[pos=.5](a);

\node[above] at (a) {$b/2$};
\node[below] at (a) {$\tau+1$};

\fill[white] (Q)  circle (2pt);
\draw (Q)  circle (2pt);
\end{scope}

\end{tikzpicture}
\caption{Une partie polaire d'ordre $b$ de type $\tau$ associée à $(v;v)$. Les demi-droites dont les labels coïncident sont identifiés par translation} \label{fig:ordreplusmoins}
\end{figure}
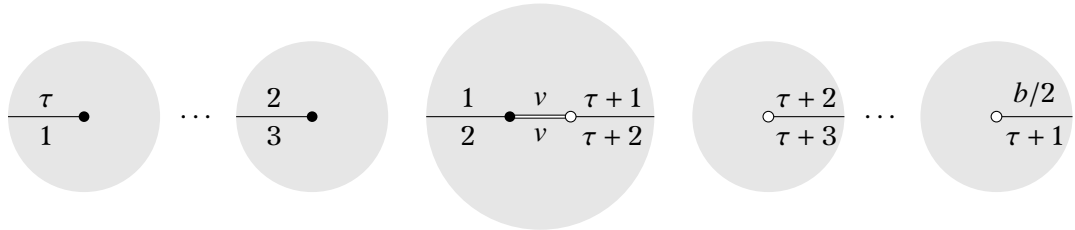

Si $\sum v_{i} =\sum w_{j}$ nous dirons que cette partie polaire est {\em triviale}. Dans le cas contraire, nous dirons que la partie polaire est {\em non triviale}. Le résidu quadratique du pôle d'ordre $-2b$ correspondant est donné par le carré de la somme $\sum v_{i}-\sum w_{j}$. Sur la figure~\ref{fig:partiesnontrivial}, le dessin de gauche illustre une partie polaire non triviale.

On se donne maintenant des vecteurs $(v_{1},\dots,v_{l})$ avec $l\geq1$ tels que la concaténation $V$ de ces vecteurs dans cet ordre n'a pas de points d'auto-intersection. De plus, on suppose qu'il existe deux demi-droites parallèles $L_{D}$ et $L_{F}$ de vecteur directeur $\overrightarrow{l}$, issues respectivement du point de départ $D$ et final $F$ de $V$, ne rencontrant pas $V$ et telles que $(\overrightarrow{DF},\overrightarrow{l})$ est une base positive de $\RR^{2}$. On définit la partie polaire $C(v_{1},\dots,v_{l})$ d'ordre $2$ associée aux $v_{i}$ comme le quotient du sous-ensemble de $\CC$ entre $V$ et les demi-droites $L_{D}$ et~$L_{F}$ par l'identification de $L_{D}$ à~$L_{F}$ par translation. Le résidu quadratique du pôle double correspondant est donné par le carré de la somme $F-D$ des~$v_{i}$. Une partie polaire d'ordre $2$ est donnée à droite de la figure~\ref{fig:partiesnontrivial}.

\begin{figure}[htb]
\center
\begin{tikzpicture}[scale=1.2]

\begin{scope}[xshift=-7cm]
\fill[fill=black!10] (0,0) coordinate (Q) circle (1.5cm);

\coordinate (a) at (-.5,0);
\coordinate (b) at (.5,0);
\coordinate (c) at (0,.2);

\fill (a)  circle (2pt);
\fill[] (b) circle (2pt);
    \fill[white] (a) -- (c)coordinate[pos=.5](f) -- (b)coordinate[pos=.5](g) -- ++(0,-2) --++(-1,0) -- cycle;
 \draw  (a) -- (c) coordinate () -- (b);
 \draw (a) -- ++(0,-1.1) coordinate (d)coordinate[pos=.5] (h);
 \draw (b) -- ++(0,-1.1) coordinate (e)coordinate[pos=.5] (i);
 \draw[dotted] (d) -- ++(0,-.3);
 \draw[dotted] (e) -- ++(0,-.3);
\node[below] at (f) {$v_{1}$};
\node[below] at (g) {$v_{2}$};
\node[left] at (h) {$1$};
\node[right] at (i) {$1$};

\draw (b)-- ++ (1,0)coordinate[pos=.6] (j);
\node[below] at (j) {$2$};
\node[above] at (j) {$3$};
    \end{scope}

\begin{scope}[xshift=-3.5cm]
\fill[fill=black!10] (0,0) coordinate (Q) circle (1.5cm);

\draw[] (0,0) -- (1.5,0) coordinate[pos=.5](a);

\node[above] at (a) {$2$};
\node[below] at (a) {$3$};
\fill[] (Q) circle (2pt);
\end{scope}

\begin{scope}[xshift=2.5cm,yshift=-1cm]
\coordinate (a) at (-1,0);
\coordinate (b) at (1,0);
\coordinate (c) at (0,.2);

    \fill[fill=black!10] (a) -- (c)coordinate[pos=.5](f) -- (b)coordinate[pos=.5](g) -- ++(0,1.5) --++(-2,0) -- cycle;
    \fill (a)  circle (2pt);
\fill[] (b) circle (2pt);
 \draw  (a) -- (c) coordinate () -- (b);
 \draw (a) -- ++(0,1.3) coordinate (d)coordinate[pos=.5](h);
 \draw (b) -- ++(0,1.3) coordinate (e)coordinate[pos=.5](i);
 \draw[dotted] (d) -- ++(0,.3);
 \draw[dotted] (e) -- ++(0,.3);
\node[below] at (f) {$v_{1}$};
\node[below] at (g) {$v_{2}$};
\node[left] at (h) {$3$};
\node[right] at (i) {$3$};

    \end{scope}
\end{tikzpicture}
\caption{Une partie polaire non triviale associée à $(v_{1},v_{2};\emptyset)$ d'ordre $6$ (de type $1$) à gauche et d'ordre $2$ à droite} \label{fig:partiesnontrivial}
\end{figure}
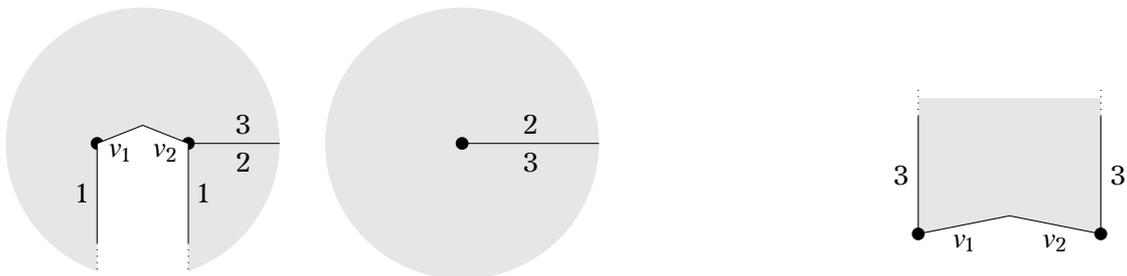

Nous traitons maintenant le cas des pôles d'ordres impairs. Soit $c=2\ell+1$ avec $\ell\geq 1$. On se donne des vecteurs $v_{i}$ de $\CC^{\ast}$ de partie réelle positive. La partie polaire d'ordre $c$ associée aux $(v_{1},\dots,v_{l};\emptyset)$ est donnée par la construction suivante. Nous considérons le domaine basique positif associé à $(v_{1},\dots,v_{l};\emptyset)$.
Ensuite nous prenons $\ell-1$ domaines basiques ouverts dans la direction de $L_{1}$ associés à la suite vide. Puis nous identifions les demi-droites cycliquement par translation, à l'exception de la dernière qui est identifiée par translation et rotation d'angle~$\pi$ à la demi-droite $L_{2}$. Cette construction est illustrée dans la figure~\ref{fig:partiepolairekdiff}. 
\begin{figure}[htb]
\center
 \begin{tikzpicture}

\begin{scope}[xshift=-2.5cm,yshift=-.5cm]
\fill[black!10] (-1,0)coordinate (a) -- (1.5,0)-- (a)+(2.5,0) arc (0:180:2)--(a)+(180:1.5) -- cycle;

   \draw (a)  -- node [below] {$v_{1}$} (0,0) coordinate (b);
 \draw (0,0) -- (1,0) coordinate[pos=.5] (c);
 \draw[dotted] (1,0) --coordinate (p1) (1.5,0);
 \fill (a)  circle (2pt);
\fill[] (b) circle (2pt);
\node[below] at (c) {$1$};

 \draw (a) -- node [above,rotate=180] {$2$} +(180:1) coordinate (d);
 \draw[dotted] (d) -- coordinate (p2) +(180:.5);
    \end{scope}

\begin{scope}[xshift=1cm,yshift=.5cm]
\fill[fill=black!10] (0,0) coordinate (Q) circle (1.2cm);

\draw[] (0,00) -- (1.2,0) coordinate[pos=.5](a);

\node[below] at (a) {$2$};
\node[above] at (a) {$1$};
\fill[] (Q) circle (2pt);
\end{scope}

\end{tikzpicture}
\caption{La partie polaire associée à $(v_{1};\emptyset)$ d'ordre~$3$} \label{fig:partiepolairekdiff}
\end{figure}
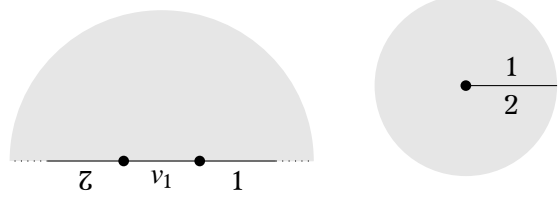
\par
Pour simplifier certaines constructions, il est utile de considérer la partie polaire d'ordre~$c$ associée à $(\emptyset;v_{1},\dots,v_{l})$. Elle est définie de manière similaire à la partie polaire précédente en considérant le domaine basique négatif associé à ces vecteurs.
\par
L'importance de ces constructions est résumée dans le lemme suivant.
\begin{lem}\label{lm:kresidu}
Soient $(v_{1},\dots,v_{l};w_{1},\dots,w_{l'})$ des nombres complexes, le pôle associé à la partie polaire d'ordre~$b$ pair et de type $\tau$ associée à $(v_{1},\dots,v_{l};w_{1},\dots,w_{l'})$ est d'ordre $-b$ et de résidu quadratique égal à $(\sum_{i=1}^{l} v_{i}-\sum_{j=1}^{l'} w_{j})^{2}$. 

Soit $(v_{1},\dots,v_{l})$ avec $l\geq1$, le pôle associé au domaine polaire d'ordre~$1$ associé à $v_{i}$ est d'ordre~$-2$ et possède un résidu égal à $(\sum_{i=1}^{l} v_{i})^{2}$.

Le pôle associé à un domaine polaire d'ordre $c\geq3$ impair est d'ordre $-c$.
\end{lem}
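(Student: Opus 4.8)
The plan is to treat the three assertions separately, in each case first determining the order of the pole by a cone-angle count and then, when a quadratic residue is defined, computing it as the square of the holonomy vector of $\whomega$ around the pole. The guiding principle is that since $\whomega$ develops to $dz$, the integral of $\whomega$ along any path equals the displacement of its developed image in $\CC$; for a loop encircling the pole this displacement is the period, which the chosen normalisation turns into the quadratic residue.

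\emph{Order.} For the polar part of even order I would simply count the constituent half-planes. The construction uses $\tau-1$ left-open and $\tfrac b2-\tau-1$ right-open basic domains, each the union of one positive and one negative half-plane of angle $\pi$, together with the terminal positive and negative domains, for a total of $2(\tau-1)+2(\tfrac b2-\tau-1)+2=b-2$ half-planes glued cyclically along their boundary rays. By the local description recalled in Section~\ref{sec:pluridiffbao}, a pole of order $-b$ is exactly an order-$(b-2)$ cover of an angular domain of angle $\pi$, so I would check that the cyclic gluing collapses all the centres to a single point and realises precisely this cover, whence the order is $-b$. The same bookkeeping gives a single half-plane glued to itself along $L_D\sim L_F$, i.e. a half-infinite cylinder and hence a double pole, in the order-$2$ case, and $1+2(\ell-1)=c-2$ half-planes, hence order $-c=-(2\ell+1)$, in the odd case; there the terminal gluing by a translation composed with the rotation of angle $\pi$ is what produces the nontrivial holonomy, consistent with an odd-order pole carrying no quadratic residue.

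\emph{Residue.} For the even pole I would pick a small loop $\gamma$ encircling the pole and track its developed image through the glued half-planes. The straight rays $\RR_-$ and $\RR_+$ are glued by pure horizontal translations, so the portions of $\gamma$ crossing the empty-sequence intermediate domains contribute nothing to the net displacement; only the terminal positive and negative domains, bounded by the broken lines $v_1,\dots,v_l$ and $w_1,\dots,w_{l'}$, contribute, and with opposite orientations, so the telescoping displacement is $\sum_i v_i-\sum_j w_j$. On the other hand, in the standard coordinate of \eqref{eq:standard_coordinates} with $m=-b$ one has $\whomega=\pm(z^{m/2}+r/z)\,dz$, and since $b>2$ the term $z^{m/2}\,dz$ is exact and contributes no period, leaving period $\pm 2\pi i\,r$. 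As the normalisation multiplies $\Resquad_P(\xi)$ by $-4\pi^2$, we obtain $\Resquad_P(\xi)=-4\pi^2 r^2=(2\pi i\,r)^2$, the square of the holonomy vector, hence $\Resquad_P(\xi)=(\sum_i v_i-\sum_j w_j)^2$. For the double pole the single positive domain glued along $L_D\sim L_F$ is a half-cylinder whose core loop has holonomy $F-D=\sum_i v_i$, and the standard form $(r/z)^2(dz)^2$ again gives period $2\pi i\,r$, so the same normalisation yields $\Resquad_P(\xi)=(\sum_i v_i)^2$.

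The main obstacle will be the telescoping argument in the even case: one must choose $\gamma$ and fix the orientations of the gluings so that the contributions of the inner empty-sequence domains provably cancel while the surviving terms are exactly $+\sum_i v_i$ and $-\sum_j w_j$. Equally, one must verify that the cyclic gluing genuinely identifies the $b-2$ centres into a single polar point, so that the local standard form of \eqref{eq:standard_coordinates} applies and, in particular, the single-valuedness of the square root near an even-order pole legitimises working with $\whomega$ and discarding the $z^{m/2}$ term; the odd case carries no residue precisely because that square root branches.
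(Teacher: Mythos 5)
Your proposal is correct, and it supplies exactly the verification that the paper leaves implicit: Lemma~\ref{lm:kresidu} is stated there without proof, as a direct summary of the constructions of Section~\ref{sec:briques}, and the intended justification is precisely your combination of the half-plane count for the order with the identity between the normalised quadratic residue and the square of the flat period of $\whomega$ around the pole --- which is what the $-4\pi^{2}$ normalisation was chosen to achieve, since in the local model \eqref{eq:standard_coordinates} the period of $\whomega$ on a loop around the pole is $\pm 2\pi i r$ and $(\pm 2\pi i r)^{2}=-4\pi^{2}r^{2}$.

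One step of your plan, however, would fail if carried out literally, and should simply be dropped. You propose to ``check that the cyclic gluing collapses all the centres to a single point'' so as to realise the degree-$(b-2)$ cover. This is false in general: the endpoints of the glued boundary rays fall into \emph{two} identification classes, one through the left chain of empty basic domains (where the initial points of the $v$- and $w$-broken lines meet) and one through the right chain (where their terminal points meet), and neither of the two resulting finite points is the pole --- they become conical singularities, i.e. zeros, once the polar parts are assembled into a closed surface. The pole sits at infinity of the polar part, and its order is read off solely from the total cone angle $(b-2)\pi$ at infinity, that is, from your count of $b-2$ half-planes; the identification pattern of the finite ray-endpoints is irrelevant to it. With that correction, the rest goes through as you describe: since every gluing in the even-order construction is a translation, the loop around the pole is homotopic to the boundary of the polar part, namely the $v$-broken line followed by the reversed $w$-broken line, so its period telescopes to $\sum_{i}v_{i}-\sum_{j}w_{j}$ and squaring gives the stated residue; the double-pole case (period $F-D=\sum_{i}v_{i}$) and the odd case (angle count $c-2$, no quadratic residue because the square root branches) are as you say.
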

\smallskip
\par
Nous donnons maintenant une condition suffisante pour que la ligne brisée décrite aux paragraphes précédents soit sans point d'intersection.
\begin{lem}\label{lem:noninter}
Si les $v_{i}$ sont soit de partie réelle strictement positive, soit de partie réelle nulle et de partie imaginaire strictement positive, alors, quitte à permuter les $v_{i}$, la concaténation des $v_{i}$ avec les demi-droites horizontales $L_{1}$ et $L_{2}$ est sans point d'intersection.
\end{lem}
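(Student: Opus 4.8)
The plan is to reorder the vectors by non-increasing argument and then read off the geometry of the resulting broken line from the monotonicity of the real part. Concretely, since each $v_i$ has $\Rea(v_i) > 0$, or else $\Rea(v_i) = 0$ with $\Ima(v_i) > 0$, every argument lies in $\left]-\tfrac{\pi}{2},\tfrac{\pi}{2}\right]$, the value $\tfrac{\pi}{2}$ being attained exactly by the purely imaginary $v_i$. After permuting so that the arguments are non-increasing, I would place the starting point $D$ at the origin and consider the partial sums $P_k = \sum_{i\le k} v_i$, which are the successive vertices of the polygonal arc $\Gamma$ joining $D = P_0$ to $F = P_l$.

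First I would record that $\Rea(P_k)$ is non-decreasing in $k$, because each step satisfies $\Rea(v_i)\ge 0$; moreover the purely imaginary steps, having the largest argument, occur first and produce an initial vertical segment on which $\Ima$ strictly increases, after which $\Rea$ becomes strictly increasing. The key step is then to prove that $\Gamma$ is simple: if two of its points coincided, the real part would have to be constant between them, so the intervening steps would all be purely imaginary and $\Ima$ would strictly increase along them, a contradiction. Hence $\Gamma$ has no self-intersection (equivalently, ordering by decreasing argument makes $\Gamma$ a convex arc).

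It then remains to glue in the two horizontal half-lines $L_1 = \{D - t : t\ge 0\}$ and $L_2 = \{F + t : t\ge 0\}$ and to check that no intersection is created beyond the junctions at $D$ and $F$. Here I would use that $L_1$ lies in $\{\Ima = 0,\ \Rea\le 0\}$ while $\Gamma$ meets $\{\Rea\le 0\}$ only along its initial vertical segment (on which $\Rea = 0$, $\Ima\ge 0$), so that $L_1\cap\Gamma = \{D\}$; symmetrically $L_2$ lies in $\{\Ima = \Ima(F),\ \Rea\ge \Rea(F)\}$ while $\Gamma$ meets $\{\Rea\ge \Rea(F)\}$ only along its final vertical run, which climbs up to $F$, so that $L_2\cap\Gamma = \{F\}$.

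The main obstacle will be the bookkeeping of the degenerate configurations rather than any genuine difficulty. I expect the only cases needing separate attention to be $\Ima(F) = 0$ and the extreme case where every $v_i$ is purely imaginary: in the former, not all steps are vertical, so $\Rea(F) > 0$, which keeps $L_1$ (in $\Rea\le 0$) disjoint from $L_2$ (in $\Rea\ge\Rea(F)>0$); in the latter, $\Gamma$ degenerates to a single vertical segment and the claims above hold verbatim. Assembling these observations shows that $L = L_1\cup\Gamma\cup L_2$ is simple, which is exactly the assertion.
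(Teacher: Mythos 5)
Your proof is correct, and it buys a bit more than the paper's own argument while starting from the same normalization. Both proofs begin by sorting the $v_i$ by decreasing argument inside $\left]-\tfrac{\pi}{2},\tfrac{\pi}{2}\right]$, but the verification mechanisms differ. The paper argues by angular visibility from the two endpoints: every point of the concatenation, seen from the initial point, has argument in $\left]-\tfrac{\pi}{2},\tfrac{\pi}{2}\right]$ (it is a nonnegative combination of the $v_i$, which span a convex cone contained in that sector), so any half-line issued from the initial point with direction in $\left]\tfrac{\pi}{2},\tfrac{3\pi}{2}\right]$ misses it, and symmetrically at the final point; this gives the freedom, which the paper records, of attaching half-lines at many angles, the horizontal ones being a particular case. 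You instead exploit the monotonicity of $\Rea$ along the arc: it is non-decreasing, strictly increasing off the vertical runs, while on vertical runs $\Ima$ strictly increases. This one observation simultaneously yields (i) simplicity of the arc $\Gamma$ itself, (ii) $L_{1}\cap\Gamma=\{D\}$ and $L_{2}\cap\Gamma=\{F\}$ via half-plane containments, and (iii) $L_{1}\cap L_{2}=\emptyset$, including the degenerate case $\Ima(F)=0$. Points (i) and (iii) are left implicit in the paper (simplicity of the sorted arc is tacitly read off from convexity, and the disjointness of the two half-lines is not discussed), so your write-up is more complete on exactly the boundary cases where such a lemma could silently fail. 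One further remark worth making explicit: your monotonicity argument never actually uses the sorting, so it proves the conclusion for the horizontal half-lines for \emph{any} ordering of the $v_i$; the permutation in the statement is only needed if one wants the convexity of the arc or the paper's stronger angular flexibility.
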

\begin{proof}
 Quitte à permuter les $v_{i}$, on peut supposer que les arguments des vecteurs~$v_{i}$, appartenant à l'ensemble $\left] -\tfrac{\pi}{2};\tfrac{\pi}{2}\right]$ sont décroissants. Notons que pour tout $\theta\in \left] \tfrac{\pi}{2}; \tfrac{3\pi}{2}\right]$ la demi-droite de pente $\theta$ partant du point initial de la concaténation n'a pas d'autres points d'intersection avec celle-ci. On a le même résultat pour les demi-droites d'angle $\phi$ partant du point final pour tout  $\phi\in \left] \tfrac{-\pi}{2}; \tfrac{\pi}{2}\right]$. Cela implique que l'on peut trouver des droites $L_{1}$ et~$L_{2}$ sans point d'intersection avec le reste de la construction qui forment n'importe quel angle strictement  compris entre $0$ et $2\pi$, en particulier avec les droites horizontales.
\end{proof}

\subsection{Différentielles quadratiques entrelacées, éclatement de zéros et couture d'anses.}
\label{sec:pluridiffentre}

Dans ce paragraphe, nous rappelons dans le cas quadratique certains cas particuliers des résultats de \cite{BCGGM3} sur les $k$-différentielles entrelacées. Cela nous permet de rappeler les constructions de l'{\em éclatement des zéros} et de la {\em couture d'anse} discutée en détail dans \cite{chge}. 

Tout d'abord, nous rappelons la définition d'une différentielle quadratique entrelacée.
\'Etant donnée une décomposition~$\mu:=(m_{1},\dots,m_{t})$ telle que $\sum_{i=1}^t m_i = 4g-4$, une {\em différentielle quadratique entrelacée $\eta$ de type~$\mu$}
sur une courbe stable $n$-marquée $(X,z_1,\ldots,z_t)$
est une collection de différentielles quadratiques non nulles~$\eta_v$ sur les composantes irréductibles~$X_v$ de~$X$ satisfaisant aux conditions suivantes:
\begin{itemize}
\item[(0)] {\bf (Annulation comme prescrit)} Chaque différentielle $\eta_v$ est méromorphe et le support de son diviseur est inclus dans l'ensemble des nœuds et des points marqués de $X_v$. De plus, si un point marqué $z_i$ se trouve sur~$X_v$, alors $\ord_{z_i} \eta_v=m_i$.
\item[(1)] {\bf (Ordres assortis)} Pour chaque nœud de $X$ qui identifie $q_1 \in X_{v_1}$ à $q_2 \in X_{v_2}$,
$$\ord_{q_1} \eta_{v_1}+\ord_{q_2} \eta_{v_2} = -4 \,. $$
\item[(2)] {\bf (Résidus assortis aux pôles doubles)} Si à un nœud de $X$
qui identifie $q_1 \in X_{v_1}$ avec $q_2 \in X_{v_2}$ on a $\ord_{q_1}\eta_{v_1}=
\ord_{q_2} \eta_{v_2}=-2$, alors
$$\Resk[2]_{q_1}\eta_{v_1} = \Resk[2]_{q_2}\eta_{v_2}\,.$$
\end{itemize}

Ce n'est que dans des cas très particuliers que nous aurons besoin de savoir quand une différentielle entrelacée est lissable. Nous rappelons ici uniquement les cas qui nous intéressent. Le premier cas est celui où les différentielles quadratiques ont un pôle double à tous les nœuds.
\begin{lem}\label{lem:lisspolessimples}
Soit $\eta=\left\{\eta_{v}\right\}$ une différentielle quadratique entrelacée. Si l'ordre des différentielles~$\eta_v$ aux nœuds est $-2$, alors $\eta$ est lissable sans modifier les invariants locaux aux points lisses.
\end{lem}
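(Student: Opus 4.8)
Le plan est d'exploiter la représentation plate des différentielles quadratiques rappelée plus haut. Rappelons qu'un pôle d'ordre $-2$ d'une différentielle quadratique correspond à un demi-cylindre infini de la structure à demi-translation induite, et que, d'après le lemme~\ref{lm:kresidu}, la circonférence et la direction de ce demi-cylindre (autrement dit le vecteur d'holonomie de sa courbe c\oe{}ur) sont entièrement déterminées par le résidu quadratique au pôle. Ainsi, à chaque différentielle $\eta_{v}$ sur la composante irréductible $X_{v}$ on associe une surface à demi-translation à bord portant un demi-cylindre infini en chacun des n\oe{}uds situés sur $X_{v}$, ainsi que les singularités prescrites aux points marqués.

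À chaque n\oe{}ud identifiant $q_{1}\in X_{v_{1}}$ à $q_{2}\in X_{v_{2}}$, la condition~(2) impose l'égalité $\Resk[2]_{q_{1}}\eta_{v_{1}}=\Resk[2]_{q_{2}}\eta_{v_{2}}$. D'après le lemme~\ref{lm:kresidu}, les deux demi-cylindres infinis attachés de part et d'autre de ce n\oe{}ud ont donc même circonférence et même direction, l'ambiguïté de signe sur la racine carrée du résidu étant absorbée par la structure à demi-translation, c'est-à-dire au niveau du revêtement canonique $\whX$ et du choix de la racine carrée $\whomega$. Je procéderais alors à leur recollement : on tronque chacun des deux demi-cylindres à une même hauteur finie et on identifie par demi-translation les deux cercles de bord obtenus, ce qui remplace la paire de demi-cylindres infinis par un unique cylindre fini joignant les deux germes.

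En effectuant cette opération à chaque n\oe{}ud, on obtient une surface plate compacte et sans bord, qui correspond donc à une différentielle quadratique sur une surface de Riemann lisse. Comme le recollement est localisé au voisinage des n\oe{}uds et laisse inchangées les régions polygonales portant les zéros et les autres pôles, les invariants locaux aux points marqués sont préservés. En faisant tendre la hauteur de troncature vers l'infini, la famille de surfaces ainsi construite dégénère sur la surface nodale $\bigsqcup_{v}X_{v}$ munie de $\eta$, ce qui fournit le chemin de lissage recherché.

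Le point le plus délicat sera de justifier proprement que ce recollement plat correspond effectivement à un chemin dans la strate aboutissant au point de bord $\eta$ au sens des différentielles entrelacées de~\cite{BCGGM3}, et de contrôler que l'holonomie non triviale (la structure à demi-translation) est préservée le long de la famille, de sorte que les différentielles lissées restent de type~$\mu$. Ce contrôle découle néanmoins directement du formalisme de recollement de~\cite{BCGGM3} dans le cas présent, où tous les n\oe{}uds portent des pôles doubles à résidus assortis : le lissage se réduit alors à un simple recollement de cylindres, sans qu'intervienne aucune condition globale de résidu.
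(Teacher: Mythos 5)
Votre preuve est correcte, mais elle emprunte une voie réellement différente de celle du texte. L'article ne démontre pas ce lemme : il le rappelle comme cas particulier des résultats de lissage des $k$-différentielles entrelacées de \cite{BCGGM3}, dans lequel tous les nœuds sont horizontaux (pôle double de chaque côté), de sorte que le graphe de niveaux est trivial, que la condition résiduelle globale est vide, et qu'il ne reste que la condition (2) de résidus assortis, déjà incluse dans la définition. Vous proposez à la place une plomberie plate explicite : par le lemme~\ref{lm:kresidu}, la condition (2) garantit que les deux demi-cylindres infinis attachés à un même nœud ont même circonférence, l'ambiguïté de signe de la racine carrée du résidu étant absorbée par un recollement par translation ou demi-translation ; on tronque alors à hauteur $h$ et on recolle. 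Votre prudence finale est d'ailleurs superflue : la famille ainsi obtenue est littéralement constante hors des cylindres, donc quand $h\to\infty$ les structures complexes convergent vers la courbe stable et les différentielles vers les $\eta_{v}$ sans aucune renormalisation, ce qui est exactement la lissabilité au sens de \cite{BCGGM3} pour un graphe à un seul niveau. Votre approche est plus élémentaire et autonome ; elle rend de plus transparents deux faits utiles aux applications du texte, à savoir la préservation des invariants locaux aux points marqués (le recollement est local aux nœuds) et celle de la primitivité (une courbe d'holonomie non triviale dans une composante survit à la chirurgie). Ce que la citation de \cite{BCGGM3} apporte en échange, c'est la brièveté et la prise en charge par la théorie générale du sens précis de \emph{lissable} dans la compactification ; votre argument en dégage la mécanique géométrique sous-jacente dans ce cas particulier.
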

 
Maintenant nous regardons le cas des différentielles entrelacées à deux composantes.
\begin{lem}\label{lem:lissdeuxcomp}
 Supposons que $X$ possède exactement deux composantes $X_{1}$ et $X_{2}$ reliées par un unique nœud  qui identifie $q_1 \in X_{1}$ à $q_2 \in X_{2}$. Si $\ord_{q_1} \eta_{1}>-2>\ord_{q_2} \eta_{2}$, alors la différentielle quadratique entrelacée est lissable si et seulement si l'une des deux conditions suivantes est vérifiée.
 \begin{enumerate}[i)]
  \item $\Resk[2]_{q_2}\eta_{2}=0$
  \item $\eta_{1}$ n'est pas le carré d'une différentielle abélienne holomorphe.
 \end{enumerate}
 De plus, si i) est satisfait ou $\eta_{1}$ n'est pas le carré d'une différentielle abélienne méromorphe, alors il existe un lissage qui ne modifie pas les résidus de $\eta_{1}$.
\end{lem}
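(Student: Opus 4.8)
The plan is to read off smoothability from the global residue condition of \cite{BCGGM3}, and to use the canonical double cover to translate this (quadratic) residue condition into a statement about residues of an abelian differential, where the role of primitivity becomes transparent. First I would record the level structure: the inequality $\ord_{q_1}\eta_1>-2>\ord_{q_2}\eta_2$ forces $X_1$ to lie strictly above $X_2$, so the unique node $q$ is vertical and there are no horizontal nodes. For a single vertical node joining two components the prong-matching and twist data are never an obstruction (the set of prong-matchings is nonempty and each one is realizable), so by \cite{BCGGM3} the interlaced differential $\eta$ is smoothable if and only if it satisfies the global residue condition relative to the order $X_1>X_2$.

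Next I would pass to the canonical double cover $\pi\colon\whX\to X$ with $\whomega^2=\pi^\ast\eta$ and deck involution $\sigma$ (so $\sigma^\ast\whomega=-\whomega$), and examine the residues of $\whomega$ at the branches over $q_2$; the global residue condition for $\eta$ is the abelian global residue condition for $\whomega$ on the connected components of $\pi^{-1}(X_1)$. The dichotomy is governed by primitivity. If $\eta_1$ is \emph{primitive} (which includes every case where $\ord_{q_1}\eta_1$ is odd, since a square has only even orders), then $\pi^{-1}(X_1)$ is connected; whether the cover is unramified over $q$ (even orders) or ramified (odd orders), the residues of $\whomega$ over $q_2$ either form a $\sigma$-antisymmetric pair summing to zero, or sit at a $\sigma$-fixed ramification point where $\sigma^\ast\whomega=-\whomega$ forces the residue to vanish; in all cases the condition holds automatically and $\eta$ is smoothable. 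If instead $\eta_1=\omega_1^2$ is a square, then $\pi^{-1}(X_1)=X_1^+\sqcup X_1^-$ splits into two copies: when $\omega_1$ has a pole, each copy carries a marked pole and the residue condition is vacuous, so $\eta$ is smoothable (and here $\eta_1$ is still not the square of a \emph{holomorphic} differential); when $\omega_1$ is holomorphic, neither copy carries a marked pole and the condition reduces to the vanishing of the residue of $\whomega$ at the single branch of $q$ on each copy, which by \eqref{eq:multiplires} is exactly $\Resk[2]_{q_2}\eta_2=0$. Assembling the three cases yields that $\eta$ is smoothable if and only if $\eta_1$ is not the square of a holomorphic abelian differential or $\Resk[2]_{q_2}\eta_2=0$, which is precisely i) or ii).

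For the \emph{moreover} I would track residues through the plumbing of \cite{BCGGM3}. If $\Resk[2]_{q_2}\eta_2=0$ the branch of $q$ carries no residue, and the standard residue-preserving plumbing leaves the residues of $\eta_1$ unchanged. If $\eta_1$ is primitive, the two branches over $q$ lie on the same connected cover component and carry opposite residues $\pm r$, so no net residue flows through the node and the plumbing can again be arranged to fix the residues of $\eta_1$. The configuration deliberately excluded from the moreover, namely $\eta_1$ a square of a meromorphic but non-holomorphic differential with $\Resk[2]_{q_2}\eta_2\neq0$, is exactly the one where the nonzero node residue must be absorbed by the poles of $\eta_1$ on $X_1^\pm$, forcing their residues to move.

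The main obstacle I anticipate is the bookkeeping in the moreover: one must check that the residue carried by the node can be prevented from leaking into the residues of $\eta_1$ precisely under the two stated hypotheses, which requires making the plumbing explicit and invoking the relation \eqref{eq:multiplires} between the quadratic residue and the squared abelian residue. The even/odd (unramified/ramified) distinction over $q$ is the other delicate point, but in each case the $\sigma$-(anti)symmetry of the residues of $\whomega$ resolves it cleanly.
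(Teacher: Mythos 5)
Your proposal is correct and takes essentially the same route as the paper, which does not write out a proof but states the lemma as a direct specialization of the smoothability criterion of \cite{BCGGM3} (the global $k$-residue condition read on the canonical double cover, i.e.\ Théorème~1.5 there), remarking that the \og de plus\fg{} part follows by combining the proof of that theorem with Lemme~4.4 of \cite{BCGGM3}. Your write-up simply makes this specialization explicit: the primitivity dichotomy via connectedness of $\pi^{-1}(X_1)$, the $\sigma$-antisymmetry of the residues of $\whomega$, and the residue bookkeeping (modification differentials supported at the nodes) in the plumbing are exactly the ingredients the paper implicitly invokes.
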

Remarquons que la deuxième partie du lemme n'est pas explicitement prouvée dans \cite{BCGGM3}. Toutefois, cela peut se montrer sans problèmes en combinant la preuve du théorème~1.5 et le lemme~4.4 de \cite{BCGGM3}.

Nous donnons maintenant deux applications cruciales du lemme~\ref{lem:lissdeuxcomp}.
\begin{prop}[Éclatement d'un zéro]\label{prop:eclatZero}
Soient $(X,\xi)$ une différentielle quadratique de type~$\mu$ et $z_{0}\in X$ un zéro d'ordre $a_{0}\geq -1$ de $\xi$. Soit $(\alpha_{1},\dots,\alpha_{t})$ un $t$-uplet d'entiers supérieurs où égaux à $-1$ tel que $\sum_{i}\alpha_{i}=a_{0}$. 

Il existe une opération sur $(X,\xi)$ en $z_{0}$ qui produit une  différentielle quadratique $(X',\xi')$ de type $(\alpha_{0},\dots,\alpha_{t},\mu\setminus\lbrace a_{0}\rbrace)$ et qui ne modifie pas les résidus de $\xi$ si et seulement si l'une des  deux conditions suivantes est vérifiée.
  \begin{enumerate}[i)]
  \item $\xi$ n'est pas le carré d'une différentielle abélienne (méromorphe).
  \item Il existe une différentielle quadratique de genre zéro et de type $(\alpha_{1},\dots,\alpha_{t};-a_{0}-4)$ dont le résidu au pôle d'ordre $-a_{0}-4$ est nul. 
 \end{enumerate} 
De plus, si $\xi=\omega^{2}$ avec $\omega$  une différentielle abélienne méromorphe, alors la différentielle~$\xi'$ est primitive si et seulement si au moins un $\alpha_{i}$ est impair. 
\end{prop}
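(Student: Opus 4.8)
Le plan est de réaliser l'éclatement comme le lissage d'une différentielle quadratique entrelacée à deux composantes, puis de lire les conditions i) et ii) directement dans le lemme~\ref{lem:lissdeuxcomp}. Je considérerais la courbe stable $X\cup_{z_{0}=q_{2}}\PP^{1}$ obtenue en attachant en $z_{0}$ une composante rationnelle $X_{2}=\PP^{1}$. Sur la composante principale $X_{1}=X$ on pose $\eta_{1}=\xi$, le \noeud étant $q_{1}=z_{0}$. Sur $X_{2}$ on choisit une différentielle quadratique $\eta_{2}=\xi_{2}$ ayant des zéros d'ordres $\alpha_{1},\dots,\alpha_{t}$ en $t$ points distincts et un pôle d'ordre $-a_{0}-4$ au \noeud $q_{2}$; une telle différentielle existe toujours sur $\PP^{1}$ pour tout diviseur de degré $-4$ prescrit (on l'écrit explicitement comme dans le lemme~\ref{lem:puissk}), et $\sum_{i}\alpha_{i}+(-a_{0}-4)=-4$. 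Les conditions (0)--(2) d'une différentielle entrelacée sont satisfaites: les ordres au \noeud sont assortis car $a_{0}+(-a_{0}-4)=-4$, et la condition~(2) est vide puisque les deux ordres ne valent pas $-2$. Comme $a_{0}\geq-1$, on a $\ord_{q_{1}}\eta_{1}=a_{0}>-2>-a_{0}-4=\ord_{q_{2}}\eta_{2}$, ce qui place la construction exactement dans le cadre du lemme~\ref{lem:lissdeuxcomp}.

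\emph{Suffisance.} L'éclatement cherché est par définition un lissage de cette différentielle entrelacée qui préserve les invariants locaux aux points marqués lisses (les pôles de $\xi$ et leurs résidus) et fait apparaître les zéros d'ordres $\alpha_{i}$ dans l'ancienne bulle. Si la condition~i) est vérifiée, $\eta_{1}=\xi$ n'est pas le carré d'une différentielle abélienne méromorphe; la partie \enquote{de plus} du lemme~\ref{lem:lissdeuxcomp} fournit alors, pour n'importe quel choix de $\xi_{2}$, un lissage ne modifiant pas les résidus de $\eta_{1}$. Si la condition~ii) est vérifiée, je choisirais $\xi_{2}$ de résidu $\Resquad_{q_{2}}(\xi_{2})=0$: la condition~i) du lemme~\ref{lem:lissdeuxcomp} est alors remplie et la même partie du lemme donne un lissage préservant les résidus. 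Dans les deux cas l'opération existe.

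\emph{Nécessité.} Réciproquement, supposons que ni i) ni ii) ne soit vérifiée: alors $\xi=\omega^{2}$ avec $\omega$ abélienne méromorphe, et toute bulle admissible a un résidu non nul en $q_{2}$. Tout éclatement de $z_{0}$ de profil $(\alpha_{i})$ provient, via la description du bord des strates par les différentielles entrelacées \cite{BCGGM3}, du lissage d'une différentielle entrelacée à deux composantes comme ci-dessus, pour laquelle $\eta_{1}$ est un carré d'une différentielle abélienne méromorphe et $\Resquad_{q_{2}}\eta_{2}\neq0$. Il reste à voir qu'aucun tel lissage ne préserve les résidus de $\eta_{1}$, autrement dit la forme optimale (réciproque) du lemme~\ref{lem:lissdeuxcomp}: un lissage préservant les résidus de $\eta_{1}$ force $\Resquad_{q_{2}}\eta_{2}=0$ dès que $\eta_{1}$ est un carré d'une différentielle abélienne méromorphe. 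C'est là le point le plus délicat, car le lemme~\ref{lem:lissdeuxcomp} n'énonce l'existence d'un lissage préservant les résidus que comme condition suffisante; le rendre nécessaire demande de suivre, à travers le plombage de \cite{BCGGM3}, comment un résidu non nul sur la bulle perturbe inévitablement les périodes de $\eta_{1}$ lorsque l'holonomie de $\eta_{1}$ est triviale.

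\emph{Primitivité.} Pour la dernière assertion, supposons $\xi=\omega^{2}$. Si l'un des $\alpha_{i}$ est impair, alors $\xi'$ possède un zéro d'ordre impair et n'est donc pas le carré d'une différentielle abélienne, dont tous les ordres sont pairs d'après~\eqref{eq:standard_coordinates}: ainsi $\xi'$ est primitive. Si au contraire tous les $\alpha_{i}$ sont pairs, la bulle $\xi_{2}$ est elle-même un carré $\zeta^{2}$ par le lemme~\ref{lem:puissk} (et $\Resquad_{q_{2}}(\xi_{2})=(\Res_{q_{2}}\zeta)^{2}=0$ par le théorème des résidus sur $\PP^{1}$ et~\eqref{eq:multiplires}, ce qui assure au passage que ii) est satisfaite); la différentielle entrelacée $\{\eta_{1},\eta_{2}\}=\{\omega^{2},\zeta^{2}\}$ est le carré de la différentielle abélienne entrelacée $\{\omega,\zeta\}$, dont les ordres au \noeud s'assortissent en $-2$, et son lissage est le carré du lissage de cette dernière, de sorte que $\xi'=(\omega')^{2}$ n'est pas primitive.
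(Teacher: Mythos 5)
Votre démonstration suit exactement la stratégie de l'article : on attache en $z_{0}$ une droite projective portant une différentielle de type $(\alpha_{1},\dots,\alpha_{t};-a_{0}-4)$, puis on lisse la différentielle entrelacée obtenue au moyen du lemme~\ref{lem:lissdeuxcomp}. Votre mise en place (vérification des conditions (0)--(2), inégalités d'ordres au \noeud) et la direction suffisante (sous i), la clause « de plus » du lemme s'applique à toute bulle; sous ii), on choisit une bulle de résidu nul) sont correctes et coïncident avec la preuve de l'article, qui tient en deux phrases.

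Vous allez en fait plus loin que l'article sur deux points. D'une part, vous relevez à juste titre que la direction nécessaire n'est pas une conséquence formelle du lemme~\ref{lem:lissdeuxcomp} : lorsque $\xi=\omega^{2}$ avec $\omega$ méromorphe non holomorphe et que toute bulle a un résidu non nul, la différentielle entrelacée \emph{est} lissable (condition ii) du lemme), et seul l'énoncé réciproque de la clause « de plus » — tout lissage préservant les résidus de $\eta_{1}$ force $\Resquad_{q_{2}}(\eta_{2})=0$ quand $\eta_{1}$ est un carré méromorphe — exclut l'opération. La preuve de l'article (« conséquence directe du lemme ») passe ce point sous silence, de sorte que votre rédaction, qui le nomme sans le démontrer, n'est pas moins complète que la sienne. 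Pour le combler, on reprend l'argument des différentielles de modification de \cite{BCGGM3} (celui-là même que les auteurs invoquent pour la clause « de plus ») : le revêtement canonique de $(X,\xi)$ étant non connexe, chaque copie de $X$ ne rencontre le revêtement de la bulle qu'en un seul \noeud, et le théorème des résidus y interdit une forme de modification dont l'unique pôle serait ce \noeud; tout plombage doit donc déplacer les résidus aux pôles marqués de $\omega$, ce qui est exactement l'énoncé voulu. D'autre part, vous démontrez l'assertion finale de primitivité, que la preuve de l'article omet entièrement; votre argument est le bon (un zéro d'ordre impair interdit d'être un carré; si tous les $\alpha_{i}$ sont pairs, la bulle est un carré $\zeta^{2}$ de résidu nul par le lemme~\ref{lem:puissk} et le théorème des résidus). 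Le seul mot manquant est la justification de « le lissage est le carré du lissage abélien » : il suffit de lisser d'abord la différentielle abélienne entrelacée $\{\omega,\zeta\}$ (lissable puisque le résidu de $\zeta$ au \noeud est nul), puis d'élever au carré, ce qui fournit un lissage de $\{\omega^{2},\zeta^{2}\}$ du type voulu et non primitif.
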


\begin{proof}
 Partons d'une différentielle quadratique $(X,\xi)$. On forme une différentielle entrelacée en attachant au point~$z_{0}$ d'ordre $a_{0}$ une droite projective avec une différentielle de type $(\alpha_{1},\dots,\alpha_{t};-a_{0}-4)$. Le résultat est alors une conséquence directe du lemme~\ref{lem:lissdeuxcomp}.
\end{proof}

La seconde construction nous permettra en particulier de faire une récurrence sur le genre des surfaces de Riemann. 

\begin{prop}[Couture d'anse]\label{prop:attachanse}
 Soient $(X,\xi)$ une différentielle quadratique primitive dans la strate $\quadomoduli(\mu)$ et $z_{0}\in X$ un zéro d'ordre $a_{0}$ de $\xi$. 
 Il existe une opération locale à $z_{0}$ qui produit une différentielle $(X',\xi')$ dans la strate $\quadomoduli[g+1](a_{0}+2k,\mu\setminus\left\{a_{0}\right\})$. 
\end{prop}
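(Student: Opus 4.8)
The plan is to imitate the proof of the éclatement operation (Proposition~\ref{prop:eclatZero}), replacing the rational bubble by a genus-one tail so that the smoothing raises the genus by one instead of leaving it fixed. Concretely, I would form a quadratic interlaced differential on the stable curve $X\cup_{N}E$ obtained by attaching to $X$ at $z_{0}$ a curve $E$ of genus one through a single node $N$ identifying $z_{0}\in X$ with a point $q\in E$. On the $X$-component I keep $\xi$, so that $\ord_{q}\xi=\ord_{z_{0}}\xi=a_{0}$, and on $E$ I put a quadratic differential $\eta_{E}$ whose only singularities are a pole of order $-a_{0}-4$ at $q$ and a single zero of order $a_{0}+2k$ at some point $p\in E$. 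The matching condition~(1) at $N$ reads $a_{0}+(-a_{0}-4)=-4$ and holds by construction, while condition~(2) is vacuous since $N$ does not join two double poles. The constraint $\deg\eta_{E}=4g(E)-4=0$ on the genus-one curve then pins down the order of the surviving zero: attaching a handle raises $\deg K^{2}$ by $4(g+1)-(4g-4)-4=4$, and since every other singularity of $\mu$ is untouched this surplus is absorbed by the single new zero, which therefore has order $a_{0}+4=a_{0}+2k$. Such an $\eta_{E}$ exists (e.g.\ as a square of an abelian differential on $E$ with one zero and one pole when $a_{0}$ is even, the remaining cases being covered by genus-one constructions), and I would note its existence explicitly.

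The decisive step is the smoothing of $N$, and this is exactly where the primitivity hypothesis enters. In the notation of Lemma~\ref{lem:lissdeuxcomp} one has $\ord_{q}\xi=a_{0}>-2>-a_{0}-4=\ord_{q}\eta_{E}$, so the $X$-side plays the role of $\eta_{1}$ and the $E$-side that of $\eta_{2}$. Because $(X,\xi)$ is primitive, $\xi$ is not the square of a holomorphic (nor even meromorphic) abelian differential, so condition~(ii) of the lemma holds regardless of the value of $\Resquad_{q}\eta_{E}$; hence the interlaced differential is lissable. Moreover, since $\xi$ is not a square of a meromorphic abelian differential, the final clause of Lemma~\ref{lem:lissdeuxcomp} yields a smoothing that preserves all the residues of $\xi$, so every local invariant carried by $\mu\setminus\{a_{0}\}$, orders and quadratic residues alike, is left unchanged.

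It remains to read off the outcome. The nodal curve $X\cup_{N}E$ has a single node and arithmetic genus $g+1$, so its smoothing $(X',\xi')$ is a smooth surface of genus $g+1$; the zero $p$ of $\eta_{E}$ survives as a zero of order $a_{0}+2k$ whereas the former node degenerates into a regular neck, giving $\xi'$ the type $(a_{0}+2k,\mu\setminus\{a_{0}\})$. Only a neighbourhood of $z_{0}$ is modified, so the operation is local at $z_{0}$, and $\xi'$ stays primitive: its canonical double cover restricts over the $X$-part to the connected canonical cover of the primitive differential $\xi$, and attaching the handle keeps the cover connected. Geometrically this is the handle-sewing surgery of \cite{lanneauquad,chge}: in the flat metric one slits a neighbourhood of $z_{0}$ and glues in a flat handle, raising the cone angle at $z_{0}$ from $(a_{0}+2)\pi$ to $(a_{0}+6)\pi$, and the interlaced-differential formulation above is the algebraic counterpart making the smoothing rigorous.

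The main obstacle is precisely the smoothability, which is what the primitivity assumption is there to resolve: for a generic elliptic tail $\Resquad_{q}\eta_{E}\neq0$, so condition~(i) of Lemma~\ref{lem:lissdeuxcomp} fails, and without primitivity one could not smooth $N$ while keeping the prescribed data. The two secondary points to verify are the existence of the genus-one tail $\eta_{E}$ with the prescribed single zero and single pole, and the persistence of primitivity after smoothing; both are routine, but they are what make the genus-one degeneration admissible and should be stated.
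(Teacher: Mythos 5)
Your proposal is correct and takes essentially the same route as the paper: one forms an interlaced quadratic differential by attaching at $z_{0}$ an elliptic curve carrying a differential of type $(a_{0}+2k;-a_{0}-2k)$ and then smooths the node via Lemma~\ref{lem:lissdeuxcomp}, primitivity of $\xi$ supplying condition (ii) of that lemma. The points you spell out explicitly (existence of the genus-one tail, preservation of the residues, persistence of primitivity, locality) are left implicit in the paper's three-line proof but are exactly the verifications it relies on.
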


\begin{proof}
 Partons d'une différentielle quadratique $(X,\xi)$. On forme une différentielle entrelacée en attachant au point $z_{0}$ une courbe elliptique avec une différentielle de type $(a_{0}+2k;-a_{0}-2k)$. Le lemme~\ref{lem:lissdeuxcomp} permet de conclure.
\end{proof}

\subsection{Différentielles à liens-selles horizontaux.}
\label{sec:coeur}

Pour certaines différentielles, beaucoup de problèmes géométriques se simplifient en des problèmes combinatoires. Cela nous permettra d'établir des obstructions en nous ramenant à ces objets.

Notons que l'application résiduelle est équivariante pour l'action de ${\rm GL}^{+}(2,\mathbb{R})$ sur les strates de différentielles quadratiques.
Le lemme~2.2 de \cite{tahar} établit le résultat suivant.
\begin{prop}\label{prop:coeurdege}
Dans une strate donnée de différentielles quadratiques d'aire infinie, chaque adhérence de ${\rm GL}^{+}(2,\mathbb{R})$-orbite contient une surface dont tous les liens-selles sont horizontaux.
\end{prop}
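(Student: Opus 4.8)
Le plan est d'exploiter l'action du sous-groupe diagonal $g_{t}=\mathrm{diag}(1,e^{-t})\in{\rm GL}^{+}(2,\mathbb{R})$, qui préserve la partie réelle (horizontale) de l'holonomie de tout lien-selle et contracte sa partie imaginaire (verticale). Concrètement, si un lien-selle a pour holonomie $x+iy$ (définie au signe près), alors $g_{t}$ l'envoie sur $x+ie^{-t}y$, de sorte que lorsque $t\to+\infty$ la composante verticale de chaque lien-selle tend vers $0$ tandis que sa composante horizontale reste fixe. L'idée directrice est donc que la surface limite le long de ce flot aura tous ses liens-selles horizontaux, à condition que cette limite existe bien dans la strate et non sur son bord.

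Je commencerais par me ramener au cas où aucun lien-selle n'est vertical. L'ensemble des directions de liens-selles d'une surface plate étant au plus dénombrable, on peut, quitte à faire agir une rotation $R_{\phi}$ générique, supposer que la partie réelle $x$ de l'holonomie de chaque lien-selle de $(X,\xi)$ est non nulle. Cette normalisation est cruciale : sous $g_{t}$, la longueur d'un lien-selle d'holonomie $x+iy$ avec $x\neq 0$ tend vers $|x|>0$, alors qu'un lien-selle vertical ($x=0$) verrait sa longueur tendre vers $0$, provoquant une collision de singularités, c'est-à-dire une dégénérescence hors de la strate.

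J'appliquerais ensuite le flot $g_{t}$ et j'étudierais la limite quand $t\to+\infty$. Le point géométrique central repose sur la théorie du \coeur (cf.\ section~\ref{sec:coeur} et les parties polaires de la section~\ref{sec:briques}) : tous les liens-selles sont contenus dans le \coeur, qui est compact, tandis que les bouts d'aire infinie, à savoir les parties polaires associées aux pôles d'ordre $\leq -2$, n'en contiennent aucun. En décomposant la surface en un nombre fini de polygones (dont les côtés sont des liens-selles) recollés à ces parties polaires, l'action de $g_{t}$ transforme chaque partie polaire en une partie polaire de même type et fait converger les vecteurs d'holonomie des côtés vers leurs parties horizontales. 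Comme, par l'étape précédente, aucune de ces holonomies ne tend vers $0$, aucun lien-selle ne se contracte : en coordonnées de période, la suite $g_{t}\cdot(R_{\phi}\cdot(X,\xi))$ converge vers un point de la strate, qui appartient à l'adhérence de la ${\rm GL}^{+}(2,\mathbb{R})$-orbite. Il resterait alors à vérifier que la surface limite a bien tous ses liens-selles horizontaux : chaque lien-selle de départ persiste le long du flot et devient horizontal, et réciproquement tout lien-selle de la limite relie deux singularités situées sur le \coeur limite, lequel s'est effondré sur un graphe horizontal d'aire nulle — toute géodésique pénétrant une partie polaire s'échappant vers le pôle sans revenir à une singularité.

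L'étape difficile est la convergence : il faut garantir que la contraction verticale ne fait pas sortir la surface de la strate. C'est précisément ici qu'interviennent les deux ingrédients de l'énoncé. D'une part l'hypothèse d'\emph{aire infinie} assure que l'effondrement du \coeur (dont l'aire tend vers $0$) n'est pas une dégénérescence, puisque la surface reste non triviale grâce aux parties polaires ; c'est exactement ce qui échoue pour une différentielle d'aire finie, où l'aire totale est un invariant que ce flot ferait tendre vers $0$. D'autre part la normalisation préalable éliminant les liens-selles verticaux empêche toute collision de singularités, donc préserve les ordres des zéros et des pôles et maintient le type $\mu$ dans la limite.
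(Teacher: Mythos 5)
Your proposal is correct and takes essentially the same route as the paper's: the paper in fact gives no argument of its own for this proposition, deferring to Lemma~2.2 of \cite{tahar}, whose proof is precisely the one you sketch — a generic rotation (possible since saddle-connection directions are at most countable) so that no saddle connection is vertical, then the contraction $g_{t}=\mathrm{diag}(1,e^{-t})$, under which the core collapses onto a horizontal graph while the polar domains persist, the infinite-area hypothesis being exactly what keeps the limit inside the stratum. Your reduction of the convergence step to the finite polygonal decomposition of the core (rather than to the a priori infinite set of saddle connections) is the right way to make that step rigorous, as in the cited reference.
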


Dans une telle surface $S$ ayant $n$ zéros (et pôles simples) et $\tilde{p}$ pôles d'ordre au moins deux, les liens-selles sont tous disjoints car ils sont tous horizontaux. Le graphe des liens-selles découpe $S$ en $\tilde{p}$ domaines polaires. Chacun d'eux est un disque topologique avec un des $\tilde{p}$ pôles à l'intérieur (voir la section~4 de \cite{tahar} pour plus de détails). Un calcul de caractéristique d'Euler-Poincaré montre alors que les liens-selles du graphe sont nécessairement au nombre de $2g+n+\tilde{p}-2$. De plus, les angles entre les liens-selles sont des multiples entiers de $\pi$.
\par
En coupant le long de ces liens-selles, nous obtenons~$\tilde{p}$ domaines polaires. Le {\em graphe d'incidence} d'une telle surface est le graphe (plongé) dont les sommets sont les domaines polaires et deux sommets sont reliés par autant d'arêtes qu'il y a de liens-selles séparant ces deux domaines polaires.
De plus, le {\em graphe d'incidence simplifié} est obtenu en enlevant tous les sommets de valence $2$ au graphe d'incidence. Les sommets du graphe d'incidence qui sont de valence supérieure ou égale à $3$ sont dits {\em spéciaux}.

\subsection{Stratification de résonance et prolongement plat}\label{sec:arrhyp}

Fixons dans ce paragraphe une strate $S$ de la forme $\quadomoduli[0](a_{1},a_{2};-b_{1},\dots,-b_{p})$. Les longueurs des liens-selles d'une différentielle de $S$ sont proportionnelles à des normes de sommes partielles de racines de résidus quadratiques. En effet, la longueur d'un lien-selle fermé est égale à la norme d'une somme partielle et celle d'un lien-selle entre les deux zéros est égale à une moitié de somme totale de racines. La dégénérescence de ces liens-selles correspond donc à l'annulation de certaines de ces sommes.

Nous encodons ces conditions dans un arrangement d'hyperplans complexes appelé \textit{arrangement de résonance} dont la projection sur l'espace résiduel $\espresquad[0](a_{1},a_{2};-b_{1},\dots,-b_{p})$ définit la \textit{stratification de résonance}.
Dans $\CC^{p}$ un \textit{hyperplan de résonance} est l'ensemble des points $(r_{1},\dots,r_{p})$ dans la fermeture du lieu satisfaisant exactement une équation de la forme $\sum\limits_{j=1}^{p} w_{j}r_{j}=0$, avec $w_{j}\in \lbrace{ -1,0,1 \rbrace}$ non tous nuls. L'ensemble $H_{p}$ des hyperplans de résonance définit un arrangement d'hyperplans complexes dans $\CC^{p}$.

\begin{defn}\label{defn:stratres}
Soit $R=(R_{1},\dots,R_{p}) \in \espresquad[0](a_{1},a_{2};-b_{1},\dots,-b_{p})$ et $r=(r_{1},\dots,r_{p})$ des racines des $R_{i}$. L'ensemble des hyperplans de résonance contenant $r$ est $H_{p}(R) \subset H_{p}$ (et ne dépend que de $R$).

La \textit{stratification de résonance de $\espresquad[0](a_{1},a_{2};-b_{1},\dots,-b_{p})$} est la projection de l'union des hyperplans de résonance par l'application $(r_{1},\dots,r_{p}) \mapsto (r_{1}^{2},\dots,r_{p}^{2})$ et une \emph{strate} est l'ensemble des résidus $R$ qui possèdent le même $H_{p}(R)$.
\end{defn}

Dans chaque strate de résonance, nous définissons une systole résiduelle qui va nous permettre de contrôler les déformations de différentielles quadratiques.

\begin{defn}\label{defn:systole}
Soit $R=(R_{1},\dots,R_{p})$ une configuration de résidus quadratiques, la {\em systole résiduelle} de $R$ est
 \[ \sigma = \min \left\{ \left| \sum_{i\in I} r_{i} \right| : I\subset \lbrace1,\dots,p \rbrace, (r_{i})^{2}=R_{i} \text{ et } \sum_{i\in I} r_{i} \neq 0 \right\}\,.\]
\end{defn}

\begin{prop}\label{prop:systole}
La systole résiduelle varie continûment dans chaque strate de résonance.
\end{prop}

\begin{proof}
Il y a un nombre fini de sommes pondérées de racines et chacune d'elles varie continûment en fonction des résidus quadratiques. 
\end{proof}

Un prolongement plat permet d'établir l'existence de différentielles  par déformation.
\begin{cor}\label{cor:defplate}
S'il existe une différentielle de $\quadomoduli[0](a_{1},a_{2};-b_{1},\dots,-b_{p})$ dont les résidus appartiennent à une strate de résonance, alors toutes les configurations de cette strate de résonance sont réalisables par une différentielle de $\quadomoduli[0](a_{1},a_{2};-b_{1},\dots,-b_{p})$.
\end{cor}

\begin{proof}
Soit $\xi$ une différentielle quadratique dont les résidus sont $R_{1},\dots,R_{p}$. Chaque lien-selle fermé découpe la surface (de genre zéro) en deux composantes connexes dont une ne compte que des singularités d'ordre pair (en effet, la surface ne compte que deux singularités d'ordre impair et l'une d'entre-elles est incidente au lien-selle fermé). Cette composante est donc une surface de translation avec un lien-selle au bord. La longueur de ce lien-selle est donc la norme d'une somme de racines des résidus quadratiques des pôles contenus dans cette composante. Si au contraire le lien-selle relie les deux zéros, alors en découpant ce lien-selle on obtient une surface de translation avec deux bords dont les périodes sont identiques (sans quoi $\xi$ ne serait pas primitive). Il s'ensuit qu'il existe un choix de racines $r_{1},\dots,r_{p}$ de $R_{1},\dots,R_{p}$ tel que la longueur du lien-selle est la norme de $\frac{1}{2} \sum r_{i}$.

La longueur des liens-selles d'une différentielle de  $\quadomoduli[0](a_{1},a_{2};-b_{1},\dots,-b_{p})$ est minorée par la moitié de la systole résiduelle. Comme celle-ci varie continûment et que la dégénérescence d'une différentielle implique que la longueur d'un lien-selle tende vers $0$, on peut déformer $\xi$ dans un voisinage contenu dans la strate de résonance. Comme les strates de résonance sont connexes, on peut déformer $\xi$ en une différentielle quadratique ayant n'importe quelle configuration de résidus quadratiques de la même strate de résonance.
\end{proof}

\begin{rem}\label{rem:redux}
Les équations de résonance ont des coefficients réels. Il est donc facile de vérifier que chaque strate de résonance contient une configuration de résidus quadratiques qui sont tous des réels positifs. En conséquence, pour les strates telles que $g=0$, $r=0$ et $n=2$, la caractérisation des configurations réalisables peut s'établir en se restreignant au cas des configurations de résidus réels positifs. Pour de telles différentielles, tous les liens-selles sont alors horizontaux.
\end{rem}

\section{Obstructions exceptionnelles}\label{sec:EXCEPT}

La réalisation de configurations de résidus dans les strates de différentielles quadratiques présente une série d'obstructions spécifiques. Elles découlent de deux obstructions principales établies dans les propositions~\ref{prop:excep1} et~\ref{prop:excep2} par une série de chirurgies.

\begin{prop}\label{prop:excep1}
Pour tout $s\geq 2$ pair, l'application résiduelle de $\Omega^{2}\moduli[1](s+1,s-1;\rec[-2][s])$ ne contient pas $\CC^{\ast}\cdot(1,\dots,1)$.
\end{prop}

\begin{proof}
Nous commençons par remarquer que l'image de l'application résiduelle de la strate $\Omega^{2}\moduli[1](1,3;-2,-2)$ ne contient pas $(1,1)$. Sinon on pourrait former une différentielle entrelacée lissable en recollant les deux pôles doubles. La différentielle quadratique obtenue en la lissant serait dans $\Omega^{2}\moduli[2](1,3)$, qui est vide en vertu du théorème~2 de \cite{masm}.

Nous traitons le cas $s\geq4$ par l'absurde. Nous supposons qu'il existe une différentielle quadratique dans la strate $\Omega^{2}(s-1,s+1;\rec[-2][s])$ avec pour résidus quadratiques $(1,\dots,1)$. Nous notons $a_{1}$ le zéro d'ordre inférieur et $a_{2}$ celui d'ordre supérieur. Dans un premier temps nous simplifions la différentielle quadratique puis nous montrons que les différentielles quadratiques simplifiées n'existent pas.

Nous supposerons que le cœur de la différentielle est dégénéré et qu'il y a donc $s+2$ liens-selles horizontaux (voir section~\ref{sec:coeur}). En coupant le long des liens-selles la surface nous obtenons~$s$ parties polaires quadratiques bordées par au moins~$2$ segments. En effet, si un domaine polaire est formé par un unique segment, celui-ci est de longueur $1$ et constitue le seul segment de bord de l'autre domaine polaire incident (également un cylindre de circonférence $1$). Une telle surface est simplement un cylindre infini. Les domaines polaires dont le bord est composé par strictement plus de deux segments sont appelés {\em spéciaux}. Les valences des sommets correspondants dans le graphe d'incidence peuvent être de la forme
 \begin{equation}\label{eq:spe}
  (6),\, (5,3),\, (4,4),\, (4,3,3) \text{ ou } (3,3,3,3)\,.
 \end{equation}

Nous considérerons tout d'abord le graphe d'incidence simplifié défini dans la section~\ref{sec:coeur}. On utilisera le vocabulaire des graphes et des surfaces de manière interchangeable. Par exemple, la {\em valence} d'un pôle est la valence du sommet correspondant dans le graphe d'incidence.

Supposons que  les deux liens-selles au bord d'un domaine polaire de valence $2$ connectent le zéro~$a_{1}$ à $a_{2}$ ou le zéro $a_{2}$ à lui même (rappelons que $a_{2}\geq a_{1}$). Nous pouvons alors réaliser la chirurgie suivante: retirer ce domaine polaire et fusionner les deux domaines polaires incidents comme sur la figure~\ref{fig:fusion}.
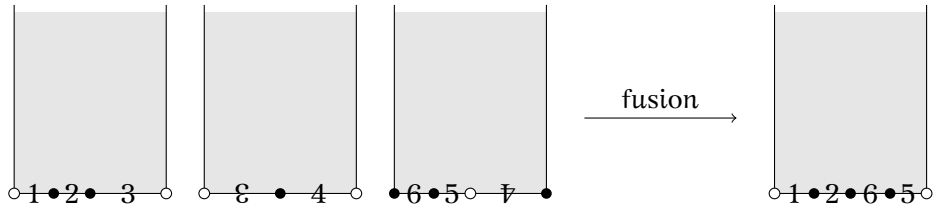
\begin{figure}[htb]
 \centering
\begin{tikzpicture}[scale=1]
\begin{scope}
\fill[fill=black!10] (0,0) coordinate (a1) -- (2,0) coordinate (a2) -- ++(0,2.4) -- ++(-2,0) -- cycle;

      \draw (a1) -- (a2) coordinate[pos=.26] (a3) coordinate[pos=.5] (a4) coordinate[pos=.13] (x1)coordinate[pos=.38] (x2) coordinate[pos=.75] (x3);

  \draw (a1)-- ++(0,2.5)coordinate[pos=.6](b);
    \draw (a2)-- ++(0,2.5)coordinate[pos=.6](c);
\node at (x1) {$1$};\node at (x2) {$2$};\node at (x3) {$3$};
\filldraw[fill=white](a1) circle (2pt);\filldraw[fill=white](a2) circle (2pt);\fill(a3) circle (2pt);\fill(a4) circle (2pt);
\end{scope}

\begin{scope}[xshift=2.5cm]
\fill[fill=black!10] (0,0) coordinate (a1) -- (2,0) coordinate (a2) -- ++(0,2.4) -- ++(-2,0) -- cycle;

      \draw (a1) -- (a2) coordinate[pos=.5] (a3)coordinate[pos=.25] (x1) coordinate[pos=.75] (x2);

  \draw (a1)-- ++(0,2.5)coordinate[pos=.6](b);
    \draw (a2)-- ++(0,2.5)coordinate[pos=.6](c);
\node[rotate=180] at (x1) {$3$};\node at (x2) {$4$};
\filldraw[fill=white] circle (2pt);\filldraw[fill=white] (a2) circle (2pt);\fill(a3) circle (2pt);
\end{scope}

\begin{scope}[xshift=5cm]
\fill[fill=black!10] (0,0) coordinate (a1) -- (2,0) coordinate (a2) -- ++(0,2.4) -- ++(-2,0) -- cycle;

      \draw (a1) -- (a2) coordinate[pos=.26] (a3) coordinate[pos=.5] (a4) coordinate[pos=.13] (x1)coordinate[pos=.38] (x2) coordinate[pos=.75] (x3);

  \draw (a1)-- ++(0,2.5)coordinate[pos=.6](b);
    \draw (a2)-- ++(0,2.5)coordinate[pos=.6](c);
\node at (x1) {$6$};\node at (x2) {$5$};\node[rotate=180] at (x3) {$4$};
\fill(a1) circle (2pt);\fill(a2) circle (2pt);\fill(a3) circle (2pt);\filldraw[fill=white](a4) circle (2pt);

\draw[->] (2.5,1) --node[above]{fusion} (4.5,1);
\end{scope}

\begin{scope}[xshift=10cm]
\fill[fill=black!10] (0,0) coordinate (a1) -- (2,0) coordinate (a2) -- ++(0,2.4) -- ++(-2,0) -- cycle;

      \draw (a1) -- (a2) coordinate[pos=.26] (a3) coordinate[pos=.5] (a4) coordinate[pos=.76] (a5)   coordinate[pos=.13] (x1)coordinate[pos=.38] (x2) coordinate[pos=.63] (x3)coordinate[pos=.88] (x4);

  \draw (a1)-- ++(0,2.5)coordinate[pos=.6](b);
    \draw (a2)-- ++(0,2.5)coordinate[pos=.6](c);
\node at (x1) {$1$};\node at (x2) {$2$};\node at (x3) {$6$};\node at (x4) {$5$};
\filldraw[fill=white](a1) circle (2pt);\filldraw[fill=white](a2) circle (2pt);\fill(a3) circle (2pt);\fill(a4) circle (2pt);\fill(a5) circle (2pt);
\end{scope}
\end{tikzpicture}
\caption{La fusion de deux pôles incidents à un pôle bordé par deux liens-selles connectant les deux zéros.}
\label{fig:fusion}
\end{figure}
La surface obtenue est toujours primitive. La chirurgie enlève deux pôles doubles et diminue l'ordre des deux zéros de $2$ ou $a_{2}$ de~$4$. La différentielle obtenue est  dans une strate du type $\Omega^{2}\moduli[1](s+1,s-1;\rec[-2][s])$. Nous supposerons désormais que tous les sommets de valence $2$ correspondent à des domaines polaires dont le bord ne contient que le zéro d'ordre inférieur~$a_{1}$.
\smallskip
\par
Si $s \geq 6$, comme il y a au plus quatre domaines polaires spéciaux (voir l'équation~\eqref{eq:spe}), il y a au moins deux sommets de valence $2$. Par le paragraphe antérieur, les liens-selles bordant ces domaines polaires connectent le zéro d'ordre inférieur $a_{1}$ à lui-même. Chacun de ces domaines polaires contribue donc à $a_{1}$ d'un angle $2\pi$. La contribution angulaire des domaines spéciaux aux deux zéros est au plus $12\pi$. Comme l'angle de la singularité associée à~$a_{2}$ est égal à celui associé à $a_{1}$ plus $2\pi$, il s'ensuit que la différence entre la contribution des domaines spéciaux à $a_{2}$ et $a_{1}$ est au moins $6\pi$. On en déduit que la contribution des domaines spéciaux au zéro d'ordre $a_{1}$ est au plus $3\pi$. Remarquons que les chaînes de domaines de valence~$2$ se rattachent aux domaines spéciaux. Donc, soit la contribution des domaines spéciaux au zéro d'ordre~$a_{1}$ est au moins $4\pi$, soit il y a une unique chaîne dont les deux extrémités sont incidentes au même sommet spécial $A$ via des liens-selles adjacents. Dans ce cas, la contribution de ce domaine polaire à $a_{1}$ est~$3\pi$. Ceci n'est possible que si le graphe d'incidence a exactement quatre sommets spéciaux de valence~$3$. Le sommet $A$ est alors relié à un autre sommet spécial par un lien-selle reliant $a_{1}$ à lui-même. Par conséquent, la contribution des domaines spéciaux à ce zéro est strictement plus grande que $3\pi$, ce qui est absurde.
\smallskip
\par
Il reste à traiter le cas de la strate $\Omega^{2}\moduli[1](3,5;\rec[-2][4])$.  S'il y a strictement moins de $4$ domaines polaires spéciaux, le même argument que précédemment permet de conclure. Nous supposerons donc que les quatre domaines polaires sont de valence $3$. Comme la différentielle est primitive certains cycles dans le graphe d'incidence ont un nombre impair de sommets. Un tel cycle n'est pas une boucle à un sommet. En effet, cela impliquerait que dans le domaine polaire correspondant, deux segments de bords sont identifiés et donc que nous avons un zéro d'ordre $-1$. Ces conditions impliquent que le seul graphe d'incidence possible est le graphe complet $K_{4}$. Dans chaque domaine polaire, deux angles $\pi$ contribuent à un zéro tandis que le troisième angle $\pi$ contribue à l'autre (sinon nous aurions un zéro d'ordre au moins $7$). Ainsi, au moins deux angles d'un domaine polaire contribuent au zéro d'ordre~$3$. Il s'ensuit qu'il existe un lien-selle reliant ce zéro à lui-même. Ainsi les deux domaines polaires adjacents contribuent deux fois à ce zéro, ce qui est absurde.
\end{proof}

Partant de la proposition~\ref{prop:excep1}, toute une série d'obstructions inédites se déduisent les unes des autres.

\begin{cor}\label{cor:except1}
Dans les strates de différentielles quadratiques listées ci-dessous, les configurations de résidus quadratiques mentionnées ne sont pas réalisables:
\begin{enumerate}
    \item pour $s \geq 2$ pair, les configurations de $\CC^{\ast}\cdot(1,\dots,1)$ dans $\Omega^{2}\moduli[1](2s;\rec[-2][s])$;
    \item pour $p \geq 1$, la configuration $(0,\dots,0)$ dans $\Omega^{2}\moduli[1](2p+1,2p-1;\rec[-4][p])$;
    \item pour $p \geq 1$, la configuration $(0,\dots,0)$ dans $\Omega^{2}\moduli[1](4p;\rec[-4][p])$;
    \item pour $s \geq 4$ pair, les configurations de la forme $(R,\dots,R,R',R')$ avec $R,R' \in \mathbb{C}^{\ast}$ dans $\Omega^{2}\moduli[0](s-1,s-3;\rec[-2][s])$;
    \item pour $s \geq 3$ impair, les configurations de la forme $(R_{1},R_{2},R_{3},\dots,R_{3})$ où les $R_{i}$ sont triangulaires (voir définition~\ref{def:triangulaire}) dans $\Omega^{2}\moduli[0](s-2,s-2;\rec[-2][s])$;
    \item pour $s \geq 2$ pair, les configurations de $\CC^{\ast}\cdot(0;1,\dots,1)$ dans $\Omega^{2}\moduli[0](s+1,s-1;-4;\rec[-2][s])$;
    \item pour $s \geq 1$ impair, les configurations de $\CC^{\ast}\cdot(1;1,\dots,1)$ dans $\Omega^{2}\moduli[0](s,s;-4;\rec[-2][s])$.
\end{enumerate}
\end{cor}

\begin{proof}
Pour établir l'obstruction (1), nous supposons qu'il existe une différentielle dans $\Omega^{2}\moduli[1](2s;\rec[-2][s])$ dont les résidus sont égaux à $(1,\dots,1)$. La proposition~\ref{prop:eclatZero} permet l'éclatement de
son zéro en deux zéros d'ordres~$s+1$ et $s-1$ sans changer les résidus. Ceci contredit la proposition~\ref{prop:excep1}.

Pour l'obstruction (2), nous supposons qu'il existe $\xi$ dans $\Omega^{2}\moduli[1](2p+1,2p-1;\rec[-4][p])$ dont les résidus sont nuls. Grâce à la proposition~\ref{prop:coeurdege}, nous pouvons supposer que tous les liens-selles de $\xi$ sont horizontaux. Comme les résidus sont nuls, chaque domaine polaire est un plan infini coupé d'une cicatrice horizontale de longueur $\ell$ finie. Nous faisons maintenant l'opération suivante illustrée par la figure~\ref{fig:metamorphose}. Pour chaque domaine polaire, nous considérons un rectangle de hauteur $1$ et de longueur $\ell$. Les bords horizontaux du rectangle correspondent aux segments de la cicatrice. Les deux bords verticaux sont collés à des cylindres de hauteur $1$.
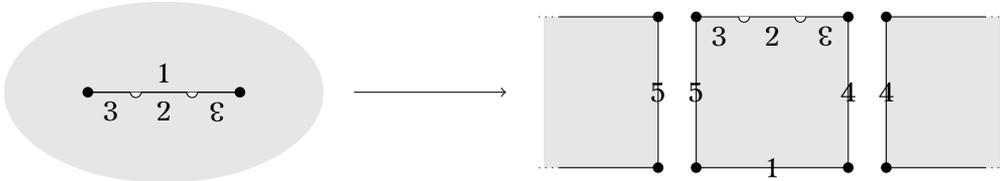
\begin{figure}[htb]
 \center
\begin{tikzpicture}[scale=1]
    \begin{scope}[xshift=-7cm,yshift=1cm]
     \fill[fill=black!10] (0,0) ellipse (2.1cm and 1.2cm);

\draw[] (-1,0)coordinate (Q) -- (1,0) coordinate (P) coordinate[pos=.5](c) coordinate[pos=.15](d) coordinate[pos=.85](e) coordinate[pos=.35](R) coordinate[pos=.65](S);
\draw[] (Q) -- (P);

\filldraw[fill=white](R)  arc (0:-180:2pt);
\filldraw[fill=white]  (S)  arc (-180:0:2pt);

\fill (P) circle (2pt);
\fill(Q) circle (2pt);

\node[above] at (c) {$1$};
\node[below] at (c) {$2$};
\node[below] at (d) {$3$};
\node[above, rotate=180] at (e) {$3$};

\draw[->] (2.5,0) --(4.5,0);
\end{scope}

\begin{scope}[]
\coordinate (a) at (0,0);
\coordinate (b) at (2,0);
\coordinate (c) at (2,2);
\coordinate (d) at (0,2);
\filldraw[fill=black!10] (a) -- (b)coordinate[pos=.5](h) -- (c) coordinate[pos=.5](e) -- (d) coordinate[pos=.5](f) coordinate[pos=.15](D) coordinate[pos=.85](E) coordinate[pos=.35](R) coordinate[pos=.65](S) -- (a) coordinate[pos=.5](g);
\fill (a) circle (2pt);\fill (b) circle (2pt);\fill (c) circle (2pt);\fill (d) circle (2pt);
\node[] at (h) {$1$};
\node[below] at (f) {$2$};
\node[below] at (E) {$3$};
\node[above, rotate=180] at (D) {$3$};
\node[] at (e) {$4$};
\node[] at (g) {$5$};

\filldraw[fill=white](S)  arc (0:-180:2pt);
\filldraw[fill=white] (R)  arc (-180:0:2pt);

\end{scope}

\begin{scope}[xshift=2.5cm]
\coordinate (a) at (0,0);
\coordinate (b) at (0,2);
    \fill[fill=black!10] (a) --  (b)coordinate[pos=.5](g) -- ++(1.5,0) --++(0,-2) -- cycle;
    \fill (a)  circle (2pt);
\fill[] (b) circle (2pt);
 \draw  (a)  -- (b);
 \draw (a) -- ++(1.3,0) coordinate (d)coordinate[pos=.5](h);
 \draw (b) -- ++(1.3,0) coordinate (e)coordinate[pos=.5](i);
 \draw[dotted] (d) -- ++(.3,0);
 \draw[dotted] (e) -- ++(.3,0);
\node[] at (g) {$4$};
    \end{scope}

    \begin{scope}[xshift=-.5cm]
\coordinate (a) at (0,0);
\coordinate (b) at (0,2);
    \fill[fill=black!10] (a) --  (b)coordinate[pos=.5](g) -- ++(-1.5,0) --++(0,-2) -- cycle;
    \fill (a)  circle (2pt);
\fill[] (b) circle (2pt);
 \draw  (a)  -- (b);
 \draw (a) -- ++(-1.3,0) coordinate (d)coordinate[pos=.5](h);
 \draw (b) -- ++(-1.3,0) coordinate (e)coordinate[pos=.5](i);
 \draw[dotted] (d) -- ++(-.3,0);
 \draw[dotted] (e) -- ++(-.3,0);
\node[] at (g) {$5$};
    \end{scope}

\end{tikzpicture}
\caption{Une chirurgie remplacant un pôle d'ordre $-4$ de résidu $0$ en deux pôles doubles de résidus égaux sans changer les ordres des zéros.} \label{fig:metamorphose}
\end{figure}
Cette chirurgie produit une différentielle de $\Omega^{2}\moduli[1](2p+1,2p-1;\rec[-2][2p])$ réalisant la configuration de résidus quadratiques $(-1,\dots,-1)$, contredisant la proposition~\ref{prop:excep1}.

L'obstruction (3) se déduit de l'obstruction (2) par éclatement du zéro.

Pour l'obstruction (4), s'il existe une différentielle de $\Omega^{2}\moduli[0](s-1,s-3;\rec[-2][s])$ de résidus de la forme $(R,\dots,R,R',R')$, alors on obtient une différentielle de $\Omega^{2}\moduli[1](s-1,s-3;\rec[-2][s-2])$ dont la configuration de résidus est $(R,\dots,R)$ en collant les pôles de résidus $R'$ ensemble. Cela contredit la proposition~\ref{prop:excep1}.

Pour l'obstruction (5), nous considérons une différentielle de $\Omega^{2}\moduli[0](s-2,s-2;\rec[-2][s])$ réalisant une configuration $(R_{1},R_{2},R_{3},\dots,R_{3})$ de résidus triangulaires. Sans perte de généralité (voir la section~\ref{sec:arrhyp}), nous pouvons supposer que les trois résidus $R_{1},R_{2},R_{3}$ n'appartiennent pas aux mêmes rayons issus de l'origine. 
Nous remplaçons alors le pôle de résidu $R_{1}$ par un triangle dont les trois bords appartiennent aux directions des racines des $R_{1},R_{2},R_{3}$. Puis nous collons sur les deux bords restants du triangle deux cylindres correspondant à des pôles doubles dont les résidus sont $R_{2}$ et $R_{3}$. L'ordre de l'un des zéros augmente de~$2$ et nous obtenons une différentielle de $\Omega^{2}\moduli[0](s,s-2;\rec[-2][s+1])$ réalisant la configuration $(R_{2},R_{2},R_{3},\dots,R_{3})$, ce qui contredit l'obstruction (4).

Pour l'obstruction (6), considérons une différentielle de $\Omega^{2}\moduli[0](s+1,s-1;-4,\rec[-2][s])$ dont le résidu au pôle quadruple est nul tandis que les résidus aux pôles doubles sont égaux entre eux. Nous obtenons une  différentielle quadratique entrelacée en attachant au pôle quadruple le carré d'une différentielle abélienne sur un tore. Le lemme~\ref{lem:lissdeuxcomp} permet de lisser cette
différentielle entrelacée et d'obtenir une différentielle de 
$\Omega^{2}\moduli[1](s+1,s-1;\rec[-2][s])$ dont les résidus aux pôles sont tous égaux, contredisant la proposition~\ref{prop:excep1}.

Enfin, pour l'obstruction (7), nous partons d'une différentielle de $\Omega^{2}\moduli[0](s,s;-4;\rec[-2][s])$ dont tous les résidus sont~$1$. On coupe alors une cicatrice dans le domaine du pôle quadruple pour insérer un cylindre de circonférence $1$. Une telle chirurgie va ajouter un pôle double de résidu $1$, l'ordre d'un zéro va augmenter de $2$ tandis que le résidu du pôle quadruple va devenir $0$. La surface obtenue contredit alors l'obstruction (6).
\end{proof}

Nous établissons maintenant une seconde famille d'obstructions exceptionnelles. 
\begin{prop}\label{prop:excep2}
Pour $a \geq 0$ et $b \geq 2$ pair, les configurations de résidus $\CC^{\ast}\cdot(1,1;0,\dots,0)$ ne sont pas réalisables dans la strate $\Omega^{2}\moduli[0](2a+b-1,2a+b-3;-b,-b,\rec[-4][a])$.
\end{prop}

\begin{proof}
Supposons par l'absurde qu'il existe une telle différentielle $\xi$. Tous les liens-selles de la surface correspondante sont horizontaux et de longueur entière (voir section~\ref{sec:arrhyp}). Plus précisément, les liens-selles reliant les deux zéros sont de longueur $1$ tandis que les liens-selles fermés sont de longueur $1$ ou $2$ selon le nombre de pôles à résidus non nuls qu'ils entourent.
Nous allons établir la non existence de $\xi$ en analysant les formes possibles du graphe d'incidence (voir section~\ref{sec:coeur}). Celui-ci admet $a+2$ sommets et arêtes. Seuls les domaines des deux pôles ayant un résidu non nul peuvent être de valence $1$. De plus, le graphe comporte un unique cycle.

Une chirurgie permet de simplifier un éventuel contre-exemple. Supposons que les liens-selles au bord d'un sommet de valence~$2$ correspondant à un pôle sans résidu relient soit les deux zéros, soit le zéro d'ordre le plus grand à lui-même. Nous retirons alors ce domaine polaire et recollons les deux bords (forcément de même longueur) l'un avec l'autre. Dans tous les cas, nous obtenons une différentielle d'une strate comme dans la proposition. 

Les liens-selles du cycle relient les deux zéros (car dans ce graphe plongé, les deux faces correspondent aux deux singularités coniques) et sont donc de longueur $1$. Il s'ensuit que les domaines de pôles ayant un résidu non nul ne peuvent être des sommets de valence $2$ dans le cycle. En particulier, le graphe n'est pas réduit au cycle et compte au moins un sommet de valence $1$.
\smallskip
\par
Supposons que le graphe a exactement deux sommets de valence $1$. Ce sont les domaines des pôles d'ordre $-b$. Si leurs bords contiennent des zéros différents, alors nous pouvons directement remplacer ces domaines polaires par des cylindres de circonférence $1$, obtenant des différentielles pour lesquelles $b=2$ tandis que les deux zéros seront d'ordres $2a+1$ et $2a-1$. En collant ces deux cylindres l'un sur l'autre, nous obtenons une différentielle de la strate $\Omega^{2}\moduli[1](2a+1,2a-1;\rec[-4][a])$ dont les résidus sont nuls. Ceci est interdit par l'obstruction~(2) du corollaire~\ref{cor:except1}.

Si au contraire, les deux domaines polaires d'ordre $-b$ ont le même zéro $z_{1}$ dans leur bord, alors les seuls angles contribuant à l'autre zéro $z_{2}$ appartiennent aux domaines polaires de l'unique cycle du graphe d'incidence. En effet, on peut voir le graphe d'incidence comme un graphe plongé découpant la sphère en deux composantes connexes correspondant aux deux zéros. Si le cycle compte un sommet de valence $4$, alors la chirurgie simplificatrice permet de supposer que c'est le seul sommet du cycle. Au bord de ce sommet, deux liens-selles adjacents sont identifiés. Il s'ensuit que $z_{2}$ est un zéro d'ordre $-1$, ce qui est impossible car $a \geq 1$ puisque le sommet de valence $4$ est l'un des pôles quadruples. Si au contraire, le cycle compte deux sommets de valence $3$ (et aucun autre sommet du fait de la chirurgie simplificatrice), alors $z_{2}$ sera au mieux un zéro d'ordre $1$, ce qui est incompatible avec $a \geq 2$.
\smallskip
\par
Dans le dernier cas, le graphe a un unique sommet de valence $1$ et donc un unique sommet de valence $3$. Supposons d'abord que ce sommet de valence $3$ est l'un des domaines polaires à résidu non nul. Dans ce cas, la chirurgie simplificatrice permet de supposer que c'est le seul sommet du cycle. Deux segments adjacents du bord de ce domaine sont identifiés, définissant un zéro d'ordre impair au plus $b-5$. L'autre zéro est l'unique singularité au bord du domaine polaire correspondant au sommet de valence $1$. Ce dernier est donc d'ordre au moins $b-1$ (un angle $(b-1)\pi$ pour le sommet de valence $1$ et $4\pi$ pour le sommet de valence~$3$). Ceci est impossible.

Supposons enfin que le sommet de valence $3$ correspond à un pôle quadruple à résidu nul. Dans ce cas, la chirurgie simplificatrice permet de se ramener au cas dans lequel c'est le seul sommet du cycle (car, nous l'avons vu, les sommets du cycle de valence $2$ correspondent à des domaines polaires à résidus nuls). Deux segments adjacents du bord de ce domaine sont identifiés, définissant un zéro d'ordre $-1$, ce qui contredit $a \geq 1$. Ainsi, la configuration $(1,1;0,\dots,0)$ n'est jamais réalisable dans $\Omega^{2}\moduli[0](2a+b-1,2a+b-3;-b,-b,\rec[-4][a])$.
\end{proof}

Une nouvelle famille d'obstructions se déduit de la proposition précédente.

\begin{cor}\label{cor:excep2}
Pour $a \geq 0$ et $b \geq 2$ pair, les configurations de résidus $\CC^{\ast}\cdot(1,1;0,\dots,0)$ ne sont pas réalisables dans la strate $\Omega^{2}\moduli[0](2a+b-1,2a+b-1;-b,-b-2,\rec[-4][a])$.
\end{cor}

\begin{proof}
Partant d'une différentielle d'une telle strate réalisant une telle configuration de résidus quadratiques, il suffit de couper, dans le domaine du pôle d'ordre $-b$, une cicatrice le long d'une demi-droite partant de l'une des singularités coniques. Insérant un plan infini dans cette cicatrice, le domaine polaire devient d'ordre $-b-2$ tandis que l'ordre du zéro en question devient $2a+b+1$. Une telle surface contredit alors la proposition~\ref{prop:excep2}.
\end{proof}

\section{Différentielles avec un pôle impair}
\label{sec:avecnondiv}

Dans cette section nous considérons le cas des différentielles de genre zéro où la décomposition possède au moins un pôle d'ordre impair. Nous traitons dans la section~\ref{sec:poleimpreszero} le cas de l'origine et dans la section~\ref{sec:poleimpresnozero} son complémentaire.

\subsection{Le cas du résidu nul}
\label{sec:poleimpreszero}

Lorsque la décomposition compte un unique pôle d'ordre impair et un unique zéro d'ordre impair, il y a une condition nécessaire non triviale pour que l'origine soit dans l'image de l'application résiduelle de la strate.

\begin{lem}
Considérons la décomposition $\mu=(a_{1},2l_{2},\dots,2l_{n};-b_{1},\dots,-b_{p};-c)$. Si l'inégalité $\sum_{i=2}^{n} l_{i}<p$ est satisfaite, alors l'application résiduelle $\appresk[0][2](\mu)$ ne contient pas $\lbrace (0,\dots,0) \rbrace$.
\end{lem}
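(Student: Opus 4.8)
Le plan est de passer au revêtement canonique afin de transformer l'annulation de tous les résidus en une condition d'exactitude, puis de conclure par un simple argument de degré sur $\PP^1$.

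Supposons par l'absurde qu'il existe $(\PP^1,\xi)$ dans la strate réalisant l'origine, c'est-à-dire $\Resquad_{P_j}(\xi)=0$ à chacun des $p$ pôles pairs $-b_j$. La partition ne possède que deux singularités d'ordre impair, le zéro $a_1$ et le pôle $-c$. Le revêtement canonique $\pi\colon\whX\to\PP^1$, connexe puisque $\xi$ est primitive, est donc ramifié en exactement deux points, et Riemann--Hurwitz donne $\whX\simeq\PP^1$. Je fixe une racine carrée $\whomega$ de $\pi^{\ast}\xi$, qui est une différentielle abélienne méromorphe vérifiant $\sigma^{\ast}\whomega=-\whomega$ pour l'involution $\sigma$ du revêtement. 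Un relèvement des ordres donne alors: au-dessus du zéro $a_1$ (resp. du pôle $-c$), points de ramification, $\whomega$ est d'ordre $a_1+1$ en $\wh{P}_{a_1}$ (resp. $-c+1$ en $\wh{P}_{c}$); au-dessus d'un zéro pair $2l_i$ (resp. d'un pôle pair $-b_j$), non ramifiés, $\whomega$ possède deux préimages d'ordre $l_i$ (resp. $-b_j/2$).

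L'étape centrale consiste à montrer que \emph{tous} les résidus de $\whomega$ sont nuls. Aux préimages des pôles pairs, cela découle de l'hypothèse $\Resquad_{P_j}(\xi)=0$. Aux deux points fixes de $\sigma$: la préimage de $a_1$ est un zéro de $\whomega$, donc sans résidu, tandis qu'en $\wh{P}_{c}$ l'anti-invariance force $\Res_{\wh{P}_{c}}\whomega=\Res_{\wh{P}_{c}}\sigma^{\ast}\whomega=-\Res_{\wh{P}_{c}}\whomega$, d'où $\Res_{\wh{P}_{c}}\whomega=0$. Sur $\PP^1$, une différentielle méromorphe dont tous les résidus s'annulent est exacte; on obtient donc $\whomega=dF$ pour une fonction rationnelle $F\colon\PP^1\to\PP^1$.

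Je conclus par le calcul du degré de $F$. Ses pôles sont exactement les préimages des pôles de $\xi$, d'ordre $b_j/2-1$ aux deux préimages de chaque $-b_j$ et d'ordre $c-2$ en $\wh{P}_{c}$, d'où $\deg F=\sum_{j}b_j-2p+c-2$; en injectant la relation $\sum_i a_i-\sum_j b_j-c=-4$, ceci se réécrit $\deg F=\sum_i a_i+2-2p$. D'autre part, $\whomega=dF$ s'annule à l'ordre $a_1+1$ en $\wh{P}_{a_1}$, donc le degré local de $F$ en ce point vaut $a_1+2$, ce qui impose $\deg F\geq a_1+2$. En combinant les deux, $\sum_{i}a_i+2-2p\geq a_1+2$, le terme $a_1$ se simplifie, et il reste $\sum_{i=2}^{n}2l_i\geq 2p$, c'est-à-dire $\sum_{i=2}^{n}l_i\geq p$; comme les $l_i$ sont positifs, ceci contredit l'hypothèse $\sum_{i=1}^{n}l_i<p$. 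La principale difficulté technique réside dans le suivi précis des ordres de $\whomega$ aux points ramifiés et non ramifiés et dans la justification de l'annulation du résidu en $\wh{P}_{c}$ via l'anti-invariance; le cas limite $a_1=-1$ se traite à l'identique, $\whomega$ y étant alors holomorphe et non nulle.
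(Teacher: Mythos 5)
Votre preuve est correcte, et elle coïncide avec celle de l'article dans sa première étape : passage au revêtement canonique $\pi\colon\whX\to\PP^{1}$, ramifié uniquement aux deux singularités impaires, donc $\whX\simeq\PP^{1}$, et identification des ordres et des résidus de la racine $\whomega$ de $\pi^{\ast}\xi$. La différence est dans la conclusion : l'article invoque à ce stade le point i) du théorème~1.2 de \cite{getaab} (critère numérique d'existence de différentielles abéliennes de genre zéro sans résidus), tandis que vous redémontrez l'inégalité nécessaire de façon autonome, via l'exactitude $\whomega=dF$ (tous les résidus étant nuls sur $\PP^{1}$) puis une comparaison entre le degré global de $F$, calculé sur ses pôles ($\deg F=\sum_i a_i+2-2p$), et son degré local au point $\whP_{a_{1}}$ ($\geq a_{1}+2$). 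Ce remplacement de la citation par un argument direct est exactement la technique que l'article emploie lui-même pour le lemme~\ref{lem:condnecngeq3} (cas où les deux singularités impaires sont des zéros) ; votre rédaction a en outre le mérite d'expliciter deux points que l'article affirme sans détail, à savoir l'annulation du résidu de $\whomega$ au pôle d'ordre $-c+1$ par anti-invariance sous l'involution du revêtement, et le traitement du cas limite $a_{1}=-1$. En contrepartie, l'approche de l'article, par citation, est plus courte et situe le lemme dans le cadre général des obstructions abéliennes déjà classifiées. Les deux arguments aboutissent à la même inégalité $\sum_{i\geq2}l_{i}\geq p$, qui contredit l'hypothèse puisque $l_{1}\geq0$.
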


\begin{proof}
\'Etant donnée une différentielle de la strate $\quadomoduli[0](\mu)$ dont tous les résidus sont nuls, le revêtement canonique définit une différentielle abélienne méromorphe $(\widehat X, \widehat\omega)$. D'après les résultats de la section~2.1 de \cite{BCGGM3}, le revêtement est uniquement ramifié au zéro d'ordre $a_{1}$ et au pôle d'ordre~$c$. Donc la surface $\widehat X$ est une sphère de Riemann. De plus la $1$-forme $\widehat \omega$ possède deux zéros d'ordres $l_{i}$ pour chaque $i\geq 2$, un zéro d'ordre $a_{1}+1$, deux pôles d'ordres $-b_{j}/2$ pour chaque $j\geq1$ et un pôle d'ordre $-c+1$. Notons de plus que le résidu à chaque pôle est nul. D'après le point i) du théorème~1.2 de \cite{getaab}, cela est possible si et seulement si l'ordre de tous les zéros est inférieur ou égal à $$(c-1)+2\sum_{j=1}^{p} \frac{b_{j}}{2} - (2p+2)\,.$$ 
Cette condition implique que 
$$ a_{1} \leq c + \sum b_{i} -4 -2p \,.$$
Le fait que la différence entre la somme des ordres des zéros et la somme des ordres des pôles est égale à~$-4$ implique le résultat. 
\end{proof}

Nous montrons maintenant dans quels cas l'origine est dans l'image de l'application résiduelle. Nous traitons tout d'abord le cas des strates avec un unique zéro.
\begin{lem}\label{lem:g=0gen1nouvbis}
Soit $\quadomoduli[0](a;-b_{1},\dots,-b_{p};-c_{1},\dots,-c_{r})$ une strate de genre zéro avec $b_{i}$ pairs et $c_{j}$ impairs. Si
$r\geq2$, alors l'image de l'application résiduelle contient $(0,\dots,0)$.
\end{lem}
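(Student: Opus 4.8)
\emph{Esquisse de preuve.} Le plan est de passer au revêtement canonique pour se ramener à une question d'existence de différentielles abéliennes à résidus nuls, résolue dans \cite{getaab}. Observons d'abord que $\mu$ contient le pôle d'ordre impair $-c_{1}$: une différentielle de type $\mu$ ne peut donc être le carré d'une différentielle abélienne, puisqu'un tel carré n'aurait que des singularités d'ordre pair (comparer au lemme~\ref{lem:puissk}). Toute différentielle de type $\mu$ est ainsi automatiquement \emph{primitive}, et il suffira de construire sur $\PP^{1}$ une différentielle quadratique de type $\mu$ dont tous les résidus aux pôles d'ordre pair sont nuls, sans avoir à se préoccuper de la primitivité.

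Je considérerais le revêtement canonique $\pi\colon\whX\to\PP^{1}$, ramifié exactement au-dessus des singularités d'ordre impair (les $r$ pôles $-c_{j}$ et, selon la parité de $a$, le zéro), muni d'une racine carrée anti-invariante $\whomega$ de $\pi^{\ast}\xi$. C'est une différentielle abélienne méromorphe sur la courbe hyperelliptique $\whX$, dont les ordres se lisent point par point: deux pôles d'ordre $-b_{i}/2$ au-dessus de chaque $-b_{i}$, un pôle d'ordre $-c_{j}+1$ au-dessus de chaque $-c_{j}$, et de même au-dessus du zéro. L'intérêt de ce passage est la traduction des résidus: en un point de ramification, l'anti-invariance de $\whomega$ force la nullité du résidu, de sorte que le résidu quadratique de $\xi$ en $-b_{i}$ est nul si et seulement si $\whomega$ est à résidu nul aux deux points au-dessus; le théorème des résidus sur $\whX$ est alors automatiquement satisfait. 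On se ramène donc à produire une différentielle abélienne anti-invariante d'ordres prescrits et à résidus tous nuls, ce qui revient concrètement à écrire $\xi=R(x)^{2}/D(x)\,(dx)^{2}$, où $D$ est le produit des facteurs linéaires aux singularités impaires et $R$ une fraction rationnelle ayant les pôles et le zéro voulus, la nullité de chaque résidu devenant une unique condition linéaire sur les coefficients de $R$.

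L'existence d'un tel $R$ se ramène alors à un comptage de paramètres, et c'est ici qu'intervient de façon essentielle l'hypothèse $r\geq2$. Lorsque $r=2$, seul cas où $\whX$ est de genre zéro, une différentielle à résidus nuls est exacte, égale à $dF$ pour une fraction rationnelle impaire $F$ sur le revêtement; la présence de \emph{deux} points de branchement, donc de deux pôles de $F$ aux deux points fixes de l'involution, fournit exactement la liberté qui manquait dans le cas $r=1$ du lemme précédent, lequel butait sur une obstruction. Lorsque $r\geq3$, le revêtement est de genre au moins~$1$ et l'existence d'une différentielle à résidus nuls d'ordres prescrits ne rencontre plus d'obstruction, par le théorème~1.2 de \cite{getaab}, l'anti-invariance étant garantie par l'écriture $R\,dx/y$ sur le modèle hyperelliptique $y^{2}=\prod_{k}(x-e_{k})$.

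L'obstacle principal sera de vérifier que l'inégalité d'existence de \cite{getaab} est bien satisfaite: elle repose sur le fait que chaque pôle impair supplémentaire augmente l'ordre total des pôles de $\whomega$ sur $\whX$ et desserre donc la borne portant sur l'ordre du zéro, ce qui explique pourquoi le passage de $r=1$ à $r\geq2$ fait disparaître l'obstruction. Il faudra aussi distinguer les deux parités de $r$ (équivalentes à celle de $a$), qui déterminent si le zéro est ou non un point de branchement, et s'assurer dans chaque cas que la différentielle produite peut effectivement être prise anti-invariante, donc descendre en la différentielle quadratique cherchée sur $\PP^{1}$.
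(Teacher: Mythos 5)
Votre réduction au revêtement canonique démarre correctement : la primitivité est bien automatique en présence du pôle impair $-c_{1}$, l'anti-invariance de $\whomega$ force bien la nullité des résidus abéliens aux points de ramification, et le résidu quadratique de $\xi$ en un pôle pair s'annule bien si et seulement si les résidus de $\whomega$ aux deux préimages s'annulent. La lacune essentielle est à l'étape d'existence. L'existence d'une différentielle abélienne \emph{anti-invariante} sur la courbe hyperelliptique $\whX$, avec des singularités placées de façon compatible avec l'involution (paires échangées au-dessus des singularités paires, points de ramification aux singularités impaires), est \emph{équivalente} au problème quadratique de départ — c'est exactement le contenu de la correspondance par revêtement canonique. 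Or le théorème~1.2 de \cite{getaab} ne produit qu'une différentielle sans aucune symétrie, sur une courbe quelconque : rien ne garantit qu'elle s'écrive $R(x)\,dx/y$ sur un modèle hyperelliptique avec les singularités bien placées, ni donc qu'elle descende en une différentielle quadratique sur $\PP^{1}$. L'invoquer ici revient à supposer le résultat voulu (et, pour $r\geq3$, le revêtement est de genre $\geq1$ alors que ce théorème est un énoncé de genre zéro dans \cite{getaab}).

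Le comptage de paramètres esquissé ne comble pas cette lacune. La strate n'a qu'un seul zéro : le diviseur de $R$ est donc entièrement prescrit et, les positions des singularités étant fixées, $R$ est déterminée à constante multiplicative près. Il n'y a aucun coefficient libre, et la nullité des $p$ résidus n'est pas une famille de conditions linéaires sur les coefficients de $R$, mais un système de $p$ équations non linéaires portant sur les positions des singularités (au nombre de $p+r-2$ modulo ${\rm PGL}(2,\CC)$, soit exactement $p$ lorsque $r=2$) ; un tel système, même équilibré, peut n'avoir aucune solution, et l'existence exige un argument. Notez que la vérification que vous reportez dans le cas $r=2$ aboutit : l'inégalité de \cite{getaab} pour le zéro d'ordre $a/2$ sur le revêtement équivaut à $\sum b_{i}+c_{1}+c_{2}\geq 4p+6$, toujours vraie puisque $b_{i}\geq4$ et $c_{j}\geq3$ ; mais cela ne suffirait toujours pas, faute de l'argument de symétrie ci-dessus. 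C'est précisément pour éviter ces difficultés que le papier procède tout autrement, par une construction plate explicite : des parties polaires triviales (donc de résidu quadratique nul) pour les pôles pairs, des parties polaires d'ordre impair pour les pôles $-c_{j}$, recollées en chaîne, le pôle $P_{1}$ absorbant tous les segments restants (figure~\ref{ex:8,8,12}) ; la surface obtenue est de genre zéro, à zéro unique, et tous les résidus sont nuls par construction.
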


\begin{proof}
Soient deux pôles $P_{1}$ et $P_{2}$ d'ordres respectifs $-c_{1}$ et $-c_{2}$ non pairs. Pour chaque pôle d'ordre $-b_{i}$ pair nous prenons une partie polaire d'ordre $b_{i}$ associée à $(1;1)$. Pour les pôles d'ordres impairs distincts de $P_{1}$, nous prenons la partie polaire associée à $(1;\emptyset)$. Pour $P_{1}$ nous prenons la partie polaire d'ordre $c_{1}$ associée à $(\emptyset;\rec[1][r])$. Cela est illustré par la figure~\ref{ex:8,8,12}.
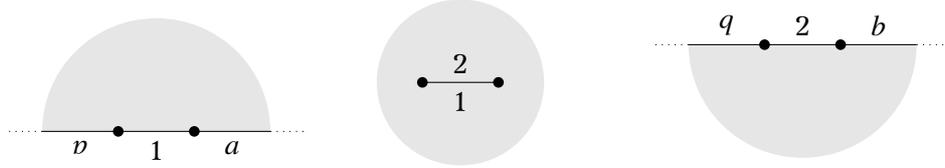
\begin{figure}[htb]
\begin{tikzpicture}

\begin{scope}[xshift=-1.5cm,yshift=.15cm]

 \fill[black!10] (-1,0)coordinate (a) -- (1.5,0)-- (a)+(2,0) arc (0:180:1.5)--(a)+(180:1.5) -- cycle;

   \draw (a)  -- node [below] {$1$} (0,0) coordinate (b);
 \draw (0,0) -- (1,0) coordinate[pos=.5] (c);
 \draw[dotted] (1,0) --coordinate (p1) (1.5,0);
 \fill (a)  circle (2pt);
\fill[] (b) circle (2pt);
\node[below] at (c) {$a$};

 \draw (a) -- node [above,rotate=180] {$a$} +(180:1) coordinate (d);
 \draw[dotted] (d) -- coordinate (p2) +(180:.5);

     \end{scope}

\begin{scope}[xshift=1.5cm,yshift=.8cm]
\fill[fill=black!10] (0.5,0)  circle (1.1cm);

 \draw (0,0) coordinate (a) -- coordinate (c) (1,0) coordinate (b);

 \fill (a)  circle (2pt);
\fill[] (b) circle (2pt);
\node[below] at (c) {$1$};
\node[above] at (c) {$2$};

    \end{scope}

\begin{scope}[xshift=6cm,rotate=180,yshift=-1.3cm]

 \fill[black!10] (-1.5,0)coordinate (a) -- (0,0)-- (0:1) arc (00:180:1.5) -- cycle;

   \draw (-1,0) coordinate (a) -- node [above] {$2$} (0,0) coordinate (b);
 \draw (a) -- +(-1,0) coordinate[pos=.5] (c) coordinate (e);
 \draw[dotted] (e) -- +(-.5,0);
 \fill (a)  circle (2pt);
\fill[] (b) circle (2pt);
\node[above] at (c) {$b$};

 \draw (b) -- node [below,rotate=-180] {$b$} +(0:1) coordinate (d);
 \draw[dotted] (d) -- +(0:.5);
\end{scope}

\end{tikzpicture}
\caption{Différentielle quadratique de $\Omega^{2}\moduli[0](6;-3,-3;-4)$ avec un résidu nul} \label{ex:8,8,12}
\end{figure}
\par
Les collages sont les suivants. Nous collons le bord inférieur de la partie polaire du $i$-ième pôle d'ordre pair au bord d'en haut du $(i+1)$-ième pôle d'ordre pair. Le bord inférieur de la partie polaire associée au pôle $P_{p}$ est collé au bord du segment du pôle~$P_{2}$. Enfin, tous les segments restants sont collés au bord de la partie polaire associée à~$P_{1}$.  Cette surface possède les propriétés souhaitées.
\end{proof}
 
Nous traitons maintenant le cas des strates ayant au moins deux zéros.
\begin{lem}\label{lem:g=0gen2}
Considérons la décomposition $\mu=(a_{1},\dots,a_{n};-b_{1},\dots,-b_{p};-c_{1},\dots,-c_{r})$ avec $n\geq2$. L'image de l'application résiduelle contient l'origine dans les trois cas suivants:
\begin{itemize}
\item[i)] $r \geq 2$;
\item[ii)] $\nim \geq 2$;
\item[iii)] $r=1$, $\nim = 1$ et la somme des $a_{i}$ pairs est supérieure ou égale à~$2p$.
\end{itemize} 
\end{lem}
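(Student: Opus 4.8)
Il s'agit de montrer que l'origine est dans l'image de l'application résiduelle pour trois familles de strates de genre zéro avec $n \geq 2$ zéros. L'idée directrice est de fabriquer explicitement une différentielle quadratique sur $\PP^1$ dont tous les résidus quadratiques sont nuls, en assemblant les briques élémentaires introduites à la section~\ref{sec:briques}. La stratégie naturelle est de se ramener autant que possible au cas d'un unique zéro, déjà traité au lemme~\ref{lem:g=0gen1nouvbis}, puis d'éclater ce zéro pour produire les zéros supplémentaires requis sans perturber les résidus, via la proposition~\ref{prop:eclatZero}.

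\emph{Cas i) ($r \geq 2$).} Le plan est de commencer par une différentielle de la strate à un seul zéro $\quadomoduli[0](a;-b_{1},\dots,-b_{p};-c_{1},\dots,-c_{r})$ avec $a = \sum b_i + \sum c_j - 4$, dont le lemme~\ref{lem:g=0gen1nouvbis} garantit qu'elle réalise l'origine dès que $r \geq 2$. On éclate ensuite ce zéro unique en les zéros voulus $(a_1,\dots,a_n)$ au moyen de la proposition~\ref{prop:eclatZero}. Comme la différentielle de départ est primitive (elle possède un pôle d'ordre impair, donc son revêtement canonique est connexe), la condition~i) de la proposition~\ref{prop:eclatZero} est automatiquement satisfaite et l'éclatement ne modifie pas les résidus, qui restent tous nuls.

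\emph{Cas ii) ($\nim \geq 2$).} Ici l'hypothèse fournit au moins deux zéros d'ordre impair. Le plan est à nouveau de partir d'une différentielle à zéro unique réalisant l'origine, mais la difficulté est qu'il faut disposer d'une telle différentielle \emph{primitive} pour pouvoir éclater librement. Si la partition des pôles comporte un pôle d'ordre impair, on conclut comme en~i). Sinon tous les pôles sont d'ordre pair, et une différentielle à zéro unique d'ordre pair pourrait être un carré; l'astuce sera d'éclater vers une configuration contenant un zéro d'ordre impair, ce qui, d'après la dernière assertion de la proposition~\ref{prop:eclatZero}, rend $\xi'$ primitive même lorsqu'on part d'un carré. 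On invoque alors la condition~ii) de la proposition~\ref{prop:eclatZero}, qui demande l'existence d'une différentielle de genre zéro à résidu nul au pôle auxiliaire: l'existence de deux zéros impairs dans la configuration cible permet précisément de satisfaire cette condition combinatoire sur la strate auxiliaire.

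\emph{Cas iii) ($r=1$, $\nim=1$, somme des $a_i$ pairs $\geq 2p$).} C'est le cas le plus délicat, et j'anticipe que ce sera l'obstacle principal. On ne peut plus s'appuyer sur $r \geq 2$ pour réaliser l'origine à zéro unique; il n'y a qu'un seul pôle d'ordre impair et un seul zéro impair. Le plan est de regrouper les zéros d'ordre pair et d'utiliser l'hypothèse $\sum_{\text{pair}} a_i \geq 2p$ pour garantir, via la proposition~\ref{prop:eclatZero}ii), l'existence de la différentielle auxiliaire de genre zéro à résidu nul. Concrètement, on attache au zéro une droite projective portant une différentielle de type $(\alpha_1,\dots,\alpha_t;-a_0-4)$ dont le résidu est nul; la condition numérique $\sum_{\text{pair}} a_i \geq 2p$ est exactement ce qui assure que cette strate auxiliaire admet une telle différentielle (en lien avec le seuil $2p$ apparaissant déjà dans le lemme initial et dans le théorème~\ref{thm:r=0s=0}). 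Le point technique sera de vérifier soigneusement l'inégalité numérique garantissant la lissabilité et d'organiser les éclatements de façon que les résidus quadratiques restent nuls à chaque étape.
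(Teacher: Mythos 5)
Your case i) is correct and is exactly the paper's argument: blow up the unique zero of the origin-realizing differential given by Lemma~\ref{lem:g=0gen1nouvbis}, primitivity (hence condition i) of Proposition~\ref{prop:eclatZero}, so the residues are preserved) being automatic from the presence of an odd-order pole. The gap lies in cases ii) and iii). Throughout this section $r\geq 1$, and the whole difficulty is the case $r=1$, where your strategy of reducing to the single-zero stratum is unavailable: when $r=1$ the unique zero of $\quadomoduli[0](a;-b_{1},\dots,-b_{p};-c)$ is odd, and the obstruction lemma opening Section~\ref{sec:poleimpreszero} (applied with no even zeros, so the relevant sum is $0<p$) shows that this stratum \emph{never} realizes the origin once $p\geq1$. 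Consequently, in case ii) your step \og si la partition des pôles comporte un pôle d'ordre impair, on conclut comme en i) \fg{} fails precisely in the only remaining case $r=1$ (and the sub-case \og tous les pôles pairs \fg{} you discuss is outside the scope of this lemma); and in case iii) your use of Proposition~\ref{prop:eclatZero} is circular: blowing up a zero preserves residues, but it needs as input a differential of the ambient stratum that \emph{already} has all residues zero, and no single-zero such differential exists. Condition ii) of Proposition~\ref{prop:eclatZero} concerns only the auxiliary genus-zero stratum $(\alpha_{1},\dots,\alpha_{t};-a_{0}-4)$ glued at the node; it cannot conjure the missing starting differential.

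What the paper supplies, and what is absent from your proposal, is an explicit flat-geometric construction for the base case of iii): a differential with all residues zero in $\quadomoduli[0](a_{1},a_{2};-b_{1},\dots,-b_{p};-c)$ with $a_{1}=2l_{1}-1$ odd and $a_{2}\geq 2p$ even, obtained by gluing polar parts, with two distinct gluing patterns according to whether $l_{1}\geq p$ or $l_{1}<p$ (the types $\tau_{i}$ of the even polar parts are chosen so as to distribute the cone angle between the two zeros). The general case iii) then follows by splitting the even zero, the odd pole guaranteeing primitivity. Case ii) is in turn reduced to iii), not to i): for $r=1$ parity forces $\nim\geq3$; after reducing to $n=\nim=3$, the total order of the zeros is at least $4p-1$, so the two largest odd zeros have total order at least $2p$, and the desired differential is obtained from the case-iii) differential whose zeros are $(a_{i_{1}}+a_{i_{2}},a_{i_{3}})$ by splitting its even zero. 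Without this explicit construction your proof has no base case to blow up from, so it does not go through.
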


\begin{proof}
La somme $r+\nim$ est toujours un nombre pair. Si $r\geq2$, on peut simplement éclater le zéro des différentielles données par le lemme~\ref{lem:g=0gen1nouvbis}.
\smallskip
\par
Nous traitons maintenant le cas (iii). L'éclatement de zéros permet de se ramener au cas $n=2$ avec un zéro d'ordre impair $a_{1}$ et un zéro d'ordre pair $a_{2} \geq 2p$. Nous écrivons $a_{1}=2l_{1}-1$ et $a_{2}=2l_{2}$ avec $l_{2} \geq p$. Nous considérons deux cas selon que $l_{1}\geq p$ ou $l_{1}<p$. Ces cas sont illustrés par la figure~\ref{fig:rgeq1ngeq2}: à gauche dans le premier et à droite dans le second.
\par
Dans le cas où $l_{1}\geq p$, on associe aux pôles d'ordres $-b_{i}$ les parties polaires d'ordres~$b_{i}$ et de types~$\tau_{i}$ associées à $(1;1)$. De plus, on choisit les types $\tau_{i}$ de telle sorte que la somme $\sum_{i}\tau_{i}$ soit inférieure ou égale à $ l_{1}$ et maximale pour cette propriété. On note $\bar{l_{1}}=l_{1}-\sum_{i}\tau_{i}$. Pour le pôle d'ordre $-c$ on procède de la façon suivante. On prend la partie polaire d'ordre~$c$ associée à $\left(\emptyset;v_{1},v_{2}\right)$ avec $v_{1}=v_{2}=1$ et de type $\bar{l_{1}}+1$.
On obtient la différentielle souhaitée en identifiant le bord inférieur de la partie polaire associée à $P_{i}$ au bord supérieur de celle de $P_{i+1}$. Le bord supérieur de $P_{1}$ est identifié au segment $v_{1}$ par rotation et le bord inférieur de la partie polaire d'ordre $b_{p}$ est identifié au segment $v_{2}$ par translation.
\par
Si $l_{1}<p$, alors on associe au pôle d'ordre $-c$ la partie polaire d'ordre $c$ associée à $(\emptyset;1)$. Pour l'un des pôles d'ordre $-b_{i}$, disons $-b_{1}$, on associe la partie polaire d'ordre $b_{1}$ associée à $(1;v_{1},v_{2})$, avec $v_{i}$ de même longueur, $v_{1}+v_{2}=1$ et l'angle (dans la partie polaire) entre ces deux étant $\pi$. Comme $a_{i}$ est impair, cette partie polaire est non dégénérée.
Pour~$l_{1}$ pôles d'ordre~$-b_{i}$, on associe des parties polaires triviales associées à $(v_{1};v_{1})$ et de type $\tau_{i}=b_{i}-1$. On colle ces parties polaires de manière cyclique à $v_{1}$ et~$v_{2}$. Le point correspondant à l'intersection entre $v_{1}$ et $v_{2}$ est la singularité d'ordre~$a_{1}$. On associe aux autres pôles d'ordres~$-b_{i}$ la partie polaire triviale associée à $(1;1)$. On colle ces parties polaires de manière cyclique aux parties polaires d'ordres $b_{1}$ et $c$ pour obtenir la surface plate souhaitée.
\begin{figure}[htb]
\center
 \begin{tikzpicture}[scale=.9]

\begin{scope}[xshift=-6cm]
    \fill[fill=black!10] (0,0) ellipse (1.5cm and .7cm);
\draw[] (0,0) coordinate (Q) -- (-1.5,0) coordinate[pos=.5](a);

\node[above] at (a) {$1$};
\node[below] at (a) {$2$};

\draw[] (Q) -- (.7,0) coordinate (P) coordinate[pos=.5](c);
\draw[] (Q) -- (P);

\fill (Q)  circle (2pt);

\fill[color=white!50!] (P) circle (2pt);
\draw[] (P) circle (2pt);
\node[above] at (c) {$v_{1}$};
\node[below] at (c) {$v_{2}$};

    \fill[fill=black!10] (0,-2.5) ellipse (1.1cm and .5cm);
\draw[] (.5,-2.5) coordinate (Q) --++ (-1.6,0) coordinate[pos=.5](b);

\fill (Q)  circle (2pt);

\node[above] at (b) {$2$};
\node[below] at (b) {$1$};

\end{scope}

\begin{scope}[xshift=-3cm]

 \fill[black!10] (.7,-.9)coordinate (a) --  ++ (90:1) coordinate (b)-- ++(-1.6,0) coordinate (c) coordinate[pos=.25] (e) coordinate[pos=.5] (f) coordinate[pos=.75] (g) -- ++ (90:1) coordinate (d) .. controls ++(10:3.5) and ++(-10:1) .. (a);
\draw[] (a) -- node [left] {$5$}  (b);
\draw[] (b) -- (c) -- node [right, rotate=180] {$5$}  (d);
\draw (b) --node [above] {$3$}node [below] {$4$}  ++(.85,0);
\node[above] at (e) {$v_{2}$};
\node[below,rotate=180] at (g) {$v_{1}$};
 \fill (f)  circle (2pt);
 \filldraw[fill=white] (b)  circle (2pt);
 \filldraw[fill=white] (c)  circle (2pt);

\fill[fill=black!10] (0,-2.5) ellipse (1.1cm and .5cm);
\draw[] (-.5,-2.5) coordinate (Q) --++ (1.5,0) coordinate[pos=.5](b);
\filldraw[fill=white] (Q)  circle (2pt);

\node[above] at (b) {$4$};
\node[below] at (b) {$3$};
\end{scope}

\begin{scope}[xshift=2cm]
\begin{scope}[yshift=0cm,xshift=-.5cm]
    \fill[fill=black!10] (.5,0) ellipse (1.1cm and .9cm);
   
 \coordinate (A) at (0,0);
  \coordinate (B) at (1,0);
    \coordinate (C)  at (.5,0);
 
 \filldraw[fill=white] (A)  circle (2pt);
\filldraw[fill=white] (B) circle (2pt);
 \fill (.57,0)  arc (0:-180:2pt); 

 \draw[]     (A) -- (B) coordinate[pos=.5](a);
 \draw (B) -- (C) coordinate[pos=.3](b);
 \draw (C) -- (A)  coordinate[pos=.6](c);

\node[above] at (a) {$1$};
\node[below] at (c) {$v_{1}$};
\node[below] at (b) {$v_{2}$};
\end{scope}

    \fill[fill=black!10] (0,-2.5) ellipse (1.1cm and .5cm);
\draw[] (-.5,-2.5) coordinate (Q) -- (.5,-2.5) coordinate[pos=.5](b) coordinate (P);

\filldraw[fill=white] (Q) circle (2pt);
\filldraw[fill=white] (P) circle (2pt);

\node[above] at (b) {$2$};
\node[below] at (b) {$1$};

\end{scope}
\begin{scope}[xshift=5.5cm]
\begin{scope}[rotate=180,yshift=-.5cm]
      
 \fill[black!10] (-1.5,0)coordinate (a) -- (0,0)-- (0:1) arc (0:180:1.5) -- cycle;

   \draw (-1,0) coordinate (a) -- node [above] {$2$} (0,0) coordinate (b);
 \draw (a) -- +(-1,0) coordinate[pos=.5] (c) coordinate (e);
 \draw[dotted] (e) -- +(-.5,0);
  \draw (b) -- node [below,rotate=180] {$b$} +(0:1) coordinate (d);
 \draw[dotted] (d) -- +(0:.5);
\filldraw[fill=white] (a) circle (2pt);
\filldraw[fill=white] (b) circle (2pt);
\node[above] at (c) {$b$};
\end{scope}

\begin{scope}[xshift=.5,yshift=-2.5cm]
\fill[fill=black!10] (0,0) coordinate (O) ellipse (1.1cm and .6cm);
\coordinate (P) at (0:.3);
\coordinate (Q) at (180:.3);
\draw[] (Q) -- (P)coordinate[pos=.5](b);
\filldraw[fill=white] (Q) circle (2pt);
\fill (P)  circle (2pt);

\node[below] at (b) {$v_{2}$};
\node[above] at (b) {$v_{1}$};
\end{scope}
\end{scope}
\end{tikzpicture}
\caption{Différentielle dont les résidus sont nuls dans la strate $\Omega^{2}\moduli[0](4,3;-6;-5)$ à gauche et dans $\Omega^{2}\moduli[0](1,10;\rec[-4][3];-3)$ à droite} \label{fig:rgeq1ngeq2}
\end{figure}
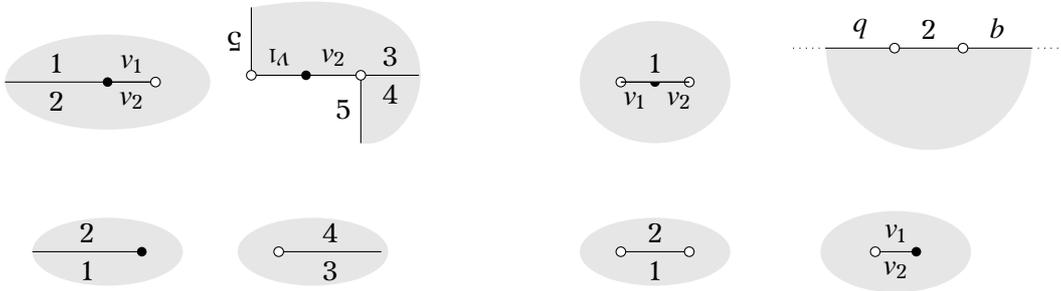
\par
Il reste à traiter le cas (ii) avec $r=1$. Comme le nombre de singularités d'ordre impair est pair, nous avons $\nim \geq 3$. L'éclatement de zéros permet de se ramener au cas $n=\nim=3$. Dans une strate $\Omega^{2}\moduli[0](a_{1},a_{2},a_{3};-b_{1},\dots,-b_{p};-c)$, l'ordre total des pôles est au moins $4p+3$, ce qui implique que l'ordre total des zéros vaut au moins $4p-1$. Par conséquent, la somme des ordres des deux zéros d'ordre le plus grand vaut toujours au moins $2p$ et donc ce cas s'obtient par éclatement de zéro à partir du cas (iii) traité plus haut.
\end{proof}

\subsection{Le complémentaire de l'origine}
\label{sec:poleimpresnozero}

Pour terminer la preuve du théorème~\ref{thm:g=0gen1bis} il suffit de montrer que le complémentaire de l'origine est dans l'image de l'application résiduelle des strates avec au moins un pôle impair. Nous traitons d'abord les strates avec un seul zéro.

\begin{lem}\label{lem:g=0gen1nouv}
Soit $\quadomoduli[0](a;-b_{1},\dots,-b_{p};-c_{1},\dots,-c_{r};\rec[-2][s])$ une strate de genre zéro avec $b_{i}$ pairs et $c_{j}$ impairs telle que $r\neq0$. L'image de l'application résiduelle contient le complémentaire de $(0,\dots,0)$ dans l'espace résiduel.
\end{lem}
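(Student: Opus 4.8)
Le plan est de construire explicitement, pour toute configuration de résidus non nulle $(R_{1},\dots,R_{p},\rho_{1},\dots,\rho_{s})\in\CC^{p}\times(\CC^{\ast})^{s}$, une surface demi-translation réalisant la strate avec ces résidus, en assemblant les briques élémentaires de la section~\ref{sec:briques}. L'idée directrice est de reprendre la construction en arbre de la preuve du lemme~\ref{lem:g=0gen1nouvbis}, qui traite le cas du résidu nul, et d'y remplacer une des parties polaires triviales par une partie polaire \emph{non triviale} portant un résidu non nul. Comme l'image de l'application résiduelle est invariante sous l'action de $\CC^{\ast}$ par dilatation (la dilatation $\xi\mapsto\lambda\xi$ multiplie tous les résidus par $\lambda$ et préserve la strate), on peut normaliser une coordonnée non nulle; on distingue ensuite suivant que la coordonnée non nulle imposée correspond à un pôle double ou à un pôle d'ordre pair $\geq4$.

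Pour la construction proprement dite, je choisirais d'abord un pôle dont le résidu prescrit est non nul — il en existe un par hypothèse, et c'est automatique dès que $s\neq0$. Pour réaliser un résidu $R\neq0$ en un pôle d'ordre $-b$ pair, on prend la partie polaire d'ordre $b$ associée à $(v_{1};w_{1})$ avec $v_{1}-w_{1}$ égal à une racine carrée de $R$; quitte à choisir $w_{1}$ de partie réelle grande, on peut imposer $\mathrm{Re}(v_{1})>0$ et $\mathrm{Re}(w_{1})>0$, et le lemme~\ref{lm:kresidu} garantit alors que le résidu quadratique vaut $R$. Pour un pôle double on prend de même $C(v_{1})$ avec $v_{1}$ une racine carrée de $\rho$. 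Les autres pôles d'ordre pair et les autres pôles doubles reçoivent des parties polaires réalisant leur résidu de la même façon, une partie triviale associée à $(1;1)$ lorsque le résidu imposé est nul. Les pôles d'ordre impair $-c_{j}$, qui ne portent pas de résidu, servent de connecteurs via les parties polaires associées à $(1;\emptyset)$ du lemme~\ref{lm:kresidu}. Je collerais enfin ces parties polaires comme dans la preuve du lemme~\ref{lem:g=0gen1nouvbis}, en identifiant par translation (et par rotation d'angle $\pi$ là où la structure demi-translation l'exige) les demi-droites et segments de bord, de sorte que toutes les extrémités des lignes brisées se rejoignent en un unique point conique; le lemme~\ref{lem:noninter} assure, quitte à permuter les vecteurs, que ces lignes sont sans auto-intersection.

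Il resterait à vérifier les invariants de la surface obtenue. Le graphe d'incidence des domaines polaires étant un arbre ($p+r+s$ sommets reliés par $p+r+s-1$ liens-selles), la surface est de genre zéro et n'a qu'un seul point conique; un décompte d'angles montre que son ordre est $a=\sum b_{i}+\sum c_{j}+2s-4$, la valeur imposée par la partition. Les ordres des pôles et les résidus sont ceux voulus d'après le lemme~\ref{lm:kresidu}. Enfin, la différentielle est automatiquement primitive: la présence d'un pôle d'ordre impair $-c_{j}$ empêche $\xi$ d'être le carré d'une différentielle abélienne, car un tel carré n'aurait que des singularités d'ordre pair. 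La configuration de résidus étant une configuration non nulle arbitraire, ceci établirait que l'image contient le complémentaire de l'origine.

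La difficulté principale sera la comptabilité des identifications de bord: il faut garantir à la fois que les longueurs et directions des segments collés entre parties polaires voisines sont compatibles \emph{tout en} réalisant simultanément les $p+s$ résidus prescrits (qui fixent chacun la somme $\sum v-\sum w$ d'une partie polaire), et qu'il ne subsiste qu'un unique point conique d'angle exactement $(a+2)\pi$. La marge de manœuvre provient des domaines basiques ouverts supplémentaires (le type $\tau$) et de la liberté sur les arguments des vecteurs, le décalage résiduel $\sum v-\sum w$ étant absorbé ``à l'infini'' par le domaine polaire sans contraindre le collage fini; il est vraisemblable qu'un traitement séparé, illustré par des figures analogues à la figure~\ref{fig:rgeq1ngeq2}, soit nécessaire selon que le pôle portant le résidu non nul est double ou d'ordre $\geq4$.
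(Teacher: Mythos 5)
Votre stratégie générale --- recoller des parties polaires selon un arbre, une partie non triviale portant chaque résidu non nul, puis conclure par un décompte de genre et d'angles --- est bien celle du texte. Mais votre proposition s'arrête précisément là où la preuve commence : vous identifiez vous-même la \emph{comptabilité des identifications de bord} comme la difficulté principale, et vous ne la résolvez pas. Or les choix que vous faites la rendent insoluble telle quelle. Vos pièces ont des segments d'holonomies incompatibles : les pôles impairs et les pôles pairs de résidu nul portent des segments d'holonomie $1$, tandis que les pôles de résidu non nul portent des segments $v_{1}$, $w_{1}$ ou $\sqrt{\rho_{i}}$ ; or deux segments ne peuvent être identifiés par translation ou par rotation d'angle $\pi$ que s'ils ont même holonomie au signe près, de sorte que le schéma de collage du lemme~\ref{lem:g=0gen1nouvbis}, où tous les segments valent $1$, ne s'étend pas tel quel. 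La solution du texte est un choix fin des données de chaque partie polaire : le pôle impair $P_{1}$ reçoit une partie polaire \og moyeu \fg{} associée à $(\emptyset;(1^{r-1}),r_{1},\dots,r_{l})$, dont le bord contient exactement un segment taillé pour chaque autre pièce ; les pôles pairs de résidu non nul sont des feuilles $(r_{i};\emptyset)$ à un seul segment (et non deux comme votre $(v_{1};w_{1})$) ; et surtout les pôles pairs de résidu nul ne sont pas associés à $(1;1)$ mais à $(r_{j_{i}};r_{j_{i}})$, pour pouvoir s'insérer en chaîne entre le moyeu et le pôle de résidu non nul $P_{j_{i}}$ qui termine la chaîne.

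Le cas $r=1$ montre que ce n'est pas un détail : le lemme~\ref{lem:g=0gen1nouvbis}, que vous prenez comme modèle, exige deux pôles impairs et ne s'applique pas ; et dans une strate comme $\quadomoduli[0](a;-4,-4;-3)$ avec résidus $(0,R)$, $R\neq0$, vos parties triviales $(1;1)$ n'ont aucun segment d'holonomie $1$ où se coller, puisque le moyeu est alors associé à $(\emptyset;r_{1},\dots,r_{l})$ et qu'il n'y a pas d'autre pôle impair ; il faut bien les associer à $(\sqrt{R};\sqrt{R})$ et les enchaîner sur le pôle de résidu $R$, comme le fait le texte. Enfin, votre vérification de l'unicité du zéro repose sur un décompte d'angles impossible à mener sans schéma de collage explicite ; l'argument du texte est plus direct : les liens-selles étant les bords de domaines polaires recollés en arbre, couper le long de n'importe quel lien-selle déconnecte la surface, ce qui donne à la fois le genre zéro et l'unicité du zéro.
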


\begin{proof}
Nous commençons par la construction d'une différentielle dans $\quadomoduli[0](\mu)$  de résidus $(R_{1},\dots,R_{p+s})\in \espresk[0][2](\mu)\setminus\left\{(0,\dots,0)\right\}$ avec $\mu=(a;-b_{1},\dots,-b_{p};-c_{1},\dots,-c_{r};\rec[-2][s])$. Si $R_{i}\neq0$, on note $r_{i}$ sa racine d'argument dans $\left] -\pi,0 \right]$. Cette construction est illustrée par la figure~\ref{ex:8,8,12res}.
\par 
Pour les pôles $P_{p+i}$ d'ordre $-2$, nous prenons une partie polaire d'ordre~$2$ associée à  $r_{p+i}$. Pour chaque pôle $P_{i}$ d'ordre $-b_{i}=-2\ell_{i}$ tel que $R_{i}\neq0$, nous prenons une partie polaire non triviale d'ordre $b_{i}$ associée à $(r_{i};\emptyset)$. Pour les pôles d'ordre $-b_{i}$ tels que $R_{i}=0$ nous prenons la partie polaire d'ordre $b_{i}$ associée à $(r_{j_{i}};r_{j_{i}})$ où $r_{j_{i}}$ est l'une des racines choisies précédemment.
\par
Maintenant, pour tous les pôles d'ordre $-c_{i}$ sauf un, disons $P_{1}$ d'ordre $-c_{1}$, nous prenons une partie polaire de type $c_{i}$ associée à $(1;\emptyset)$. Pour le dernier pôle $P_{1}$ d'ordre $-c_{1}$, nous prenons la partie polaire de type $c_{1}$, sans points d'intersection (voir le lemme~\ref{lem:noninter}), associée à $(\emptyset;\rec[1][r-1],r_{1},\dots,r_{t})$ où $t$ est le nombre de résidus non nuls.
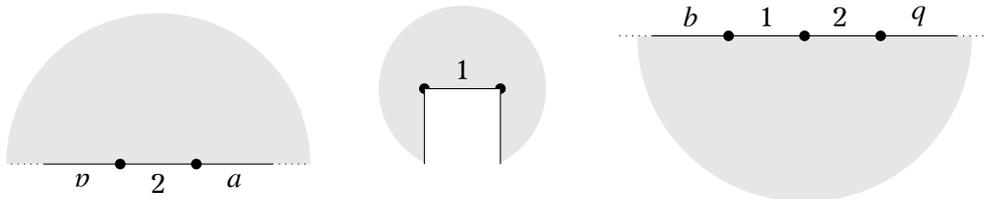
\begin{figure}[htb]
\begin{tikzpicture}


\begin{scope}[xshift=-1.5cm]
 \fill[black!10] (-1,0)coordinate (a) -- (1.5,0)-- (a)+(2.5,0) arc (0:180:2)--(a)+(180:1.5) -- cycle;

   \draw (a)  -- node [below] {$2$} (0,0) coordinate (b);
 \draw (0,0) -- (1,0) coordinate[pos=.5] (c);
 \draw[dotted] (1,0) --coordinate (p1) (1.5,0);
 \fill (a)  circle (2pt);
\fill[] (b) circle (2pt);
\node[below] at (c) {$a$};

 \draw (a) -- node [above,rotate=180] {$a$} +(180:1) coordinate (d);
 \draw[dotted] (d) -- coordinate (p2) +(180:.5);

     \end{scope}

\begin{scope}[xshift=1.5cm,yshift=1cm]
\fill[fill=black!10] (0.5,0)coordinate (Q)  circle (1.1cm);
    \coordinate (a) at (0,0);
    \coordinate (b) at (1,0);

     \fill (a)  circle (2pt);
\fill[] (b) circle (2pt);
    \fill[white] (a) -- (b) -- ++(0,-1.1) --++(-1,0) -- cycle;
 \draw  (a) -- (b);
 \draw (a) -- ++(0,-1);
 \draw (b) -- ++(0,-1);

\node[above] at (Q) {$1$};
    \end{scope}

\begin{scope}[xshift=6.5cm,yshift=1.7cm,rotate=180]

 \fill[black!10] (-1,0)coordinate (a) -- (1,0)-- (2.2,0) arc (0:180:2.2) -- cycle;
\draw (-1,0) coordinate (a) -- node [above,xshift=-1] {$2$} (0,0) coordinate (b);
\draw (b) -- node [above] {$1$} +(1,0) coordinate (c);
\draw (c) -- node [above] {$b$} +(1,0) coordinate (d);
\draw[dotted] (d) -- +(.5,0);
\fill (a)  circle (2pt);
\fill[] (b) circle (2pt);
\fill[] (c) circle (2pt);

\draw (a) -- node [below,rotate=180] {$b$} +(180:1) coordinate (e);
\draw[dotted] (e) -- +(180:.5);

\end{scope}

\end{tikzpicture}
\caption{Différentielle quadratique de $\Omega^{2}\moduli[0](6;-3,-3;-4)$ avec un résidu non nul au pôle d'ordre $-4$} \label{ex:8,8,12res}
\end{figure}
\par
La surface est obtenue par les recollements suivants. Nous collons le segment inférieur~$r_{j_{i}}$ de chaque partie polaire triviale au bord de la partie non triviale du pôle $P_{j_{i}}$. Nous faisons les collages similaires, pour chacun des pôles d'ordres pairs dont le résidu est nul. Ensuite nous collons par translation les bords des parties polaires différentes de $P_{1}$ aux segments correspondants du bord de la partie polaire de $P_{1}$.
La différentielle $\xi$ associée à cette surface plate possède les ordres de pôles et les résidus quadratiques souhaités.

Il reste donc à montrer que le genre de la surface est zéro et que la différentielle $\xi$ possède un unique zéro.
Pour cela, il suffit de vérifier que si l'on coupe la surface le long d'un lien-selle, alors on sépare cette surface en deux parties. C'est une conséquence du fait que les liens-selles correspondent aux bords des domaines polaires. 
\end{proof}

Le cas des strates avec au moins deux zéros s'obtient immédiatement par éclatement de singularité. Le théorème~\ref{thm:g=0gen1bis} est donc démontré, ainsi que le cas $r \geq 1$ du théorème~\ref{thm:surjimp4}.

\section{Différentielles à pôles pairs non tous doubles}
\label{sec:NTD}

Cette section est consacrée aux strates  $\quadomoduli[0](a_{1},\dots,a_{n};-b_{1},\dots,-b_{p},\rec[-2][s])$ avec $p\geq1$. Tout au long de cette section, nous notons $b_{i}:=2\ell_{i}$ et $a_{i}:=2l_{i}+\bar{a_{i}}$ avec $\bar{a_{i}}\in\{-1,0\}$.
À l'exception de la section~\ref{sub:pair4impair}, nous supposerons toujours que $a_{1},a_{2}$ sont impairs tandis que les $a_{i}$ sont pairs pour tout $i\geq3$.

\subsection{Le cas du résidu nul}\label{sec:reszeropolesdoubles}
Dans cette section nous montrons que les strates de la forme $\quadomoduli[0](a_{1},\dots,a_{n};-b_{1},\dots,-b_{p})$ qui possèdent une différentielle dont les résidus sont nuls si et seulement si la somme des zéros pairs est $\geq 2p $.

Nous commençons par montrer que cette condition est nécessaire pour que l'application résiduelle contienne l'origine.
\begin{lem}\label{lem:condnecngeq3}
Soit $\mu=(a_{1},\dots,a_{n};-b_{1},\dots,-b_{p})$ une décomposition telle que $a_{1}$ et $a_{2}$ sont impairs et~$a_{i}$ est pair pour $i \geq 3$. Si l'application résiduelle $\appresquad[0](\mu)$ contient $(0,\dots,0)$, alors
\begin{equation}\label{eq:inezero}
 \sum_{i=3}^{n} a_{i} \geq 2p \,.
\end{equation}
\end{lem}

\begin{proof}
Soit $\xi$ une différentielle dans la strate $\quadomoduli[0](\mu)$ dont les résidus sont nuls. Son revêtement canonique $\pi$ n'est ramifié qu'en deux singularités d'ordre impair et est donc de genre zéro. Les singularités de la racine $\whomega$ de $\pi^{\ast}\xi$ sont deux zéros $\whz_{1},\whz_{2}$ d'ordres $a_{1}+1$ et $a_{2}+1$ respectivement, $2n-4$ zéros d'ordres $\frac{a_{3}}{2},\frac{a_{3}}{2},\dots,\frac{a_{n}}{2},\frac{a_{n}}{2}$ et $2p$ pôles d'ordres $-\frac{b_{1}}{2},-\frac{b_{1}}{2},\dots,-\frac{b_{p}}{2},-\frac{b_{p}}{2}$ dont le résidu est nul. Il s'ensuit que $\whomega$ est  exacte.
\par
Pour toute fonction méromorphe $f\colon \PP^{1}  \to \CC$ telle que $\whomega=df$, la symétrie du revêtement canonique implique que $f(\whz_{1})=f(\whz_{2})$.  On supposera donc sans perte de généralité que $f(\whz_{1})=f(\whz_{2})=0$. En tant que primitive de $\whomega$, la fonction $f$ a $2p$ pôles d'ordres $1-\frac{b_{j}}{2}$. La somme des ordres des pôles est donc $2p-\sum_{j=1}^{p} b_{j}$.
\par
Aux points $\whz_{1}$ et $\whz_{2}$, la fonction $f$ possède des zéros d'ordres $2+a_{1}$ et $2+a_{2}$. Cela implique que $4+a_{1}+a_{2} \leq \sum_{j=1}^{p} b_{j} -2p$. De façon équivalente, $a_{1}+a_{2} \leq \sum_{i=1}^{n} a_{i} -2p$ et donc $\sum_{i=3}^{n} a_{i} \geq 2p$.
\end{proof}

Nous montrons maintenant que l'équation~\eqref{eq:inezero} est suffisante pour que l'origine soit dans l'image de l'application résiduelle.
Pour que cette équation soit satisfaite, il faut que la strate contienne au moins un zéro d'ordre pair.

Nous traitons d'abord le cas avec un unique zéro pair et $p\geq 1$ pôles.
\begin{lem}\label{lem:condsufngeq3}
Soit $\mu=(a_{1},a_{2},a_{3};-b_{1},\dots,-b_{p})$ une décomposition telle que $a_{1}$ et $a_{2}$ sont impairs et~$a_{3}$ est pair. Si $a_{3} \geq 2p$, alors l'application résiduelle $\appresquad[0](\mu)$ contient $(0,\dots,0)$.
\end{lem}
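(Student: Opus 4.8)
The plan is to build, directly from the elementary polar parts of Section~\ref{sec:briques}, a genus-zero flat surface in $\quadomoduli[0](\mu)$ whose quadratic residue vanishes at every pole. Since we ask for vanishing residues, each pole of order $-b_i=-2\ell_i$ will be realised by a \emph{trivial} polar part, so that its residue is $0$ by Lemma~\ref{lm:kresidu}. The two odd zeros $a_1,a_2$ and the even zero $a_3$ are then produced as the cone points of the polygon obtained after the boundary identifications, and the whole point is to select the types of the polar parts and the gluing pattern so that these three vertices carry exactly the prescribed cone angles $(a_1+2)\pi$, $(a_2+2)\pi$, $(a_3+2)\pi$, with no spurious zeros and with the surface remaining of genus zero.

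Writing $a_1=2l_1-1$, $a_2=2l_2-1$ and $a_3=2l_3$, the fact that $\mu$ is a partition of $-4$ gives the balance $\sum_{i=1}^p \ell_i = l_1+l_2+l_3+1$, while the hypothesis $a_3\geq 2p$ reads $l_3\geq p$. I would first create the two odd zeros exactly as in the proof of Lemma~\ref{lem:g=0gen2}: attach to two distinguished poles trivial polar parts whose free boundary is split into two segments of equal length meeting at interior angle $\pi$ (as in the part associated with $(1;v_1,v_2)$, $v_1+v_2=1$); the interior vertex of each such pair then carries an odd cone angle. Gluing a cyclic chain of further trivial polar parts of suitable types around each of these two vertices raises their angles to the required odd multiples $(a_1+2)\pi$ and $(a_2+2)\pi$, consuming respectively $l_1$ and $l_2$ units of the length budget $\sum_i \ell_i$.

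The remaining polar parts are then organised around the even zero $a_3$, which plays the role of a hub joining all $p$ polar domains and which absorbs the remaining $l_3+1$ units of the budget. This is exactly where the inequality $l_3\geq p$ is used: it guarantees that the hub has enough corners to meet all $p$ polar parts and still be assigned the even order $a_3$ without forcing an extra cone point. The identifications are made cyclically, translating the top boundary segment of each polar part onto the bottom segment of the next, so that every saddle connection of the resulting surface is a boundary between two adjacent polar domains.

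The main obstacle is precisely this angle bookkeeping: one must check, distinguishing cases according to the sizes of $l_1$ and $l_2$ relative to $p$ (as in the two sub-cases of the proof of Lemma~\ref{lem:g=0gen2}), that the chosen types $\tau_i$ and the cyclic order of the gluings distribute the total cone angle so that only the three prescribed vertices appear and each receives the correct multiple of $\pi$. Once this is settled, the two final points are immediate: genus zero follows exactly as in the proof of Lemma~\ref{lem:g=0gen1nouv}, since each saddle connection bounds a polar domain and hence disconnects the surface when cut; and the presence of the two odd zeros forces the linear holonomy to be nontrivial, so the quadratic differential so constructed is primitive, as required.
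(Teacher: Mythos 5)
Your overall strategy --- glue only \emph{trivial} polar parts so that every quadratic residue vanishes automatically, place the two odd zeros at $\pi$-corners, and then do angle bookkeeping --- is the same as the paper's, but the proposal is not a proof: the step you label ``the main obstacle'' and leave as ``one must check'' \emph{is} the lemma, and the specific architecture you propose (two distinguished parts $(1;v_{1},v_{2})$ carrying the odd zeros, chains of two-segment parts around them, and all $p$ polar domains meeting at an even-zero hub) provably fails in admissible cases. Take $\mu=(1,1,6;-4,-4,-4)$, which satisfies $a_{3}=6=2p$. Every trivial two-segment polar part of order $4$ contributes exactly $2\pi$ at each of its two corners, and each part $(1;v_{1},v_{2})$ contributes $\pi,2\pi,2\pi$; so each odd zero must receive its own $\pi$-corner plus exactly one $2\pi$-corner, and the even zero the four remaining $2\pi$-corners. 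Checking the finitely many matchings of the eight boundary segments compatible with such an assignment, each one either forces a segment length to vanish (for instance, matching the two middle segments $v_{1}\leftrightarrow v_{1}'$ and each top to the other part's $v_{2}$ gives $|v_{1}|+|v_{2}|=|v_{2}'|$ and $|v_{1}'|+|v_{2}'|=|v_{2}|$, hence $|v_{1}|=0$) or creates an extra cone point. Structurally, when all $b_{i}=4$ your chain picture needs $p\geq l_{1}+l_{2}+2$ poles, whereas the hypothesis $a_{3}\geq 2p$ then only guarantees $p\geq l_{1}+l_{2}+1$. This borderline regime is exactly the paper's first case $a_{1}+a_{2}\geq 2p-4$, which it treats by a genuinely different construction: it starts from the slit-plane differential of $\quadomoduli[0](-1,-1,2;-4)$ of Lemma~\ref{lem:unpolesdivzero}, whose unique polar domain has \emph{four} boundary segments, inserts chains of $(1;1)$ parts along its two saddle connections, and lets the types of the large-order poles (and cyclically glued planes) absorb the leftover angle; there the even zero is not adjacent to all polar domains. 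Your hub picture essentially covers only the paper's second case $a_{1}+a_{2}\leq 2p-6$.

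A second, independent error: your genus argument does not apply. ``Every saddle connection disconnects'' is equivalent to the incidence graph being a tree, hence, by Euler characteristic ($V-E+F=2$ with $F=p$, $E=p-1$), to the surface having a \emph{single} zero; that is the situation of Lemma~\ref{lem:g=0gen1nouv}. Here any candidate surface has three zeros, hence $p+1$ saddle connections and an incidence graph of first Betti number $2$, so some saddle connections do not disconnect. In fact, since $E=p+1$ is forced by the segment count, genus zero is \emph{equivalent} to there being exactly three cone points, i.e.\ it is part of the very bookkeeping you deferred, not a consequence one can add afterwards. (Your primitivity remark, by contrast, is correct: a zero of odd order prevents the differential from being a global square.)
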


\begin{proof}
Nous supposons dans un premier temps que $a_{1}+a_{2} \geq 2p-4$. Rappelons que pour $i\in\{1,2\}$ on note $a_{i}=2l_{i}-1$ avec $l_{i}\geq0$. Nous supposerons que $a_{1}\leq a_{2}$ et  $b_{1}\geq b_{2}\geq \dots \geq b_{p}$.  

Commençons par le cas $p=1$.   Un plan infini coupé d'une cicatrice dont les deux côtés sont chacun divisés en deux segments égaux et identifiés par rotation donne une différentielle dans la strate $\quadomoduli[0](-1,-1,2;-4)$. Pour les strates $b_{1}>4$, on peut augmenter de $2$ l'ordre d'un zéro en coupant le plan par une demi-droite partant de celui-ci et en y collant un plan.  Nous obtenons de cette façon une différentielle à résidus nuls dans toutes les strates considérées.

Nous considérons maintenant le cas $p\geq2$ avec $a_{1}+a_{2} \geq 2p-4$. Soit $k'$ tel que 
$$ (b_{1}-4) + \sum_{i=2}^{k'-1}(b_{i}-2) < a_{2}  \leq (b_{1}-4) + \sum_{i=2}^{k'}(b_{i}-2) \,.$$
De plus, comme $a_{1}+a_{2} \geq 2p-4$ il existe  $k \geq k'$ tel que $p-k \leq l_{1}$.
On coupe la différentielle de $\quadomoduli[0](-1,-1,2;-4)$ le long d'un lien-selle et on colle  de manière cyclique les $k-1$ parties polaires associés à $(1;1)$ correspondant aux pôles $b_{i}$ pour $i\in\{2,\dots,k\}$. Les parties polaires associées aux autres pôles sont collées de manière cyclique aux segments obtenus en coupant l'autre lien-selle. On choisit alors les types des $k-1$ parties polaires de telle sorte que leurs contributions à l'angle de la singularité correspondant à $a_{2}$ soit inférieures ou égale à~$2l_{2}\pi$. Puis on coupe la partie polaire associée au pôle d'ordre $-b_{1}$ le long d'une demi-droite qui part de la singularité correspondant à $a_{2}$ et on ajoute de manière cyclique le nombre de plans afin d'obtenir l'angle $\pi(a_{2}+2)$. On fait la construction similaire pour $a_{1}$ et on obtient la différentielle avec les propriétés désirées.
\par
Nous supposons finalement que $a_{1}+a_{2} \leq 2p-6$ (la somme de ces ordres est toujours paire), i.e. que $l_{1}+l_{2} \leq p-2$. On considère deux parties polaires $P_{1}$ et $P_{2}$ respectivement associées   à $(1,1;2)$ et $(2;1,1)$. On colle $l_{i}$ parties polaires associées à $(1;1)$ de manière cyclique aux segments de longueur $1$ de $P_{i}$. Le type est choisi de sorte  que chaque partie polaire ne contribue que d'un angle $2\pi$ au singularité $a_{1}$ et $a_{2}$. Finalement, pour les autres pôles, on colle les parties polaires associée à $(2;2)$ entre les segments de longueur $2$ des $P_{i}$. 
\end{proof}

Pour obtenir l'origine dans une strate avec $\npa\geq2$ zéros pairs il suffit d'éclater  l'unique zéro pair d'une différentielle obtenue dans le lemme~\ref{lem:condsufngeq3}.

\subsection{Configurations non nulles pour $\mathfrak{i}=n=2$}

Dans cette section, nous considérons les strates avec deux zéros, d'ordres impairs.
Par la remarque~\ref{rem:redux}  pour établir qu'une configuration de résidus quadratiques appartenant à une strate de résonance est réalisable dans la strate de différentielle, il suffit de le prouver pour une configuration de résidus réels positifs de la même strate de résonance.
Dans cette section, nous notons par  $R_{1},\dots,R_{s} \in \mathbb{R}_{+}$ les résidus et $r_{1},\dots,r_{s}$ leurs racines positives respectives. Les surfaces plates correspondant à de telles différentielles sont formées de domaines polaires reliés par exactement~$s$ liens-selles horizontaux définissant un graphe d'incidence ayant un unique cycle (voir section~\ref{sec:coeur}).

\subsubsection{Le cas $a_{2}=-1$}

Nous donnons dans ce cas une construction systématique.

\begin{prop}\label{prop:5Negatif}
L'image de l'application résiduelle de $\quadomoduli[0](a,-1;-b_{1},\dots,-b_{p};\rec[-2][s])$ avec $p\neq 0$
 contient les résidus $R_{i}\in\RR_{+}$ non tous nuls.
\end{prop}

\begin{proof}
Nous supposerons que les pôles dont le résidu quadratique est nul sont $P_{1},\dots,P_{t}$ et que les résidus $R_{t+1},\dots,R_{p+s}$  sont non nuls. 
On associe au pôle $P_{1}$ la partie polaire triviale d'ordre $2\ell_{1}$ associée à $(v_{1},v_{2};r_{t+1},\dots,r_{p+s})$ si $R_{1}=0$ et la partie polaire non triviale associée à $(v_{1},v_{2};r_{2},\dots,r_{p+s})$ si $R_{1}\neq 0$ avec $v_{1}=v_{2}=\frac{1}{2} \sum_{i\geq1} r_{i}$.
Pour les pôles $P_{i}$ avec $i\geq2$, on prend une partie polaire associée à $(r_{i};\emptyset)$ si $R_{i} \neq 0$ et $(r_{p+s};r_{p+s})$  si $R_{i}=0$. Il reste à identifier tous les segments par translation à exception des $v_{i}$ que nous identifions par rotation d'angle $\pi$. Cette surface plate possède les invariants locaux souhaités.
\end{proof}

\subsubsection{Résidus tous non nuls (sauf peut-être l'un d'entre eux)}

Nous traitons d'abord du cas $p=1$ dans lequel un seul pôle n'est pas un pôle double.
\begin{prop}\label{prop:5p=1}
Toute configuration $(R_{1},\dots,R_{s+1})$ de résidus quadratiques tels que $R_{1} \in \mathbb{R}_{+}$ et $R_{2},\dots,R_{s+1} \in \mathbb{R}_{+}^{\ast}$ est réalisable dans la strate $\quadomoduli[0](a_{1},a_{2};-b;\rec[-2][s])$ avec $b \geq 4$ pair et $s \geq 1$ sauf dans les cas suivants :
\begin{enumerate}
    \item les configurations $\mathbb{C}^{\ast}\cdot (1,\dots,1)$ dans $\quadomoduli[0](2l+1,2l+1;-4;\rec[-2][2l+1])$ avec $l \geq 0$;
    \item les configurations $\mathbb{C}^{\ast}\cdot (0;1,\dots,1)$ dans $\quadomoduli[0](2l-1,2l+1;-4;\rec[-2][2l])$ avec $l \geq 1$.
\end{enumerate}
\end{prop}

\begin{proof}
Posons $a_{1} \geq a_{2}$ et $t=\frac{1}{2}(a_{1}-1)$. Nous supposerons d'abord que $b=4$.
Sans perte de généralité, nous supposerons que $R_{2} \geq \dots \geq R_{s+1}$. Posons $S= \sum\limits_{i=1}^{t+1} r_{i} - \sum\limits_{j=t+2}^{s+1} r_{j}$. Nous avons $S>0$ sauf éventuellement dans  les cas exceptionnels suivants:
\begin{enumerate}[(i)]
    \item $s$ impair et $a_{1}=a_{2}=s$;
    \item $r_{1}=0$, $r_{2}=\dots=r_{s+1}$ avec $s$ pair, $a_{1}=s+1$ et $a_{2}=s-1$.
\end{enumerate}
En dehors de ces deux cas, nous construisons la partie polaire d'ordre $4$ associée aux vecteurs $(r_{2},\dots,r_{t+1};v_{1},r_{t+2},\dots,r_{s+1},v_{2})$ avec $v_{1}=v_{2}=\frac{S}{2}$. En collant des cylindres de circonférences $r_{2},\dots,r_{s+1}$ sur les segments correspondants, puis en identifiant les deux segments $v_{1}$ et $v_{2}$ par rotation, nous obtenons une surface ayant les invariants adéquats.
\par
Le cas (i) correspond à la famille  d'obstructions (1) et le cas (ii) avec $r_{1}=\dots=r_{s+1}$ correspond à la famille~(2).  Si $S>0$, on fait la construction précédente.

Supposons que les $r_{1},\dots,r_{s+1}$ sont non tous égaux tels que $S \leq 0$. Ces conditions impliquent en particulier que $r_{1}<r_{2}$.
Comme  $s$ est impair, il existe une famille $\epsilon_{3},\dots,\epsilon_{s+1} \in \lbrace{ \pm 1 \rbrace}$ de signes telle que la somme $S'=\sum\limits_{i=3}^{s+1}\epsilon_{i}r_{i}$ satisfait $0 \leq S'< r_{2}-r_{1}$. La construction est la suivante. Le cylindre de circonférence $r_{2}$ est bordé par deux segments (reliant les deux zéros) dont les longueurs respectives sont $\frac{1}{2}(r_{2}-r_{1}-S')>0$ et $\frac{1}{2}(r_{2}+r_{1}+S')>0$. Tous les autres cylindres ont un unique segment de bord. La partie polaire d'ordre $4$ est  associée à $\left(\frac{1}{2}(r_{2}+r_{1}+S'), r_{i_{1}},\dots,r_{i_{j}}; r_{i_{j+1}},\dots,r_{i_{s-3}},\frac{1}{2}(r_{2}-r_{1}-S')\right)$ où $\epsilon_{i_{1}} = \dots = \epsilon
_{i_{j}} = 1$ et  $\epsilon_{i_{j+1}} = \dots = \epsilon
_{i_{s-3}} = -1$. Les longueurs des différents segments montrent que le résidu du pôle quadruple est bien $R_{1}$.
\par
Les constructions se généralisent pour tout $b\geq6$ en collant des plans de façon cyclique sur la partie polaire d'ordre $4$. Puisque nous pouvons choisir sur quelle singularité conique portera l'accroissement de l'angle, nous pouvons partir d'une strate dans laquelle il n'y a pas d'obstruction. Ainsi, il n'y a pas d'obstructions pour $b \geq 6$.
\end{proof}

Nous donnons maintenant la construction pour le cas dans lequel il n'y a que deux pôles.
\begin{prop}\label{prop:5polesp2}
Toute configuration $(R_{1},R_{2})\in(\RR_{+}^{\ast})^{2}$ de résidus est réalisable dans la strate $\quadomoduli[0](a_{1},a_{2};-b_{1},-b_{2})$ avec $b_{1},b_{2} \geq 4$ pairs sauf dans le cas des configurations de $\mathbb{C}^{\ast}\cdot (1,1)$ dans les strates suivantes:
\begin{enumerate}
    \item $\quadomoduli[0](b-1,b-1;-b,-b-2)$ avec $b \geq 2$ pair;
    \item $\quadomoduli[0](b-1,b-3;-b,-b)$ avec $b \geq 4$ pair.
\end{enumerate}
\end{prop}

\begin{proof}
Posons $a_{1} \leq a_{2}$ et $b_{1} \leq b_{2}$. Si $a_{2} \geq b_{1}$, alors nous pouvons donner une première construction. Le domaine du pôle d'ordre $-b_{1}$ est bordé  par un unique lien-selle de longueur $r_{1}$ reliant le zéro d'ordre $a_{2}$ avec lui-même pour un angle intérieur de $(b_{1}-1)\pi$. On associe au pôle d'ordre $-b_{2}$ la partie polaire $\left(\frac{r_{1}+r_{2}}{2},\frac{r_{1}+r_{2}}{2};r_{1}\right)$. Les deux premiers segments sont identifiés par une rotation et on ajoute $a_{1}+1$ demi-plans à leur intersection pour que l'angle entre ces deux segments vaille $(a_{1}+2)\pi$.
Nous obtenons bien une surface avec les invariants locaux voulus.
\par
Si $a_{2}<b_{1}$, alors la strate est soit de la forme $\quadomoduli[0](b-1,b-1;-b,-b-2)$, soit de la forme $\quadomoduli[0](b-1,b-3;-b,-b)$. En supposant que $R_{1} < R_{2}$, nous pouvons donner la construction suivante: un domaine polaire est associé à $\left(\frac{r_{2}-r_{1}}{2},\frac{r_{2}+r_{1}}{2};\emptyset\right)$ et l'autre à $\left(\frac{r_{2}-r_{1}}{2};\frac{r_{2}+r_{1}}{2}\right)$.  En choisissant de manière adéquate le type de ces parties polaires on obtient des différentielles dans les strates souhaitées.
\end{proof}

Le cas dans lequel l'un des zéros est d'ordre $1$ et qu'il y a au moins un pôle double nécessite une construction spécifique.

\begin{prop}\label{prop:5NONNUL+1}
Toute configuration $(R_{1},\dots,R_{p+s})$ de résidus quadratiques réels positifs non nuls est réalisable dans la strate $\quadomoduli[0](1,a;-b_{1},\dots,-b_{p},\rec[-2][s])$ avec $a \geq 1$, $p \geq 2$ et $s \geq 1$.
\end{prop}

\begin{proof}
Nous divisons la preuve en deux cas.
\par
\paragraph{\bf Il existe un pôle double tel que $r_{p+s}< \sum\limits_{i=1}^{p+s-1} r_{i}$.}
On généralise la construction de la proposition~\ref{prop:5Negatif} de la façon suivante. On choisit un pôle d'ordre $-b_{1}$ et on lui associe la partie polaire de la forme $(v_{1},r_{p+s},v_{2};r_{2},\dots,r_{p+s-1})$. Puis on colle les autres pôles aux segments $r_{i}$ et les $v_{i}$ ensemble pour obtenir la différentielle désirée.
\smallskip
\par
\paragraph{\bf Il n'existe pas de pôle double tel que $r_{p+s}< \sum\limits_{i=1}^{p+s-1} r_{i}$.}
Dans ce cas, on a $s=1$ et $r_{p+1} \geq \sum\limits_{i=1}^{p} r_{i}$. Le graphe d'incidence (voir la section~\ref{sec:coeur}) de la différentielle que nous construisons possède un cycle de longueur $2$, formé des domaines des pôles d'ordres $-b_{1}$ et $-b_{2}$. On colle sur ce dernier sommet une chaîne formée dans l'ordre des domaines des pôles d'ordres $-b_{3}$ jusqu'à $-b_{p}$, pour finir avec le domaine de l'unique pôle double comme sommet de valence $1$. Pour $3 \leq k \leq p$, le domaine du pôle d'ordre $-b_{k}$ a pour bord deux liens-selles de longueurs respectives $\sum\limits_{i=k}^{p+1} r_{i}$ et $\sum\limits_{i=k+1}^{p+1} r_{i}$, séparés par des angles qui sont des multiples entiers de $2\pi$. En particulier, le lien-selle reliant la chaîne au cycle est de longueur $\sum\limits_{i=3}^{p+1} r_{i}$. Les deux autres liens-selles de bord du domaine du pôle d'ordre $-b_{2}$ (qui correspondent aux arêtes du cycle) sont de longueurs $\frac{1}{2}\sum\limits_{i=1}^{p+1} r_{i}$ et $-r_{1}+\frac{1}{2}\sum\limits_{i=1}^{p+1} r_{i}$ (nécessairement positif d'après l'hypothèse sur $r_{p+1}$), tandis que les trois angles sont $\pi$, $2\pi$ et $(b_{2}-2)\pi$. Enfin, le domaine du pôle d'ordre $-b_{1}$ a deux angles de magnitudes $2\pi$ et $(b_{1}-2)\pi$, obtenant un résidu quadratique $r_{1}^{2}$. Nous avons bien construit une différentielle avec une singularité conique d'angle $3\pi$.
\end{proof}

Nous donnons maintenant une construction générale quand au moins un pôle est un pôle double.

\begin{prop}\label{prop:MAIN5s1}
Toute configuration $(R_{1},\dots,R_{p+s})$ de résidus quadratiques réels positifs non nuls est réalisable dans la strate $\quadomoduli[0](a_{1},a_{2};-b_{1},\dots,-b_{p};\rec[-2][s])$ avec $3\leq a_{1}\leq a_{2}$, $p \geq 2$ et $s \geq 1$.
\end{prop}

\begin{proof} 
Dans toute la preuve on pose $b_{1}\geq b_{2}\geq \dots \geq b_{p}>2=b_{p+1}=\dots=b_{p+s}$.
Nous la scindons  en deux cas. 
\par
\paragraph{\bf Il existe un pôle double tel que $r_{p+s} \geq \sum\limits_{i=1}^{p+s-1} r_{i}$.}
Considérons la construction du second cas de la preuve de la proposition~\ref{prop:5NONNUL+1}. En changeant le type des parties polaires d'ordres~$b_{1}$ et $b_{2}$ correspondant aux pôles de la boucle, on peut obtenir tous les ordres satisfaisant $1\leq a_{1}\leq 1 + (b_{1}-4)+(b_{2}-4)$. On retire ensuite un pôle d'ordre $-b_{3}$ de la chaîne du graphe d'incidence pour le mettre au bord de la singularité $a_{1}$ dans la partie polaire $b_{1}$. Comme précédemment, le fait que $r_{p+s} \geq \sum\limits_{i=1}^{p+s-1} r_{i}$ implique que cette opération est possible. On obtient alors tous les ordres $1 + b_{3}\leq a_{1}\leq 1 + b_{3} + (b_{1}-4)+(b_{2}-4)$ en variant les types des parties polaires d'ordres $b_{1}$ et $b_{2}$. En retirant d'autres pôles de la chaîne pour les mettre dans la boucle on pourra obtenir toutes les valeurs de $a_{1}$ à condition que $b_{1}>4$. Si $b_{1} = \dots = b_{p} = 4$, alors on obtient les zéros de la forme $1+4k$ pour $k\geq 0$.
\par
Les zéros de la forme $-1 +4k$ sont obtenus à partir de la construction pour un zéro d'ordre $-1$ dans la preuve de la proposition~\ref{prop:5Negatif}. On choisit $k=k_{1}+2k_{2}$ avec $k_{1}\leq p$ et $2k_{2}<s$. On retire $k_{1}$ pôles d'ordre $-4$ et $k_{1}$ pôles d'ordre $-2$ du bord du zéro $a_{2}$ et on les déplace au bord du zéro d'ordre $a_{1}$.
\smallskip
\par
\paragraph{\bf Tous les pôles doubles satisfont $r_{p+j}< \sum\limits_{i\neq p+j} r_{i}$.}
Partons de la construction du premier cas de la preuve de la proposition~\ref{prop:5NONNUL+1}. En répartissant les demi-plans de la partie polaire du pôle spécial d'ordre $-b_{1}$ le long de demi-droites partant de $a_{1}$, on obtient tous les ordres $1\leq a_{1}\leq 1 + (b_{1}-4)$. Comme $p\geq2$, soit $r_{1}$ soit $r_{2}$ est inférieur à la somme des autres résidus. En faisant jouer à ce pôle le rôle du pôle double, on obtient tous les ordres satisfaisant $b_{2}-1\leq a_{1}\leq b_{1}+b_{2}-5$.

Dans les autres cas, on va décrire un ensemble $A_{1}$ constitué des pôles qui vont contribuer à $a_{1}$. Notons $-b_{3,4}$ le pôle de $\lbrace -b_{3},-b_{4} \rbrace$ de résidu minimal. On le met alors dans $A_{1}$. Comme précédemment, la somme des résidus des pôles de $A_{1}$ est inférieure à la somme des autres pôles et on peut faire la construction du premier paragraphe. On obtient tous les ordres $b_{1}+b_{3,4}-1\leq a_{1}\leq b_{3,4}+ (b_{2}-1) + (b_{1}-4)$. On a $(b_{2}-1) + (b_{1}-4) \geq b_{1}+b_{3,4}-1$ sauf si $b_{3,4} = b_{2}-2$. Dans ce cas pour obtenir l'ordre manquant, on ajoute au cas précédent un pôle double de résidu minimal. On continue alors cette procédure en mettant le pôle $-b_{5,6}$ de résidu minimal de  $\lbrace -b_{5},-b_{6} \rbrace$ dans $A_{1}$ et ainsi de suite.

Il reste à montrer que l'on peut obtenir tous les ordres de cette façon. Comme $a_{1}+a_{2} +4 = \sum_{i=1}^{p+s} b_{i} $, on a $a_{1}+2 \leq \frac{1}{2}\sum_{i=1}^{p+s} b_{i} $.  La procédure que l'on vient de décrire permet d'obtenir tous les ordres $\leq\sum_{i=3}^{p+s-1} b_{i,i+1}+ (b_{2}-1) + (b_{1}-4)$ avec $b_{i,i+1} =\min \lbrace b_{i},b_{i+1}\rbrace$. Cette somme est supérieure ou égale à $b_{1}-3 + \sum \max \lbrace b_{i},b_{i+1}\rbrace$ où les $i$ sont impairs supérieurs ou égaux à~$3$. En effet, la somme des différences $\max \lbrace b_{i},b_{i+1}\rbrace-\min \lbrace b_{i},b_{i+1}\rbrace$ est inférieure ou égale à $b_{2}-2 \geq2$.  Cette somme est donc supérieure ou égale à $a_{1}+2$ comme souhaité.
\end{proof}

Enfin, nous donnons les constructions pour le cas dans lequel il n'y a aucun pôle double, obtenant le résultat général.

\begin{prop}\label{prop:MAIN5}
Toute configuration $(R_{1},\dots,R_{p})$ de résidus quadratiques réels positifs et non nuls est réalisable dans la strate
$\quadomoduli[0](a_{1},a_{2};-b_{1},\dots,-b_{p}))$ sauf dans les cas suivants :
\begin{enumerate}
    \item les configurations  $\mathbb{C}^{\ast}\cdot (1,1)$ dans $\quadomoduli[0](b-1,b-1;-b,-b-2)$ avec $b \geq 2$ pair;
    \item les configurations  $\mathbb{C}^{\ast}\cdot (1,1)$ dans les strates $\quadomoduli[0](b-1,b-3;-b,-b)$ avec $b \geq 4$ pair.
\end{enumerate}
\end{prop}

La proposition~\ref{prop:5Negatif} montre le cas $a_{1}=-1$ et la  proposition~\ref{prop:5polesp2} le cas $p=2$. Les deux lemmes qui viennent considèrent donc les cas $1\leq a_{1}\leq a_{2}$ et $p \geq 3$. Le premier se restreint au cas où tous les résidus sont égaux et le second considère le cas général où au moins deux résidus sont distincts.

\begin{lem}\label{lm:realquadcolib}
L'image de l'application résiduelle de $\Omega^{2}\moduli[0](a_{1},a_{2};-b_{1},\ldots,-b_{p})$ avec $a_{1},a_{2}$ impairs et $p\geq 3$ contient $(1,\dots,1)$.
\end{lem}

\begin{proof}
Nous scindons la preuve selon si $p$ est pair ou impair.
\smallskip
\par
\paragraph{\bf  $p$ est impair $\geq3$.}
Nous collons des domaines polaires standards (voir la section~\ref{sec:briques}) le long de liens-selles selon un graphe d'incidence. Ce graphe est composé d'une boucle formée par un nombre $s$  de sommets de valence $2$. On colle à l'un de ses sommets une chaîne composée de $p-s$ sommets. Le cas extrême $p=s$ correspond à un graphe cyclique.

Si $s\neq p$, le dernier sommet de la chaîne correspond à un domaine polaire associé aux vecteurs $(1;\emptyset)$. Les $p-s-2$ sommets intermédiaires de la chaîne correspondent à des domaines polaires associés aux vecteurs $(2;1)$ et $(1;2)$.
Le sommet de valence $3$ correspond aux vecteurs $(1,\frac{1}{2};\frac{1}{2})$ si la chaîne est de longueur paire et $(2;\frac{1}{2},\frac{1}{2})$ si la longueur est impaire. Dans tous les cas, les sommets de la boucle correspondent à des domaines polaires associés aux vecteurs $(\frac{1}{2},\frac{1}{2};\emptyset)$.
 La différentielle quadratique obtenue en recollant les parties polaires selon le graphe d'incidence  possède tous ses résidus égaux à~$1$. Cette construction est illustrée  dans le cas $p=3$ dans les  figures~\ref{fig:residu17} et~\ref{fig:residu35}.
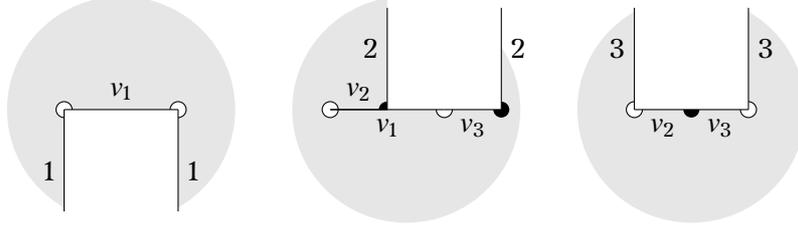
\begin{figure}[hbt]
\center
\begin{tikzpicture}[scale=1.5]
\begin{scope}[xshift=-2.5cm]
\fill[fill=black!10] (0,0)coordinate (Q)  circle (1cm);
    \coordinate (a) at (-1/2,0);
    \coordinate (b) at (1/2,0);

     \filldraw[fill=white] (a)  circle (2pt);
\filldraw[fill=white](b) circle (2pt);
    \fill[white] (a) -- (b) -- ++(0,-1.1) --++(-1,0) -- cycle;
 \draw  (a) -- (b) coordinate[pos=.5](c) ;
 \draw (a) -- ++(0,-.9) coordinate[pos=.6](e);
 \draw (b) -- ++(0,-.9)  coordinate[pos=.6](f);

\node[above] at (c) {$v_{1}$};
\node[left] at (e) {$1$};
\node[right] at (f) {$1$};
    \end{scope}
\begin{scope}[xshift=0cm]
\fill[fill=black!10] (0,0)coordinate (Q)  circle (1cm);
    \coordinate (a) at (-2/3,0);
    \coordinate (b) at (1/3,0);
    \coordinate (c) at (5/6,0);
      \coordinate (d) at (-1/6,0);

  \filldraw[fill=white] (a)  circle (2pt);
  \filldraw[fill=white](b) circle (2pt);
    \fill (c)  circle (2pt);
\fill (-.1,0)  arc (0:180:2pt);
    \fill[white] (d) -- (c) -- ++(0,1.1) --++(-1,0) -- cycle;
 \draw  (a) -- (b) coordinate[pos=.5](e);
 \draw  (a) -- (d) coordinate[pos=.5](f);
 \draw  (b) -- (c) coordinate[pos=.5](g);
 \draw (d) -- ++(0,.9) coordinate[pos=.6](h);
 \draw (c) -- ++(0,.9)  coordinate[pos=.6](i);

\node[below] at (e) {$v_{1}$};
\node[above] at (f) {$v_{2}$};
\node[below] at (g) {$v_{3}$};
\node[left] at (h) {$2$};
\node[right] at (i) {$2$};
    \end{scope}

\begin{scope}[xshift=2.5cm]
\fill[fill=black!10] (0,0)coordinate (Q)  circle (1cm);
    \coordinate (a) at (-1/2,0);
    \coordinate (b) at (1/2,0);

     \filldraw[fill=white] (a)  circle (2pt);
\filldraw[fill=white]  (b) circle (2pt);
\fill (Q) circle (2pt);
    \fill[white] (a) -- (b) -- ++(0,1.1) --++(-1,0) -- cycle;
 \draw  (a) -- (b) coordinate[pos=.25](c) coordinate[pos=.75](d);
 \draw (a) -- ++(0,.9) coordinate[pos=.6](e);
 \draw (b) -- ++(0,.9)  coordinate[pos=.6](f);

\node[below] at (c) {$v_{2}$};
\node[below] at (d) {$v_{3}$};
\node[left] at (e) {$3$};
\node[right] at (f) {$3$};
    \end{scope}
    \end{tikzpicture}
\caption{Différentielle de $\Omega^{2}\moduli[0](1,7;-4,-4,-4)$ dont les résidus sont $(1,1,1)$} \label{fig:residu17}
\end{figure}
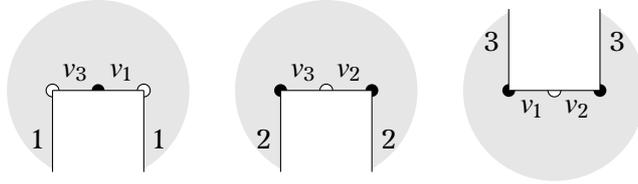
\begin{figure}[hbt]
\center
\begin{tikzpicture}[scale=1.2]
\begin{scope}[xshift=-2.5cm]
\fill[fill=black!10] (0,0)coordinate (Q)  circle (1cm);
    \coordinate (a) at (-1/2,0);
    \coordinate (b) at (1/2,0);

     \filldraw[fill=white] (a)  circle (2pt);
\filldraw[fill=white](b) circle (2pt);
\fill (Q) circle (2pt);
    \fill[white] (a) -- (b) -- ++(0,-1.1) --++(-1,0) -- cycle;
 \draw  (a) -- (b) coordinate[pos=.25](c) coordinate[pos=.75](d);
 \draw (a) -- ++(0,-.9) coordinate[pos=.6](e);
 \draw (b) -- ++(0,-.9)  coordinate[pos=.6](f);

\node[above] at (c) {$v_{3}$};
\node[above] at (d) {$v_{1}$};
\node[left] at (e) {$1$};
\node[right] at (f) {$1$};
    \end{scope}
\begin{scope}[xshift=0cm]
\fill[fill=black!10] (0,0)coordinate (Q)  circle (1cm);
    \coordinate (a) at (-1/2,0);
    \coordinate (b) at (1/2,0);

     \fill (a)  circle (2pt);
\fill (b) circle (2pt);
  \filldraw[fill=white](Q) circle (2pt);
    \fill[white] (a) -- (b) -- ++(0,-1.1) --++(-1,0) -- cycle;
 \draw  (a) -- (b) coordinate[pos=.25](c) coordinate[pos=.75](d);
 \draw (a) -- ++(0,-.9) coordinate[pos=.6](e);
 \draw (b) -- ++(0,-.9)  coordinate[pos=.6](f);

\node[above] at (c) {$v_{3}$};
\node[above] at (d) {$v_{2}$};
\node[left] at (e) {$2$};
\node[right] at (f) {$2$};
    \end{scope}

\begin{scope}[xshift=2.5cm]
\fill[fill=black!10] (0,0)coordinate (Q)  circle (1cm);
    \coordinate (a) at (-1/2,0);
    \coordinate (b) at (1/2,0);

     \fill (a)  circle (2pt);
\fill(b) circle (2pt);
\filldraw[fill=white] (Q) circle (2pt);
    \fill[white] (a) -- (b) -- ++(0,1.1) --++(-1,0) -- cycle;
 \draw  (a) -- (b) coordinate[pos=.25](c) coordinate[pos=.75](d);
 \draw (a) -- ++(0,.9) coordinate[pos=.6](e);
 \draw (b) -- ++(0,.9)  coordinate[pos=.6](f);

\node[below] at (c) {$v_{1}$};
\node[below] at (d) {$v_{2}$};
\node[left] at (e) {$3$};
\node[right] at (f) {$3$};
    \end{scope}
    \end{tikzpicture}
\caption{Différentielle de $\Omega^{2}\moduli[0](3,5;-4,-4,-4)$ dont les résidus sont $(1,1,1)$} \label{fig:residu35}
\end{figure}
\par
Nous montrons maintenant que la différentielle ainsi formée peut avoir les zéros d'ordres souhaités.
Cela se fait grâce au choix du type des domaines polaires (voir la section~\ref{sec:briques}). On peut remarquer que l'une des singularités coniques n'est adjacent qu'aux domaines polaires de la boucle tandis que l'autre est adjacent à tous les domaines polaires. On note $a_{1}$ l'ordre de la première singularité.
L'angle de cette singularité est $\pi+2s\pi$ auquel on ajoute des multiples de $2\pi$ correspondant aux types des domaines polaires qui contribuent à cette singularité. L'angle total de cette singularité est donc compris entre $\pi$ et $-\pi+\pi\sum_{j=1}^{p}(b_{j} -2)$. L'ordre de $a_{1}$ peut donc aller de $-1$ jusqu'à $-3+\sum_{j=1}^{p}(b_{j} -2)$. Ceci couvre tous les ordres possibles et donc toutes les strates considérées.
\smallskip
\par
\paragraph{\bf  $p$ est pair $\geq4$.}
Nous commençons par montrer que $(1,1,1,1)$ est dans l'image de l'application résiduelle pour les strates $\Omega^{2}\moduli[0](7,5;(-4^{4}))$, $\Omega^{2}\moduli[0](9,3;(-4^{4}))$, $\Omega^{2}\moduli[0](11,1;(-4^{4}))$ et $\Omega^{2}\moduli[0](13,-1;(-4^{4}))$.
La figure~\ref{fig:exeptpasexept} montre une différentielle dans $\Omega^{2}\moduli[0](7,5;(-4^{4}))$ qui possède ces résidus.
\begin{figure}[htb]
\begin{tikzpicture}

\begin{scope}[xshift=-6cm]
\fill[fill=black!10] (0,0)coordinate (Q)  circle (1.1cm);
    \coordinate (a) at (-.25,0);
    \coordinate (b) at (.25,0);
    \coordinate (c) at (-.75,0);
     \fill (a)  circle (2pt);
\fill[] (b) circle (2pt);
  \fill[] (c) circle (2pt);

    \filldraw[fill=white] (-.19,0)  arc (0:180:2pt);

    \fill[white] (a) -- (b) -- ++(0,-1.2) --++(-.5,0) -- cycle;
 \draw  (b)-- (a);
 \draw (a)-- (c) coordinate[pos=.5](d);
 \draw (a) -- ++(0,-1.05);
 \draw (b) -- ++(0,-1.05);

\node[above] at (Q) {$v_{2}$};
\node[above] at (d) {$v_{1}$};
\node[below] at (d) {$v_{3}$};
    \end{scope}

\begin{scope}[xshift=-3cm]
\fill[fill=black!10] (0,0)coordinate (Q)  circle (1.1cm);
    \coordinate (a) at (-.25,0);
    \coordinate (b) at (.25,0);
    \coordinate (c) at (-.75,0);

     \fill (a)  circle (2pt);
\fill[] (b) circle (2pt);
    \filldraw[fill=white] (-.19,0)  arc (0:-180:2pt);

    \fill[white] (a) -- (b) -- ++(0,1.2) --++(-.5,0) -- cycle;
 \draw  (a) -- (b);
 \draw (a) -- ++(0,1.05);
 \draw (b) -- ++(0,1.05);
 \draw (a)-- (c) coordinate[pos=.5](d);
  \filldraw[fill=white] (c) circle (2pt);

\node[below] at (Q) {$v_{2}$};
\node[below,rotate=180] at (d) {$v_{1}$};
\node[below] at (d) {$v_{4}$};
    \end{scope}

\begin{scope}[xshift=0cm]
\fill[fill=black!10] (0,0)coordinate (Q)  circle (1.1cm);
    \coordinate (a) at (-.25,0);
    \coordinate (b) at (.25,0);

     \fill (a)  circle (2pt);
\fill[] (b) circle (2pt);
    \fill[white] (a) -- (b) -- ++(0,-1.2) --++(-.5,0) -- cycle;
 \draw  (a) -- (b);
 \draw (a) -- ++(0,-1.05);
 \draw (b) -- ++(0,-1.05);

\node[above] at (Q) {$v_{3}$};
    \end{scope}

\begin{scope}[xshift=3cm]
\fill[fill=black!10] (0,0)coordinate (Q)  circle (1.1cm);
    \coordinate (a) at (-.25,0);
    \coordinate (b) at (.25,0);
  \filldraw[fill=white] (a) circle (2pt);
  \filldraw[fill=white] (b) circle (2pt);

    \fill[white] (a) -- (b) -- ++(0,-1.2) --++(-.5,0) -- cycle;
 \draw  (a) -- (b);
 \draw (a) -- ++(0,-1.05);
 \draw (b) -- ++(0,-1.05);

\node[above] at (Q) {$v_{4}$};
    \end{scope}

\end{tikzpicture}
\caption{Différentielle de $\Omega^{2}\moduli[0](7,5;(-4^{4}))$ dont les résidus sont $(1,1,1,1)$} \label{fig:exeptpasexept}
\end{figure}
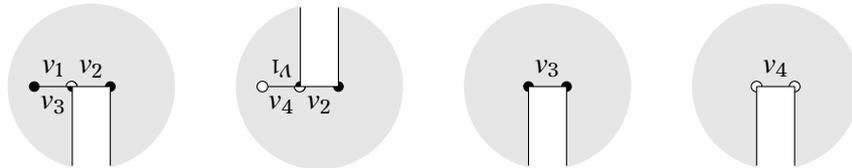
Pour les strates de la forme $\Omega^{2}\moduli[0](13-2k,-1+2k;(-4^{4}))$ avec $0 \leq k \leq 2$, nous collons des parties polaires d'ordre~$4$ le long de liens-selles selon les graphes d'incidence suivants. Ces graphes contiennent une boucle formée par $k$ sommets de valence $2$ qui est collée à un sommet de valence $3$ auquel est attaché une chaîne composée de $3-k$ sommets. Ces graphes sont représentés dans la figure~\ref{fig:graphesincidence}.
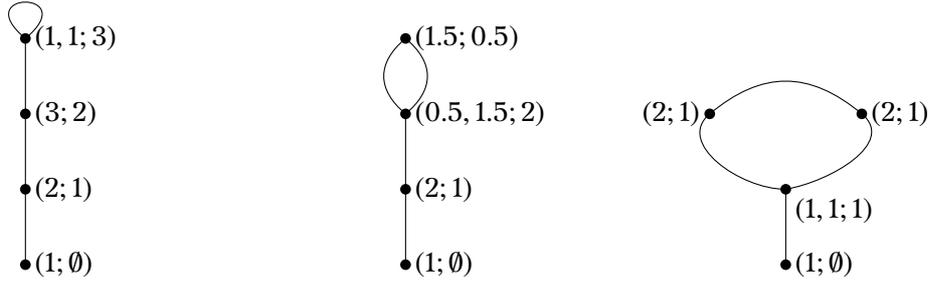
\begin{figure}[htb]
\begin{tikzpicture}
\begin{scope}[xshift=-5cm]
\coordinate (z1) at (0,0);\node[right] at (z1) {$(1,1;3)$};
\coordinate (z2) at (0,-1);\node[right] at (z2) {$(3;2)$};
\coordinate (z3) at (0,-2);\node[right] at (z3) {$(2;1)$};
\coordinate (z4) at (0,-3);\node[right] at (z4) {$(1;\emptyset)$};

\draw (z1) -- (z2) -- (z3) -- (z4);
\foreach \i in {1,2,3,4}
\fill (z\i) circle (2pt);
\draw (z1) .. controls ++(40:1) and ++(140:1) .. (z1);
\end{scope}

\begin{scope}[xshift=0cm]
\coordinate (z1) at (0,0);\node[right] at (z1) {$(1.5;0.5)$};
\coordinate (z2) at (0,-1);\node[right] at (z2) {$(0.5,1.5;2)$};
\coordinate (z3) at (0,-2);\node[right] at (z3) {$(2;1)$};
\coordinate (z4) at (0,-3);\node[right] at (z4) {$(1;\emptyset)$};

\draw (z2) -- (z3) -- (z4);
\foreach \i in {1,2,3,4}
\fill (z\i) circle (2pt);
\draw (z1) .. controls ++(-40:.5) and ++(40:.5) .. (z2);
\draw (z1) .. controls ++(-140:.5) and ++(140:.5) .. (z2);
\end{scope}

\begin{scope}[xshift=5cm]
\coordinate (z1) at (1,-1);\node[right] at (z1) {$(2;1)$};
\coordinate (z2) at (-1,-1);\node[left] at (z2) {$(2;1)$};
\coordinate (z3) at (0,-2);\node[below right] at (z3) {$(1,1;1)$};
\coordinate (z4) at (0,-3);\node[right] at (z4) {$(1;\emptyset)$};

\draw (z3) -- (z4);
\foreach \i in {1,2,3,4}
\fill (z\i) circle (2pt);
\draw (z1) .. controls ++(140:.9) and ++(40:.9) .. (z2);
\draw (z2) .. controls ++(-140:.6) and ++(180:.6) .. (z3);
\draw (z3) .. controls ++(10:.6) and ++(-40:.6) .. (z1);
\end{scope}
\end{tikzpicture}
\caption{Les graphes d'incidence pour les exemples dans les strates $\Omega^{2}\moduli[0](13-2k,-1+2k;(-4^{4}))$ avec $0 \leq k \leq 2$ de gauche à droite} \label{fig:graphesincidence}
\end{figure}
Les parties polaires correspondant aux sommets sont associés aux vecteurs donnés dans la  figure~\ref{fig:graphesincidence}. On vérifie sans difficulté que le recollement de ces parties polaires donne des différentielles ayant les singularités voulues.
\par
Dans le cas où tous les pôles sont d'ordres $-4$, on ajoute aux différentielles construites aux paragraphes précédents des parties polaires associées à $(v;v+1)$ soit à la boucle, soit à la chaîne. La première opération ajoute $2$ à l’ordre de chaque zéro et la seconde $4$ à l'ordre du zéro le plus grand. Cela permet d'obtenir toutes les strates considérées.
\smallskip
\par
Dans le cas où les pôles sont d'ordres $-b_{i}$, il suffit de coller des plans de manière cyclique à une demi-fente infinie partant de l'un des deux zéros. Un calcul similaire au cas où $p$ est impair montre que l'on peut obtenir tous les ordres des deux zéros.
\end{proof}

\begin{lem}\label{lem:polesdivksanszero}
L'image de $\appresk[0][2](a_{1},a_{2};-2\ell_{1},\ldots,-2\ell_{p})$ avec $p\geq3$ et $a_{1},a_{2} \geq 1$ contient tous les résidus~$(r_{1}^{2},\dots,r_{p}^{2})$ avec $r_{i}\in \RR_{>0}$.
\end{lem}

\begin{proof}
Le cas où les $r_{i}=1$ a été considéré dans le lemme précédent. Nous supposerons que les $r_{i}$ ne sont pas tous égaux entre eux.
\smallskip
\par

Commençons par le cas des strates $\quadomoduli[0](5,3;-4,-4,-4)$ et $\quadomoduli[0](7,1;-4,-4,-4)$ avec trois pôles d'ordre $-4$.

Pour la strate  $\quadomoduli[0](5,3;-4,-4,-4)$, on choisit $r_{3}>r_{2}-r_{1}\geq0$ et on considère les parties polaires $(r_{1};\emptyset)$, $(r_{2};\emptyset)$ et  $(r_{1};v,r_{2},v)$ avec $2v = r_{3}+r_{1}-r_{2}$. Les collages des $v$ et des $r_{i}$ entre eux donnent la différentielle avec les invariants souhaités.
\smallskip
\par
Dans le cas de la strate $\quadomoduli[0](7,1;-4,-4,-4)$, si $r_{3}>r_{1}+r_{2}$, on prend la partie polaire associée à $(v,r_{1}+r_{2},v;\emptyset)$, avec $2v=r_{3}-r_{1}-r_{2}$. Puis on colle les $v$ ensemble et les $r_{i}$ aux parties polaires $(r_{1};\emptyset)$ et $(r_{1};r_{1}+r_{2})$ pour obtenir la différentielle souhaitée. Si $r_{3}\leq r_{1}+r_{2}$, on peut supposer que les $r_{i}$ ne sont pas tous égaux entre eux car ce cas a été traité dans le lemme~\ref{lm:realquadcolib}. On peut donc supposer que $0<r_{2}-r_{1}<r_{3}$. On utilise alors les parties polaires $(r_{1},\emptyset)$, $(r_{1},r_{2}-r_{1};\emptyset)$ et $(v,r_{2}-r_{1},v;\emptyset)$ avec $2v+r_{2}-r_{1}=r_{3}$  pour obtenir la différentielle souhaitée.
\smallskip
\par
Considérons maintenant une strate $\quadomoduli[0](a_{1},a_{2};\rec[-4][p])$ avec $p\geq 4$.
Si $a_{1} \equiv 3 \mod 4$, alors on partitionne les résidus $r_{1},\dots,r_{p-1}$ en deux ensembles $A_{1}$ et $A_{2}$ tels que $A_{1}$ est de cardinal $m=\frac{a_{1}-1}{2}$ et  $r_{p}>\sum_{i\in A_{2}} r_{i}-\sum_{i\in A_{1}} r_{i}\geq0$. Cette partition existe car tous les $r_{i}$ ne sont pas égaux entre eux et comme $a_{1} \equiv 3 \mod 4$ on a $m\leq p-m-1$.

Quitte à changer la numérotation, on peut supposer que les résidus de $A_{1}$ sont les $m$ premiers résidus. On considère alors la partie polaire  $(r_{m+1},\dots,r_{p-1};v,r_{1},\dots,r_{m},v)$. La différentielle avec les invariants souhaités est obtenue en collant les parties polaires $(r_{i};\emptyset)$.

Si $a_{1} \equiv 1 \mod 4$, alors pour tout $r_{1},\dots,r_{p}>0$ on peut trouver une partition $A_{1}$ et $A_{2}$ des résidus tels que $A_{1}$ est de cardinal $m=\frac{a_{1}-1}{2}$ satisfaisant la condition suivante. Il existe un choix de signe tel que
$$S_{j}=\sum_{i\in A_{j}} \pm r_{i}>0 \text{ et } r_{p}>\sum_{i\in A_{2}} \pm r_{i}-\sum_{i\in A_{1}}\pm r_{i}\geq0\,.$$  On obtient une différentielle avec les invariants souhaités en généralisant la construction de la strate $\quadomoduli[0](7,1;-4,-4,-4)$. Plus précisément, on considère les résidus de $A_{1}$ de signe positif (supposons que ce sont les $k$ premiers). On note $S_{1}^{+}=\sum_{i\leq k} r_{i}$ la somme de ces résidus. La première partie polaire est $(r_{1};\emptyset)$, puis $(r_{2}+r_{1};r_{1})$, jusqu'à obtenir $(S_{1}^{+};\sum_{i< k} r_{i})$. Puis on considère les parties polaires $(S_{1}^{+}-r_{k+1};S_{1}^{+})$ jusqu'à obtenir  $(S_{1};S_{1}-r_{m})$. On fait de même pour les résidus de $A_{2}$ et on colle les liens-selles restants à la partie polaire $(v,S_{2},v;S_{1})$ pour obtenir la différentielle souhaitée.
\smallskip
\par
Les strates de la forme $\quadomoduli[0](a_{1},a_{2};-2\ell_{1},-2\ell_{2},\dots,-2\ell_{p})$ avec au moins un $\ell_{i}\geq3$ s'obtiennent via une variante des constructions précédentes. On suppose que $\ell_{1}\leq \ell_{2}\leq \cdots\leq \ell_{p}$. On commence par faire la construction du lemme~\ref{prop:5Negatif} où l'un des zéros est d'ordre $-1$ avec comme pôle spécial le pôle d'ordre $-2\ell_{p}$. On peut alors en changeant le type de la partie polaire obtenir tous les ordres entre $-1$ et $2\ell_{p}-5$. On peut alors faire la construction similaire à celle de la strate $\quadomoduli[0](7,1;-4,-4,-4)$ pour obtenir les pôles d'ordre $2\ell_{p}-5$. On continue alors à partir de la construction de la strate $\quadomoduli[0](5,3;-4,-4,-4)$. On obtient ainsi tous les ordres souhaités.
\end{proof}

\subsubsection{Cas général pour $\mathfrak{i}=n=2$}

La construction des différentielles à résidus quadratiques prescrits se fait désormais par récurrence. Nous prouvons que les seules obstructions portant sur les configurations non uniformément nulles sont les obstructions décrites dans les points~(6) et~(7) du corollaire~\ref{cor:except1}, la proposition~\ref{prop:excep2} et le corollaire~\ref{cor:excep2}.

\begin{prop}\label{prop:MAIN5+n2}
Toute configuration $(R_{1},\dots,R_{p+s})$ de résidus quadratiques réels positifs non tous nuls (et non nuls pour $i\geq p+1$) est réalisable dans $\quadomoduli[0](a_{1},a_{2};-b_{1},\dots,-b_{p},\rec[-2][s])$ sauf dans les cas suivants:
\begin{enumerate}
    \item les configurations $\mathbb{C}^{\ast}\cdot (0;1,\dots,1)$ dans $\quadomoduli[0](2l-1,2l+1;-4,\rec[-2][2l])$ avec $l \geq 1$;
    \item les configurations  $\mathbb{C}^{\ast}\cdot (1,\dots,1)$ dans $\quadomoduli[0](2l+1,2l+1;-4,\rec[-2][2l+1])$ avec $l \geq 0$;
    \item les configurations  $\mathbb{C}^{\ast}\cdot (\rec[0][a],1,1)$ dans $\quadomoduli[0](2a+b-1,2a+b-1;\rec[-4][a],-b,-b-2)$ avec $b \geq 2$ pair et $a \geq 0$;
    \item les configurations  $\mathbb{C}^{\ast}\cdot (\rec[0][a],1,1)$ dans $\quadomoduli[0](2a+b-1,2a+b-3;\rec[-4][a],-b,-b)$ avec $b \geq 2$ pair, $a \geq 0$ et $2a+b \geq 4$.
\end{enumerate}
\end{prop}

\begin{proof}
Nous procédons par récurrence sur le nombre $t$ de pôles non doubles pour lesquels le résidu est nul. La proposition~\ref{prop:MAIN5} établit le cas $t=0$. En supposant la propriété valide jusqu'au rang $t$, nous allons la démontrer pour le cas $t+1$.
\par
Soit une configuration formée de $t+1$ résidus nuls et de $p+s-(t+1)$ résidus non nuls $R_{t+2},\dots,R_{p+s}$. Il s'agit de construire une différentielle ayant ces résidus dans une certaine strate $\quadomoduli[0](a_{1},a_{2};-b_{1},\dots,-b_{p};\rec[-2][s])$. Nous supposerons naturellement que la configuration n'est pas interdite dans la strate concernée.
\par
La proposition~\ref{prop:5Negatif} permet de se restreindre aux strates pour lesquelles $a_{1},a_{2}>0$. Ainsi, il existe des entiers $m_{1},m_{2}>0$ tels que $2m_{1}+2m_{2}=b_{t+1}$ et $a_{i}-2m_{i}>-2$ pour $i=1,2$.
\par
Le cas $p=t=1$ a été réglé par la proposition~\ref{prop:5p=1} (dans laquelle $R_{1}$ peut prendre une valeur nulle). Dans les autres cas, l'hypothèse de récurrence implique que la configuration avec un résidu nul de moins est réalisable dans la strate dont les zéros sont d'ordres $a_{1}-2m_{1},a_{2}-2m_{2}$ et le pôle d'ordre $b_{t+1}$ a été retiré. Dans la surface plate définie par la différentielle, il existe un lien-selle reliant les deux zéros. Nous découpons le long de ce lien-selle pour y coller les deux bords d'une cicatrice découpée dans un domaine polaire d'ordre~$b_{t+1}$ et pour lesquelles les angles aux extrémités de la cicatrice sont $2m_{1}\pi$ et $2m_{2}\pi$. Cette chirurgie produit une différentielle réalisant une configuration ayant un résidu nul de plus dans la strate adéquate.
\par
Le cas des configurations soumises à des obstructions ne pose aucun problème car il s'agit toujours d'ajouter ou retirer des pôles quadruples à résidu nul. On a donc $m_{1}=m_{2}=1$. La différence entre les ordres des deux zéros demeure identique tout au long de la récurrence.
\end{proof}

\subsection{Strates avec $n \geq 3$ zéros dont $\mathfrak{i}=2$ impairs}

Dans pratiquement tous les cas, la construction est un simple éclatement de zéros. Certaines familles de strates nécessitent néanmoins une construction spécifique.

\begin{prop}\label{prop:5Except1}
L'application résiduelle de $\quadomoduli[0](b+2a-3,b+2a-3,2;\rec[-4][a],-b,-b)$ avec $b \geq 2$ pair, $a \geq 0$ et $b+2a \geq 4$ contient $(\rec[0][a],1,1)$.
\end{prop}

\begin{proof}
On considère deux cas selon la parité de~$a$.
Si $a$ est impair, la construction est représentée à gauche de la figure~\ref{fig:extroiszero} dans le cas où $a=1$ et $b=2$. On associe à un pôle de résidu nul la partie polaire $(v_{3},v_{1},v_{3};v_{4},v_{2},v_{4})$ avec $v_{i}=1$. La moitié des autres pôles à résidu nul sont associés à $(v_{1};v_{1})$ et l'autre moitié à $(v_{2};v_{2})$. Enfin, aux pôles d'ordre $-b$ sont associées les parties polaires $(v_{1};\emptyset)$ et $(v_{2};\emptyset)$.  Puis on colle cycliquement par translation les parties polaires associées à  $(v_{i};v_{i})$ à la partie polaire spéciale et enfin on colle les parties polaires $(v_{i};\emptyset)$.  Si $a$ est pair et non nul, on procède de manière similaire en partant de la configuration à droite de la figure~\ref{fig:extroiszero}.
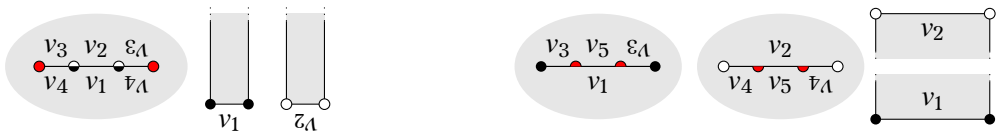
\begin{figure}[hbt]
\center
\begin{tikzpicture}

\begin{scope}[xshift=-4cm]
    \fill[fill=black!10] (0,0) ellipse (1.2cm and .7cm);

\draw[] (-.75,0)coordinate (Q) -- (.75,0) coordinate (P) coordinate[pos=.5](c) coordinate[pos=.15](d) coordinate[pos=.85](e) coordinate[pos=.35](R) coordinate[pos=.65](S);
\draw[] (Q) -- (P);

\filldraw[fill=white] (R)  arc (0:180:2pt); 
\filldraw[fill=white] (S)  arc (180:0:2pt); 
\fill (R)  arc (0:-180:2pt); 
\fill (S)  arc (-180:0:2pt);

\filldraw[fill=red] (P) circle (2pt);
\filldraw[fill=red] (Q) circle (2pt);

\node[above] at (c) {$v_{2}$};
\node[below] at (c) {$v_{1}$};
\node[above] at (d) {$v_{3}$};
\node[below] at (d) {$v_{4}$};
\node[below, rotate=180] at (e) {$v_{3}$};
\node[above, rotate=180] at (e) {$v_{4}$};

\begin{scope}[xshift=2cm]
\coordinate (a) at (-.5,-.5);
\coordinate (b) at (0,-.5);

    \fill[fill=black!10] (a)  -- (b)coordinate[pos=.5](f) -- ++(0,1.2) --++(-.5,0) -- cycle;
    \fill (a)  circle (2pt);
\fill[] (b) circle (2pt);
 \draw  (a) -- (b);
 \draw (a) -- ++(0,1.1) coordinate (d)coordinate[pos=.5](h);
 \draw (b) -- ++(0,1.1) coordinate (e)coordinate[pos=.5](i);
 \draw[dotted] (d) -- ++(0,.2);
 \draw[dotted] (e) -- ++(0,.2);
\node[below] at (f) {$v_{1}$};
\end{scope}

\begin{scope}[xshift=3cm]
\coordinate (a) at (-.5,-.5);
\coordinate (b) at (0,-.5);

    \fill[fill=black!10] (a)  -- (b)coordinate[pos=.5](f) -- ++(0,1.2) --++(-.5,0) -- cycle;
 \draw  (a) -- (b);
 \draw (a) -- ++(0,1.1) coordinate (d)coordinate[pos=.5](h);
 \draw (b) -- ++(0,1.1) coordinate (e)coordinate[pos=.5](i);
 \draw[dotted] (d) -- ++(0,.2);
 \draw[dotted] (e) -- ++(0,.2);
 \filldraw[fill=white] (a)  circle (2pt); 
\filldraw[fill=white] (b)  circle (2pt); 
\node[above, rotate=180] at (f) {$v_{2}$};
\end{scope}
\end{scope}

\begin{scope}[xshift=5cm]
     \fill[fill=black!10] (0,0) ellipse (1.1cm and .7cm);

\draw[] (-.75,0)coordinate (Q) -- (.75,0) coordinate (P) coordinate[pos=.5](c) coordinate[pos=.15](d) coordinate[pos=.85](e) coordinate[pos=.35](R) coordinate[pos=.65](S);
\draw[] (Q) -- (P);

\filldraw[fill=red] (R)  arc (0:-180:2pt); 
\filldraw[fill=red] (S)  arc (-180:0:2pt);

\filldraw[fill=white] (P) circle (2pt);
\filldraw[fill=white] (Q) circle (2pt);

\node[above] at (c) {$v_{2}$};
\node[below] at (c) {$v_{5}$};
\node[below] at (d) {$v_{4}$};
\node[above, rotate=180] at (e) {$v_{4}$};

\begin{scope}[xshift=-2.4cm]
    \fill[fill=black!10] (0,0) ellipse (1.1cm and .7cm);

\draw[] (-.75,0)coordinate (Q) -- (.75,0) coordinate (P) coordinate[pos=.5](c) coordinate[pos=.15](d) coordinate[pos=.85](e) coordinate[pos=.35](R) coordinate[pos=.65](S);
\draw[] (Q) -- (P);

\filldraw[fill=red] (R)  arc (0:180:2pt); 
\filldraw[fill=red] (S)  arc (180:0:2pt);

\fill (P) circle (2pt);
\fill (Q) circle (2pt);

\node[above] at (c) {$v_{5}$};
\node[below] at (c) {$v_{1}$};
\node[above] at (d) {$v_{3}$};
\node[below, rotate=180] at (e) {$v_{3}$};
\end{scope}

\begin{scope}[xshift=2cm,yshift=-.7cm]
\coordinate (a) at (-.75,0);
\coordinate (b) at (.75,0);

    \fill[fill=black!10] (a)  -- (b)coordinate[pos=.5](f) -- ++(0,.6) --++(-1.5,0) -- cycle;
    \fill (a)  circle (2pt);
\fill[] (b) circle (2pt);
 \draw  (a) -- (b);
 \draw (a) -- ++(0,.5) coordinate (d)coordinate[pos=.5](h);
 \draw (b) -- ++(0,.5) coordinate (e)coordinate[pos=.5](i);
 \draw[dotted] (d) -- ++(0,.15);
 \draw[dotted] (e) -- ++(0,.15);
\node[above] at (f) {$v_{1}$};
\end{scope}

\begin{scope}[xshift=2cm,yshift=.7cm]
\coordinate (a) at (-.75,0);
\coordinate (b) at (.75,0);

    \fill[fill=black!10] (a)  -- (b)coordinate[pos=.5](f) -- ++(0,-.6) --++(-1.5,0) -- cycle;

 \draw  (a) -- (b);
 \draw (a) -- ++(0,-.5) coordinate (d)coordinate[pos=.5](h);
 \draw (b) -- ++(0,-.5) coordinate (e)coordinate[pos=.5](i);
 \draw[dotted] (d) -- ++(0,-.15);
 \draw[dotted] (e) -- ++(0,-.15);
     \filldraw[fill=white](a)  circle (2pt);
   \filldraw[fill=white](b) circle (2pt);
\node[below] at (f) {$v_{2}$};
\end{scope}
\end{scope}
\end{tikzpicture}
\caption{Différentielles dans les strates $\Omega^{2}\moduli[0](1,1,2;-4;-2,-2)$ (à gauche) et $\Omega^{2}\moduli[0](3,3,2;-4,-4;-2,-2)$ (à droite) dont les résidus sont respectivement $(0;1,1)$ et $(0,0;1,1)$.} \label{fig:extroiszero}
\end{figure}
\par
Enfin, si $a=0$, on a $b \geq 4$. La figure~\ref{fig:deuxdiffresunun} montre une différentielle quadratique de $\Omega^{2}\moduli[0](1,1,2;-4,-4)$ dont les résidus quadratiques sont $(1,1)$.
\begin{figure}[hbt]
\center
\begin{tikzpicture}
\begin{scope}[xshift=-2cm]
\fill[fill=black!10] (1,0)coordinate (Q)  circle (1.2cm);
    \coordinate (a) at (0,0);
    \coordinate (b) at (2,0);

     \filldraw[fill=white] (a)  circle (2pt);
\filldraw[fill=white](Q) circle (2pt);
\filldraw[fill=red] (1.07,0)  arc (0:180:2pt); 
\fill (b) circle (2pt);
    \fill[white] (a) -- (Q) -- ++(0,-1.2) --++(-1,0) -- cycle;
 \draw  (a) -- (b) coordinate[pos=.25](c) coordinate[pos=.75](d);
 \draw (a) -- ++(0,-.5);
 \draw (Q) -- ++(0,-1.1);

\node[above] at (c) {$1$};
\node[above] at (d) {$2$};
\node[below] at (d) {$3$};

    \end{scope}
\begin{scope}[xshift=2cm]
\fill[fill=black!10] (1,0)coordinate (Q)  circle (1.2cm);
    \coordinate (a) at (0,0);
    \coordinate (b) at (2,0);

     \filldraw[fill=red] (a)  circle (2pt);
\filldraw[fill=red](Q) circle (2pt);
\filldraw[fill=white] (1.07,0)  arc (0:180:2pt); 
\fill (b) circle (2pt);
    \fill[white] (a) -- (Q) -- ++(0,-1.2) --++(-1,0) -- cycle;
 \draw  (a) -- (b) coordinate[pos=.25](c) coordinate[pos=.75](d);
 \draw (a) -- ++(0,-.5);
 \draw (Q) -- ++(0,-1.1);

\node[below,rotate=180] at (c) {$1$};
\node[above] at (d) {$3$};
\node[below] at (d) {$2$};

    \end{scope}
    \end{tikzpicture}
\caption{Différentielle quadratique de $\Omega^{2}\moduli[0](1,1,2;-4,-4)$ dont les résidus sont $(1,1)$} \label{fig:deuxdiffresunun}
\end{figure}
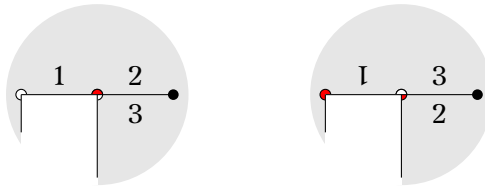
Le cas des strates de la forme $\Omega^{2}\moduli[0](b-3,b-3,2;-b,-b)$ s'obtient de manière similaire en utilisant des parties polaires d'ordre $b$ et de type $b-1$ associés aux mêmes vecteurs que dans la figure~\ref{fig:deuxdiffresunun}.
\end{proof}

\begin{prop}\label{prop:5Except2}
L'image de l'application résiduelle de $\quadomoduli[0](2a-1,2a-1,2;-4;\rec[-2][2a])$ avec $a \geq 1$ contient $(0;1,\dots,1)$.
\end{prop}

\begin{proof}
On procède à la construction suivante: on associe aux pôles d'ordres $-2$ les parties polaires d'ordre~$2$ associées à~$(1;\emptyset)$. Pour le pôle d'ordre~$-4$, on associe la partie polaire d'ordre $4$ associée à $(\rec[1][a+2];\rec[v][a+2])$. La différentielle est obtenue en collant par rotation d'angle $\pi$ le premier vecteur~$v$, resp. $1$, de cette partie polaire au dernier vecteur $v$, resp. $1$, puis les autres vecteurs aux bords des parties polaires d'ordre~$2$. Cette construction est représentée à gauche de la figure~\ref{fig:extroiszero}.
\end{proof}

Nous donnons maintenant la preuve du cas général.

\begin{prop}\label{prop:5i2n3}
Toute configuration $(R_{1},\dots,R_{p+s})$ de résidus quadratiques non tous nuls (et non nuls pour $R_{p+1},\dots,R_{p+s}$) est réalisable dans la strate $\quadomoduli[0](a_{1},\dots,a_{n};-b_{1},\dots,-b_{p};\rec[-2][s])$ avec $n \geq 3$, $a_{1},a_{2}$ impairs et $a_{3},\dots,a_{n}$ pairs.
\end{prop}

\begin{proof}
Considérons tout d'abord les strates  $\quadomoduli[0](a_{1},a_{2},2l_{3};-b_{1},\dots,-b_{p},\rec[-2][s])$.
Si une configuration est réalisable dans $\quadomoduli[0](a_{1}+2l_{3},a_{2};\rec[-2][s])$ ou $\quadomoduli[0](a_{1},a_{2}+2l_{3};\rec[-2][s])$, alors l'éclatement de zéros permet de le réaliser dans $\quadomoduli[0](a_{1},a_{2},2l_{3};\rec[-2][s])$. Rappelons que pour ces strates vérifiant $n=\mathfrak{i}=2$, la remarque~\ref{rem:redux} permet de caractériser les obstructions portant sur chaque strate résiduelle (voir section~\ref{sec:arrhyp}) en n'ayant à étudier que les configurations dans lesquelles les résidus quadratiques sont des réels positifs. On déduit de la proposition~\ref{prop:MAIN5+n2} que les seuls cas dans lesquels nous obtenons une obstruction sont:
\begin{enumerate}
    \item les configurations $\mathbb{C}^{\ast}\cdot (\rec[0][a],1,1)$ dans $\quadomoduli[0](2a+b-1,2a+b-1,-2;\rec[-4][a],-b,-b)$ avec $b \geq 2$ pair, $a \geq 0$ et $2a+b \geq 4$;
    \item les configurations $\mathbb{C}^{\ast}\cdot (0;1,\dots,1)$ dans les strates $\quadomoduli[0](2a-1,2a-1,2;-4,\rec[-2][2a])$ avec $a \geq 1$.
\end{enumerate}
Des constructions ont été données pour ces cas dans les propositions~\ref{prop:5Except1} et ~\ref{prop:5Except2}.
\end{proof}

\subsection{Au moins quatre zéros d'ordre impair}\label{sub:pair4impair}

Toutes les configurations qui ne sont pas uniformément nulles peuvent s'obtenir dans les strates $\quadomoduli[0](a_{1},\dots,a_{n};-b_{1},\dots,-b_{p},\rec[-2][s])$ par éclatement de zéro avec $\mathfrak{i} \geq 4$ en partant d'une strate vérifiant $n=3$ et $\mathfrak{i}=2$. Le lemme suivant traite du cas des configurations uniformément nulles.

\begin{lem}
Si la décomposition $\mu=(a_{1},\dots,a_{n};-b_{1},\dots,-b_{p})$ contient au moins quatre zéros impairs, alors l'application résiduelle $\appresquad[0](\mu)$ contient $(0,\dots,0)$.
\end{lem}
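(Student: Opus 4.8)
The plan is to realize the origin by reducing to a stratum with exactly two odd zeros, where everything is governed by the inequality~\eqref{eq:inezero}, and then to isolate and treat by hand the single family for which this reduction fails. Throughout I use that in genus zero the number of odd singularities is even, so here $\nim\geq 4$ is even, and that $\sum_i a_i=\sum_j b_j-4\geq 4p-4$ since every pole has even order $\geq 4$.

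First I would keep the two smallest odd zeros $a_1\leq a_2$ and absorb everything else into a single even zero of order $a_3':=\sum_{i\geq 3}a_i=\sum_i a_i-a_1-a_2$ (this is even, being a total even sum minus the two odd orders). By Lemma~\ref{lem:condsufngeq3} the origin lies in the image of $\appresquad[0](a_1,a_2,a_3';-b_1,\dots,-b_p)$ as soon as $a_3'\geq 2p$, which is exactly~\eqref{eq:inezero}. Starting from such a residue-free differential, which is primitive because it carries two odd zeros and hence is not a square, I would apply the blow-up of Proposition~\ref{prop:eclatZero} (condition i) to split $a_3'$ into the remaining orders $a_3,\dots,a_n$; this preserves primitivity and the vanishing of all residues, producing a differential of $\quadomoduli[0](\mu)$ whose residues are all zero.

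It remains to check the bound $a_3'\geq 2p$. Since $a_1,a_2$ are the two smallest of the $\nim\geq 4$ odd zeros, their average is at most the average of all odd orders, whence $a_1+a_2\leq \tfrac{2}{\nim}\sum_{\mathrm{odd}}a_i\leq\tfrac12\sum_i a_i$, using $\nim\geq 4$ and that even zeros are nonnegative. Therefore $a_3'=\sum_i a_i-(a_1+a_2)\geq\tfrac12\sum_i a_i\geq 2p-2$. Tracking the equalities (all quantities here are even, so a value strictly below $2p$ must equal $2p-2$) forces $\sum_i a_i=4p-4$, i.e. all poles of order exactly $-4$, no even zeros, and all four odd zeros equal; that is, the bound drops to $2p-2$ only for the family $\quadomoduli[0]((p-1)^4;(-4^p))$ with $p$ even.

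The main obstacle is precisely this exceptional family, which cannot be reached by the two‑zero machinery: merging its four odd zeros produces $\quadomoduli[0](p-1,p-1,2(p-1);(-4^p))$, whose even sum $2(p-1)$ violates~\eqref{eq:inezero}, so Lemma~\ref{lem:condnecngeq3} forbids any origin-realization obtained by blow-up. I would instead build it directly with the tools of Section~\ref{sec:briques}: glue $p$ trivial polar parts of order $4$, each associated to vectors $(v;v)$ and hence of vanishing residue, cyclically along horizontal saddle connections, choosing the types so that the $p+2$ saddle connections of the resulting surface (see Section~\ref{sec:coeur}) distribute the angle into exactly four cone points of angle $(p+1)\pi$, performing at least one identification through a rotation by $\pi$ to guarantee primitivity. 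Alternatively one passes to the canonical double cover, a genus-one curve carrying an anti-invariant abelian differential with four zeros of order $p$, $2p$ double poles and vanishing residues, whose existence follows from the genus-one case of~\cite{getaab}. This last construction settles the family $\quadomoduli[0]((p-1)^4;(-4^p))$ and completes the proof that the origin is always realized as soon as at least four zeros are odd.
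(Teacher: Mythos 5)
Your reduction is essentially the paper's own: keep two odd zeros, merge everything else into a single even zero, realize the origin there (the paper invokes Lemma~\ref{lem:pairdivzero}, built on Lemma~\ref{lem:condsufngeq3}), then split that zero by Proposition~\ref{prop:eclatZero}, and your identification of the unique escaping family $\Omega^{2}\moduli[0]((p-1)^{4};(-4)^{p})$, $p$ even, agrees with the paper (as does your observation, via Lemma~\ref{lem:condnecngeq3}, that no blow-up argument can reach it). The gap is in your treatment of that exceptional family: both constructions you offer fail. For the first one, a trivial polar part of order $4$ associated to $(v;v)$ has exactly two boundary segments, and for $b=4$ the type is forced ($\tau=1$), so there is no freedom ``in the types''. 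Gluing $p$ such parts cyclically therefore produces only $p$ saddle connections, not the $p+2$ you announce, and an Euler characteristic count on the resulting sphere ($F=p$ faces, $E=p$ edges, hence $V=2$) shows the surface has exactly \emph{two} cone points, whatever mixture of translations and rotations by $\pi$ you use in the identifications; one lands in $\Omega^{2}\moduli[0](2p-2,2p-2;(-4)^{p})$, never in $\Omega^{2}\moduli[0]((p-1)^{4};(-4)^{p})$. To create four zeros some polar parts must carry four boundary segments: this is exactly what the paper does, taking two special parts with boundaries $(v_{1},v_{2};v_{3},v_{4})$ and $(v_{4},v_{2};v_{3},v_{1})$ with $v_{i}=1$ (so that the gluings along $v_{2}$ and $v_{3}$ are rotations by $\pi$, ensuring primitivity) and inserting the remaining $p-2$ parts $(1;1)$ cyclically along the segments $v_{1}$ and $v_{3}$.

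Your second alternative has a different but equally real gap: the genus-one case of \cite{getaab} produces \emph{some} abelian differential with four zeros of order $p$, $2p$ double poles and vanishing residues, but not one that is anti-invariant under an involution of the elliptic curve fixing the four zeros. Without this equivariance the differential does not descend to a primitive quadratic differential on $\PP^{1}$, and a generic element of the stratum carries no involution at all; producing the anti-invariant representative is precisely equivalent to the problem you are trying to solve, so the citation is circular. In short, the exceptional family genuinely requires a direct flat construction of the kind the paper gives, and neither of your two proposed substitutes supplies it.
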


\begin{proof}
Considérons le cas d'une strate avec quatre zéros d'ordre impair. Le cas avec davantage de zéros d'ordres impairs se déduit par éclatement. Le lemme~\ref{lem:condsufngeq3} montre que la configuration uniformément nulle est réalisable par une différentielle quadratique avec deux zéros d'ordres impairs $a_{1}$ et $a_{2}$ et un zéro d'ordre $a_{3}$ pair vérifiant $a_{3} \geq 2p$. L'éclatement de ce dernier zéro permet d'obtenir toutes les strates avec quatre zéros d'ordre impair, sauf si la somme des ordres de deux d'entre eux est  $<2p$. Les pôles étant d'ordres $b_{i}\geq 4$, les seules exceptions sont les strates $\Omega^{2}\moduli[0](\rec[p-1][4];\rec[-4][p])$ avec $p$ pair.
\par
Dans le cas $\Omega^{2}\moduli[0](1,1,1,1;-4,-4)$ on considère deux parties polaires d'ordre $4$ associées à $(v_{1},v_{2};v_{3},v_{4})$ et à $(v_{4},v_{2};v_{3},v_{1})$ respectivement, avec $v_{i}=1$. On colle les segments de même nom ensemble pour obtenir la différentielle quadratique souhaitée. Enfin pour obtenir les différentielles dans les strates $\Omega^{2}\moduli[0](\rec[p-1][4];\rec[-4][p])$ avec $p$ pair, on considère les parties polaires d'ordre $4$ associées à $(1;1)$. Puis on colle cycliquement la moitié de ces parties polaires aux segments $v_{1}$ et l'autre moitié aux segments $v_{3}$.
\end{proof}

\section{Différentielles dont les pôles sont doubles}
\label{sec:juste-k}

Comme pour les différentielles abéliennes (voir \cite[section 3.3]{getaab}), ce cas est subtil. Dans les strates ayant un unique zéro, toutes les singularités ont un ordre pair. Elles ne sont donc pas primitives (lemme~\ref{lem:puissk}). Nous considérerons les strates avec $n\geq 2$ zéros, dont au moins deux sont impairs. 
\smallskip
\par
\subsection{Obstructions}
\label{sec:doubleobstr}

Deux familles de configuration de résidus à deux paramètres présentent des obstructions exceptionnelles:
\begin{defn}\label{def:crosse}
Les $s$-uplets proportionnels à $(1,\dots,1,R,R)$ avec $R\in\CC^{\ast}$ et $s$ pair sont {\em en crosse} tandis que les $s$-uplets de la forme $(\lambda^{2},\mu^{2},\nu^{2},\dots,\nu^{2})$ avec $s$ impair et $\lambda\mu\nu \in \CC^{\ast}$ vérifiant $\lambda+\mu+\nu=0$ sont {\em triangulaires}.
\end{defn}

Les cas (3) et (4) du corollaire~\ref{cor:except1} prouvent que les configurations en crosse et triangulaires sont impossibles à réaliser dans les strates de la forme $\Omega^{2}\moduli[0](s-1,s-3;\rec[-2][s])$ et $\Omega^{2}\moduli[0](s-2,s-2;\rec[-2][s])$ respectivement.
Outre ces deux obstructions, les configurations de résidus doivent aussi satisfaire une remarquable obstruction arithmétique pour être réalisables dans une strate de différentielles quadratiques à pôles doubles.

\begin{defn}\label{def:arithmétique}
Un $s$-uplet $(R_{1},\dots,R_{s})\in \NN^{s}$ est {\em arithmétique} si $R_{i} = r_{i}^{2}$ avec $r_{i}\in \NN$ premiers entre eux.
\end{defn}

Il existe deux obstructions distinctes concernant les configurations arithmétiques selon si la somme des racines $r_{i}$ est paire ou impaire. Ces obstructions sont les inégalités~\eqref{eq:sumimpintro} et~\eqref{eq:sumpairintro}  du théorème~\ref{thm:geq0quad2}. La fin de cette section est dédiée à la preuve de la nécessité de ces conditions.

\begin{prop}\label{prop:ObstrArithm}
Soit $\xi$ une différentielle  de $\quadomoduli[0](a_{1},a_{2},2l_{3},\dots,2l_{n};\rec[-2][s])$ avec $a_{1},a_{2}$ impairs et $l_{3},\dots,l_{n} \geq 1$ dont les résidus $(R_{1},\dots,R_{s})$ sont arithmétiques.
 \begin{itemize}
  \item  Si la somme $\sum r_{i}$ est impaire, alors elle est supérieure ou égale à $a_{2}+2$.
  \item  Si la somme $\sum r_{i}$ est paire, alors elle est supérieure ou égale à $a_{1}+a_{2}+4$.
 \end{itemize}
\end{prop}

\begin{proof}
L'application développante $dev$ de la structure plate de $\xi$ est une fonction multivaluée sur la surface privée de ses singularités. Toutefois, l'holonomie de $\xi$ est engendrée par les fonctions $z \mapsto z+1$ et $z \mapsto -z$. Le quotient de $\CC$ par ces transformations est une surface de demi-translation : le demi-cylindre infini $\mathcal{X}$ avec deux singularités coniques $A,B$ d'angle $\pi$ situées en $0$ et $\frac{1}{2}$ et un pôle double $C$. Ainsi, l'application $dev$ définit un revêtement ramifié $\pi$ de $(\mathbb{CP}^{1},\xi)$ vers $\mathcal{X}$.

La fibre au-dessus du pôle double $C$ est constituée de tous les pôles doubles de $\xi$. Le degré du revêtement est donc la somme $\sum r_{i}$. De plus, les zéros d'ordres impairs $a_{1}$ et $a_{2}$ ont un angle conique qui est un multiple impair de $\pi$. Leur image par le revêtement $\pi$ est donc nécessairement l'une des deux singularités $A$ ou $B$.

Dans la fibre au-dessus de $A$ (ou $B$), les points d'ordre pair (incluant les points réguliers de~$\xi$) ont un ordre de ramification qui est pair également. Autrement dit, si $\sum r_{i}$ est impair, la fibre au-dessus de $A$ contient un nombre impair de points d'ordre impair. Dans le cas présent, elle en contient exactement un. Le degré du revêtement borne donc l'ordre de ramification en ce point. Nous avons donc $\sum r_{i} \geq \max(a_{1},a_{2})+2$.

Si au contraire $\sum r_{i}$ est pair, la fibre au-dessus de $A$ contient un nombre pair de points dont l'ordre de ramification est impair. Il s'ensuit que les singularités d'ordres $a_{1}$ et $a_{2}$ sont projetées sur le même point ($A$ ou $B$) par le revêtement $\pi$ et nous obtenons l'inégalité $\sum r_{i} \geq a_{1}+a_{2}+4$.
\end{proof}

\begin{rem}\label{rem:longueurLS}
La construction du revêtement ramifié prouve en particulier que la longueur des liens-selles reliant les deux zéros d'ordres impairs se trouve dans $\mathbb{N}^{\ast}$ si $\sum r_{i}$ est pair et dans $\frac{1}{2}+\mathbb{N}$ si $\sum r_{i}$ est impair.
\end{rem}

\subsection{Le cas de strates avec deux zéros}\label{sub:i2n2double}

Pour des strates telles que $n=\mathfrak{i}=2$ (autrement dit, pour lesquelles les seules singularités coniques sont deux zéros d'ordre impair), la remarque~\ref{rem:redux} permet de caractériser les obstructions portant sur chaque strate résiduelle (voir section~\ref{sec:arrhyp}) en n'ayant à étudier que les configurations dans lesquelles les résidus quadratiques sont des réels positifs.

Dans cette section, nous donnerons les résultats d'existence de différentielles à résidus prescrits pour des $s$-uplets $R_{1},\dots,R_{s} \in \mathbb{R}_{+}^{\ast}$ de résidus quadratiques. Nous noterons $r_{1},\dots,r_{s}$ leurs racines positives respectives. De plus, si tous ces nombres sont commensurables, nous les normaliserons de telle façon à ce que les $r_{1},\dots,r_{s}$ soient des entiers premiers entre eux (voir la définition~\ref{def:arithmétique} des configurations arithmétiques).

Les surfaces plates correspondant à de telles différentielles sont formées de cylindres horizontaux de circonférences $r_{1},\dots,r_{s}$ reliés par exactement $s$ liens-selles horizontaux définissant un graphe d'incidence ayant un unique cycle (voir section~\ref{sec:coeur}). Ces différentielles sont en particulier des différentielles de Strebel. Dans chaque cas, nous donnerons le graphe d'incidence et l'attribution de chaque circonférence de cylindre au sommet du graphe qui lui correspond. Le lemme suivant sera implicitement employé dans le reste de la section pour établir que les différentielles construites ont bien les ordres des zéros voulus.

\begin{lem}\label{lem:ordresgraphes}
Soit  $\xi$ une différentielle sur $\mathbb{CP}^{1}$ constituée de $s$ cylindres horizontaux reliés par exactement $s$ liens-selles. Son graphe d'incidence est un graphe plongé dans la sphère
 formé d'un unique cycle de longueur impaire $t_{0}$ sur lequel sont collés des arbres. Soit $t_{1}$ et $t_{2}$ le nombre des sommets des arbres appartenant à chacune des composantes connexes découpées par le cycle. La différentielle $\xi$ compte deux zéros d'ordres respectifs $t_{0}+2t_{1}-2$ et $t_{0}+2t_{2}-2$.
\end{lem}

\begin{proof}
La section~\ref{sec:coeur} et un calcul de caractéristique d'Euler montrent qu'une telle surface plate compte exactement deux singularités coniques. Puisque la surface est constituée de cylindres horizontaux, chaque angle entre deux liens-selles est égal à $\pi$. Nous déduisons du comptage des coins de chacune des deux faces dans le graphe d'incidence que les deux singularités coniques sont d'angles respectifs $t_{0}+2t_{1}$ et $t_{0}+2t_{2}$. Finalement, le nombre de cylindres du cycle est impair sans quoi l'holonomie linéaire de la surface serait triviale (et la différentielle non primitive).
\end{proof}

\begin{rem}\label{rem:LongueurLS}
Les longueurs des différents liens-selles sont entièrement déterminées par la position de l'arête correspondante dans le graphe d'incidence.
En effet, un lien-selle reliant un zéro à lui-même sépare le graphe en deux composante connexe. L'une est un arbre $\mathcal{A}$ (dont la racine est le sommet incident au lien-selle) tandis que l'autre contient un cycle. La longueur du lien-selle est donnée par la somme des circonférences des cylindres de $\mathcal{A}$, chacune étant assortie d'un signe positif si la distance à la racine est paire, négatif dans le cas contraire.
\end{rem}

\subsubsection{Si l'un des zéros est d'ordre négatif}

Nous donnons d'abord une construction géométrique du cas générique.

\begin{lem}\label{lem:polesimple}
Toute configuration de la forme $(r_{1}^{2},\dots,r_{s}^{2})$ avec $r_{1} < r_{2} \leq r_{3} \leq \dots \leq r_{s}$ est réalisable par une différentielle de la strate $\quadomoduli[0](-1,2s-3;\rec[-2][s])$.
\end{lem}

\begin{proof}
Nous donnons une construction géométrique directe: le graphe est une chaîne de $s$ sommets avec un sommet de valence $1$ à un bout et un sommet de valence $3$ avec une boucle à l'autre bout (et $s-2$ sommets de valence $2$ entre les deux). Les circonférences $r_{1},\dots,r_{s}$ sont ordonnées le long du graphe avec le plus petit cylindre sur le sommet de valence $1$ et le plus grand sur le sommet de valence $3$. Il suffit de vérifier que ces conditions permettent toujours d'obtenir des liens-selles de longueur strictement positive. En fait, la longueur du lien-selle entre les cylindres de circonférences $r_{i}$ et $r_{i+1}$ est la somme alternée $\sum\limits_{k=1}^{i} (-1)^{i-k} r_{i}$ (pour $1 \leq i \leq s-1$). Comme les $r_{i}$ sont croissants et que $r_{2}>r_{1}$, toutes ces sommes seront non nulles. Les deux segments identifiés par rotation sur le domaine polaire de valence $3$ (du fait de la boucle dans le graphe) ont pour longueur $\frac{1}{2}\sum\limits_{k=0}^{s-1}(-1)^{k}r_{s-k} > 0$.
\end{proof}

La caractérisation complète se démontre par récurrence sur le nombre de pôles doubles.

\begin{prop}\label{prop:poledouble-1}
La configuration $(r_{1}^{2},\dots,r_{s}^{2})$ est réalisable dans $\quadomoduli[0](-1,2s-3;\rec[-2][s])$ si et seulement s'il ne s'agit pas d'une configuration arithmétique de somme $T=\sum r_{i} \leq 2s-2$.
\end{prop}

\begin{proof}
Le lemme~\ref{lem:polesimple} donne les constructions pour $s=1$ et $s=2$ sauf pour les configurations proportionnelles à $(1,1)$. Celles-ci ne sont pas réalisables car nous obtiendrions une différentielle entrelacée lissable dans le bord de $\quadomoduli[1](-1,1)$ en collant les pôles doubles.
\par
Supposons la proposition valide jusqu'au rang $s\geq2$. Prenons un uplet $C$ de $(s+1)$ nombres réels $r_{i}$ classés par ordre croissant. Soit ils sont non commensurables, soit la proposition~\ref{prop:ObstrArithm} implique que ce sont des entiers premiers entre eux de somme $T \geq 2s + 1$. Comme les éléments de $C$ ne sont pas tous identiques, on a $r_{s+1}>r_{1}$. Soit $C'$ la configuration obtenue à partir de~$C$ en éliminant $r_{1}$ et en remplaçant $r_{s+1}$ par $r_{s+1}-r_{1}$.
\par
Supposons qu'il existe une différentielle de $\quadomoduli[0](-1,2s-3;\rec[-2][s])$ dont les racines des résidus sont les éléments de $C'$. On grossit le cylindre de taille $r_{s+1}-r_{1}$  à partir du zéro d'ordre $2s-3$ jusqu'à obtenir un cylindre de taille de~$r_{s+1}$ à bord de taille $r_{1}$. On colle alors un cylindre de circonférence $r_{1}$ à ce bord. La surface obtenue appartient à la strate voulue et ses résidus sont les carrés des éléments de~$C$.
\par
Supposons à présent que les carrés des éléments de $C'$ n’apparaissent pas comme résidus dans la strate $\quadomoduli[0](-1,2s-3;\rec[-2][s])$. La configuration $C'$ est donc proportionnelle à une configuration arithmétique de somme au plus $2s-2$. Soit $k$ le pgcd des éléments de $C'$, la somme des éléments de $C'$ est strictement inférieure à $k(2s-1)$ et $C'$ compte donc au moins deux éléments égaux à $k$. Nous avons donc $r_{2}=r_{3}=k$. Si $k>r_{1}$, alors le lemme~\ref{lem:polesimple} prouve que $C$ est réalisable. Si $k=r_{1}$, alors $k$ divise chaque élément de $C$ et donc $k=1$. Puisque la somme des éléments de $C'$ vaut au plus $2s-2$, la somme des éléments de $C$ vaut au plus $2s-1$ et nous avons une contradiction.
\end{proof}

\subsubsection{Constructions génériques}

Nous étendons la construction donnée dans le lemme~\ref{lem:polesimple} afin d'obtenir la plupart des configurations de résidus réels positifs dans les strates vérifiant $\mathfrak{i}=n=2$. Nous distinguons plusieurs cas selon si les ordres des zéros se trouvent dans $4\mathbb{Z}+1$ ou $4\mathbb{Z}+3$. Dans toute cette section, les résidus sont classés par ordre croissant.

\begin{lem}\label{lem:4z+3}
Toute configuration de la forme $(r_{1}^{2},\dots,r_{s}^{2})$ avec $r_{1} < r_{2}$ et $r_{3}<r_{4}$ est réalisable par une différentielle de la strate $\quadomoduli[0](a_{1},a_{2};\rec[-2][s])$ avec $a_{1},a_{2} \in 4\mathbb{N}+3$.
\end{lem}

\begin{proof}
Soit $t=\frac{a_{1}+1}{2}\in 2\NN_{>0}$. Notons $\sigma=3,4,\dots,t-2$ et $\tau=1,2,t-1,\dots,s-1$ les suites croissantes de $t$  et $s-t-1$ nombres entiers.
\par
Nous donnons une construction géométrique directe: le graphe est constitué d'un sommet, correspondant au pôle de résidu  $r_{s}$, avec une boucle à laquelle sont collées deux chaînes (faites de sommets de valence $2$ avec un sommet de valence $1$ au bout). De plus, chaque chaîne appartient à une composante connexe distincte de la sphère privée de la boucle. Ainsi, les angles au bord des cylindres pour leurs domaines polaires sont attribués à deux zéros distincts.
\par
Pour les deux chaînes, nous attribuons les circonférences des cylindres selon les deux suites $\sigma$ et $\tau$ de la façon suivante. Les sommets de valence $1$ correspondent aux cylindres de circonférences $r_{1}$ et $r_{3}$. Puis les cylindres suivants sont de circonférences $r_{2}-r_{1}$ et $r_{4}-r_{3}$. Ainsi de suite nous prenons des cylindres dont le bord est la somme alternée des résidus de $\sigma$ et $\tau$ respectivement. L'hypothèse $r_{1}<r_{2}$ et $r_{3}<{4}$ garantit que tous les liens-selles ont une longueur strictement positive. Les deux segments identifiés sur le bord du cylindre de circonférence $r_{s}$ ont pour longueur la moitié de la somme alternée
$\sum\limits_{k\text{ impair}}r_{k}- \sum\limits_{k\text{ pair}}r_{k}$ qui est aussi strictement positive. En effet, comme $a_{1},a_{2} \in 4\mathbb{N}+3$ on a $\sigma$ et $\tau$ qui sont de longueurs paires strictement positives (voir le lemme~\ref{lem:ordresgraphes}).
\end{proof}

Lorsque l'un des ordres des zéros est dans $4\mathbb{N}+1$ et l'autre dans $4\mathbb{N}+3$, une variante de la construction précédente permet d'obtenir la plupart des configurations de résidus.

\begin{lem}\label{lem:4z+1/3}
Toute configuration de la forme $(r_{1}^{2},\dots,r_{s}^{2})$ avec $r_{2} < r_{3}$ et $\sum\limits_{k\text{ impair}}r_{k} < \sum\limits_{k\text{ pair}}r_{k}$ est réalisable par une différentielle de la strate $\quadomoduli[0](a_{1},a_{2};\rec[-2][s])$ avec $a_{1} \in 4\mathbb{N}+3$ et $a_{2} \in 4\mathbb{N}+1$.
\end{lem}

\begin{proof}
La construction est pratiquement identique à celle du lemme~\ref{lem:4z+3}. Nous avons toujours $t=\frac{a_{1}+1}{2}$ qui est pair mais ici $s \geq 4$ est un nombre pair. Nous posons $\sigma=2,3,\dots,t+1$ et $\tau=1,t+2,\dots,s-1$ et construisons le graphe de façon analogue. Les inégalités strictes permettent de garantir que tous les liens-selles de la surfaces sont de longueur non nulle.
\end{proof}

Quand les ordres des zéros sont tous deux dans $4\mathbb{Z}+1$, les constructions précédentes ne fonctionnent pas. Nous procéderons par récurrence en partant de la strate $\quadomoduli[0](5,1;\rec[-2][5])$. La strate $\quadomoduli[0](1,1;\rec[-2][3])$ étant exceptionnelle, nous la traitons à part d'emblée.

\begin{lem}\label{lem:double11}
Dans la strate $\quadomoduli[0](1,1;\rec[-2][3])$, la configuration $(r_{1}^{2},r_{2}^{2},r_{3}^{2})$ est réalisable par une différentielle si et seulement si $r_{3} \neq r_{1} + r_{2}$.
\end{lem}

\begin{proof}
Si $r_{3}=r_{1}+r_{2}$, l'obstruction (5) du corollaire~\ref{cor:except1} prouve que la configuration n'est pas réalisable dans la strate. Nous donnons les constructions dans les cas restants.
\par
Si $r_{3}>r_{1}+r_{2}$, le graphe d'incidence a un cycle fait d'un seul sommet, correspondant au cylindre de circonférence $r_{3}$. Le bord de ce domaine polaire se divise en quatre segments de longueurs $r_{1},\frac{1}{2}(r_{3}-r_{2}-r_{1}), r_{2}, \frac{1}{2}(r_{3}-r_{2}-r_{1})$ dans cet ordre. On colle les cylindres de tailles $r_{1}$ et $r_{2}$ sur les segments correspondants et identifie les deux derniers segments par rotation.
\par
Si $r_{3}<r_{1}+r_{2}$, le graphe d'incidence est un cycle de trois sommets. Les trois liens-selles sont de longueurs $\frac{1}{2}(r_{3}-r_{2}+r_{1})$, $\frac{1}{2}(r_{3}-r_{1}+r_{2})$ et $\frac{1}{2}(r_{2}-r_{3}+r_{1})$.
\end{proof}

Le cas de la strate $\quadomoduli[0](5,1;\rec[-2][5])$ constitue l'étape initiale de notre construction par récurrence quand les ordres des zéros sont tous deux dans $4\mathbb{Z}+1$.

\begin{lem}\label{lem:double15}
Dans la strate $\quadomoduli[0](1,5;\rec[-2][5])$, la configuration $(r_{1}^{2},\dots,r_{5}^{2})$ est réalisable par une différentielle si $r_{1}<r_{2}$ et $r_{3}<r_{4}$. De plus, nous pouvons exiger que pour cette différentielle les deux zéros  appartiennent au bord du cylindre de circonférence $r_{5}$.
\end{lem}

\begin{proof}
Supposons d'abord que $r_{5}+r_{4}>r_{3}+r_{2}+r_{1}$. Nous donnons la construction du lemme~\ref{lem:4z+3} avec $\sigma=(3,4,2)$ et $\tau=(1)$. Le cylindre de circonférence $r_{5}$ est de valence $4$ et les deux segments identifiés sur son bord sont de longueur $\frac{1}{2}(r_{5}-r_{3}-r_{1}+r_{4}-r_{2})$.
\par
Dans les cas restants, nous avons $r_{5}+r_{4} \leq r_{3}+r_{2}+r_{1}$ et donc $r_{5}+r_{3}< r_{4}+r_{2}+r_{1}$. Nous prenons alors un graphe d'incidence avec un cycle de longueur $3$ composé des cylindres de circonférences $r_{1},r_{3},r_{4}$. Nous collons le bord du cylindre de taille $r_{2}$ sur celui de taille $r_{3}$ et l'autre bord du cylindre de taille $r_{3}$ sur celui de taille $r_{4}$. Les longueurs des trois liens-selles du cycle seront alors $\frac{1}{2}((r_{4}-r_{3}+r_{2})+r_{1}-r_{5})$, $\frac{1}{2}((r_{4}-r_{3}+r_{2})-r_{1}+r_{5})$ et $\frac{1}{2}(r_{5}+r_{1}-(r_{4}-r_{3}+r_{2}))$. Ces longueurs sont par hypothèse strictement positives.
\end{proof}

Nous pouvons enfin réaliser notre construction par récurrence. Celle-ci couvre en particulier le cas des configurations dont tous les résidus sont deux à deux distincts.

\begin{lem}\label{lem:4z+1}
Pour $s \geq 5$, toute configuration de la forme $(r_{1}^{2},\dots,r_{s}^{2})$ avec $r_{2k-1} < r_{2k}$ pour chaque $k$ vérifiant $1 \leq k \leq \frac{s-1}{2}$ est réalisable dans $\quadomoduli[0](a_{1},a_{2};\rec[-2][s])$ avec $a_{1},a_{2} \in 4\mathbb{N}+1$.
\end{lem}

\begin{proof}
Pour de telles strates, $s$ est un nombre impair. Nous montrons la proposition par récurrence sur $s$ en exigeant en outre que le
cylindre de circonférence $r_{s}$ (le plus grand) soit bordé par les deux zéros. La propriété est valide pour $s=5$ par le lemme~\ref{lem:double15} et nous la supposerons vraie jusqu'au rang $s$.
\par 
Considérons une configuration de $s+2$ racines de résidus $C=(r_{1},r_{2},\dots,r_{s+2})$ rangées par ordre croissant et vérifiant $r_{2k-1} < r_{2k}$ pour chaque $k$ entre $1$ et $\frac{s+1}{2}$.
\par
Nous définissons une nouvelle configuration $C'$ de $s$ éléments en remplaçant les trois derniers éléments $r_{s},r_{s+1},r_{s+2}$ de $C$ par $\tilde{r}=r_{s+2}-r_{s+1}+r_{s}$. Par hypothèse, nous avons $\tilde{r} \geq r_{s}$. La nouvelle configuration $C'$ satisfait donc les hypothèses de récurrence et peut donc être réalisée par une différentielle de n'importe quelle strate ayant $s$ pôles doubles et des ordres de zéros dans $4\mathbb{Z}+1$. Nous pouvons remplacer le cylindre de circonférence $\tilde{r}$ par un cylindre de circonférence $r_{s+2}$ sur lequel on colle par un lien-selle de longueur $r_{s+1}-r_{s}>0$ une chaîne formée du cylindre de circonférence $r_{s+1}$ et de celui de circonférence $r_{s}$. Comme le lien-selle formé peut être adjacent à n'importe quel zéro, nous obtenons n'importe quelle strate ayant $s+2$ pôles doubles. De plus, le cylindre de plus grande circonférence est encore bordé par les deux zéros.
\end{proof}

\subsubsection{La configuration est arithmétique de somme impaire}

Nous introduisons d'abord une chirurgie permettant d'ajouter des paires de résidus identiques.

\begin{lem}\label{lem:+2res}
Considérons une configuration de résidus réels positifs $(r_{1}^{2},\dots,r_{s}^{2})$ réalisable dans une strate $\quadomoduli[0](a_{1},a_{2};\rec[-2][s])$. Soit $x \in \mathbb{R}_{+}^{\ast}$ satisfaisant l'une des deux propositions suivantes:
\begin{itemize}
    \item au moins un  $r_{i}$ n'est pas un multiple entier de $x$;
    \item si chaque $r_{i}$ est un multiple entier de $x$, alors $\frac{1}{x}\sum\limits_{i=1}^{s}r_{i} \neq 2s$.
\end{itemize}
Alors $(r_{1}^{2},\dots,r_{s}^{2},x^{2},x^{2})$ est réalisable dans la strate $\quadomoduli[0](a_{1}+2,a_{2}+2;\rec[-2][s+2])$.
\end{lem}

\begin{proof}
Considérons  une différentielle  de la strate
$\quadomoduli[0](a_{1},a_{2};\rec[-2][s])$ dont les résidus sont~$(r_{1}^{2},\dots,r_{s}^{2})$. Rappelons que le graphe d'incidence de cette différentielle possède un cycle. Si l'un des segments du cycle correspond à un lien-selle de longueur strictement inférieure à $x$, alors on coupe la surface le long de ce lien-selle et on y colle successivement les deux cylindres de circonférence $x$.
\par
Si l'un des liens-selles du cycle est de longueur strictement supérieure à $x$, on fait la construction suivante. On découpe le lien-selle correspondant et on colle un cylindre sur chacun des segments ainsi créé l'un adjacent à l'une des singularités et l'autre cylindre adjacent à l'autre singularité. Les parties restantes des segments sont collées entre elles en respectant les singularités. On obtient dans les deux cas une différentielle dans la strate  $\quadomoduli[0](a_{1}+2,a_{2}+2;\rec[-2][s+2])$ dont les résidus sont $(r_{1}^{2},\dots,r_{s}^{2},x^{2},x^{2})$.
\par
Si tous les liens-selles reliant les deux zéros sont de longueur $x$ et que le graphe d'incidence est un cycle, alors pour $1 \leq i \leq s$, nous avons $r_{i}=2x$, ce qui est exclu. Par conséquent, le graphe d'incidence contient au moins un arbre collé sur le cycle.
\par
Si un lien-selle reliant un arbre au cycle est de longueur strictement supérieure à $x$, on fait la construction représentée sur la figure~\ref{fig:ajoutdeuxpoles}. On réduit la longueur de ce lien-selle de $x$, puis collons un cylindre de circonférence sur le bord de chacun des cylindres incidents à ce lien-selle.
\par
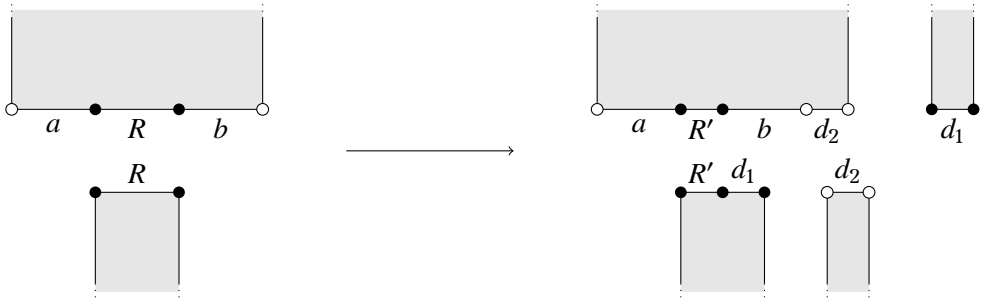
\begin{figure}[htb]
\begin{tikzpicture}[scale=1.1]
\begin{scope}[xshift=-6cm]
\coordinate (a) at (-1,1);
\coordinate (b) at (2,1);
\coordinate (c) at (0,1);
\coordinate (d) at (1,1);

    \fill[fill=black!10] (a)  -- (c)coordinate[pos=.5](f)-- (d)coordinate[pos=.5](g)-- (b)coordinate[pos=.5](j) -- ++(0,1.2) --++(-3,0) -- cycle;
    \fill (c)  circle (2pt);
\fill (d) circle (2pt);
 \draw  (a) -- (b);
 \draw (a) -- ++(0,1.1) coordinate (d)coordinate[pos=.5](h);
 \draw (b) -- ++(0,1.1) coordinate (e)coordinate[pos=.5](i);
 \draw[dotted] (d) -- ++(0,.2);
 \draw[dotted] (e) -- ++(0,.2);
\node[below] at (f) {$a$};
\node[below] at (g) {$R$};
\node[below] at (j) {$b$};
    \filldraw[fill=white] (a)  circle (2pt);
\filldraw[fill=white] (b) circle (2pt);
\end{scope}

\begin{scope}[xshift=-5cm,yshift=-1cm]
\coordinate (a) at (-1,1);
\coordinate (b) at (0,1);

\fill[fill=black!10] (a)  -- (b)coordinate[pos=.5](f) -- ++(0,-1.2) --++(-1,0) -- cycle;
    \fill (a)  circle (2pt);
\fill[] (b) circle (2pt);
 \draw  (a) -- (b);
 \draw (a) -- ++(0,-1.1) coordinate (d)coordinate[pos=.5](h);
 \draw (b) -- ++(0,-1.1) coordinate (e)coordinate[pos=.5](i);
 \draw[dotted] (d) -- ++(0,-.2);
 \draw[dotted] (e) -- ++(0,-.2);
\node[above] at (f) {$R$};

\draw[->](2,1.5) -- (4,1.5);
\end{scope}


\begin{scope}[xshift=7cm]
\begin{scope}[xshift=-6cm]
\coordinate (a) at (-1,1);
\coordinate (b) at (2,1);
\coordinate (c) at (0,1);
\coordinate (d) at (.5,1);
\coordinate (o) at (1.5,1);

    \fill[fill=black!10] (a)  -- (c)coordinate[pos=.5](f)-- (d)coordinate[pos=.5](g)-- (o)coordinate[pos=.5](k)-- (b)coordinate[pos=.5](j) -- ++(0,1.2) --++(-3,0) -- cycle;
    \fill (c)  circle (2pt);
\fill (d) circle (2pt);
 \draw  (a) -- (b);
 \draw (a) -- ++(0,1.1) coordinate (d)coordinate[pos=.5](h);
 \draw (b) -- ++(0,1.1) coordinate (e)coordinate[pos=.5](i);
 \draw[dotted] (d) -- ++(0,.2);
 \draw[dotted] (e) -- ++(0,.2);
\node[below] at (f) {$a$};
\node[below] at (j) {$d_{2}$};
\node[below] at (g) {$R'$};
\node[below] at (k) {$b$};
    \filldraw[fill=white] (a)  circle (2pt);
\filldraw[fill=white] (b) circle (2pt);
\filldraw[fill=white] (o) circle (2pt);

\end{scope}

\begin{scope}[xshift=-2.5cm]
\coordinate (a) at (-.5,1);
\coordinate (b) at (0,1);

    \fill[fill=black!10] (a)  -- (b)coordinate[pos=.5](f) -- ++(0,1.2) --++(-.5,0) -- cycle;
    \fill (a)  circle (2pt);
\fill[] (b) circle (2pt);
 \draw  (a) -- (b);
 \draw (a) -- ++(0,1.1) coordinate (d)coordinate[pos=.5](h);
 \draw (b) -- ++(0,1.1) coordinate (e)coordinate[pos=.5](i);
 \draw[dotted] (d) -- ++(0,.2);
 \draw[dotted] (e) -- ++(0,.2);
\node[below] at (f) {$d_{1}$};
\end{scope}

\begin{scope}[xshift=-5cm,yshift=-1cm]
\coordinate (a) at (-1,1);
\coordinate (b) at (0,1);

\fill[fill=black!10] (a)  -- (b)coordinate[pos=.25](f)coordinate[pos=.5](g)coordinate[pos=.75](j) -- ++(0,-1.2) --++(-1,0) -- cycle;
    \fill (a)  circle (2pt);
\fill[] (b) circle (2pt);
\fill[] (g) circle (2pt);
 \draw  (a) -- (b);
 \draw (a) -- ++(0,-1.1) coordinate (d)coordinate[pos=.5](h);
 \draw (b) -- ++(0,-1.1) coordinate (e)coordinate[pos=.5](i);
 \draw[dotted] (d) -- ++(0,-.2);
 \draw[dotted] (e) -- ++(0,-.2);
\node[above] at (f) {$R'$};
\node[above] at (j) {$d_{1}$};
\end{scope}

\begin{scope}[xshift=-4.25cm,yshift=-1cm]
\coordinate (a) at (0,1);
\coordinate (b) at (.5,1);

    \fill[fill=black!10] (a)  -- (b)coordinate[pos=.5](f) -- ++(0,-1.2) --++(-1/2,0) -- cycle;
    \fill (a)  circle (2pt);
 \draw  (a) -- (b);
 \draw (a) -- ++(0,-1.1) coordinate (d)coordinate[pos=.5](h);
 \draw (b) -- ++(0,-1.1) coordinate (e)coordinate[pos=.5](i);
 \draw[dotted] (d) -- ++(0,-.2);
 \draw[dotted] (e) -- ++(0,-.2);
 \filldraw[fill=white] (a) circle (2pt);
 \filldraw[fill=white]  (b)  circle (2pt);
\node[above] at (f) {$d_{2}$};
\end{scope}
\end{scope}

\end{tikzpicture}
\caption{Une opération qui ajoute deux pôles doubles de résidus égaux.} \label{fig:ajoutdeuxpoles}
\end{figure}

Si au contraire un tel lien-selle entre un cylindre $C_{1}$ du cycle et un cylindre $C_{2}$ de l'arbre est de longueur $L<x$, alors on découpe ce lien-selle, déconnectant l'arbre du reste du graphe, pour le recoller sur le bord d'un cylindre de circonférence $x$. Sur ce dernier, un segment de bord de longueur $x-L$ est alors recollé sur le bord $C_{1}$ tandis qu'on ajoute un nouveau cylindre de circonférence $x$ sur le bord de $C_{1}$. Les longueurs des liens-selles du cycle doivent alors changer, mais comme elles sont des sommes signées de racines de tous les résidus quadratiques (voir section~\ref{sec:arrhyp}), elles ne peuvent diminuer que de $L$ dans le pire des cas et vont donc rester strictement positives.
\par
Nous considérons finalement les cas où les segments du cycle sont égaux à $x$ et les segments entre tous les arbres et le cycle sont égaux à $x$. Considérons un arbre $\Gamma$ collé au cycle. Remontons $\Gamma$ en partant du cycle et considérons le premier segment de longueur strictement inférieure à $x$. Nous pouvons alors couper ce segment et coller la branche sur le cycle au sommet où est collé~$\Gamma$. Cette opération est représentée sur la figure~\ref{fig:operationgraphe}. 
Si le segment que l'on coupe est à distance impaire du cycle, on obtient un nouveau graphe pour lequel les longueurs des segments du cycle ne sont pas toutes égales entre elles. Si le segment est à distance paire l'arbre que nous avons rajouté sur le cycle est attaché par un segment de longueur strictement inférieure à $x$. Dans les deux cas, on est ramené à un cas précédent.
\begin{figure}[htb]
\begin{tikzpicture}

\begin{scope}[xshift=0cm]
\coordinate (z1) at (1,-1);
\coordinate (z2) at (-1,-1);
\coordinate (z3) at (0,0);
\coordinate (z4) at (1,-2);
\coordinate (z5) at (.2,-3);
\coordinate (z6) at (1,-3);
\coordinate (z7) at (-1,-1.7);
\coordinate (z8) at (-1,-2.3);
\coordinate (z9) at (-1,-3);

\draw (z1) --node[right] {$x$} (z4);
\draw (z4) --node[left] {$y$} (z5);
\draw (z4) --node[right] {$z$} (z6);
\draw (z2) --node[left] {$x$} (z7);
\draw (z7) --node[left] {$x$} (z8);
\draw (z8) --node[left] {$z$} (z9);
\foreach \i in {1,2,...,9}
\fill (z\i) circle (2pt);
\draw (z1) ..node[above] {$x$} controls ++(-140:.9) and  ++(-40:.9)   .. (z2);
\draw (z2) ..node[above] {$x$} controls ++(90:.6) and ++(180:.6) .. (z3);
\draw (z3) ..node[above] {$x$} controls ++(0:.6) and ++(90:.6) .. (z1);

\draw[->] (1.7,-3) -- (2.8,-4);
\draw[->] (-1.7,-3) -- (-2.8,-4);
\end{scope}

\begin{scope}[xshift=-4.5cm,yshift=-2cm]
\coordinate (z1) at (1,-1);
\coordinate (z2) at (-1,-1);
\coordinate (z3) at (0,0);
\coordinate (z4) at (1,-2);
\coordinate (z5) at (.2,-3);
\coordinate (z6) at (1,-3);
\coordinate (z7) at (-1,-1.7);
\coordinate (z8) at (-1,-2.3);
\coordinate (z9) at (-1.7,-1);

\draw (z1) --node[right] {$x$} (z4);
\draw (z4) --node[left] {$y$} (z5);
\draw (z4) --node[right] {$z$} (z6);
\draw (z2) --node[left] {$x+z$} (z7);
\draw (z7) --node[left] {$x-z$} (z8);
\draw (z2) --node[above] {$z$} (z9);
\foreach \i in {1,2,...,9}
\fill (z\i) circle (2pt);
\draw (z1) ..node[above] {$x$} controls ++(-140:.9) and  ++(-40:.9)   .. (z2);
\draw (z2) ..node[above] {$x$} controls ++(90:.6) and ++(180:.6) .. (z3);
\draw (z3) ..node[above] {$x$} controls ++(0:.6) and ++(90:.6) .. (z1);
\end{scope}

\begin{scope}[xshift=4.5cm,yshift=-2cm]
\coordinate (z1) at (1,-1);
\coordinate (z2) at (-1,-1);
\coordinate (z3) at (0,0);
\coordinate (z4) at (1,-2);
\coordinate (z5) at (.2,-3);
\coordinate (z6) at (1.7,-1);
\coordinate (z7) at (-1,-1.7);
\coordinate (z8) at (-1,-2.3);
\coordinate (z9) at (-1,-3);

\draw (z1) --node[right] {$x+z$} (z4);
\draw (z4) --node[left] {$y$} (z5);
\draw (z1) --node[above] {$z$} (z6);
\draw (z2) --node[left] {$x$} (z7);
\draw (z7) --node[left] {$x$} (z8);
\draw (z8) --node[left] {$z$} (z9);
\foreach \i in {1,2,...,9}
\fill (z\i) circle (2pt);
\draw (z1) ..node[above] {$x-z$} controls ++(-140:.9) and  ++(-40:.9)   .. (z2);
\draw (z2) ..node[above left] {$x+z$} controls ++(90:.6) and ++(180:.6) .. (z3);
\draw (z3) ..node[above right] {$x-z$} controls ++(0:.6) and ++(90:.6) .. (z1);
\end{scope}
\end{tikzpicture}
\caption{L'opération de couper une branche et la recoller avec $z<x$ si la branche est à distance paire (à gauche) ou impaire (à droite)} \label{fig:operationgraphe}
\end{figure}
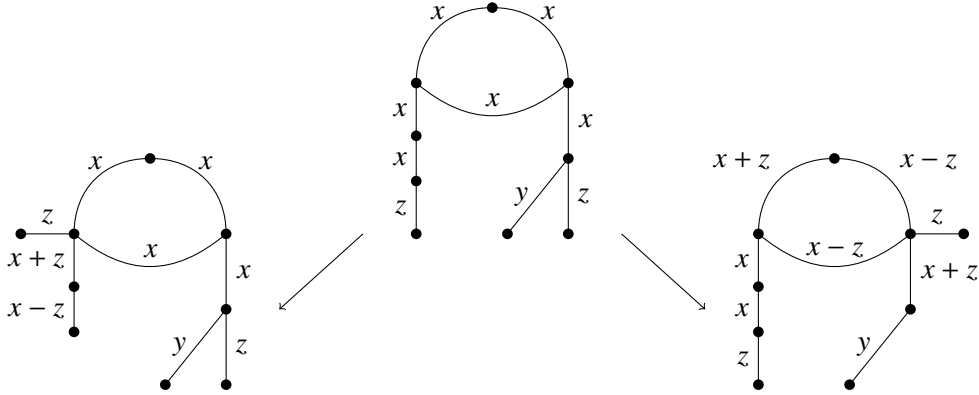
\par
Demeurent donc les cas dans lesquels tous les segments internes des arbres sont de
longueurs supérieures ou égales à $x$.
 Remontons un arbre en partant du cycle et considérons le
premier segment de longueur $L > x$, entre un sommet $A$ et un sommet $B$. Le sommet $A$ est attaché au reste de l’arbre par une arête de taille $x$.
Nous en retirons $A$ et sa branche pour la remplacer par une feuille correspondant à un cylindre de taille $x$. Puis nous coupons une arête du cycle et insérons les sommets $A$ et $B$ dans ce cycle en préservant la branche de $A$. De plus, ces sommets sont reliés au cycle par des segments de longueur $x$ et entre eux par un segment de longueur $L-x$. Nous collons enfin une nouvelle feuille correspondant à un cylindre de taille $x$ sur le sommet $A$. La différentielle associée à ce graphe possède les propriétés souhaitées.
\par
Finalement considérons le cas où tous les liens-selles de la surface sont de longueur $x$. Comme il y a $2s$ liens-selles, cela signifie que $\frac{1}{x}\sum\limits_{i=1}^{s}r_{i}=2s$, ce qui est exclu par hypothèse.
\end{proof}

En s'appuyant sur les constructions précédentes, nous pouvons traiter le cas des configurations arithmétiques de somme impaire.

\begin{prop}\label{prop:polesdoublesimpair}
Étant donnée configuration arithmétique $(r_{1}^{2},\dots,r_{s}^{2})$ de somme $T=\sum r_{i}$ impaire. Si  $T \geq a_{2}+2$ alors elle est réalisable dans la strate $\quadomoduli[0](a_{1},a_{2};\rec[-2][s])$ avec $a_{1} \leq a_{2}$.
\end{prop}

\begin{proof}
Nous procédons par récurrence sur le nombre $s$ de pôles doubles. Quand $s \leq 3$, les strates sont soit de la forme $\quadomoduli[0](-1,2s-3;\rec[-2][s])$, soit la strate  $\quadomoduli[0](1,1;\rec[-2][3])$. Les propositions~\ref{prop:poledouble-1} et~\ref{lem:double11} donnent le résultat. Nous supposerons l'existence de différentielles pour chaque configuration arithmétique de somme impaire vérifiant la borne dans chaque strate jusqu'au rang $s$. Pour une certaine strate $\quadomoduli[0](a_{1},a_{2};\rec[-2][s+1])$, choisissons maintenant une configuration arithmétique $(R_{1},\dots,R_{s+1})$ dont les racines positives forment la configuration $C=(r_{1},\dots,r_{s+1})$ de circonférences de cylindres. Si $T=\sum\limits_{i=1}^{s+1} r_{i}$ est impair et satisfait $T \geq a_{2}+2$, nous prouvons l'existence d'une différentielle réalisant cette configuration.
\par
Grâce à la proposition~\ref{prop:poledouble-1} nous pouvons nous restreindre au cas $a_{1},a_{2} \geq 1$. De plus, si les~$r_{i}$ sont deux à deux distincts, les lemmes~\ref{lem:4z+3},~\ref{lem:4z+1/3} et~\ref{lem:4z+1} permettent aussi de conclure. Nous supposerons donc que deux éléments de $C$ sont identiques. Notons $d$ la plus petite valeur apparaissant au moins deux fois dans la configuration $C$. Nous définissons $C'$ en retirant à~$C$ deux éléments égaux à $d$. On note $k$ le pgcd des éléments de $C'$ et $T'=T-2d$ leur somme.
\par
Comme  $T$  est impair, les nombres $k$ et $\frac{T'}{k}$ sont impairs. Si  $\frac{T'}{k} \geq a_{2}$, alors par récurrence les carrés des éléments de $C'$ sont réalisables dans $\quadomoduli[0](a_{1}-2,a_{2}-2;\rec[-2][s-1])$. Cela implique d'après le lemme~\ref{lem:+2res} que $(R_{1},\dots,R_{s+1})$ est réalisable dans $\quadomoduli[0](a_{1},a_{2};\rec[-2][s+1])$.
\par
Supposons à présent que $\frac{T'}{k} < a_{2}$. Comme $a_{2}\geq 2s-3$ (puisque $a_{1} \geq 1$) et que $\frac{T'}{k}$ est impair, cela signifie que $\frac{T'}{k}$ vaut au plus $2s-5$ (pour $s-1$ éléments) donc au moins trois éléments de~$C'$ valent $k$. Définissons $C''$ en retirant de $C$ deux éléments égaux à $k$. Puisque les éléments de $C$ et $C''$ prennent les mêmes valeurs, les éléments de $C''$ sont premiers entre eux. Si $C''$ est réalisable dans la strate $\quadomoduli[0](a_{1}-2,a_{2}-2;\rec[-2][s-1])$, alors on en déduit (comme plus haut)
que la configuration formée des carrés des éléments de $C$ est réalisable dans la strate originale. Si enfin $C''$ n'est pas réalisable, c'est que la somme de ses éléments $T''$ vaut au plus $2s-5$, qu'elle compte trois éléments égaux à $1$, ce qui implique que $d=k=1$. C'est absurde car alors $T''=T-2$ et donc $T'' \geq a_{2}$.
\end{proof}

\subsubsection{La configuration est arithmétique de somme paire}

Afin de contourner les obstructions portant sur les configurations en crosse ou triangulaires, nous donnons des constructions spécifiques pour certaines strates.

\begin{lem}\label{lem:double33}
Toute configuration de la forme $(x^{2},y^{2},z^{2},t^{2},t^{2})$ avec $x,y,z,t > 0$, $z=x+y$ et $t \notin \lbrace{ x,y,z \rbrace}$ est réalisable par une différentielle de la strate $\quadomoduli[0](3,3;\rec[-2][5])$.
\end{lem}

\begin{proof}
Selon les valeurs de $x,y,t$, nous choisissons différents graphes d'incidence pour construire la différentielle voulue.

Si $z>t>x,y$, alors on choisit un graphe cyclique avec des cylindres de circonférences $(x,y,t,z,t)$ dans cet ordre. Les longueurs des segments sont alors données par $(z-t,t-x,x,y,t-y)$.

Si $t>z$, alors on choisit un graphe avec un cycle de longueur $3$ fait de cylindres de tailles respectives $t$, $t$ et $z$. On peut ensuite coller un cylindre de taille $x$ sur le bord d'un cylindre de taille $t$  et le cylindre de taille $y$ sur l’autre cylindre de
taille $t$. Les longueurs des segments du cycle sont alors $(t-z,x,y)$.

Si $x<t<y<z$, alors le cycle est fait de trois cylindres de tailles $t$, $y$ et $z$. On colle ensuite un cylindre de taille $x$ sur le cylindre de taille $y$ et l’autre cylindre de taille $t$ sur celui de taille $z$. Les longueurs des segments du cycle sont alors $(x,y-t,t-x)$.

Si $t<x$, alors le cycle est simplement fait du cylindre de taille $z$. On colle sur ce sommet deux chaînes de deux sommets dont le sommet de valence $2$ est un cylindre de taille $x$ ou~$y$ tandis que le sommet de valence $1$ est un cylindre de taille $t$.
\end{proof}

\begin{lem}\label{lem:double53}
Toute configuration de la forme $(x^{2},x^{2},y^{2},y^{2},z^{2},z^{2})$ avec $0<x<y<z$ est réalisable par une différentielle de la strate $\quadomoduli[0](5,3;\rec[-2][6])$.
\end{lem}

\begin{proof}
On choisit un graphe avec un cycle de longueur $3$ fait de cylindres de tailles respectives $y$, $z$ et $z$. Sur l'un de ces cylindres de taille $z$, on colle le bord d'un cylindre de taille $x$. Sur l'autre on colle une chaîne de deux sommets dont le sommet de valence $2$ est un cylindre de taille $y$ tandis que le sommet de valence $1$ est un cylindre de taille $x$. Les longueurs des liens-selles du cycle sont $z-y$, $x$ et $y-x$.
\end{proof}

\begin{lem}\label{lem:double55}
Dans la strate $\quadomoduli[0](5,5;\rec[-2][7])$, les configurations suivantes sont réalisables par une différentielle quadratique:
\begin{enumerate}
    \item $(x^{2},y^{2},y^{2},y^{2},z^{2},z^{2},z^{2})$ avec $y>x$ et $z=x+y$;
    \item $(x^{2},x^{2},x^{2},y^{2},z^{2},z^{2},z^{2})$ avec $y>x$ et $z=x+y$;
    \item $(x^{2},x^{2},x^{2},y^{2},y^{2},y^{2},z^{2})$ avec $y>x$, $z=x+y$ et $y \neq 2x$.
\end{enumerate}
\end{lem}

\begin{proof}
Dans le cas (1), on choisit un graphe d'incidence avec un cycle de longueur~$3$ fait de cylindres de tailles $y,z,z$. Sur chacun des deux cylindres de taille $z$ dans le cycle, on colle une chaîne (de telle façon que chaque chaîne contribue à un zéro distinct) formée de deux sommets. Sur la première chaîne, le sommet de valence $2$ est un cylindre de taille $y$ tandis que le sommet de valence $1$ est un cylindre de taille $x$. Sur la seconde chaîne, ils sont respectivement de tailles $z$ et $y$. Les trois liens-selles du cycle sont de longueur
$y-x$, $x$ et $x$. Le calcul des ordres est une conséquence directe du lemme~\ref{lem:ordresgraphes}.

Dans le cas (2), on se donne un graphe d'incidence avec un cycle de longueur $3$ fait des trois cylindres de taille $z$. Sur l'un de ces cylindres, on colle une chaîne comprenant un cylindre de taille $x$ pour le sommet de valence $1$ et un cylindre de taille $y$ pour le sommet de valence $2$. Sur les deux autres cylindres du cycle, on colle un cylindre de taille $x$. Les trois liens-selles du cycle sont de longueurs $x,x,y-x$.

Dans le cas (3), nous donnons deux constructions selon le signe de $y-2x$. Si $y<2x$, la construction du lemme~\ref{lem:polesimple} fonctionne avec $\sigma=1,4,2$ et $\tau=3,5,6$. Si au contraire $y>2x$, alors nous donnons une construction avec également un cycle de longueur $1$ dont le sommet est toujours le cylindre de taille $z$. D'un côté du cycle, nous collons une chaîne comprenant un cylindre de taille $x$ pour le sommet de valence $1$ et deux cylindres de taille $y$ pour les sommets de valence $2$. De l'autre côté du cycle, nous collons un cylindre de taille $y$ sur lequel sont recollés deux cylindres de taille $x$.
\end{proof}

\begin{lem}\label{lem:double75}
Toute configuration de la forme $(x^{2},x^{2},x^{2},x^{2},y^{2},y^{2},y^{2},y^{2})$ avec $0<x<y$ et $y \neq 2x$ est réalisable par une différentielle de la strate $\quadomoduli[0](7,5;\rec[-2][8])$.
\end{lem}

\begin{proof}
Nous donnerons deux constructions selon le signe de $y-2x$.

Si $y>2x$, on choisit un cycle de longueur $3$ fait de trois cylindres de taille $y$. Sur l'un d'eux, on colle un cylindre de taille $y$ sur lequel sont recollés deux cylindres de taille $x$. Sur les deux autres cylindres du cycle, on colle un cylindre de taille $x$. Les longueurs des trois liens-selles du cycle sont $y-2x,x,x$.

Si $y<2x$, alors on choisit un cycle de longueur $3$ fait de deux cylindres de taille $y$ et un de taille $x$. Sur l'un des cylindres de taille $y$, on colle une chaîne faite d'un cylindre de taille~$y$ puis un cylindre de taille $x$. Sur l'autre cylindre de taille $y$ du cycle, on colle une chaîne faite de trois cylindres de tailles respectives $x,y,x$. Les longueurs des trois liens-selles du cycle sont $2x-y,y-x,y-x$.
\end{proof}

Les cas particuliers ayant été traités, nous pouvons donner pour chaque strate la caractérisation des configurations arithmétiques de somme paire qui peuvent y être réalisées. La démonstration couvre aussi le cas des configurations non arithmétiques.

\begin{prop}\label{prop:polesdoublespair}
Étant donnée une strate $\quadomoduli[0](a_{1},a_{2};\rec[-2][s])$ avec $a_{1} \leq a_{2}$, toute configuration $(r_{1}^{2},\dots,r_{s}^{2})$ non arithmétique ou arithmétique de somme $T=\sum r_{i}$ pair est réalisable par une différentielle de cette strate sauf précisément dans les cas suivants:
\begin{enumerate}
    \item $T<a_{1}+a_{2}+4$;
    \item $a_{1}=a_{2}$ et la configuration est triangulaire;
    \item $a_{2}=a_{1}+2$ et la configuration est en crosse.
\end{enumerate}
\end{prop}

\begin{proof}
Les obstructions proviennent de la proposition~\ref{prop:ObstrArithm} et des cas (4) et (5) du corollaire~\ref{cor:except1}. Pour établir la réciproque, nous procédons par récurrence sur le nombre~$s$ de pôles doubles. Quand $s \leq 3$, les strates sont soit de la forme $\quadomoduli[0](-1,2s-3;\rec[-2][s])$ ou bien $\quadomoduli[0](1,1;\rec[-2][3])$. Les propositions~\ref{prop:poledouble-1} et~\ref{lem:double11} donnent le résultat. Nous supposerons le résultat valide dans chaque strate jusqu'au rang $s \geq 3$. Pour une strate $\quadomoduli[0](a_{1},a_{2};\rec[-2][s+1])$, choisissons maintenant une configuration  $(R_{1},\dots,R_{s+1})$ dont les racines positives forment la configuration $C=(r_{1},\dots,r_{s+1})$ de circonférences de cylindres. En supposant qu'aucune obstruction ne s'y applique, nous prouvons l'existence d'une différentielle réalisant cette configuration. La démonstration est en partie analogue à celle de la proposition~\ref{prop:polesdoublesimpair}.
\par
La proposition~\ref{prop:poledouble-1} permet de se restreindre au cas $a_{1},a_{2} \geq 1$. Les lemmes~\ref{lem:4z+3},~\ref{lem:4z+1/3} et~\ref{lem:4z+1} permettent aussi de supposer qu'au moins deux éléments de $C$ sont égaux. Pour chaque valeur $d$ apparaissant au moins deux fois dans la configuration $C$, nous définissons $C_{d}$ en retirant à $C$ deux éléments égaux à $d$. Trois situations peuvent apparaître:
\begin{enumerate}
    \item $C_{d}$ n'est pas réalisable dans la strate $\quadomoduli[0](a_{1}-2,a_{2}-2;\rec[-2][s-1])$;
    \item $C_{d}$ est réalisable dans la strate $\quadomoduli[0](a_{1}-2,a_{2}-2;\rec[-2][s-1])$, $C$ est proportionnelle à une configuration arithmétique de somme $2(s+1)$ avec $d=1$;
    \item $C_{d}$ est réalisable dans la strate $\quadomoduli[0](a_{1}-2,a_{2}-2;\rec[-2][s-1])$ mais soit $C$ n'est pas proportionnelle à une configuration arithmétique de somme $2(s+1)$ soit c'est le cas mais au moins un élément de $C$ n'est pas un multiple entier de $d$.
\end{enumerate}

Dans le cas (3), le lemme~\ref{lem:+2res} montre que la configuration $C$ est réalisable dans la strate voulue. 

Dans le cas (2), normalisons en supposant que $C=(1,1,r_{3},\dots,r_{s+1})$ de somme $2s+2$. La configuration est donc $(1,1,2,\dots,2,3,3)$ ou $(1,1,2,\dots,2,4)$. Si la configuration compte au moins trois éléments valant $2$, la configuration $C_{2}$ est dans chaque cas réalisable. Si elle en compte deux, $C_{2}=(1,1,3,3)$ est en crosse mais $(1,1,2,2,3,3)$ est réalisable dans $\quadomoduli[0](5,3;\rec[-2][6])$ (lemme~\ref{lem:double53}). Si $C$ compte un seul élément valant~$2$, la configuration $(1,1,2,3,3)$ est réalisable dans $\quadomoduli[0](3,3;\rec[-2][5])$ (voir lemme~\ref{lem:double33}) et $\quadomoduli[0](5,1;\rec[-2][5])$ (prenons un graphe d'incidence avec un cycle de longueur $3$ et deux feuilles collées sur deux sommets du cycle, attribuons une longueur de $1$ pour chaque arête et nous obtenons la surface voulue). Ensuite, $(1,1,2,4)$ est réalisable dans $\quadomoduli[0](1,3;\rec[-2][4])$ (voir lemme~\ref{lem:4z+1/3}). Enfin, considérons les cas dans lesquels aucun élément de $C$ ne vaut $2$. La configuration $(1,1,4)$ est réalisable dans  $\quadomoduli[0](1,1;\rec[-2][3])$ (voir lemme~\ref{lem:double11}) tandis que $(1,1,3,3)$ est de toute façon interdit dans $\quadomoduli[0](1,3;\rec[-2][4])$.
\smallskip
\par
Par conséquent, il reste à étudier le cas (1), i.e. pour toute valeur de $d$, la configuration $C_{d}$ est interdite dans la strate $\quadomoduli[0](a_{1}-2,a_{2}-2;\rec[-2][s-1])$. Nous montrerons que cela implique que $C$ est en fait interdite dans $\quadomoduli[0](a_{1},a_{2};\rec[-2][s+1])$.
Une configuration $C_{d}$ peut ne pas être réalisable pour trois raisons: configuration  en crosse, triangulaire ou arithmétique de somme trop faible.
\par
Si  $C_{d}$ est en crosse et $a_{2}=a_{1}+2$, alors $C$ est de la forme $(x,x,y,y,z,\dots,z)$ avec un nombre pair d'éléments valant $z$. Si $x,y,z$ sont distincts, alors soit $s+1=6$ et le lemme~\ref{lem:double53} donne une construction directe, soit $s+1 \geq 8$ et nous pouvons retirer une paire d'éléments valant $z$ pour obtenir une configuration réalisable dans la strate $\quadomoduli[0](a_{1}-2,a_{2}-2;\rec[-2][s-1])$. Si $x,y,z$ ne sont pas distincts, comme $C$ n'est pas (par hypothèse) une configuration en crosse, alors $x=y$ et $s+1 \geq 8$. Comme $C$ n'est concerné par aucune obstruction, nous avons nécessairement $y \neq 2z$ et $x \neq 2z$ ($C$ serait dans ce cas proportionnel à une configuration arithmétique de somme strictement inférieure à $2s+2=a_{1}+a_{2}+4$). Dans le cas $s+1 \geq 10$, $C_{z}$ est une configuration réalisable. Quand $s=8$, c'est le lemme~\ref{lem:double75} qui donne la construction.
\par
Si $C_{d}$ est triangulaire et $a_{1}=a_{2}$, alors $C$ est de la forme $(d,d,x,y,z,\dots,z)$ avec un nombre impair d'éléments valant $z$. Si $s+1=5$ et $d \notin \lbrace{ x,y,z \rbrace}$, alors le lemme~\ref{lem:double33} donne la construction. Si $d \in \lbrace{ x,y,z \rbrace}$, alors $C$ est une configuration triangulaire. Si $s+1=7$,  alors soit $d \in \lbrace{ x,y,z \rbrace}$ soit $d \notin \lbrace{ x,y,z \rbrace}$. Dans le premier cas le lemme~\ref{lem:double55} donne la construction sauf si $C$ est proportionnel à $(1,1,1,2,2,2,3)$ qui est exclu par hypothèse. Dans le second cas nous retirons une paire d'éléments valant $z$ pour obtenir une configuration de la forme $(d,d,x,y,x+y)$ (quitte à échanger les rôles de $x,y,z$) réalisable dans $\quadomoduli[0](3,3;\rec[-2][5])$ (voir lemme~\ref{lem:double33}). Si $s+1 \geq 9$, alors $C_{z}$ est une configuration de la forme $(d,d,x,y,z,\dots,z)$ avec un nombre impair (au moins $3$ d'éléments valant $z$). Par hypothèse, $C_{z}$ n'est pas non plus réalisable dans $\quadomoduli[0](a_{1}-2,a_{2}-2;\rec[-2][s-1])$. Si c'est parce que $C_{z}$ est triangulaire, alors $d=z$ et $C$ est donc aussi une configuration triangulaire (ce qui est exclu par hypothèse). Si $C_{z}$ est proportionnelle à une configuration arithmétique, alors $C$ (qui compte précisément les mêmes valeurs) est aussi une configuration arithmétique. En fait, les deux sont de somme paire (en l'occurrence $T$ et $T-2z$). Puisque $T \geq 2s+2$ mais que $T-2z \leq 2s-4$ (puisque les deux sont de somme paire et que $C_{z}$ est interdit), alors $z \geq 3$. Or, $T=2d+(s-2)z$ donc $T-2z=2d+(s-4)z$. Ainsi, $T\leq 2s-4$ est impossible même si $d=1$.
\par
Si pour chaque valeur possible de $d$, la configuration~$C_{d}$ est arithmétique de somme totale insuffisante (les bornes diffèrent si la somme totale, après normalisation, est paire ou impaire), alors nous considérons deux cas.
\par
Supposons d'abord qu'il existe une valeur $d$ apparaissant au moins trois fois dans $C$. Dans ce cas, comme $C_{d}$ est proportionnelle à une configuration arithmétique et compte les mêmes valeurs que $C$, nous supposerons que $C$ est une configuration arithmétique normalisée de somme $T$. La configuration $C_{d}$ est aussi de somme paire $T-2d \leq 2s-4$ tandis que $T \geq 2s+2$. Il s'ensuit que $d \geq 3$ et donc au moins trois éléments de $C_{d}$ valent $1$. Ainsi, $C$ compte aussi trois éléments qui valent $1$. Sans perte de généralité, nous aurions donc pu supposer $d=1$ et obtenir une contradiction avec $d \geq 3$. 
\par
Désormais, nous supposerons que chaque valeur apparaît seulement une ou deux fois dans la configuration $C$. C'est donc aussi le cas pour chaque configuration $C_{d}$. Pour que leur somme normalisée soit strictement inférieure à $2(s-1)$, $C_{d}$ ne peut valoir (après normalisation) que $(1,1)$, $(2,1)$, $(2,1,1)$, $(3,1,1)$, $(2,2,1)$, $(2,2,1,1)$, $(3,2,1,1)$ ou $(3,2,2,1,1)$. Dans chaque cas, $d$ est par hypothèse distinct des valeurs restantes dans $C_{d}$. Dans chaque cas pour lequel~$2$ apparaît deux fois, nous aurions pu prendre $2$ au lieu de $d$ et obtenir une configuration réalisable. Nous nous restreignons donc aux cas dans lesquels $C_{d}$ est proportionnelle à $(1,1)$, $(2,1)$, $(2,1,1)$, $(3,1,1)$ ou $(3,2,1,1)$. Les configurations correspondantes pour~$C$ sont donc proportionnelles à:
\begin{enumerate}[(a)]
    \item $(1,1,d,d)$ avec $d \neq 1$;
    \item $(2,1,d,d)$ avec $d \notin \lbrace{ 1,2 \rbrace}$;
    \item $(2,1,1,d,d)$ avec $d \notin \lbrace{ 1,2 \rbrace}$;
    \item $(3,1,1,d,d)$ avec $d \notin \lbrace{ 1,3 \rbrace}$;
    \item $(3,2,1,1,d,d)$ avec $d \notin \lbrace{ 1,2,3 \rbrace}$.
\end{enumerate}
Le cas (a) est une configuration en crosse pour la strate $\quadomoduli[0](1,3;\rec[-2][4])$. Comme la configuration $(2,1)$ est réalisable dans la strate $\quadomoduli[0](1,-1;\rec[-2][2])$, le cas (b) n'est pas un problème. Le cas (c) est réglé par le lemme~\ref{lem:double33} pour la strate $\quadomoduli[0](3,3;\rec[-2][5])$. En revanche, pour la strate $\quadomoduli[0](1,5;\rec[-2][4])$, on s'appuie sur le fait que chaque configuration $(2,d,d)$ est réalisable dans $\quadomoduli[0](-1,3;\rec[-2][4])$. Le cas (d) est identique. Enfin, pour le cas (e), la configuration $(3,2,d,d)$ est toujours réalisable dans les strates voulues.
\end{proof}

\subsection{Les configurations avec $\mathfrak{i} = 2$ et $n \geq 3$}

Nous prouvons d'abord que les seules obstructions pour de telles strates sont arithmétiques.

\begin{lem}\label{lem:n3}
Toute configuration non arithmétique de résidus non nuls est réalisable dans la strate $\quadomoduli[0](a_{1},a_{2},2l_{3},\dots,2l_{n};\rec[-2][s])$ avec deux zéros impairs et un nombre quelconque de zéros pairs.
\end{lem}

\begin{proof}
Les différentielles de ces strates peuvent s'obtenir en éclatant un zéro de différentielles des strates $\quadomoduli[0](a_{1},a_{2}+2l_{3}+\dots+2l_{n};\rec[-2][s])$ et $\quadomoduli[0](a_{2},a_{1}+2l_{3}+\dots+2l_{n};\rec[-2][s])$. Pour ces deux strates, les résultats de la section~\ref{sub:i2n2double} nous disent que les seules obstructions non arithmétiques sont les configurations triangulaires ou en crosse (voir définition~\ref{def:crosse}). Elles concernent les deux strates à la fois uniquement dans le cas des strates $\quadomoduli[0](a,a,2;\rec[-2][s])$ avec $a$ impair et $s$ pair. Il s'agit donc de montrer que les configurations en crosse qui ne sont pas arithmétiques sont toujours réalisables dans de telles strates.

Pour $\lambda \in \mathbb{C} \setminus \mathbb{Q}$, la configuration $(1,\dots,1,4\lambda^{2})$ est réalisable dans  $\quadomoduli[0](a,a;\rec[-2][s-1])$. On colle sur le pôle double de résidu $4\lambda^{2}$ le carré d'une différentielle abélienne de la strate $\Omega\mathcal{M}_{0}(1,\rec[-1][3])$ de résidus abéliens $(2\lambda,-\lambda,-\lambda)$. En lissant, on obtient une différentielle de $\quadomoduli[0](a,a,2;\rec[-2][s])$ qui réalise la configuration $(1,\dots,1,\lambda^{2},\lambda^{2})$.
\end{proof}

Pour les configurations arithmétiques, nous distinguons selon la parité de la somme $T$ des circonférences des cylindres.

\begin{prop}\label{prop:F1}
Étant donnée une strate $\quadomoduli[0](a_{1},a_{2},2l_3,\dots,2l_n;\rec[-2][s])$ avec deux zéros impairs $a_{1}<a_{2}$ et un nombre quelconque de zéros pairs et une configuration arithmétique $(r_{1}^{2},\dots,r_{s}^{2})$, si la somme $\sum r_{i}$ est impaire et vérifie $\sum r_{i} \geq a_{2}+2$ alors il existe une différentielle de cette strate avec ces résidus.
\end{prop}

\begin{proof}
Le résultat se prouve par éclatement des zéros de la proposition~\ref{prop:eclatZero}. Le cas des strates $\quadomoduli[0](a_{1},a_{2};\rec[-2][s])$  est prouvé dans la proposition~\ref{prop:polesdoublesimpair}. En éclatant un zéro d'ordre $a_{1}+2l_{3}$ on obtient le résultat pour les strates $\quadomoduli[0](a_{1},a_{2},2l_{3};\rec[-2][s])$ lorsque la somme des racines des résidus $T$ est supérieure ou égale à $2s-1$. Pour ces strates, il reste donc à considérer les cas où  $a_{1} + 2 \leq T \leq 2s - 3$. Les autres strates s'obtiennent en éclatant le zéro d'ordre pair.
 
Soit $C$ un $s$-uplet tel que $a_{1} + 2 \leq T \leq 2s - 3$. Il contient au moins trois éléments égaux à $1$. On forme un ensemble $C_{1}$ de $l_{3}+1$ éléments de $C$ tels que si le complémentaire $C_{2}$ n'est pas vide, alors il contient au moins un~$1$. Par le théorème~1.2 de \cite{getaab} il existe une différentielle abélienne $\omega_{1}$ de $\omoduli[0](l_{3};\rec[-1][l_{3}+2])$ dont les résidus abéliens sont les éléments de~$C_{1}$ et l'opposé de leur somme. De plus, il existe une différentielle $\eta_{2}$ de $\quadomoduli[0](a_{1},a_{2};\rec[-2][s-l_{3}])$ dont les résidus sont les carrés des éléments de $C_{2}$ et le carré de la somme des éléments de $C_{1}$. La différentielle avec les invariants souhaités est obtenue en lissant la différentielle obtenue en collant $\eta_{2}$ à $\omega_{1}^{2}$ le long du pôle de résidu maximal.
\end{proof}

Nous traitons enfin le cas des configurations arithmétiques de somme paire.

\begin{prop}\label{prop:H1}
Étant donnée une strate $\quadomoduli[0](a_{1},a_{2},2l_3,\dots,2l_n;\rec[-2][s])$ avec deux zéros impairs $a_{1}\leq a_{2}$ et un nombre quelconque de zéros pairs et une configuration arithmétique $(r_{1}^{2},\dots,r_{s}^{2})$. Si la somme $\sum r_{i}$ est paire et vérifie $\sum r_{i} \geq a_{1}+a_{2}+4$, alors il existe une différentielle de cette strate avec ces résidus.
\end{prop}

\begin{proof}
Considérons tout d'abord le cas des strates  $\quadomoduli[0](a_{1},a_{2},2l_{3};\rec[-2][s])$.
Si un uplet est réalisable dans $\quadomoduli[0](a_{1}+2l_{3},a_{2};\rec[-2][s])$ ou $\quadomoduli[0](a_{1},a_{2}+2l_{3};\rec[-2][s])$, alors l'éclatement de zéros permet de le réaliser dans $\quadomoduli[0](a_{1},a_{2},2l_{3};\rec[-2][s])$. Nous
distinguons deux cas selon que $T \geq 2s$ ou $2s - 2l_{3} \leq T \leq 2s- 2$.

Si $T \geq 2s$, alors $r$ est réalisable dans $\quadomoduli[0](a_{1}+2l_{3},a_{2};\rec[-2][s])$ ou $\quadomoduli[0](a_{1},a_{2}+2l_{3};\rec[-2][s])$ sauf si elle est triangulaire ou en crosse (voir proposition~\ref{prop:polesdoublespair}). Les seuls cas problématiques sont les résidus en crosse (de la forme $(A,\dots,A,B,B)$) dans les strates $\quadomoduli[0](a_{1},a_{1},2;\rec[-2][s])$. Dans ce cas $s$ est pair supérieur ou égal à $4$. Pour réaliser ces résidus il suffit de lisser la différentielle stable formée en collant les pôles doubles de résidu $4A^{2}$ du carré d'une différentielle abélienne de $\omoduli[0](1;-1,-1,-1)$ dont les résidus sont $(A,A,2A)$ et d'une différentielle de $\quadomoduli[0](a_{1},a_{2};\rec[-2][s-1])$ dont les résidus sont les carrés de $(A,\dots,A,2A,B,B)$. Il suffit donc de montrer que ce dernier cas est possible.
Si $s\geq 6$, alors $A,2A$ et $B$ sont premiers entre eux et il existe donc une différentielle de $\quadomoduli[0](a_{1},a_{1},2;\rec[-2][s])$ dont les résidus sont les carrés de ces éléments. Si $s=4$ et $B$ est pair, alors $A+B$ est impair, car $A$ et $B$ sont premiers entre eux. Par la proposition~\ref{prop:polesdoublesimpair}, dans ce cas la condition est simplement que la somme des éléments est supérieure ou égale à $3$. Ce uplet est donc toujours réalisable.
\par
Nous traitons à présent le cas $2s- 2l_{3} \leq T \leq  2s - 2$. Cette condition implique qu’au moins deux éléments de $r$ sont égaux à $1$. Parmi les $s$ éléments, on en choisit $l_{3}+1$ (sachant que $2l_{3}\leq 2s-2$) tels que s'il reste des éléments, au moins l’un d’entre eux est égal à $1$. On considère une différentielle abélienne $\omega_{1}$ de $\omoduli[0](l_{3};\rec[-1][l_{3}+2])$ dont les résidus abéliens sont les éléments choisis et l'opposé $S$ de leur somme. Nous allons construire une différentielle~$\omega_{2}$ de $\quadomoduli[0](a_{1},a_{2};\rec[-2][s-l_{3}])$ telle que les résidus sont donnés par les carrés des éléments restants et de $S$. S’il ne restait aucun résidu, alors $a_{1}=a_{2}=-1$ et l'existence de $\omega_{2}$ vient simplement du fait que la strate $\quadomoduli[0](-1,-1;-2)$ est non vide. S’il restait des résidus, au moins l’un d’entre eux vaut $1$. Pour construire $\omega_{2}$, il faut simplement vérifier que le uplet $r'$ formé par ces éléments et $S$  est réalisable dans la strate. Notons que leur pgcd est $1$ car $r'$ contient $1$.
\par
La somme de  $r'=( A_{1} ,\dots, A_{q} , S)$ est supérieure ou égale à $2s-2l_{3}=a_{1}+a_{2}+4$. Par la proposition~\ref{prop:polesdoublespair} il suffit de vérifier qu’elle n'est ni triangulaire ni en crosse. Pour que la configuration ne soit pas en crosse, il suffit  d’avoir choisi les $l_{3}+1$ plus grands éléments de $r$ pour que $S$ soit strictement supérieur aux autres éléments de~$r'$. Si la configuration est triangulaire, alors la configuration est de la forme $(A,B,\dots,B, S)$ avec $S= A + B$. Cela contredit le fait que $S$ est la somme des plus grands éléments de $r$.
\par
Finalement considérons le cas des strates $\quadomoduli[0](a_{1},a_{2},2l_{3},\dots,2l_{n};\rec[-2][s])$ avec $n\geq4$. Les différentielles avec les résidus souhaités sont obtenues par éclatement du zéro pair de la strate $\quadomoduli[0](a_{1},a_{2},2l_{3}+\dots+2l_{n};\rec[-2][s])$.
\end{proof}

\subsection{Réalisation des configurations pour $\mathfrak{i} \geq 4$}

Il n'y a pas d'obstruction dans ce cas. Une construction s'appuyant sur le cas $n=2$ prouvé dans la section~\ref{sub:i2n2double} permet de réaliser chaque configuration de résidus quadratiques dans chacune de ces strates. Ceci démontre le théorème~\ref{thm:surjimp4} dans le cas $r=p=0$.

\begin{prop}\label{prop:quadsurjbcpimp}
L'application résiduelle de la strate $\Omega^{2}\moduli[0](a_{1},\dots,a_{n};\rec[-2][s])$ avec $\nim\geq4$ zéros impairs est surjective.
\end{prop}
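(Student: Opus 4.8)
The plan is to deduce surjectivity from the two--odd--zero results by the splitting operation of Proposition~\ref{prop:eclatZero}, reserving a direct flat construction for the residue patterns that the two--odd--zero theory genuinely forbids. Throughout, the residual space is $(\CC^{\ast})^{s}$ (all poles being double), the number $\nim$ of odd zeros is even and $\ge 4$, and by the ${\rm GL}^{+}(2,\mathbb{R})$--equivariance recalled in Section~\ref{sec:coeur} I may normalise the target $R=(R_{1},\dots,R_{s})$ as convenient; since $\quadomoduli[0](\mu)$ is primitive, condition i) of Proposition~\ref{prop:eclatZero} always holds, so every splitting below preserves both the residues and primitivity. Order the odd zeros as $o_{1}\le\cdots\le o_{\nim}$. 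If the $R_{i}$ do not all lie on one ray through the origin, I would keep $o_{1},o_{2}$ and merge all the remaining zeros into a single even zero, of order $2s-4-o_{1}-o_{2}\ge s-2$ and hence genuine as soon as $s\ge 4$; Lemma~\ref{lm:quadgzerogen2} then realises $R$ in this two--odd--zero stratum, and splitting the even block back into the original zeros via Proposition~\ref{prop:eclatZero} realises $R$ in $\mu$. The finitely many strata with $s\le 3$ are checked by hand.

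When all the $R_{i}$ lie on a single ray I first bring them, by the ${\rm GL}^{+}(2,\mathbb{R})$--action, to the positive real axis. A configuration that is neither triangular nor en crosse is, by Lemma~\ref{lemmeB}, either realisable in the coarser two--odd--zero stratum obtained as above — in which case one splits — or arithmetic, $R_{i}=r_{i}^{2}$ with the $r_{i}$ coprime positive integers. For an arithmetic configuration I would again keep the two smallest odd zeros and invoke Corollary~\ref{cor:F1} (for $\sum r_{i}$ odd) or Corollary~\ref{cor:H1} (for $\sum r_{i}$ even): whenever $\sum r_{i}$ meets the threshold $o_{2}+2$, resp.\ $o_{1}+o_{2}+4$, the configuration is realisable in the coarser stratum and we split. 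What remains is the genuine obstacle, namely the single--ray triangular and en crosse configurations of Lemmas~\ref{lem:obstrutri} and~\ref{lem:quadexeptionnelsgzero}, together with the arithmetic configurations violating \eqref{eq:sumimpintro} or \eqref{eq:sumpairintro}; the multi--ray triangular and en crosse cases fall under the non--single--ray step above, these obstructions occurring only in strata without even zeros.

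The crucial structural point is that all these residual obstructions come from the incidence graph (Section~\ref{sec:coeur}) of a two--singularity differential being a single cycle with attached trees, which forces the parity constraint of Lemma~\ref{lemmeC} and hence a lower bound on $\sum r_{i}$. With four odd cone points the analogous incidence graph has first Betti number three, so this rigidity disappears entirely, in agreement with the asserted full surjectivity. I would therefore reduce these cases, again by Proposition~\ref{prop:eclatZero}, to a \emph{core} stratum $\quadomoduli[0](o_{1},o_{2},o_{3},o_{4};(-2^{s}))$ with exactly four odd zeros and $\sum_{k}o_{k}=2s-4$: any target whose odd zeros can be grouped into four blocks of odd cardinality, each of total order $\ge -1$, is recovered from such a core by splitting each $o_{k}$ into the corresponding block of odd zeros and even zeros, and the remaining configurations, those with too many simple poles to admit such a grouping, are built directly.

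In the core stratum I would realise the configuration by gluing the $s$ half--infinite cylinders of circumferences $r_{1},\dots,r_{s}$ along an incidence graph carrying two independent cycles, in the spirit of the cylinder--gluing lemmas of Section~\ref{sec:juste-k} (compare Lemmas~\ref{lem:G2},~\ref{lem:G345} and~\ref{lem:G}). Because the two cycles can be balanced independently, the saddle connections can all be assigned the prescribed lengths — integer or half--integer in the arithmetic case — with no surviving parity constraint, so the triangular and en crosse coincidences, which for two odd zeros arise only from the equal--order, special--length degeneracies exploited in Lemmas~\ref{lem:obstrutri} and~\ref{lem:quadexeptionnelsgzero}, are avoided once a second pair of odd cone points absorbs the extra angle. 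The hard part will be the explicit bookkeeping: distributing the total angle so that precisely the four cone angles $(o_{k}+2)\pi$ occur with the right orders, and confirming through the Euler characteristic identity $E=n+\tilde p-2$ (four cone points, $s$ polar domains, $s+2$ saddle connections) that the glued surface has genus zero. Primitivity is then automatic, each angle $(o_{k}+2)\pi$ being an odd multiple of $\pi$ and so producing non--trivial holonomy.
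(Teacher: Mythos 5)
Your reduction scaffolding is sound: merging all zeros but the two smallest odd ones into a single even zero, realising non--single--ray configurations via Lemma~\ref{lm:quadgzerogen2}, and splitting back by Proposition~\ref{prop:eclatZero} is legitimate (the intermediate differential has odd zeros, hence is primitive, so residues are preserved), and invoking Corollaries~\ref{cor:F1} and~\ref{cor:H1} above their thresholds is also correct. The problem is that your proof stops exactly where the content of the proposition begins. After these reductions, what is left over are the ray--aligned configurations that the two--odd--zero theory genuinely forbids, and these are not exotic: for instance $(1,\dots,1)$ with $s=8$ in $\quadomoduli[0](3,3,3,3;(-2^{8}))$ cannot be reached by your route, because in the merged stratum $\quadomoduli[0](3,3,6;(-2^{8}))$ this configuration is excluded by point (iv) of Theorem~\ref{thm:geq0quad2} (its sum $8$ is smaller than $3+3+4$), so no splitting argument can produce it --- only a direct construction with four odd zeros can. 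For precisely these cases you offer a sketch (``glue the cylinders along an incidence graph carrying two independent cycles'') and you explicitly defer ``the hard part \dots the explicit bookkeeping'' of cone angles and of the genus count. That bookkeeping is not a routine verification: it \emph{is} the proof, and it is exactly what the paper's argument consists of. Two smaller lapses: the strata with $s\le 3$ are also waved off ``by hand'', and ``two independent cycles'' contradicts your own correct count (with four zeros and $s$ double poles the degenerate--core incidence graph has $s$ vertices and $s+2$ edges, hence first Betti number $3$).

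For comparison, the paper reduces to $n=4$ with all zeros odd (splitting handles $n>4$), treats non--single--ray configurations by repeating the method of Lemma~\ref{lm:quadgzerogen2}, and then disposes of \emph{all} positive--real configurations by one uniform polygon, with no dichotomy into arithmetic, triangular or \emph{en crosse} cases: partition the residues into four subsets $\mathcal{R}_{k}$ of cardinalities $l_{k}$ (so that $a_{k}=2l_{k}-1$), pick $s_{1}\in\RR$ positive with $S:=2s_{1}+\sum_{\mathcal{R}_{1}}\sqrt{R_{i}}-\sum_{k\geq2}\sum_{\mathcal{R}_{k}}\sqrt{R_{i}}>0$, then pick $s_{2},s_{3}$ with real part $-S/4$ and imaginary parts of opposite signs so that the boundary closes up; gluing each of the three pairs $s_{1},s_{2},s_{3}$ to itself by rotation and attaching half--infinite cylinders to the segments $\sqrt{R_{i}}$ yields the desired differential (figure~\ref{fig:troisquatrezeros}). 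The device you are missing is that two of the three rotation--identified pairs can be taken off the real axis: this keeps the polygon non--degenerate for \emph{every} positive--real configuration, so the parity rigidity of Lemma~\ref{lemmeC} is destroyed by an explicit two--parameter freedom rather than by a Betti--number heuristic. To complete your route you must either carry out your cylinder--graph construction in full or substitute this polygon construction for your final step.
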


\begin{proof}
On commence par supposer que $n=4$ et les $a_{i}$ sont impairs. Les cas $n>4$ s'obtiennent alors par éclatement de zéros. Posons $s_{1}=\frac{a_{1}+a_{2}}{2}$ et $s_{2}=\frac{a_{3}+a_{4}}{2}$. Pour chaque configuration de résidus $(R_{1},\dots,R_{s})$, il est facile de trouver un nombre complexe $R \in \mathbb{C}^{\ast}$ tel que les uplets $(R,R_{1},\dots,R_{s_{1}})$ et $(R,R_{s_{1}+1},\dots,R_{s})$ ne vérifient aucune condition de résonance (voir section~\ref{sec:arrhyp}).
Ainsi, le cas $n=2$ du théorème~\ref{thm:geq0quad2} prouve l'existence de différentielles quadratiques réalisant ces deux configurations de résidus dans les strates respectives $\Omega^{2}\moduli[0](a_{1},a_{2};\rec[-2][s_{1}+1])$ et $\Omega^{2}\moduli[0](a_{3},a_{4};\rec[-2][s_{2}+1])$. Dans chacune de ces surfaces de translation, nous découpons une géodésique fermée d'un cylindre correspondant au pôle de résidu $R$. En recollant les deux surfaces l'une sur l'autre le long de ce bord, nous obtenons la surface réalisant la configuration $(R_{1},\dots,R_{s})$ dans la strate $\Omega^{2}\moduli[0](a_{1},a_{2},a_{3},a_{4};\rec[-2][s])$.
\end{proof}

\section{Différentielles quadratiques en genre supérieur}
\label{sec:ggeq1}

Dans cette section, nous donnons l'image de l'application résiduelle pour chaque composante connexe de strate de genre $g \geq 1$, prouvant les théorèmes~\ref{thm:ggeq2} et \ref{thm:geq1}.
\par
Nous commençons par un rappel sur les composantes connexes des strates de différentielles quadratiques méromorphes dans la section~\ref{sec:rapquad}.
Nous considérons dans la section~\ref{sec:pluri1} les strates ayant au moins un pôle d'ordre strictement inférieur à~$-2$. Dans la section~\ref{sec:pluri2} nous traitons les cas où tous les pôles sont d'ordre~$-2$.

\subsection{Rappels sur les composantes connexes des strates}
\label{sec:rapquad}
%

En genre $1$, les composantes connexes des strates sont caractérisées par le nombre de rotation $\rot(S)$ de la surface plate~$S$ associée à la différentielle quadratique $\xi$ (voir la section~2.4 de \cite{chge}).  Pour une surface plate $S$ définie par une différentielle de $\quadomoduli[1](a_{1},\dots,a_{n};-b_{1},\dots,-b_{p})$ avec une base symplectique de lacets lisses de l'homologie $(\alpha,\beta)$ le {\em nombre de rotation} est $$\rot(S):=\pgcd(a_{1},\dots,a_{n};b_{1},\dots,b_{p},\ind(\alpha),\ind(\beta))\,,$$ où $\ind(\alpha)$ est l'indice de l’application de Gauss du lacet $\alpha$. Notons que l'indice est égal au nombre de demi-tours effectués par le vecteur tangent. On a alors le résultat suivant.
\begin{itemize}
 \item[i)] Si $n=p=1$, la strate est $\quadomoduli[1](a;-a)$ avec $a\geq2$ et chaque composante connexe correspond à un nombre de rotation qui est un diviseur strict de $a$.
 \item[ii)] Sinon, il existe une composante connexe correspondant à chaque nombre de rotation qui est un diviseur de $\pgcd(a_{1},\dots,a_{n};b_{1},\dots,b_{p})$.
\end{itemize}
La composante de $\quadomoduli[1](\mu)$ associée au nombre de rotation $\rho$ est notée $\quadomoduli[1]^{\rho}(\mu)$. La restriction de l'application résiduelle $\appresquad[1](\mu)$ à cette composante est notée $\appresquadrho[1](\mu)$.
\par
Le nombre de rotation peut être calculé de manière algébrique (voir la proposition~3.13 de \cite{chge}). De ce fait, on déduit directement le résultat suivant.
\begin{lem}\label{lem:rotprim}
 Une composante $\quadomoduli[1]^{\rho}(\mu)$ paramètre des différentielles quadratiques primitives si et seulement si $\rho$ est impair.
\end{lem}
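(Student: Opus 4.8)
The plan is to combine the geometric criterion for primitivity with the behaviour of the rotation number under squaring. Recall that $\xi$ fails to be primitive precisely when $\xi=\omega^{2}$ for some abelian differential $\omega$ on $X$; equivalently, the canonical double cover $\pi\colon\whX\to X$ is disconnected, or the holonomy of the horizontal line field is trivial. Since the number of connected components of $\whX$ is a discrete invariant that varies locally constantly within a stratum, primitivity is constant along each connected component of $\quadomoduli[1](\mu)$. It therefore suffices to decide, for each admissible rotation number $\rho$, whether the corresponding component contains a square.

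First I would treat the non-primitive differentials. If $\xi=\omega^{2}$ then $\ord_{P}\xi=2\ord_{P}\omega$ for every $P$, so every order $a_{i}$ and every $b_{j}$ is even and $\mu=2\nu$ for the partition $\nu$ recording the zeros and poles of $\omega$. Moreover the flat metric $|\xi|^{1/2}=|\omega|$ attached to $\xi$ coincides with that of $\omega$, but the tangent direction, which for $\xi$ is only defined up to sign, lifts to the genuine oriented direction of $\omega$. Consequently the Gauss index measured in half-turns for $\xi$ is twice the index measured in full turns for $\omega$, i.e. $\ind(\alpha)=2\ind_{\omega}(\alpha)$ and likewise for $\beta$. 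Every entry of the $\pgcd$ defining $\rot(\xi)$ is then even, so $\rho=\rot(\xi)$ is even. This already gives one implication: a component with $\rho$ odd contains no square, hence parametrizes only primitive differentials.

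For the converse I would show that every even admissible $\rho$ is realized by a square. The squaring map $\omega\mapsto\omega^{2}$ sends $\omoduli[1](\nu)$ into $\quadomoduli[1](\mu)$ with $\mu=2\nu$, and by the order- and index-doubling computed above it multiplies the abelian rotation number by $2$: a component of $\omoduli[1](\nu)$ with rotation number $\rho_{0}$ maps into $\quadomoduli[1]^{2\rho_{0}}(\mu)$. As $\rho_{0}$ runs over the divisors of $\pgcd(\nu)=\tfrac12\pgcd(\mu)$ — all of which correspond to non-empty components by the abelian theory recalled in the introduction — the value $2\rho_{0}$ runs over exactly the even divisors of $\pgcd(\mu)$. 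Hence each even-rotation-number component meets the image of squaring, so contains a non-primitive differential, and by the constancy established above it consists entirely of squares. Combining the two implications yields the statement; the same dichotomy can be read off directly from the explicit algebraic expression for $\rot$ in Proposition~3.13 of \cite{chge}, which exhibits $\rot(\xi)$ as even exactly on the square locus.

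The step I expect to require the most care is the identification $\ind=2\ind_{\omega}$ of the two Gauss indices, since the quadratic index counts half-turns of an unoriented line field whereas the abelian index counts full turns of an oriented one. Making this precise, and checking that it is independent of the chosen smooth representatives of $\alpha$ and $\beta$, is what pins down the parity and hence the whole dichotomy; once it is in hand, both the order-doubling and the index-doubling feed uniformly into the $\pgcd$ formula and the conclusion follows.
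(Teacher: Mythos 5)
Your proof is correct, but it takes a genuinely different and substantially heavier route than the paper's. The paper offers no geometric argument at all: it invokes the algebraic characterization of the rotation number (proposition~3.13 de \cite{chge}), which exhibits $\rot(\xi)$ as even exactly when $\xi$ is a square, and the dichotomy is then read off pointwise --- this is precisely the alternative you mention in your closing sentence. Your first implication (orders and indices double under squaring, hence squares have even $\rot$) is the same parity computation and is pointwise. Your second implication, however, replaces a pointwise observation by a global realization argument. Note that $\rho$ even forces \emph{every} entry of the $\pgcd$ to be even, i.e. all the $a_{i}$, $b_{j}$, $\ind(\alpha)$, $\ind(\beta)$; since the corresponding loops generate $H_{1}$ of the punctured surface modulo~$2$ and the index modulo~$2$ computes the $\pm1$ holonomy, the holonomy of $\xi$ is then trivial, so $\xi$ admits a global square root and is itself a square. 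This pointwise argument needs neither the local constancy of primitivity, nor the squaring map, nor the abelian classification, nor the uniqueness of the component with a given rotation number --- all four of which your route requires. What your route buys in exchange is a sharper picture: each even-rotation-number component is identified concretely as the image under squaring of a specific component of $\omoduli[1](\mu/2)$.

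Two loose ends in your realization argument should be repaired. First, the abelian classification (due to Boissy) is not ``recalled in the introduction''; section~\ref{sec:rapquad} recalls only the quadratic classification of \cite{chge}, so your input needs its own citation. Second, your claim that $\rho_{0}$ runs over \emph{all} divisors of $\pgcd(\nu)$, each realized by a nonempty component, is false for the exceptional strata $\omoduli[1](c;-c)$, where only strict divisors occur. Your conclusion survives because the same exclusion holds on the quadratic side: for $\quadomoduli[1](2c;-2c)$ the admissible rotation numbers are the strict divisors of $2c$, and $2\rho_{0}$ is a strict divisor of $2c$ if and only if $\rho_{0}$ is a strict divisor of $c$. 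But this matching of exceptions has to be checked, not assumed.
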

En éclatant un zéro (voir la proposition~\ref{prop:eclatZero}) on obtient le résultat suivant.
\begin{lem}
 Le nombre d'une rotation d'une différentielle  obtenue en éclatant un zéro d'une différentielle de nombre de rotation $\rho$ est égal à $\rho$  modulo les ordres des nouveaux zéros.
\end{lem}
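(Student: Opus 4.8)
The plan is to reduce the statement to an elementary $\pgcd$ computation, once it is established that the only local invariants modified by the éclatement are the orders at the blown-up zero.

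First I would recall that the rotation number of a surface $S$ in $\quadomoduli[1](\mu)$ is
\[
\rot(S) = \pgcd(a_1,\dots,a_n;b_1,\dots,b_p,\ind(\alpha),\ind(\beta))
\]
for a symplectic basis $(\alpha,\beta)$ of smooth loops. The key geometric input is that the éclatement of the zero $z_0$ of order $a_0$ furnished by la proposition~\ref{prop:eclatZero} is a local operation: it alters $(X,\xi)$ only inside an arbitrarily small neighborhood $U$ of $z_0$, where one grafts a rational component carrying a differential of type $(\alpha_1,\dots,\alpha_t;-a_0-4)$ and then smooths the node. In particular the genus stays equal to $1$, and outside $U$ the flat structure of $\xi'$ coincides with that of $\xi$.

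Next I would fix the homology basis. As the modification is supported in $U$, I would choose representatives of $\alpha$ and $\beta$ disjoint from $U$; these persist as a symplectic basis of smooth loops on the new surface, and since the index of the Gauss map depends only on the flat structure along the loop, one gets $\ind(\alpha')=\ind(\alpha)$ and $\ind(\beta')=\ind(\beta)$. Every singularity of $\xi'$ distinct from those created at $z_0$ keeps the same order, while the zero $z_0$ of order $a_0$ is replaced by zeros of orders $\alpha_1,\dots,\alpha_t$ with $\sum_i\alpha_i=a_0$.

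Finally I would conclude with a $\pgcd$ computation. Writing $d:=\pgcd(a_1,\dots,\widehat{a_0},\dots,a_n;b_1,\dots,b_p,\ind(\alpha),\ind(\beta))$ for the greatest common divisor of all the invariants except the order at $z_0$, the two rotation numbers read $\rho=\pgcd(a_0,d)$ and $\rot(\xi')=\pgcd(\alpha_1,\dots,\alpha_t,d)$. Since $a_0=\sum_i\alpha_i$, any common divisor of the $\alpha_i$ divides $a_0$, so that $\pgcd(\alpha_1,\dots,\alpha_t,a_0)=\pgcd(\alpha_1,\dots,\alpha_t)$; hence
\[
\rot(\xi')=\pgcd(\alpha_1,\dots,\alpha_t,d)=\pgcd(\alpha_1,\dots,\alpha_t,a_0,d)=\pgcd(\alpha_1,\dots,\alpha_t,\rho),
\]
which is precisely the claim that $\rot(\xi')$ equals $\rho$ reduced modulo the orders of the new zeros. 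The main obstacle is the index-preservation step: one must make sure the éclatement can be performed so as to leave the metric untouched away from $z_0$, and that the index, computed for instance via the algebraic formula of la proposition~3.13 de \cite{chge}, is genuinely insensitive to the surgery; the remaining arithmetic is formal.
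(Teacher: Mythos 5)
The paper states this lemma without proof (it is introduced only by the phrase \emph{il est très facile d'obtenir le résultat suivant}), so there is no written argument to compare yours against; your proposal supplies precisely the argument the authors evidently have in mind, and it is correct. Your $\pgcd$ computation is right: with $d$ the gcd of the untouched invariants, $\pgcd(\alpha_{1},\dots,\alpha_{t},d)=\pgcd(\alpha_{1},\dots,\alpha_{t},a_{0},d)=\pgcd(\alpha_{1},\dots,\alpha_{t},\rho)$ because $\pgcd(\alpha_{1},\dots,\alpha_{t})$ divides $a_{0}=\sum_{i}\alpha_{i}$. Concerning the obstacle you flag at the end: if the éclatement is realized by the classical local surgery of \cite{lanneauquad} (breaking up the zero inside a small disk), the flat metric is literally unchanged outside that disk and your argument applies verbatim; if it is realized instead by the twisted-differential smoothing of la proposition~\ref{prop:eclatZero}, exact locality can fail, but it is not needed — the indices $\ind(\alpha)$, $\ind(\beta)$ of a symplectic basis chosen away from $z_{0}$ are integers depending continuously on the flat structure along those loops, hence are unchanged for small smoothing parameter, and the rotation number is in any case constant on connected components of the stratum. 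Either of these remarks closes the gap you mention, so your proof is complete.
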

\par
Afin de simplifier les énoncés dans les deux prochaines sections, nous introduisons la notation suivante.
\begin{defn}
Une composante de genre $1$ qui possède au moins un pôle d'ordre strictement inférieur à~$-2$ est {\em exceptionnelle} si elle est l'une des suivantes:
\begin{enumerate}
  \item la composante primitive de $\Omega^{2}\moduli[1](4a;\rec[-4][a])$  ou $\Omega^{2}\moduli[1](2a-1,2a+1;\rec[-4][a])$ pour tout $a\geq1$;
  \item la composante primitive de $\Omega^{2}\moduli[1](2s;\rec[-2][s])$ ou $\Omega^{2}\moduli[1](s-1,s+1;\rec[-2][s])$ avec $s$ un entier pair non nul;
  \item les composantes $\Omega^{2}\moduli[1]^{1}(6;-6)$, $\Omega^{2}\moduli[1]^{1}(3,3;-6)$ et $\Omega^{2}\moduli[1]^{3}(12;-6,-6)$.
 \end{enumerate}
\end{defn}
\smallskip
\par
Le cas des strates de genre $g\geq 2$ est donné en détail dans le théorème~1.3 de \cite{chge}. Ces strates sont connexes sauf pour quatre types de décompositions $\mu$ pour lesquelles une composante hyperelliptique existe:
\begin{itemize}
 \item $\mu=(2m_{1},-2m_{2})$,
 \item $\mu=(m_{1},m_{1},-2m_{2})$,
 \item $\mu=(2m_{1},-m_{2},-m_{2})$,
 \item $\mu =(m_{1},m_{1},-m_{2},-m_{2})$,
\end{itemize}
où $m_{1},m_{2}>0$ ne sont pas tous les deux pairs et $2m_{1}-2m_{2}=4g-4$. Notons que ces deux conditions impliquent que $m_{1}$ et $m_{2}$ sont impairs. On a donc le résultat suivant.
\begin{lem}\label{lem:nocomphyp}
Les strates $\quadomoduli( \mu)$    telles que  $\mu=(2m_{1},-2m_{2})$ ou  $\mu=(m_{1},m_{1},-2m_{2})$ avec $m_{1},m_{2}$ impairs $\geq1$ sont les seules strates possédant au moins un pôle pair avec une  composante hyperelliptique primitive.
\end{lem}
De plus, les résultats de \cite{chge} impliquent qu'il existe une autre composante primitive, sauf dans les strates $\quadomoduli[1](2;-2)$ et $\quadomoduli[1](1,1;-2)$

Dans la preuve nous utiliserons principalement le fait que toutes les composantes connexes (même en genre un) peuvent être obtenues en combinant la couture d'anse à partir d'une différentielle de genre zéro (voir la proposition~6.3 de \cite{chge}) et l'éclatement de singularités (par la proposition~6.4 de \cite{chge}).
De plus, la composante hyperelliptique est obtenue en cousant une anse de nombre de rotation maximal. Notons que la différentielle de genre un que l'on colle pour effectuer cette opération n'est pas primitive en général. La composante non-hyperelliptique est obtenue en cousant une anse de n'importe quel nombre de rotation strictement inférieur et peut donc être choisie primitive. Notons que cela n'est pas explicitement écrit dans \cite{chge}, mais cela se déduit de sa section~7, en particulier la remarque~7.3.

\subsection{Différentielles avec un pôle d'ordre strictement inférieur à~$-2$}
\label{sec:pluri1}

Nous montrons tout d'abord que l'application résiduelle est surjective pour les composantes non exceptionnelles de genre~$1$ possédant un unique zéro. Dans le cas des composantes exceptionnelles, l'application résiduelle contient le complémentaire de l'origine (lemmes~\ref{lem:geq1} et \ref{lem:geq1bis}). Puis nous montrons la surjectivité des applications résiduelles de certaines strates (lemme~\ref{lem:geq1ter}). Nous en déduisons la surjectivité de l'application résiduelle dans le cas général par éclatement de zéro et couture d'anse (lemme~\ref{lem:geq1qua}). Comme nous avons montré que l'origine n'est pas  l'image de l'application résiduelle des composantes exceptionnelles dans le cas~(1), il restera à montrer dans le lemme~\ref{lem:compexept} qu'elle n'est pas dans celles du cas~(3).
\smallskip
\par
Commençons par le cas des strates de genre $1$ avec un unique zéro et un unique pôle.
\begin{lem}\label{lem:geq1}
La restriction de l'application résiduelle $\appresquad[1](m;-m)$ à chaque composante non exceptionnelle est surjective pour $m\geq3$. L'image de l'application résiduelle des composantes $\quadomoduli[1](4;-4)$ et $\quadomoduli[1]^{1}(6;-6)$ contient $\CC^{\ast}$.
\end{lem}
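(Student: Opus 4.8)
Le plan est de traiter séparément les cas $m$ impair et $m$ pair selon la forme de l'espace résiduel. Lorsque $m$ est impair, le pôle est d'ordre impair, donc $\espresquad[1](m;-m)$ est réduit à un point et la surjectivité de $\appresquad[1](m;-m)$ sur une composante équivaut à sa non-vacuité ; lorsque $m$ est pair $\geq4$, on a $\espresquad[1](m;-m)=\CC$ et la surjectivité signifie réaliser toute valeur du résidu quadratique. Rappelons de la section~\ref{sec:rapquad} que les composantes primitives de $\quadomoduli[1](m;-m)$ sont indexées par les diviseurs impairs $\rho$ de $m$ (nécessairement stricts, d'après le point~i) de la classification et le lemme~\ref{lem:rotprim}). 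Il s'agit donc de construire, dans chacune de ces composantes, une différentielle de résidu $R$ prescrit : $R$ arbitraire dans le cas pair non exceptionnel, $R\neq0$ pour les deux strates exceptionnelles, et l'unique résidu trivial dans le cas impair.

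Je construirais d'abord la surface plate directement à partir d'une seule partie polaire. On prend la partie polaire d'ordre $m$ de la section~\ref{sec:briques} associée à des vecteurs $(v_{1},v_{2};w_{1},w_{2})$ ; son bord est constitué d'exactement quatre liens-selles, en accord avec le décompte $2g+n+\tilde p-2=2$ des liens-selles horizontaux attendu pour $g=1$, $n=1$, $\tilde p=1$ (section~\ref{sec:coeur}), chacun apparaissant deux fois au bord de l'unique domaine polaire. D'après le lemme~\ref{lm:kresidu}, le résidu quadratique du pôle obtenu est $(v_{1}+v_{2}-w_{1}-w_{2})^{2}$. On identifie ensuite ces quatre segments deux à deux, en utilisant une rotation d'angle $\pi$ sur au moins une paire afin que l'holonomie soit non triviale et la différentielle primitive, selon le schéma de commutateur qui produit une surface de genre un portant une unique singularité conique d'angle $(m+2)\pi$. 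En choisissant les quatre vecteurs tels que $v_{1}+v_{2}-w_{1}-w_{2}=r$ pour un $r\in\CC$ prescrit, en utilisant l'équivariance de l'application résiduelle pour l'action de ${\rm GL}^{+}(2,\mathbb{R})$ et le lemme~\ref{lem:noninter} pour garder la ligne brisée sans auto-intersection, on réalise tout résidu $R=r^{2}$ ; en particulier tout $R\neq0$ est atteint, et $R=0$ également dès que la partie polaire est assez grande pour laisser place à l'anse.

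Il reste à déterminer la composante connexe. Le nombre de rotation $\rot=\pgcd(m,\ind(\alpha),\ind(\beta))$ se calcule à partir des deux lacets de recollement $\alpha,\beta$ ; comme le zéro est d'ordre $m$, le nombre $m$ figure déjà dans le pgcd, et il reste à ajuster $\ind(\alpha)$ et $\ind(\beta)$. Le type $\tau$ de la partie polaire contrôle le décalage angulaire entre ses domaines positif et négatif, donc les demi-tours accumulés le long de $\alpha$ et $\beta$ ; en faisant parcourir à $\tau$ sa plage admissible et en appliquant la formule algébrique de la proposition~3.13 de \cite{chge}, on peut rendre $\pgcd(m,\ind(\alpha),\ind(\beta))$ égal à n'importe quel diviseur impair $\rho$ de $m$. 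Pour $m$ impair, où il n'y a aucun résidu à réaliser, il est plus commode de coudre une anse (proposition~\ref{prop:attachanse}) sur le zéro de la strate primitive de genre zéro $\quadomoduli[0](m-4;-m)$ (primitive car ses deux ordres sont impairs) : avec $k=2$ on obtient $\quadomoduli[1](m;-m)$, le nombre de rotation de la composante étant fixé par le choix de l'anse (section~\ref{sec:rapquad}), ce qui permet d'atteindre chaque diviseur impair $\rho$ de $m$. Pour les deux strates exceptionnelles $\quadomoduli[1](4;-4)$ et $\quadomoduli[1]^{1}(6;-6)$, la construction par partie polaire avec $r\neq0$ fournit déjà tout résidu non nul, ce qui est tout ce que ce lemme affirme à leur sujet.

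Le principal obstacle est la combinatoire du recollement : il faut vérifier simultanément que l'identification produit exactement une singularité conique d'angle correct $(m+2)\pi$, que la surface obtenue est de genre un, et que le type $\tau$ choisi donne le nombre de rotation visé — trois contraintes qui interagissent, puisque modifier $\tau$ pour atteindre un $\rho$ donné déplace aussi l'angle conique et les contributions d'indice. Un point technique secondaire est de s'assurer que le polygone développant reste plongé lorsque $r$ parcourt tout $\CC$, ce qui se traite par le lemme~\ref{lem:noninter} après réarrangement des vecteurs par argument croissant. Le fait que le résidu $0$ devienne inatteignable précisément pour $(4;-4)$ et pour la composante $\rho=1$ de $(6;-6)$ — où la partie polaire est trop petite pour accueillir l'anse en même temps qu'une entaille refermée — est exactement l'obstruction prouvée plus loin, et est cohérent avec la moitié constructive menée ici.
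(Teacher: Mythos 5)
Your handling of the odd case and of nonzero residues essentially coincides with the paper's proof: for $m$ odd the residual space is a point and surjectivity reduces to non-emptiness of each component, and for $m$ even the paper also realizes every $R\neq 0$ from a single polar part of order $m$ and type $(\rho-1)/2$, associated to $(v_{1},v_{2};-v_{2},-v_{1})$ and glued by rotations, the loops through the midpoints of the $v_{i}$ having index $\rho$. Up to this point the proposal is sound (your handle-sewing variant for odd $m$ is an acceptable substitute for the paper's appeal to Abel's theorem).

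The genuine gap is the realization of the residue $0$ for even $m=2\ell\geq 6$ in the non-exceptional components, which is the technical heart of the lemma since it is exactly what separates $\quadomoduli[1](4;-4)$ and $\quadomoduli[1]^{1}(6;-6)$ from the other components. Your single-polar-part construction cannot produce $R=0$: by the lemme~\ref{lm:kresidu}, the gluing pattern that makes the surface primitive forces the residue $\left(2(v_{1}+v_{2})\right)^{2}$, which vanishes only when the broken line degenerates; and more generally the paper's own obstruction analysis (lemmes~\ref{lem:geq1cin} et~\ref{lem:compexept}) shows that gluing the boundary of one trivial polar part of order $4$ never gives a primitive differential, and of order $6$ only gives rotation number~$3$. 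So the sentence ``$R=0$ également dès que la partie polaire est assez grande pour laisser place à l'anse'' is not a construction: no identification is specified, and none exists of the form you describe. The paper uses a genuinely different device here: \emph{two} polar parts, slit along horizontal half-lines issued from the endpoints of the $v_{i}$, glued to each other along the slits, with the $v_{i}$ identified by rotation (figure~\ref{fig:a-a,pasderes}), the resulting loops having indices $\pm\rho$. Even then, rotation number $\rho=1$ cannot be reached directly: the paper gets it for $\ell\geq 4$ by an arithmetic detour, choosing $1<\rho'<\ell-1$ coprime to $2\ell$ so that $\pgcd(2\ell,\rho',\rho')=1$ --- precisely the step that is impossible for $\ell=3$, which is why $\quadomoduli[1]^{1}(6;-6)$ is exceptional. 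Your appeal to the formula of la proposition~3.13 de \cite{chge} to ``rendre le pgcd égal à n'importe quel diviseur impair'' assumes exactly what must be proved, and is false for the direct construction when $\rho=1$.
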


\begin{proof}
Si $m$ est impair, la surjectivité de $\appresquadrho[1](m;-m)$ est équivalente au fait que la composante $\quadomoduli[1]^{\rho}(m;-m)$ soit non vide. Ceci est une conséquence élémentaire du théorème d'Abel. \`A partir de maintenant, nous supposons que $m$ est pair. 

Une différentielle dans $\quadomoduli[1](2\ell,-2\ell)$ avec $\ell\geq2$ et au moins un résidu non nul est donnée par le recollement de la partie polaire non triviale d'ordre $2\ell$ et de type $(\rho-1)/2$ associée aux vecteurs $(v_{1},v_{2};-v_{2},-v_{1})$. Une telle différentielle est représentée en blanc sur la figure~\ref{fig:a-a}.
 \begin{figure}[htb]
 \centering
\begin{tikzpicture}[scale=1,decoration={
    markings,
    mark=at position 0.5 with {\arrow[very thick]{>}}}]
\fill[fill=black!10] (0,0)  -- (0,4)  -- (2.4,4) -- (2.4,0) -- cycle;

      \draw (0,0) coordinate (a1) -- node [sloped] {$1$} (0,1) coordinate (a2) -- node [sloped] {$2$} (0,2) coordinate (a3) -- node [sloped,rotate=180] {$1$} (0,3) coordinate (a4) -- node [sloped,rotate=180] {$2$} (0,4) coordinate (a5);
  \foreach \i in {1,2,...,5}
  \fill (a\i) circle (2pt);

  \draw (a1)-- ++(2.5,0)coordinate[pos=.6](b);
    \draw (a5)-- ++(2.5,0)coordinate[pos=.6](c);
      \draw[dotted] (a3)-- ++(-2.5,0);
    \node[] at (b) {$3$};
     \node[] at (c) {$3$};

      \draw[postaction={decorate},red] (0,.5) .. controls ++(180:1.4)  and ++(180:1.4) ..node[left] {$\alpha_{1}$} (0,2.5);
  \draw[postaction={decorate},blue](0,1.4) .. controls ++(180:1.5)  and ++(180:1.4) ..node[left] {$\alpha_{2}$} (0,3.5);
\end{tikzpicture}
\caption{Différentielles de $\Omega^{2}\moduli[1](4;-4)$ (en blanc) et $\Omega^{2}\moduli[1](2;-2)$ (en gris)}
\label{fig:a-a}
\end{figure}
Rappelons que l'indice est le nombre de demi-tours que fait le vecteur tangent. L'indice des lacets~$\alpha_{i}$ qui relient les milieux des segments $v_{i}$ est donc $\rho$. On en déduit que le nombre de rotation de la différentielle quadratique ainsi obtenue est égal à $\rho$.

Nous traitons maintenant le cas des composantes non exceptionnelles $\Omega^{2}\moduli[1]^{\rho}(2\ell ;-2\ell)$ avec $\ell\geq3$ et pour des différentielles dont le résidu au pôle est nul. Une différentielle avec ces invariants locaux est obtenue à partir de deux parties polaires. La première est d'ordre $\rho$, de type $(\rho -1)/2$ et associée à $(v_{1};v_{2})$ avec $v_{1}=v_{2}$. La seconde est d'ordre $\ell-\rho$, de type $\ell-\rho$ et associée à $(v_{1};v_{2})$. Nous coupons ces deux domaines polaires le long des demi-droites horizontales commençant au point final des $v_{i}$. Nous collons alors les fentes par translations et les vecteurs $v_{i}$ entre eux par rotation. Cette construction est illustrée par la figure~\ref{fig:a-a,pasderes} dans le cas de la composante $\quadomoduli[1]^{3}(6;-6)$. L'indice des lacets $\alpha$ et $\beta$ entre les milieux des $v_{i}$, comme représentés sur la figure~\ref{fig:a-a,pasderes}, est égal à~$\pm\rho$.
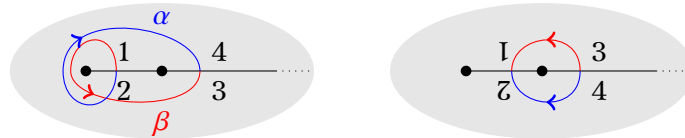
\begin{figure}[htb]
 \centering
\begin{tikzpicture}[scale=1,decoration={
    markings,
    mark=at position 0.5 with {\arrow[very thick]{>}}}]

\begin{scope}[xshift=-1.5cm]
      \fill[fill=black!10] (0,0) ellipse (2cm and .9cm);
   \draw (-1,0) coordinate (a) -- node [below] {$2$} node [above] {$1$}  (0,0) coordinate (b);
 \draw (0,0) -- (1.5,0) coordinate[pos=.5] (c);
  \draw[dotted] (1.5,0) -- (2,0);
 \fill (a)  circle (2pt);
\fill[] (b) circle (2pt);
\node[above] at (c) {$4$};
\node[below] at (c) {$3$};

 \draw[postaction={decorate},red] (-.6,0) .. controls ++(90:.6)  and ++(90:.5) .. (-1.2,0)  .. controls         ++(-90:.5) and ++(-90:.6) .. (.5,0);
  \draw[postaction={decorate},blue] (-.6,0) .. controls ++(-90:.6)  and ++(-90:.6) .. (-1.3,0)  .. controls         ++(90:.9) and ++(90:.6) .. (.5,0);
  
     \node[blue] at (0,.7) {$\alpha$};
  \node[red] at (0,-.7) {$\beta$};
    \end{scope}

\begin{scope}[xshift=3.5cm]
      \fill[fill=black!10] (0,0) ellipse (2cm and .9cm);
   \draw (-1,0) coordinate (a) -- node [above,rotate=180] {$2$} node [below,rotate=180] {$1$}  (0,0) coordinate (b);
 \draw (0,0) -- (1.5,0) coordinate[pos=.5] (c);
  \draw[dotted] (1.5,0) -- (2,0);
 \fill (a)  circle (2pt);
\fill[] (b) circle (2pt);
\node[above] at (c) {$3$};
\node[below] at (c) {$4$};

 \draw[postaction={decorate},red] (.5,0) .. controls ++(90:.6)  and ++(90:.5) .. (-.4,0);
  \draw[postaction={decorate},blue] (.5,0) .. controls ++(-90:.6)  and ++(-90:.5) .. (-.4,0);
    \end{scope}
\end{tikzpicture}
\caption{Différentielle de $\Omega^{2}\moduli[1]^{3}(6;-6)$ dont le résidu est nul}
\label{fig:a-a,pasderes}
\end{figure}

Cela donne une différentielle quadratique souhaitée, sauf dans le cas où le nombre de rotation est $\rho=1$. Dans ce cas, on choisit $1<\rho'<\ell -1$ premier avec $2\ell$. Cela est toujours possible lorsque $\ell \geq 4$. On fait la même construction que ci-dessus et on obtient une différentielle quadratique de nombre de rotation $\pgcd(\rho',\rho',2\ell)=1$.
\end{proof}

Nous déduisons du résultat précédent le cas des strates de genre $1$ avec un unique zéro mais un profil de pôles arbitraire.
\begin{lem}\label{lem:geq1bis}
Étant donnée $\mu=(a;-b_{1},\dots,-b_{p};-c_{1},\dots,-c_{r};\rec[-2][s])$ une décomposition de~$0$ telle que $p+r \geq 2$ et $\rho$ un nombre de rotation. Si $\mu\neq(4p;\rec[-4][p])$, alors l'application résiduelle~$\appresquadrho[1](\mu)$ est surjective. De plus, l'image de $\appresquad[1](4p,\rec[-4][p])$ contient $\CC^{p}\setminus\lbrace 0\rbrace$.
\end{lem}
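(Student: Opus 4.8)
L'idée est de construire directement les différentielles recherchées à partir des parties polaires élémentaires de la section~\ref{sec:briques}, recollées le long de liens-selles horizontaux autour d'une unique anse, exactement comme dans la preuve du lemme~\ref{lem:geq1}. Remarquons d'abord que $\espresquad[1](\mu)=\CC^{p}\times(\CC^{\ast})^{s}$, de sorte que l'origine n'appartient à $\espresquad[1](\mu)$ que lorsque $s=0$; ainsi, pour $s\geq1$, la surjectivité revient à réaliser toutes les configurations dont les $s$ résidus aux pôles doubles sont non nuls, et la distinction entre le complémentaire de l'origine et l'espace tout entier n'intervient que si $s=0$. Rappelons aussi la relation de degré $a=\sum_{i} b_{i}+\sum_{j} c_{j}+2s$ et que, d'après le lemme~\ref{lem:rotprim}, la composante $\quadomoduli[1]^{\rho}(\mu)$ est primitive si et seulement si $\rho$ est impair; il suffit donc de réaliser chaque nombre de rotation impair admissible. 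Comme dans le lemme~\ref{lem:geq1}, l'anse sera produite par deux segments de bord parallèles $v_{1}=v_{2}$ identifiés par une rotation d'angle $\pi$, et le nombre de rotation se lira comme l'indice d'une base symplectique de lacets traversant ces segments.

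Je commencerais par réaliser toute configuration dont au moins un résidu est non nul (cela fournit déjà tout $\espresquad[1](\mu)$ lorsque $s\geq1$, et $\CC^{p}\setminus\{(0,\dots,0)\}$ lorsque $s=0$, prouvant en particulier la seconde assertion pour $(4p;(-4^{p}))$). Pour chaque pôle je choisis une racine carrée $r_{i}$ du résidu prescrit avec $\Re(r_{i})\geq0$ (et $\Im(r_{i})>0$ si $\Re(r_{i})=0$) et je forme la partie polaire correspondante: les pôles doubles donnent des demi-cylindres, les pôles impairs les parties polaires impaires, et les pôles pairs les parties polaires triviales ou non triviales selon que leur résidu est nul ou non. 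Puisque $p+r\geq2$, il existe un pôle distingué $P_{1}$ d'ordre $\leq-4$ pair ou $\leq-3$ impair; sa partie polaire portera l'anse, son type étant choisi — comme dans le lemme~\ref{lem:geq1} — de sorte que le nombre de rotation de la différentielle obtenue soit $\rho$. Je recolle ensuite toutes les parties polaires restantes le long de liens-selles, de façon arborescente afin de ne pas ajouter de genre, en identifiant toutes les singularités coniques en un unique sommet; un décompte des angles, utilisant $a=\sum_{i} b_{i}+\sum_{j} c_{j}+2s$, montre que ce sommet est d'angle $(a+2)\pi$, c'est-à-dire un unique zéro d'ordre $a$. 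L'unique anse provenant du recollement par rotation de $v_{1}$ et $v_{2}$ donne un genre exactement égal à $1$ et une holonomie non triviale, de sorte que $\xi$ est primitive.

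Je traiterais ensuite l'origine, qui n'apparaît que si $s=0$: tous les résidus sont alors nuls. J'utilise des parties polaires paires triviales (associées à des vecteurs $(v;v)$, de résidu nul) ainsi que l'anse. La construction ne dégénère que lorsqu'il n'y a pas assez de liberté pour placer l'anse tout en conservant un unique zéro, ce qui se produit précisément pour la strate minimale $(4p;(-4^{p}))$; dès que $r\geq1$ (présence d'un pôle impair) ou qu'un pôle pair est d'ordre $\leq-6$, la liberté angulaire supplémentaire — obtenue en choisissant les types des parties polaires — permet de construire une différentielle à résidus nuls de n'importe quel nombre de rotation impair prescrit. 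Joint au lemme~\ref{lem:geq1} pour le cas $p+r=1$, ceci donne la surjectivité pour les composantes non exceptionnelles.

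La principale difficulté sera le contrôle simultané du nombre de rotation: il faut réaliser chaque $\rho$ impair admissible tout en conservant un unique zéro et les résidus prescrits. Toute la souplesse nécessaire provient du choix des types des parties polaires le long de l'anse, et vérifier que l'indice des lacets $\alpha,\beta$ prend la valeur voulue $\rho$ demande le même calcul d'indice soigneux que dans le lemme~\ref{lem:geq1}, mais désormais en présence de plusieurs parties polaires. Une difficulté secondaire est la non-dégénérescence des polygones dans le cas de l'origine pour les strates extrémales où les ordres des pôles sont minimaux: on doit vérifier que la construction ne produit une véritable surface à unique zéro que hors de $(4p;(-4^{p}))$ — strate pour laquelle la non-réalisabilité de l'origine est établie dans les lemmes d'obstruction ultérieurs — ce qui explique que pour cette strate on n'affirme que l'inclusion de $\CC^{p}\setminus\{(0,\dots,0)\}$ dans l'image de $\appresquad[1](4p;(-4^{p}))$.
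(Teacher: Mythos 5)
Your plan for the configurations with at least one non-zero residue is essentially the paper's own construction (all polar parts glued along a tree, a distinguished polar part carrying the handle via two identified segments $v_{1}=v_{2}$, its type controlling the rotation number), so that part is sound in outline, even though the index verification is only asserted. The genuine problems are elsewhere.

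First, and most concretely, your treatment of the origin is not just unproven but false as stated. You claim that as soon as $r\geq1$ or some even pole has order $\leq-6$, the angular freedom in choosing the types yields a zero-residue differential of any prescribed odd rotation number. The component $\Omega^{2}\moduli[1]^{3}(12;-6,-6)$ contradicts this: both poles have order $-6$, $\mu\neq(4p;(-4^{p}))$, yet the paper proves (lemme~\ref{lem:compexept}, cf.\ théorème~\ref{thm:geq1}, point iii)) that this component contains no differential with vanishing residues. The paper's own proof of the present lemma is careful exactly here: its zero-residue construction is stated to work \emph{sauf} pour $\mu=(4p;(-4^{p}))$ \emph{et} pour $\mu=(12;-6,-6)$ avec $\rho=3$ — the lemma's wording is slightly imprecise on this point, and any correct argument must detect where the construction degenerates; it is not only at $(4p;(-4^{p}))$. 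Your proposal, by purporting to cover this case, would prove something false.

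Second, the control of the rotation number — which is where almost all of the paper's work lies — is deferred rather than carried out. For $s=0$ the primitive components are indexed by the odd divisors of $\pgcd(a,b_{i},c_{j})$, and in the zero-residue situation realizing each such $\rho$ is \emph{not} achieved by tuning the type of a single distinguished polar part: the paper must coordinate the types of all the polar parts (for $p$ poles of order $-6$, choosing them so that the index of the loop through the chain is or is not a multiple of $3$), and for two poles of order $-2\ell\leq-8$ with $\rho=\ell$ it needs a special construction distributing three identified segments over both polar parts. Your sentence "vérifier que l'indice des lacets prend la valeur voulue demande le même calcul que dans le lemme~\ref{lem:geq1}" is precisely the non-trivial content of the lemma, not a routine adaptation. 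Note also that for the connected strata with $r\geq1$ the paper sidesteps much of this by sewing a handle onto the genus-zero differentials of la section~\ref{sec:avecnondiv} (proposition~\ref{prop:attachanse}), a reduction you replace by a direct construction without executing it.
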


\begin{proof}
Nous commençons la preuve par le cas des strates connexes puis adaptons les arguments au cas non connexe.
\smallskip
\par
\paragraph{\bf Les strates connexes.}
Nous commençons par le cas où $r\geq2$. On a montré dans les lemmes~\ref{lem:g=0gen1nouvbis} et~\ref{lem:g=0gen1nouv} que l'application résiduelle de $\quadomoduli[0](a-4;-b_{1},\dots,-b_{p};-c_{1},\dots,-c_{r};\rec[-2][s])$ est surjective. On obtient donc la surjectivité de  $\appresquadrho[1](\mu)$ par couture d'anse sauf dans le cas de la composante $\quadomoduli[1]^{1}(6;-3,-3)$. Dans ce dernier cas, la surjectivité est équivalente au fait que cette composante est non vide, ce qui est vrai par le théorème d'Abel.
\par
Supposons maintenant qu'il existe un unique pôle d'ordre impair. Le lemme~\ref{lem:g=0gen1nouv} dans le cas des strates $\omoduli[0](a-4;-b_{1},\dots,-b_{p};-c;\rec[-2][s])$ implique que $\CC^{p}\setminus\lbrace 0\rbrace$ est contenu dans l'image de $\appresquadrho[1](\mu)$ par couture d'anse. Il suffit donc de prouver que l'origine est dans l'image de l'application résiduelle. En particulier, ces strates ne possèdent pas de pôles d'ordre~$-2$.  Nous choisissons un pôle d'ordre $-2\ell_{1}$ et nous lui associons une partie polaire d'ordre~$2\ell_{1}$, de type $(\rho-1)/2$ associée aux vecteurs $(v_{1},v_{2},-2(v_{1}+v_{2});-v_{2},-v_{1})$ avec $v_{1}=v_{2}$. On associe aux autres pôles d'ordre pair les parties polaires d'ordres $2\ell_{i}$ associées aux vecteurs $(2(v_{1}+v_{2});2(v_{1}+v_{2}))$. Au pôle d'ordre $c$, on choisit une partie polaire d'ordre $c$ associée à $(2(v_{1}+v_{2}),\emptyset)$. On colle alors les pôles d'ordre paire de manière cyclique et au bout le pôle d'ordre $c$. Enfin on identifie les vecteurs $v_{i}$ ensemble. Les indices des lacets qui joignent les milieux des $v_{i}$ est~$\rho$. Comme~$\rho$ divise $2\ell_{1}$, on obtient toutes les composantes. 
\par
Il nous reste à traiter le cas où tous les pôles sont d'ordre pair. Nous commençons par construire une différentielle dans $\quadomoduli[1](a;-2\ell_{1},\dots,-2\ell_{p})$ dont tous les résidus sont nuls, sauf dans le cas où $\mu=(4p;\rec[-4][p])$ et $\mu=(12;-6,-6)$ avec $\rho=3$.  L'idée générale est la suivante. Supposons que $\ell_{1}\geq3$ et partons de la différentielle de $\quadomoduli[1](2\ell_{1};-2\ell_{1})$ sans résidu donnée dans la preuve du lemme~\ref{lem:geq1}. On peut alors couper cette surface le long d'un lien-selle associé au vecteur $v_{1}$. Par exemple, on coupe la différentielle de la figure~\ref{fig:a-a,pasderes} le long du lien-selle dénoté par~$1$. Pour tous les autres pôles on prend une partie polaire triviale d'ordre~$2\ell_{i}$ associée à $(v_{1};v_{1})$. La surface obtenue en recollant les segments de manière cyclique, comme sur la figure~\ref{fig:nonresg1bis}, possède les invariants désirés.
\begin{figure}[htb]
\begin{tikzpicture}


\begin{scope}[xshift=-4.5cm]
      
    \fill[fill=black!10] (0,0) ellipse (2cm and .7cm);

   \draw (-1,0) coordinate (a) -- node [below] {$2$} node [above] {$1$}  (0,0) coordinate (b);
 \draw (0,0) -- (1.5,0) coordinate[pos=.5] (c);
  \draw[dotted] (1.5,0) -- (2,0);
 \fill (a)  circle (2pt);
\fill[] (b) circle (2pt);
\node[above] at (c) {$b$};
\node[below] at (c) {$a$};

    \end{scope}

\begin{scope}[xshift=0cm]
    \fill[fill=black!10] (0,0) ellipse (2cm and .7cm);
   \draw (-1,0) coordinate (a) -- node [above,rotate=180] {$2$} node [above] {$3$}  (0,0) coordinate (b);
 \draw (0,0) -- (1.5,0) coordinate[pos=.5] (c);
  \draw[dotted] (1.5,0) -- (2,0);
 \fill (a)  circle (2pt);
\fill[] (b) circle (2pt);
\node[above] at (c) {$a$};
\node[below] at (c) {$b$};

\end{scope}

%

\begin{scope}[xshift=4cm]
    \fill[fill=black!10] (-.5,0) ellipse (1cm and .5cm);
   \draw (0,0) coordinate (a1) -- node [below] {$1$} node [above,xscale=-1] {$3$}  (-1,0) coordinate (a2);
  \foreach \i in {1,2}
  \fill (a\i) circle (2pt);

\end{scope}

\end{tikzpicture}
\caption{Différentielle de $\Omega^{2}\moduli[1](10;-4,-6)$ dont le résidu est nul} 
\label{fig:nonresg1bis}
\end{figure}
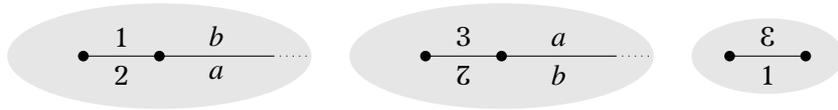
\smallskip
\par
\paragraph{\bf Les strates non connexes.}
Notons que si la strate possède un pôle d'ordre~$-4$, alors elle est connexe. On supposera donc que les pôles sont d'ordres $-2\ell_{i}\leq -6$. 
Si tous les pôles sont d'ordre $-6$, alors on procède de la façon suivante. Si $p=2$, la construction ci-dessus donne une différentielle quadratique avec un nombre de rotation égal à $1$. En effet, les indices des lacets sont respectivement $3$ et $7$. Supposons maintenant qu'il y a $p\geq 3$ pôles d'ordre~$-6$. 
On choisit le type des parties polaires de telle façon que l'indice du lacet qui les traverse est multiple de $3$ pour obtenir le nombre de rotation $3$ et n'est pas multiple sinon. Comme chaque pôle peut contribuer à l'indice par $2$ ou $4$ il est facile de vérifier que cela est toujours possible.

Les cas où il y a au moins trois pôles d'ordres identiques ou deux pôles d'ordres distincts sont obtenus de façon analogue. Nous allons donc nous concentrer sur le cas où il y a deux pôles d'ordres $-2\ell \leq -8$. Dans ce cas, pour les nombres de rotations $\rho < \ell$, la construction précédente fonctionne avec les adaptations nécessaires. Pour  $\rho = \ell$, on prend trois vecteurs $v_{1}=v_{2}=v_{3}$ et on associe au premier pôle $\ell-1$ parties polaires d'ordre $4$ ouvertes à droite associées à $(v_{1};v_{2})$, $(\emptyset;v_{2})$, $(v_{3};\emptyset)$ et les autres associées à $(\emptyset;\emptyset)$. On colle ces parties polaires de manière cyclique de sorte que l'indice entre les milieux des segments $v_{2}$ est $\ell$ et entre les milieux des segments $v_{1}$ et $v_{3}$ est $\nim<\ell$. Pour le second pôle, on considère la partie polaire d'ordre $2\ell$ et de type $(\rho-i)/2$ associée à $(v_{3};v_{1})$. En collant les segments de même label, on obtient une différentielle quadratique avec les invariants souhaités. Cette construction est représentée dans la figure~\ref{fig:nonresgdeuxpoles} dans le cas $\ell=5$.
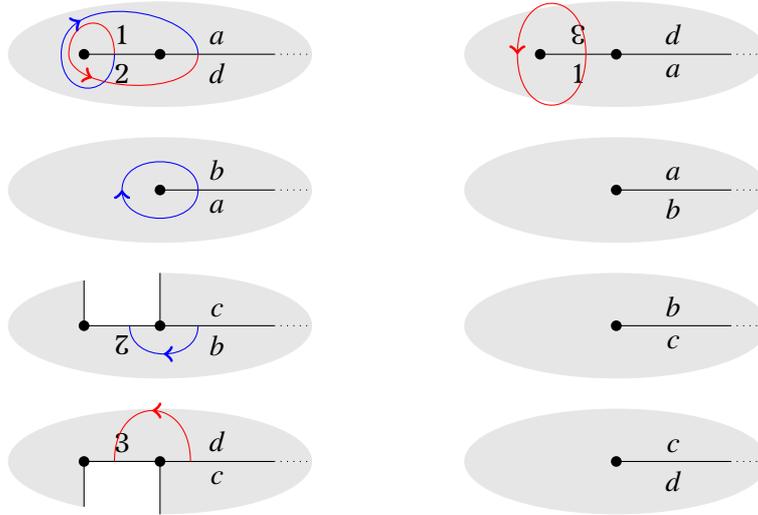
\begin{figure}[htb]
\begin{tikzpicture}[scale=1,decoration={
    markings,
    mark=at position 0.5 with {\arrow[very thick]{>}}}]


\begin{scope}[xshift=0cm]
      
    \fill[fill=black!10] (0,0) ellipse (2cm and .7cm);

   \draw (-1,0) coordinate (a) -- node [below] {$2$} node [above] {$1$}  (0,0) coordinate (b);
 \draw (0,0) -- (1.5,0) coordinate[pos=.5] (c);
  \draw[dotted] (1.5,0) -- (2,0);
 \fill (a)  circle (2pt);
\fill[] (b) circle (2pt);
\node[above] at (c) {$a$};
\node[below] at (c) {$d$};

\draw[postaction={decorate},red] (-.6,0) .. controls ++(90:.6)  and ++(90:.5) .. (-1.2,0)  .. controls         ++(-90:.5) and ++(-90:.6) .. (.5,0);
  \draw[postaction={decorate},blue] (-.6,0) .. controls ++(-90:.6)  and ++(-90:.6) .. (-1.3,0)  .. controls         ++(90:.9) and ++(90:.6) .. (.5,0);
    \end{scope}

\begin{scope}[yshift=-1.8cm]
    \fill[fill=black!10] (0,0) ellipse (2cm and .7cm);
 \draw (0,0) -- (1.5,0) coordinate[pos=.5] (c);
  \draw[dotted] (1.5,0) -- (2,0);
\fill[] (0,0) circle (2pt);
\node[above] at (c) {$b$};
\node[below] at (c) {$a$};

\draw[postaction={decorate},blue] (.5,0) .. controls ++(-90:.5)  and ++(-90:.5) .. (-.5,0)  .. controls         ++(90:.5) and ++(90:.5) .. (.5,0);
\end{scope}

\begin{scope}[yshift=-3.6cm]
    \fill[fill=black!10] (0,0) ellipse (2cm and .7cm);
   \draw (-1,0) coordinate (a) -- node [above,rotate=180] {$2$}   (0,0) coordinate (b);
   \fill[white] (a) -- (b) -- ++(0,.8) --++(-1,0) -- cycle;
   \draw (a) -- ++(0,.6);
\draw (b) -- ++(0,.7);
\draw (a) -- (b);
 \draw (0,0) -- (1.5,0) coordinate[pos=.5] (c);
  \draw[dotted] (1.5,0) -- (2,0);
 \fill (a)  circle (2pt);
\fill[] (b) circle (2pt);
\node[above] at (c) {$c$};
\node[below] at (c) {$b$};

\draw[postaction={decorate},blue] (.5,0) .. controls ++(-90:.5)  and ++(-90:.5) .. (-.4,0);
\end{scope}

\begin{scope}[yshift=-5.4cm]
      \fill[fill=black!10] (0,0) ellipse (2cm and .7cm);
   \draw (-1,0) coordinate (a) --  node [above] {$3$}  (0,0) coordinate (b);
   \fill[white] (a) -- (b) -- ++(0,-.8) --++(-1,0) -- cycle;
   \draw (a) --(b);
   \draw (a) -- ++(0,-.6);
\draw (b) -- ++(0,-.7);
 \draw (0,0) -- (1.5,0) coordinate[pos=.5] (c);
  \draw[dotted] (1.5,0) -- (2,0);
 \fill (a)  circle (2pt);
\fill[] (b) circle (2pt);
\node[above] at (c) {$d$};
\node[below] at (c) {$c$};

\draw[postaction={decorate},red] (.4,0) .. controls ++(90:.9)  and ++(90:.9) .. (-.6,0);
\end{scope}


\begin{scope}[xshift=6cm]
    \fill[fill=black!10] (0,0) ellipse (2cm and .7cm);
   \draw (-1,0) coordinate (a) -- node [below] {$1$} node [below,rotate=180] {$3$}  (0,0) coordinate (b);
 \draw (0,0) -- (1.5,0) coordinate[pos=.5] (c);
  \draw[dotted] (1.5,0) -- (2,0);
 \fill (a)  circle (2pt);
\fill[] (b) circle (2pt);
\node[above] at (c) {$d$};
\node[below] at (c) {$a$};

\draw[postaction={decorate},red] (-.4,0) .. controls ++(90:.9)  and ++(90:.9) .. (-1.3,0)  .. controls         ++(-90:.9) and ++(-90:.9) .. (-.4,0);
\end{scope}

\begin{scope}[xshift=6cm,yshift=-1.8cm]
       \fill[fill=black!10] (0,0) ellipse (2cm and .7cm);
 \draw (0,0) -- (1.5,0) coordinate[pos=.5] (c);
  \draw[dotted] (1.5,0) -- (2,0);
\fill[] (0,0) circle (2pt);
\node[above] at (c) {$a$};
\node[below] at (c) {$b$};
\end{scope}

\begin{scope}[xshift=6cm,yshift=-3.6cm]
        \fill[fill=black!10] (0,0) ellipse (2cm and .7cm);
 \draw (0,0) -- (1.5,0) coordinate[pos=.5] (c);
  \draw[dotted] (1.5,0) -- (2,0);
\fill[] (0,0) circle (2pt);
\node[above] at (c) {$b$};
\node[below] at (c) {$c$};
\end{scope}

\begin{scope}[xshift=6cm,yshift=-5.4cm]
        \fill[fill=black!10] (0,0) ellipse (2cm and .7cm);
 \draw (0,0) -- (1.5,0) coordinate[pos=.5] (c);
  \draw[dotted] (1.5,0) -- (2,0);
\fill[] (0,0) circle (2pt);
\node[above] at (c) {$c$};
\node[below] at (c) {$d$};
\end{scope}
\end{tikzpicture}
\caption{Différentielle de $\Omega^{2}\moduli[1](20;-10,-10)$ dont les résidus sont nuls et de nombre de rotation $5$} 
\label{fig:nonresgdeuxpoles}
\end{figure}
\smallskip
\par
Nous traitons maintenant les strates $\Omega^{2}\moduli[1](a;-2\ell_{1},\dots,-2\ell_{p};\rec[-2][s])$, telles que $s\neq0$ ou $s=0$ et avec au moins un résidu non nul. Dans le cas où le résidu d'un pôle est nul, alors nous supposerons que le résidu de $P_{1}$ est nul.
Considérons les pôles $P_{i}$ avec $2\leq i\leq p'$ qui possèdent un résidu $R_{i}$ non nul. Nous associons à $P_{i}$ la partie polaire non triviale d'ordre~$2\ell_{i}$ associée à $(r_{i};\emptyset)$ où $r_{i}$ est une racine carrée de $R_{i}$ de partie réelle positive (ou de partie imaginaire positive si la partie réelle est nulle). Considérons maintenant les pôles~$P_{j}$ avec $j > p'$ ayant un résidu nul. Nous associons la partie polaire triviale d'ordre $2\ell_{j}$ associée à $(r_{i_{j}};r_{i_{j}})$ pour un résidu quadratique $R_{i_{j}}\neq0$. Puis nous collons le segment~$r_{i}$ du domaine positif de~$P_{i}$ au segment~$r_{i_{j}}$ du domaine basique négatif de~$P_{j}$. 
\par
Enfin, pour le pôle $P_{1}$ nous procédons à la construction suivante. Notons que la somme des~$r_{i}$ est non nulle.  Nous supposerons que les $r_{i}$ sont ordonnés par argument croissant. Nous prenons pour $P_{1}$ la partie polaire d'ordre type $2\ell_{1}$ et de type $(\rho-1)/2$ associée à $(v_{1},v_{2};-v_{2},-v_{1},r_{2},\dots,r_{p'})$ où les $v_{i}$ vérifient les égalités $v_{1}=v_{2}$ et  $r_{1}=2v_{1}+2v_{2}-\sum_{i\geq2} r_{i}$. 
\par
La différentielle est obtenue en identifiant par translation les bords $r_{i}$ des domaines polaires positifs aux segments $r_{i}$ de la partie polaire négative de $P_{1}$. Enfin, nous identifions par rotation d'angle $\pi$ et translation, le premier $v_{1}$ au premier~$v_{2}$ et le second $v_{1}$ au second~$v_{2}$. Cela donne une différentielle quadratique primitive avec les invariants souhaités. Un exemple est donné dans la figure~\ref{fig:nonresg1ter}. 
    \begin{figure}[htb]
\begin{tikzpicture}
\begin{scope}[xshift=-3cm]
\fill[fill=black!10] (0.5,0)coordinate (Q)  circle (1.2cm);
    \coordinate (a) at (0,0);
    \coordinate (b) at (1,0);

     \fill (a)  circle (2pt);
\fill[] (b) circle (2pt);
    \fill[white] (a) -- (b) -- ++(0,-1.2) --++(-1,0) -- cycle;
 \draw  (a) -- (b);
 \draw (a) -- ++(0,-1.1);
 \draw (b) -- ++(0,-1.1);

\node[above] at (Q) {$r_{2}$};
    \end{scope}

\begin{scope}[xshift=3cm]
\fill[fill=black!10] (0.5,0)coordinate (Q)  circle (1.2cm);
    \coordinate (a) at (0,0);
    \coordinate (b) at (1,0);

     \fill (a)  circle (2pt);
\fill[] (b) circle (2pt);
    \fill[white] (a) -- (b) -- ++(0,-1.2) --++(-1,0) -- cycle;
 \draw  (a) -- (b);
 \draw (a) -- ++(0,-1.1);
 \draw (b) -- ++(0,-1.1);

\node[above] at (Q) {$r_{3}$};
    \end{scope}

%

\begin{scope}[xshift=.5cm,yshift=-.3cm]
\fill[fill=black!10] (0,0)coordinate (Q)  circle (1.5cm);
    \coordinate (a) at (-1,0);
    \coordinate (b) at (0,0);
    \coordinate (c) at (1,0);
    \coordinate (e) at (-.5,0);
    \coordinate (f) at (.5,0);

\fill (a)  circle (2pt);
\fill[] (b) circle (2pt);
\fill (c)  circle (2pt);
   \fill (-.4,0)  arc (0:180:2pt); 
   \fill (.6,0)  arc (0:180:2pt); 

\draw (a) -- (b)coordinate[pos=.5] (g1) --(c)coordinate[pos=.5] (g2) -- (f)coordinate[pos=.5] (g3) -- (b)coordinate[pos=.5] (g4) -- (e)coordinate[pos=.5] (g5)-- (a)coordinate[pos=.5] (g6);

\node[below] at (g1) {$r_{2}$};
\node[below] at (g2) {$r _{3}$};
\node[above] at (g3) {$v_{2}$};
\node[above] at (g4) {$v_{1}$};
\node[below,rotate=180] at (g5) {$v_{2}$};
\node[below,rotate=180] at (g6) {$v_{1}$};

\draw[dotted] (b) -- ++(0,1.5);
\draw[dotted] (c) -- ++(0,-1);
\end{scope}

\end{tikzpicture}
\caption{Différentielle de $\Omega^{2}\moduli[1](12;\rec[-4][3])$ dont les résidus sont $(0,1,1)$} \label{fig:nonresg1ter}
\end{figure}
\end{proof}

Nous traitons maintenant le cas des strates avec deux zéros et des pôles d'ordre~$-4$.
\begin{lem}\label{lem:geq1ter}
Pour tout $p\geq1$ et tout $(a_{1},a_{2})\neq(2p+1,2p-1)$, les images des applications résiduelles $\appresk[1][2](a_{1},a_{2};\rec[-4][p])$ et $\appresk[2][2](4(p+1);\rec[-4][p])$ contiennent l'origine.
\end{lem}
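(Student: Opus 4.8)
The plan is to exhibit the origin directly by flat-geometric constructions, because the usual residue-preserving reductions are unavailable here. Indeed, both the éclatement of Proposition~\ref{prop:eclatZero} and the couture d'anse of Proposition~\ref{prop:attachanse} preserve the residues at the poles, so producing the origin by either would require the origin to already lie in a lower stratum; but by Lemma~\ref{lem:pairdivzero} the origin is absent from $\quadomoduli[0](a_1,a_2;(-4^p))$ (two odd zeros force the empty sum $\sum_{i\ge 3}a_i=0<2p$), and by Lemma~\ref{lem:geq1bis} it is absent from $\Omega^{2}\moduli[1](4p;(-4^p))$. I would therefore build the surfaces by hand from the elementary bricks of Section~\ref{sec:briques}. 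The organizing observation is that, by Lemma~\ref{lm:kresidu}, a pole of order $-4$ has vanishing quadratic residue exactly when it is cut out by a \emph{trivial} polar part of order $4$, namely one associated to a pair $(v;v)$; so each of the $p$ poles will be such a trivial brick, and the whole problem reduces to gluing $p$ trivial order-$4$ bricks along the saddle connections bounding them.

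First I would treat the genus one family $\Omega^{2}\moduli[1](a_1,a_2;(-4^p))$ with $a_1,a_2$ odd, $a_1+a_2=4p$, and $(a_1,a_2)\neq(2p-1,2p+1)$. By the bookkeeping of Section~\ref{sec:coeur}, such a surface has $2g+n+\tilde p-2=p+2$ saddle connections, so its incidence graph has first Betti number $2g+n-1=3$; the two extra cycles, compared with the single cycle available in the genus zero model, are exactly what produce the handle. Concretely I would chain the $p$ trivial bricks into a necklace and close it up by identifying one pair of boundary segments by a rotation of angle $\pi$, so that the holonomy around the resulting loop is nontrivial and $\xi$ is primitive, as in the constructions underlying Lemmas~\ref{lem:geq1} and~\ref{lem:geq1bis}. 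The two cone angles $(a_1+2)\pi$ and $(a_2+2)\pi$ would then be realized by the combinatorics of the gluing, i.e.\ by how many bricks border each of the two vertices, together with auxiliary half-planes cut in along slits issued from a vertex, each such insertion raising one cone angle by a controlled multiple of $2\pi$ without touching any residue. The balanced partition $a_1=2p-1,\ a_2=2p+1$ is precisely the one for which the angle cannot be split off while still leaving a nondegenerate polygon that closes the handle, in agreement with the independent non-realizability proved in Lemma~\ref{lem:geq1six}.

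For the genus two stratum $\Omega^{2}\moduli[2](4(p+1);(-4^p))$ I would run the same scheme with a single even cone point. Here $n=1$, so the target Betti number is $2g+n-1=4$: I would introduce a \emph{second} $\pi$-rotation identification, creating a second handle, and concentrate the entire cone angle $(4(p+1)+2)\pi=(4p+6)\pi$ at the unique vertex. Since all orders are now even, the canonical double cover is an unramified connected cover of a genus two surface, and primitivity is exactly the assertion that the two handle-loops carry nontrivial $\ZZ/2$ holonomy, which the two $\pi$-rotation identifications guarantee; the residues stay zero because no brick is ever made nontrivial. The base cases $\Omega^{2}\moduli[1](-1,5;-4)$ and $\Omega^{2}\moduli[2](8;-4)$ would be drawn explicitly, and the general $p$ recovered by inserting further trivial order-$4$ bricks cyclically, a move that changes neither the genus nor the residues and increases the cone angle in controlled steps.

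The main obstacle, in both families, is to arrange simultaneously that (i) the prescribed genus is reached, (ii) the available cone angle is split into the prescribed odd--odd partition (respectively deposited entirely at one even vertex), and (iii) the gluing remains primitive and nondegenerate, all while every polar brick stays trivial. I expect the delicate point to be the nondegeneracy of the resulting polygon at the extremes of the angle distribution, and in particular the verification that the \emph{only} failure is the balanced case $(2p-1,2p+1)$; once the small base surfaces are checked by hand, the inductive insertion of trivial order-$4$ bricks should make the passage to general $p$ routine.
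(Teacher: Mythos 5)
Your overall plan for the genus-one part --- explicit small base cases plus cyclic insertion of trivial order-$4$ bricks, after correctly observing that the residue-preserving reductions (éclatement, couture d'anse) cannot produce the origin here --- is indeed the same spirit as the paper's proof. But two concrete gaps remain. First, the gluing you propose does not work as stated: identifying the top segment of one trivial brick with the bottom segment of another ``by a rotation of angle $\pi$'' is not an orientation-compatible gluing (a map $z\mapsto -z+c$ can only identify two boundary segments whose surface germs lie on the \emph{same} side, or fold a segment onto itself); and if the necklace is closed by translation instead, the cyclic chain of $p$ trivial bricks is a genus-zero surface with trivial holonomy, hence the square of an abelian differential. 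So the handle and the nontrivial $\ZZ/2$ holonomy must come from a more careful identification pattern, which is exactly what the paper's figure~\ref{fig:exceptquadra} encodes. Second --- and this is the heart of the lemma --- you never verify that your construction reaches \emph{every} pair $(a_{1},a_{2})\neq(2p+1,2p-1)$; you assert it and invoke Lemma~\ref{lem:geq1six}, but that lemma only proves the balanced pair is unrealizable, it says nothing about what your construction attains. In particular you restrict to $a_{1},a_{2}$ odd, whereas the statement also covers even pairs (e.g.\ $(2,2)$, one of the paper's two base cases), for which primitivity is a genuine constraint. The paper closes this gap with an induction on $p$ whose hypothesis tracks two specific saddle connections (one joining the two zeros and, in the case $a_{1}=-1$, a closed one based at the other zero): cutting along one of them and gluing in a trivial brick $(v;v)$ adds $(2,2)$ or $(4,0)$ to the orders, and this reaches every non-balanced pair from the base cases $(5,-1)$ and $(2,2)$.

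For the genus-two stratum your route also diverges from the paper's, and is not substantiated. The paper makes no direct two-handle construction: it takes the differential of Lemma~\ref{lem:geq1bis} in $\Omega^{2}\moduli[1](4(p+1);(-4^{p});-2,-2)$ whose residues at the order-$(-4)$ poles vanish and whose two double poles carry equal quadratic residues, glues these two double poles into a node, and smooths the resulting interlaced differential by Lemma~\ref{lem:lisspolessimples}; the smoothing preserves the residues at the order-$(-4)$ poles and raises the genus to two. Your proposed construction with two $\pi$-rotation identifications faces the same orientability problem as above, and the claims that the full angle $(4p+6)\pi$ can be deposited at a single vertex and that primitivity follows are not established. (A minor point: the absence of the origin in $\Omega^{2}\moduli[1](4p;(-4^{p}))$ is Lemma~\ref{lem:geq1cin}; Lemma~\ref{lem:geq1bis} only states the positive inclusion of $\CC^{p}\setminus\lbrace 0\rbrace$.) As it stands the proposal is a plausible outline whose decisive steps --- the topology of the gluing, the realization of every non-balanced partition, and the genus-two case --- are missing.
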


\begin{proof}
Pour commencer la figure~\ref{fig:exceptquadra} fournit une différentielle quadratique primitive ayant un résidu nul dans chacune des strates $\Omega^{2}\moduli[1](5,-1;-4)$ et $\Omega^{2}\moduli[1](2,2;-4)$ (en gris).

 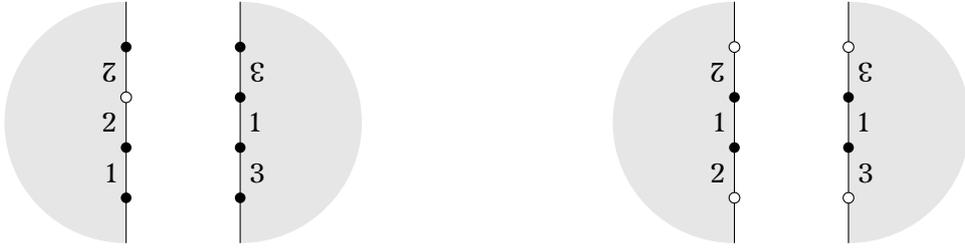
\begin{figure}[htb]
\begin{tikzpicture}
\begin{scope}[xshift=-4cm]

     \foreach \i in {1,2,...,4}
  \coordinate (a\i) at (0,2*\i/3); 
   \fill[black!10] (a1) -- (a4)-- ++(0,.6) arc (90:270:1.6) -- cycle;
 
       \foreach \i in {1,2,4}
   \fill (a\i)  circle (2pt);

        \foreach \i in {1,2,...,4}
  \coordinate (b\i) at (1.5,2*\i/3); 
     \fill[black!10] (b1) -- (b4)-- ++(0,.6) arc (90:-90:1.6) -- cycle;
       \foreach \i in {1,2,...,4}
   \fill (b\i)  circle (2pt);

\draw (a1)--+(0,-.6);
\draw (b1)--+(0,-.6);
\draw (a4)--+(0,.6);
\draw (b4)--+(0,.6);

\draw (a1)-- (a2) coordinate[pos=.5] (c1) -- (a3) coordinate[pos=.5] (c2) -- (a4) coordinate[pos=.5] (c3);
\draw (b1)-- (b2) coordinate[pos=.5] (d1) -- (b3) coordinate[pos=.5] (d2) -- (b4) coordinate[pos=.5] (d3);

      \fill[white] (a3)  circle (2pt);
      \draw (a3)  circle (2pt);
      
\node[left] at (c1) {$1$};
\node[left] at (c2) {$2$};
\node[right,rotate=180] at (c3) {$2$};
\node[right] at (d1) {$3$};
\node[right] at (d2) {$1$};
\node[left,rotate=180] at (d3) {$3$};

\end{scope}

\begin{scope}[xshift=4cm]
     \foreach \i in {1,2,...,4}
  \coordinate (a\i) at (0,2*\i/3); 
     \fill[black!10] (a1) -- (a4)-- ++(0,.6) arc (90:270:1.6) -- cycle;

       \foreach \i in {2,3}
   \fill (a\i)  circle (2pt);

        \foreach \i in {1,2,...,4}
  \coordinate (b\i) at (1.5,2*\i/3); 
       \fill[black!10] (b1) -- (b4)-- ++(0,.6) arc (90:-90:1.6) -- cycle;

       \foreach \i in {2,3}
   \fill (b\i)  circle (2pt);

\draw (a1)--+(0,-.6);
\draw (b1)--+(0,-.6);
\draw (a4)--+(0,.6);
\draw (b4)--+(0,.6);

\draw (a1)-- (a2) coordinate[pos=.5] (c1) -- (a3) coordinate[pos=.5] (c2) -- (a4) coordinate[pos=.5] (c3);
\draw (b1)-- (b2) coordinate[pos=.5] (d1) -- (b3) coordinate[pos=.5] (d2) -- (b4) coordinate[pos=.5] (d3);

       \foreach \i in {1,4}
      \fill[white] (a\i)  circle (2pt);
             \foreach \i in {1,4}
      \draw (a\i)  circle (2pt);
             \foreach \i in {1,4}
      \fill[white] (b\i)  circle (2pt);
             \foreach \i in {1,4}
      \draw (b\i)  circle (2pt);
      
\node[left] at (c2) {$1$};
\node[left] at (c1) {$2$};
\node[right,rotate=180] at (c3) {$2$};
\node[right] at (d1) {$3$};
\node[right] at (d2) {$1$};
\node[left,rotate=180] at (d3) {$3$};
\end{scope}
\end{tikzpicture}
\caption{Différentielles des strates $\Omega^{2}\moduli[1](5,-1;-4)$ et $\Omega^{2}\moduli[1](2,2;-4)$ dont le résidu est nul en gris. Différentielles de $\Omega^{2}\moduli[1](5,-1;-2,-2)$ et $\Omega^{2}\moduli[1](2,2;-2,-2)$ dont le résidu est $(1,1)$ en blanc.}
\label{fig:exceptquadra}
\end{figure}
\par
Nous prouvons maintenant le résultat pour les strates $\Omega^{2}\moduli[1](a_{1},a_{2};\rec[-4][p])$ satisfaisant $(a_{1},a_{2})\neq(2p+1,2p-1)$ par récurrence sur le nombre $p$ de pôles. L'hypothèse de récurrence est la suivante. Il existe une différentielle quadratique primitive dont les résidus sont nuls dans toutes les strates de cette forme avec $p-1$ pôles telles que si $a_{1}=-1$, alors il existe un lien-selle fermé reliant le zéro d'ordre $a_{2}$ à lui-même et dans tous les cas il existe un lien-selle entre les deux zéros. L'hypothèse de récurrence est satisfaite pour $p=1$ par les différentielles quadratiques représentées dans la figure~\ref{fig:exceptquadra}. Nous allons construire des différentielles satisfaisant à l'hypothèse de récurrence avec $p$ pôles.
\par
Si $(a_{1},a_{2})=(4p+1,-1)$, coupons la différentielle quadratique de $\Omega^{2}\moduli[1](4p-3,-1;\rec[-4][p-1])$ donnée par l'hypothèse le long du lien-selle entre le zéro d'ordre~$4p-3$. Prenons une partie polaire d'ordre~$4$ associée à $(v;v)$ où $v$ est la période de ce lien-selle. La surface formée en collant les bords de ces surfaces par translation est dans $\Omega^{2}\moduli[1](4p+1,-1;\rec[-4][p])$. Ses résidus sont nuls, elle possède un lien-selle fermé reliant le zéro d'ordre $4p+1$ à lui-même et un autre entre les deux zéros.
\par
Si $(a_{1},a_{2})\neq(4p+1,-1),(2p+1,2p-1)$, la construction du paragraphe précédent en partant d'une différentielle de $\Omega^{2}\moduli[1](a_{1}-2,a_{2}-2;\rec[-4][p-1])$ et du lien-selle entre les deux zéros donne une différentielle quadratique ayant les propriétés souhaitées.
\par
Une différentielle quadratique dans la strate $\Omega^{2}\moduli[2](4(p+1);\rec[-4][p])$ sans résidus aux pôles est donnée de la façon suivante. Le lemme~\ref{lem:geq1bis} fournit une différentielle primitive de la strate $\Omega^{2}\moduli[1](4(p+1);\rec[-4][p];-2,-2)$ telle que les résidus aux pôles d'ordre $-4$ sont nuls et les résidus quadratiques aux pôles doubles sont égaux entre eux. Nous formons une $2$-différentielle entrelacée en collant les deux pôles d'ordre $-2$ ensemble. Cette différentielle quadratique entrelacée peut être lissée sans changer les résidus aux pôles d'ordre $-4$ (voir proposition~\ref{lem:lisspolessimples}) pour donner une différentielle quadratique ayant les invariants souhaités. 
\end{proof}

Nous montrons maintenant que les applications résiduelles de toutes les composantes non exceptionnelles sont surjectives.
\begin{lem}\label{lem:geq1qua}
Les applications résiduelles des composantes non exceptionnelles de $\quadomoduli(\mu)$ avec $g\geq1$ sont surjectives.
\end{lem}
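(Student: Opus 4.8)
Le plan est de partir des strates de genre un à un unique zéro, déjà traitées dans les lemmes~\ref{lem:geq1} et~\ref{lem:geq1bis}, et d'en déduire toutes les composantes non exceptionnelles en genre $g\geq1$ au moyen des deux chirurgies à notre disposition: l'éclatement d'un zéro (proposition~\ref{prop:eclatZero}) et la couture d'anse (proposition~\ref{prop:attachanse}). Ces deux opérations préservent chaque résidu quadratique $\Resquad_{P}$, de sorte que toute la difficulté consiste à contrôler, à chaque étape, la composante connexe (c'est-à-dire le nombre de rotation en genre un, et l'alternative hyperelliptique/non hyperelliptique en genre supérieur) dans laquelle atterrit la différentielle obtenue.

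Nous commencerions par le genre un. Étant donnés une composante cible $\quadomoduli[1]^{\rho}(\mu)$ avec $\mu=(a_{1},\dots,a_{n};\dots)$ et une configuration de résidus $R$, on pose $a=\sum_{i}a_{i}$ et on réalise $R$ dans la strate à un seul zéro $\quadomoduli[1](a;\dots)$, dont l'application résiduelle recouvre déjà $\espresquad[1](\mu)\setminus\{0\}$ par les lemmes~\ref{lem:geq1} et~\ref{lem:geq1bis}. Comme la différentielle de départ est primitive, la proposition~\ref{prop:eclatZero} permet d'éclater le zéro unique en $(a_{1},\dots,a_{n})$ sans modifier les résidus, et la formule rappelée pour le nombre de rotation sous éclatement ($\gcd(\rho,a_{1},\dots,a_{n})$) montre qu'un choix adéquat du nombre de rotation de la source produit exactement la composante $\quadomoduli[1]^{\rho}(\mu)$. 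Cela traite tous les cas $R\neq0$.

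Le point délicat, et l'obstacle principal, est l'origine $R=(0,\dots,0)$: la réduction à un zéro peut échouer précisément lorsqu'elle tombe sur l'une des strates exceptionnelles $(4p;(-4^{p}))$, $(6;-6)$ ou $(12;-6,-6)$, dont l'image résiduelle omet l'origine. Nous contournerions ceci comme suit. Lorsque tous les pôles sont d'ordre $-4$, les strates à deux zéros concernées sont directement fournies par le lemme~\ref{lem:geq1ter}, qui donne une différentielle primitive à résidus nuls dans chaque $\Omega^{2}\moduli[1](a_{1},a_{2};(-4^{p}))$ avec $(a_{1},a_{2})\neq(2p+1,2p-1)$; la paire exclue étant exactement la composante exceptionnelle $(2a-1,2a+1;(-4^{a}))$, toutes les cibles non exceptionnelles sont atteintes, et les cas $n\geq3$ s'en déduisent par un éclatement supplémentaire. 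Pour les composantes au-dessus des pôles d'ordre $-6$, l'observation clé est qu'éclater une différentielle à un zéro de nombre de rotation non exceptionnel (par exemple $\quadomoduli[1]^{3}(6;-6)$ ou $\quadomoduli[1]^{1}(12;-6,-6)$, où l'origine est disponible) en des zéros d'ordres premiers à $3$ fait chuter le nombre de rotation, de sorte que tout nombre de rotation non exceptionnel de la strate à plusieurs zéros est atteint tout en conservant $R=0$; les quelques composantes restantes se réalisent par une construction plate ad hoc dans l'esprit de celles du lemme~\ref{lem:geq1bis}.

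Enfin, pour $g\geq2$, nous raisonnerions par récurrence sur le genre à l'aide de la couture d'anse. D'après les rappels de \cite{chge}, toute composante de $\quadomoduli(\mu)$ s'obtient à partir d'une différentielle de genre zéro en cousant $g$ anses et en éclatant des zéros, la composante hyperelliptique correspondant à une anse de nombre de rotation maximal et les autres à une valeur strictement inférieure; ces chirurgies préservant les résidus, il suffit de réaliser $R$ un genre plus bas puis de coudre. L'hypothèse de récurrence couvre l'étape de genre $g-1$ dès que $g-1\geq1$, la seule subtilité restant à nouveau l'origine, pour laquelle l'entrée de genre deux $\Omega^{2}\moduli[2](4(p+1);(-4^{p}))$ fournie par le lemme~\ref{lem:geq1ter} amorce les cas où la base de genre un serait exceptionnelle. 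En combinant les deux chirurgies et en choisissant le nombre de rotation de l'anse pour sélectionner la composante hyperelliptique ou non, on obtient la surjectivité de l'application résiduelle sur chaque composante non exceptionnelle en genre $g\geq1$.
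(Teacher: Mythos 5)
Your proposal follows the same architecture as the paper's proof: realize residues in the single-zero genus-one strata (lemmes~\ref{lem:geq1} et~\ref{lem:geq1bis}), propagate to several zeros by éclatement (proposition~\ref{prop:eclatZero}) while tracking the rotation number through the gcd formula, invoke lemme~\ref{lem:geq1ter} for the strata over poles of order $-4$, and climb in genus by couture d'anse. For every configuration $R\neq 0$, and for the origin in most components, these reductions are correct and match the paper. There are, however, two places where the essential difficulty is deferred rather than solved.

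The main gap is the origin in the rotation-number-$3$ components over two poles of order $-6$, namely $\quadomoduli[1]^{3}(3,9;-6,-6)$ and $\quadomoduli[1]^{3}(6,6;-6,-6)$ (and, after éclatement, $(3,3,6)$ and $(3,3,3,3)$). These components are \emph{not} exceptional, so the lemma requires $(0,0)$ in their image; yet no surgery can reach them from earlier results: splitting the zero of a differential of $\quadomoduli[1]^{1}(12;-6,-6)$ only produces rotation number $\pgcd(1,a_{1},\dots,a_{n})=1$, and the unique single-zero source with rotation number $3$, $\quadomoduli[1]^{3}(12;-6,-6)$, is precisely exceptional (lemme~\ref{lem:compexept}), hence carries no zero-residue differential to split. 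You correctly isolate these leftover components, but then only assert that they are realized by an unspecified ad hoc flat construction. This is exactly where the paper must exhibit new explicit flat surfaces (figure~\ref{fig:nonres66}); it is the one ingredient of the proof that cannot be derived from the preceding lemmas by surgeries, so asserting it without construction leaves the hardest case open.

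A second, smaller imprecision concerns genus $\geq 2$. Your induction sews a handle onto a non-exceptional genus-$(g-1)$ component carrying the origin, but one must also know that \emph{every} component of the genus-$g$ stratum arises by sewing onto such a component, and your genus-two input from lemme~\ref{lem:geq1ter} only covers the family $(-4^{p})$. For $\quadomoduli(12+4g;-6,-6)$ the natural genus-one source of rotation number $3$ is exceptional; the paper circumvents this by citing the connectedness of these primitive strata for $g\geq 2$ (théorème~1.3 de \cite{chge}), so that sewing onto the rotation-number-$1$ component suffices. Similarly, for $\quadomoduli[2](10;-6)$ it verifies that both the hyperelliptic and non-hyperelliptic components are obtained by sewing a handle (of rotation number $3$, respectively $1$) onto $\quadomoduli[1]^{3}(6;-6)$. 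These component-identification facts are needed for your induction to close, and they do not follow from the generic statement that all components are generated by couture d'anse and éclatement.
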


\begin{proof}
Si $\mu$ possède un unique zéro on obtient les différentielles dans les composantes souhaitées par couture d'anses à partir des différentielles quadratiques de genre $1$ ayant un unique zéro.
De plus, ces différentielles de genre $1$ peuvent être prises primitives sauf dans le cas des composantes hyperelliptiques. La surjectivité de l'application résiduelle est une conséquence de la surjectivité des applications résiduelles de ces strates (voir lemme~\ref{lem:geq1} et \ref{lem:geq1bis}), à l’exception des strates de la forme $\Omega^{2}\moduli[g](4p+4g-4;\rec[-4][p])$, des composantes hyperelliptiques et éventuellement de certaines composantes des strates $\quadomoduli(6+4g;-6)$ et $\quadomoduli(12+4g;-6,-6)$. De plus dans les cas non-hyperelliptiques, on obtient le complémentaire de l'origine de cette façon.

Dans les cas non-hyperelliptiques la configuration de résidus $(0,\dots,0)$ est obtenue en collant des anses à partir des strates $\Omega^{2}\moduli[2](4(p+1);\rec[-4][p])$: le lemme~\ref{lem:geq1ter} montre qu'elles contiennent l'origine.

Par le Théorème~1.3 de \cite{chge}, les strates primitives $\quadomoduli(12+4g;-6,-6)$ sont connexes et donc il suffit de partir de la composante $\quadomoduli[1]^{1}(12;-6,-6)$. La strate $\quadomoduli[2](10;-6)$ possède une composante hyperelliptique et une composante non-hyperelliptique. Dans le cas non-hyperelliptique, on colle une anse de nombre de rotation $1$ à une différentielle de la composante $\quadomoduli[1]^{3}(6;-6)$. On obtient la surjectivité dans ce cas. Le cas des composantes non-hyperelliptiques de $\quadomoduli(6+4g;-6)$ de genres supérieurs s'obtient en collant des anses aux différentielles de $\quadomoduli[2](10;-6)$.

Pour les composantes hyperelliptiques, on sait par le lemme~\ref{lem:nocomphyp} que $\mu=(2m_{1},-2m_{2})$. On part d'une différentielle de genre $1$ de nombre de rotation $m_{2}$ de la strate $\quadomoduli[1](2m_{2},-2m_{2})$. Comme $m_{2}$ est impair, l'application résiduelle de ces strates est surjective.  On colle des anses de nombres de rotations maximaux. Notons que si les différentielles que l'on colle sont des carrés de différentielles abéliennes, le théorème~1.1  de  \cite{getaab} montre qu'il existe une telle différentielle dont le résidu est nul. On obtient donc la surjectivité de l'application résiduelle de ces composantes.
\smallskip
\par
Considérons les strates $\quadomoduli(a_{1},\dots,a_{n};-c_{i};-b_{j};\rec[-2][s])$ avec $n\geq2$ zéros. La surjectivité de l'application résiduelle est obtenue en éclatant l'unique zéro des différentielles quadratiques de la strate $\quadomoduli(\sum a_{i};-c_{i};-b_{j};\rec[-2][s])$ (voir proposition~\ref{prop:eclatZero}), sauf éventuellement pour $\Omega^{2}\moduli[1](a_{1},\dots,a_{n};\rec[-4][p])$, $\Omega^{2}\moduli[1]^{1}(a_{1},\dots,a_{n};-6)$ et $\Omega^{2}\moduli[1]^{3}(a_{1},\dots,a_{n};-6,-6)$ avec $n\geq2$. Dans le  cas $\Omega^{2}\moduli[1](a_{1},a_{2};\rec[-4][p])$ la surjectivité a été démontrée dans le lemme~\ref{lem:geq1ter}. Notons que la seule strate $\Omega^{2}\moduli[1](a_{1},a_{2};-6)$ qui possède une composante de nombre de rotation égal à $3$ est $\Omega^{2}\moduli[1](3,3;-6)$.
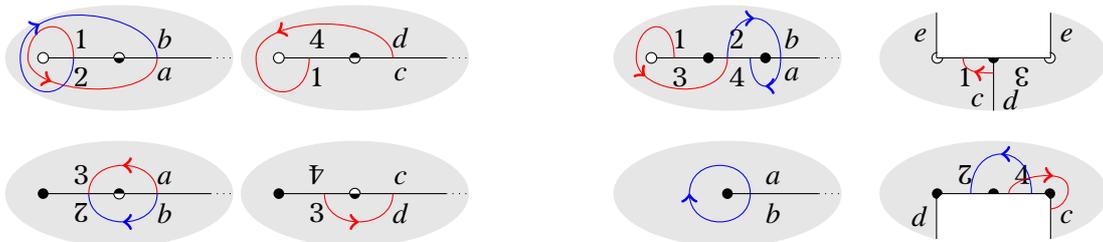
\begin{figure}[htb]
\begin{tikzpicture}[scale=1,decoration={
    markings,
    mark=at position 0.5 with {\arrow[very thick]{>}}}]
%
\begin{scope}[xshift=0cm]
\fill[fill=black!10] (0,0) ellipse (1.5cm and .7cm);

\draw (-1,0) coordinate (a) -- node [below] {$2$} node [above] {$1$}  (0,0) coordinate (b);
\draw (0,0) -- (1.2,0) coordinate[pos=.5] (c);
\draw[dotted] (1.2,0) -- (1.5,0);
 \filldraw[fill=white] (a)  circle (2pt);
\fill[] (b) circle (2pt);
   \filldraw[fill=white] (.07,0)  arc (0:180:2pt); 
\node[above] at (c) {$b$};
\node[below] at (c) {$a$};

 \draw[postaction={decorate},red] (-.6,0) .. controls ++(90:.6)  and ++(90:.5) .. (-1.2,0)  .. controls         ++(-90:.5) and ++(-90:.6) .. (.5,0);
  \draw[postaction={decorate},blue] (-.6,0) .. controls ++(-90:.6)  and ++(-90:.6) .. (-1.3,0)  .. controls         ++(90:.9) and ++(90:.6) .. (.5,0);
    \end{scope}

\begin{scope}[yshift=-1.8cm]
    \fill[fill=black!10] (0,0) ellipse (1.5cm and .7cm);
   \draw (-1,0) coordinate (a) -- node [above,rotate=180] {$2$} node [above] {$3$}  (0,0) coordinate (b);
 \draw (0,0) -- (1.2,0) coordinate[pos=.5] (c);
  \draw[dotted] (1.2,0) -- (1.5,0);
 \fill (a)  circle (2pt);
\fill[] (b) circle (2pt);
   \filldraw[fill=white] (.07,0)  arc (0:-180:2pt); 
\node[above] at (c) {$a$};
\node[below] at (c) {$b$};

\draw[postaction={decorate},blue] (.5,0) .. controls ++(-90:.5)  and ++(-90:.5) .. (-.4,0);
\draw[postaction={decorate},red] (.5,0) .. controls ++(90:.5)  and ++(90:.5) .. (-.4,0);
\end{scope}

%

\begin{scope}[xshift=3.1cm]
      
    \fill[fill=black!10] (0,0) ellipse (1.5cm and .7cm);

   \draw (-1,0) coordinate (a) -- node [below] {$1$} node [above] {$4$}  (0,0) coordinate (b);
 \draw (0,0) -- (1.2,0) coordinate[pos=.5] (c);
  \draw[dotted] (1.2,0) -- (1.5,0);
 \filldraw[fill=white] (a)  circle (2pt);
\fill[] (b) circle (2pt);
   \filldraw[fill=white] (.07,0)  arc (0:-180:2pt); 
\node[above] at (c) {$d$};
\node[below] at (c) {$c$};

  \draw[postaction={decorate},red] (.5,0) .. controls ++(90:.6)  and ++(90:.6) .. (-1.3,0)  .. controls         ++(-90:.6) and ++(-90:.6) .. (-.6,0);
    \end{scope}

\begin{scope}[xshift=3.1cm,yshift=-1.8cm]
    \fill[fill=black!10] (0,0) ellipse (1.5cm and .7cm);
   \draw (-1,0) coordinate (a) -- node [below,rotate=180] {$4$} node [below] {$3$}  (0,0) coordinate (b);
 \draw (0,0) -- (1.2,0) coordinate[pos=.5] (c);
  \draw[dotted] (1.2,0) -- (1.5,0);
 \fill (a)  circle (2pt);
\fill[] (b) circle (2pt);
   \filldraw[fill=white] (.07,0)  arc (0:180:2pt); 

\node[above] at (c) {$c$};
\node[below] at (c) {$d$};

\draw[postaction={decorate},red] (-.4,0) .. controls ++(-90:.5)  and ++(-90:.5) .. (.5,0);
\end{scope}


\begin{scope}[xshift=8cm]
\fill[fill=black!10] (0,0) ellipse (1.5cm and .7cm);

\draw (-1,0) coordinate (a) -- node [below] {$3$} node [above] {$1$}  (-.25,0) coordinate (b)-- node [below] {$4$} node [above] {$2$}  (0.5,0) coordinate (c);
\draw (c) -- (1.2,0) coordinate[pos=.5] (d);
\draw[dotted] (1.2,0) -- (1.5,0);
 \filldraw[fill=white] (a)  circle (2pt);
\fill[] (b) circle (2pt);
\fill[] (c) circle (2pt);
\node[above] at (d) {$b$};
\node[below] at (d) {$a$};

 \draw[postaction={decorate},red] (-.7,0) .. controls ++(90:.6)  and ++(90:.5) .. (-1.2,0)  .. controls         ++(-90:.5) and ++(-90:.6) .. (0,0);
 \draw[postaction={decorate},blue] (0,0) .. controls ++(90:.7)  and ++(90:.7) .. (.7,0);
\draw[postaction={decorate},blue] (.7,0) .. controls ++(-90:.5)  and ++(-90:.5) .. (.3,0);


\begin{scope}[yshift=-1.8cm]
    \fill[fill=black!10] (0,0) ellipse (1.5cm and .7cm);
 \draw (0,0) -- (1.2,0) coordinate[pos=.5] (c);
  \draw[dotted] (1.2,0) -- (1.5,0);
\fill[]  (0,0) circle (2pt);
\node[above] at (c) {$a$};
\node[below] at (c) {$b$};

\draw[postaction={decorate},blue] (.3,0) .. controls ++(-90:.5)  and ++(-90:.5) .. (-.5,0)  .. controls         ++(90:.5) and ++(90:.5) .. (.3,0);
\end{scope}


\begin{scope}[xshift=3.5cm]
    \fill[fill=black!10] (0,0) ellipse (1.5cm and .7cm);
    \fill[white] (-.75,0) coordinate (a)-- (.75,0) coordinate (c)-- ++(0,.75)  --++(-1.5,0) -- cycle;
\filldraw[fill=white] (-.67,0)  arc (0:-270:2pt); 
\filldraw[fill=white] (.67,0)  arc (-180:90:2pt); 
\fill(.07,0)  arc (0:-180:2pt); 
   \draw  (a) -- node [below] {$1$}   (0,0) coordinate (b) --  node [above,rotate=180] {$3$} (c);
 \draw (b) -- (0,-.7) coordinate[pos=.8] (d);
\node[right] at (d) {$d$};
\node[left] at (d) {$c$};
\draw (c) -- node[right] {$e$} ++(0,.6) ;
\draw (a) -- node[left] {$e$} ++(0,.6);

\draw[postaction={decorate},red] (0,-.2) .. controls ++(180:.2)  and ++(-90:.2) .. (-.4,0);
    \end{scope}

\begin{scope}[xshift=3.5cm,yshift=-1.8cm]
       \fill[fill=black!10] (0,0) ellipse (1.5cm and .7cm);
    \fill[white] (-.75,0) coordinate (a)-- (.75,0) coordinate (c)-- ++(0,-.75)  --++(-1.5,0) -- cycle;
\fill (-.67,0)  arc (0:270:2pt); 
\fill (.67,0)  arc (180:-90:2pt); 
\fill(.07,0)  arc (0:180:2pt); 
   \draw  (a) -- node [below,rotate=180] {$2$}   (0,0) coordinate (b) --  node [above] {$4$} (c);
\draw (c) -- node[right] {$c$} ++(0,-.6) ;
\draw (a) -- node[left] {$d$} ++(0,-.6);

\draw[postaction={decorate},red] (.2,0) .. controls ++(90:.2)  and ++(150:.2) .. (.9,.2)  .. controls         ++(-30:.2) and ++(0:.2) .. (.75,-.2);
\draw[postaction={decorate},blue] (.5,0) .. controls ++(90:.7)  and ++(90:.7) .. (-.3,0);
\end{scope}
\end{scope}

\end{tikzpicture}
\caption{Différentielle de résidus nuls dans $\Omega^{2}\moduli[1]^{3}(6,6;-6,-6)$  à gauche et dans $\Omega^{2}\moduli[1]^{3}(3,9;-6,-6)$ à droite} 
\label{fig:nonres66}
\end{figure}

Dans le cas $\Omega^{2}\moduli[1](a_{1},a_{2};-6,-6)$, notons que la seule strate qui possède une composante de nombre de rotation égal à $3$ satisfait $(a_{1},a_{2})=(3,9)$ ou $(a_{1},a_{2})=(6,6)$. Une différentielle dans chacune de ces composantes avec un résidu nul est représentée dans la figure~\ref{fig:nonres66}.
Dans tous les cas, pour $n\geq3$, il suffit d'éclater l'un des deux zéros de ces différentielles.
\end{proof}

Le  lemme suivant montre que l'origine n'appartient pas à l'image de l'application résiduelle des composantes exceptionnelles listées dans~(3).
\begin{lem}\label{lem:compexept}
Il n'existe pas de différentielle quadratique dans les composantes $\Omega^{2}\moduli[1]^{1}(6;-6)$,  $\Omega^{2}\moduli[1]^{1}(3,3;-6)$ et  $\Omega^{2}\moduli[1]^{3}(12;-6,-6)$ dont les résidus sont nuls.
\end{lem}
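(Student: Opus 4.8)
The plan is to argue by contradiction in each of the three cases: assume a primitive quadratic differential $\xi$ with all residues equal to $0$ exists in the given component, and derive a contradiction by computing its rotation number. Each of these strata has a pole of order $-6$, so it has infinite area, and the residual map is $\mathrm{GL}^+(2,\mathbb{R})$-equivariant (as recalled just before Proposition~\ref{prop:coeurdege}); since $\rot$ is a combinatorial invariant constant along $\mathrm{GL}^+(2,\mathbb{R})$-orbits, it is constant on each connected component. Hence, after replacing $\xi$ by a surface in the closure of its orbit, I may assume by Proposition~\ref{prop:coeurdege} that all saddle connections of $\xi$ are horizontal, while keeping both the vanishing of the residues and the value of $\rot(\xi)$. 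The key reduction is that a vanishing quadratic residue at an order $-6$ pole means, by Lemma~\ref{lm:kresidu}, that $\sum v_i=\sum w_j$, i.e. the corresponding polar part is \emph{trivial} in the sense of Section~\ref{sec:briques}.

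In this horizontal model the saddle connections cut $\xi$ into $\tilde p$ polar domains ($\tilde p=1$ for $(6;-6)$ and $(3,3;-6)$, and $\tilde p=2$ for $(12;-6,-6)$) joined along $2g+n+\tilde p-2$ saddle connections, that is $2$ saddle connections for $(6;-6)$ and $3$ for the other two. First I would enumerate, for each case, the finitely many incidence graphs (Section~\ref{sec:coeur}) compatible with these numerical data, with the constraints that the cone angles at the zeros are $8\pi$ for $(6;-6)$, $5\pi$ at each of the two zeros of $(3,3;-6)$, and $14\pi$ for $(12;-6,-6)$, and that each order $-6$ pole yields a polar domain covering a half-plane with total angle $4\pi$ at infinity. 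Because the order $-6$ polar parts must be trivial, the identification of their boundary saddle connections is rigidified (up to the horizontal rescaling already fixed), which in each admissible configuration pins down the gluing completely.

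With the gluings determined, I would pick a symplectic basis $(\alpha,\beta)$ of $H_1$ realized by loops crossing the saddle connections, compute the Gauss indices $\ind(\alpha),\ind(\beta)$ as in Section~\ref{sec:rapquad}, and read off $\rot(\xi)=\pgcd$ of these indices with the orders of the singularities. The triviality forced by the zero residue is what makes the two first cases behave oppositely to the third: in $(6;-6)$ and $(3,3;-6)$ it makes both indices divisible by $3$, so that $\rot(\xi)=3\neq 1$, excluding $\Omega^{2}\moduli[1]^{1}(6;-6)$ and $\Omega^{2}\moduli[1]^{1}(3,3;-6)$; whereas in $(12;-6,-6)$ the two trivial polar parts impose a length relation along the cycle of the incidence graph that prevents both indices from being simultaneously divisible by $3$, so that $\rot(\xi)=1\neq 3$, excluding $\Omega^{2}\moduli[1]^{3}(12;-6,-6)$.

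The main obstacle will be the bookkeeping in this index computation, namely checking that \emph{every} admissible incidence graph leads to the same divisibility conclusion, and tracking how the angle $4\pi$ of each trivial polar domain is distributed among the horizontal separatrices at the zeros as the loops $\alpha,\beta$ traverse the saddle connections. To organize this cleanly I expect it to be convenient to pass to the canonical double cover, where each order $-6$ pole with vanishing residue lifts to poles of order $-3$ with vanishing abelian residue and $\whomega$ becomes an anti-invariant differential of the second kind; the triviality statement then becomes a statement about the periods of $\whomega$, and the translation between $\rot(\xi)$ and the indices of $\whomega$ can be handled using the algebraic description of the rotation number (Proposition~3.13 of \cite{chge}) recalled in Section~\ref{sec:rapquad}.
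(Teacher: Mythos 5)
Your reduction is exactly the paper's: both proofs invoke Proposition~\ref{prop:coeurdege} to replace $\xi$ by a zero-residue surface in the same component whose saddle connections are all horizontal, then decompose it along those saddle connections into polar domains (the paper phrases this as a gluing of half-planes, via Proposition~6.2 of \cite{chge}) and analyse the finitely many possible gluings; your counts of saddle connections ($2$, $3$, $3$) and your final conclusions ($\rot=3$ for the first two components, $\rot=1$ for the third) agree with what the paper establishes.

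The gap is that everything after the reduction is asserted rather than proved, and this is where all the content lies. Your uniform mechanism --- ``triviality of the polar part forces both Gauss indices to be divisible by $3$'' --- is not how the surviving configurations are actually pinned down, and it is not the case that every admissible gluing is eliminated by an index computation. In the paper's treatment of $\Omega^{2}\moduli[1]^{1}(3,3;-6)$, the gluing patterns are excluded for heterogeneous reasons: one forces trivial holonomy (impossible when the cone angles are odd multiples of $\pi$), one produces zeros of orders $(2,4)$ instead of $(3,3)$, and two are killed by a degeneration argument --- shrinking a saddle connection to produce a zero-residue differential in $\Omega^{2}\moduli[1]^{1}(6;-6)$, which the first case has already ruled out. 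This inductive structure (the $(3,3;-6)$ case resting on the $(6;-6)$ case) is an idea missing from your plan; without it, you would have to run your index computation over configurations that are degenerate or non-primitive, where the divisibility claim as you state it is vacuous or needs reformulation. Likewise, for $\Omega^{2}\moduli[1]^{3}(12;-6,-6)$ the paper discards several gluings by primitivity or by counting singularities, and the rotation number is only computed for the single surviving one. So your plan identifies the right invariant and the right answers, but the proof --- the exhaustive admissibility analysis and the case-by-case exclusions --- still has to be written, and it is not mere bookkeeping: it is the lemma.
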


\begin{proof}
Supposons qu'il existe une différentielle dans l'une de ces composantes dont tous les résidus sont nuls. Dans ce cas la proposition~\ref{prop:coeurdege} implique qu'il existe une telle différentielle $\xi$ dont tous les liens-selles sont horizontaux. La proposition~6.2 de \cite{chge} implique que $\xi$ est obtenue en recollant des demi-plans dont le bord est formé de segments horizontaux. Nous faisons maintenant une analyse pour chaque composante.
\smallskip
\par
\paragraph{\bf Le cas de la strate $\Omega^{2}\moduli[1]^{1}(3,3;-6)$.}
La différentielle $\xi$ est obtenue en recollant $4$ demi-plans dont les bords sont formés d'un total de $6$ segments horizontaux identifiés deux à deux. Nous numérotons les demi-plans de manière cyclique.
Comme tous les liens-selles bordent le domaine polaire, deux liens-selles relient les deux zéros et un lien-selle est fermé.  Nous notons  les segments associés aux deux premiers liens-selles par $a$ et $b$, et les segments associés au dernier par $c$.

Supposons que les quatre segments $a$ et $b$ sont dans des demi-plans de même parité. Une base de l'homologie est donnée par deux lacets qui relient respectivement les $a$ et les segments $b$ entre eux. Comme le nombre de rotation est $1$, on peut supposer que les segments~$a$ appartiennent au même demi-plan. Afin d'obtenir une singularité d'angle $5\pi$, il faut que les quatre segments $a$ et $b$ soient dans le même demi-plan. Comme le résidu est nul les segments $c$ sont dans les demi-plans de l'autre parité. En faisant tendre la longueur de $a$ vers zéro et en agrandissant $b$ de la même quantité, on obtient une différentielle de $\Omega^{2}\moduli[1]^{1}(6;-6)$ dont le résidu est nul, ce qui est absurde.

Supposons que les segments $a$ soient dans des demi-plans de même parité (disons pair) et les segments $b$ dans des demi-plans de parité distincte. Comme le résidu est nul les segments~$c$ sont dans les demi-plans impairs. On peut alors faire tendre la longueur des segments $b$ afin d'obtenir une différentielle de $\Omega^{2}\moduli[1]^{1}(6;-6)$ dont le résidu est nul.

Supposons que les segments $a$ et les segments $b$ soient dans deux demi-plans de parités distinctes. Comme le résidu est nul, les segments $c$ sont aussi dans des demi-plans de parité distinctes. On obtient donc le carré d'une différentielle abélienne.

Enfin, supposons que les segments $a$ sont dans des demi-plans d'une même parité (disons paire) et les segments $b$ dans des demi-plans tous deux de l'autre parité (disons impaire). Si les segments $c$ sont de la même parité, on peut fusionner les deux zéros comme précédemment. Si les segments $c$ sont dans des demi-plans de parité distincte, alors l'unique configuration possible pour de tels segments donne une différentielle dont les zéros sont d'ordres $(2,4)$. Cela conclut la preuve de ce cas. 
\smallskip
\par
\paragraph{\bf Le cas de la strate $\Omega^{2}\moduli[1]^{1}(6;-6)$.}
Supposons qu'il existe une différentielle dans $\Omega^{2}\moduli[1]^{1}(6;-6)$ dont le résidu au pôle est nul. Par la proposition~\ref{prop:eclatZero}, on pourrait éclater son zéro en deux zéros d'ordre~$3$ sans changer le résidu au pôle. Cela contredit le fait qu'il n'existe pas une telle différentielle dans $\Omega^{2}\moduli[1]^{1}(3,3;-6)$.
\smallskip
\par
\paragraph{\bf Le cas de la strate $\Omega^{2}\moduli[1]^{3}(12;-6,-6)$.}
La différentielle $\xi$ est obtenue en recollant $8$ demi-plans, partitionnés en deux ensembles de $4$ demi-plans collés ensembles, formant deux domaines polaires. Les bords sont formés d'un total de $6$ segments horizontaux. Comme les résidus sont nuls, les domaines polaires sont bordés soit par $2$ et $4$ segments respectivement, soit les deux par $3$ segments.
\par
Dans le premier cas, considérons le domaine bordé par deux segments $a$ et $b$. Notons que~$a$ et $b$ ne sont pas identifiés l'un à l'autre car sinon la surface serait non connexe. Comme le résidu est nul, les segments $a$ et $b$ sont dans des demi-plans de parité distinctes.  Maintenant l'autre domaine polaire est bordé des segments $a$, $b$ et de deux segments $c$. Si $a$ et $b$ sont dans des demi-plans de parité différentes alors les deux $c$ sont dans des demi-plans de parités différentes. On a donc le carré d'une différentielle quadratique. Si les $a$ et $b$ sont dans le même demi-plan, alors comme le résidu est nul les deux $c$ sont dans les demi-plans de l'autre parité. Donc cette différentielle possède au moins deux singularités. Enfin si $a$ et $b$ sont dans deux demi-plans distincts mais de même parité, alors les $c$ sont chacun dans un demi-plan de l'autre parité. La différentielle ainsi formée possède les invariants souhaités à l'exception du nombre de rotation qui est égal à $1$.
\par
Dans le second cas, soit un segment de $a$, $b$ et $c$ borde chacun des domaines polaires, soit un domaine est bordé par les deux segments $a$ et un $c$ et l'autre par les deux segments $b$ et l'autre $c$. Dans le premier cas, comme les résidus sont nuls, on a deux segments au bord du premier domaine polaire qui sont dans des demi-plans de même parité et l'autre dans un de parité différente. Cette propriété est aussi satisfaite dans l'autre domaine polaire. On obtient donc le carré d'une différentielle abélienne.
Dans le second cas, le bord du premier domaine est formé de deux segments $a$ et un $c$ et le second domaine polaire de deux segments $b$ et le second $c$. Comme le résidu est nul, les segments $a$ sont dans des demi-plans de même parité et $c$ de l'autre. De plus, les deux $a$ sont dans deux demi-plans distincts, car sinon il y aurait un pôle simple. Les mêmes considérations s'appliquent à l'autre pôle. Toutes les différentielles qui peuvent être réalisées de cette façon possèdent au moins deux zéros.
\end{proof}

\subsection{Différentielles dont tous les pôles sont doubles}
\label{sec:pluri2}
Dans cette section, nous considérons les strates $\quadomoduli(a_{1},\dots,a_{n};\rec[-2][s])$ avec $g\geq1$. 
Comme le fait que la configuration $(1,\dots,1)$ n'est pas dans l'image de l'application résiduelle des strates exceptionnelles listées dans~(2) a été montré dans la section~\ref{sec:EXCEPT}, nous nous concentrons sur les constructions.

Nous commençons par traiter le cas des strates de genre~$1$ avec un unique zéro. Nous montrons dans le lemme~\ref{lem:g=1residugen} que l'application résiduelle contient toutes les configurations de résidus quadratiques sauf si tous les résidus sont colinéaires. Puis nous montrons dans le lemme~\ref{lem:g=1quadgen} que la seule exception en genre $1$ peut être les résidus quadratiques proportionnels à $(1,\dots,1)$  dans $\Omega^{2}\moduli[1](2s;\rec[-2][s])$ avec $s$ pair. Puis nous montrons dans le lemme~\ref{lem:g=1quadspe} que $(1,\dots,1)$ est dans l'image de l'application résiduelle des strates $\Omega^{2}\moduli[1](a_{1},a_{2};\rec[-2][s])$ avec $(a_{1},a_{2})\neq(s-1,s+1)$ et $s$ pair. Les composantes générales sont traitées dans le lemme~\ref{lem:ggentousres}.
\smallskip
\par
Nous commençons par traiter le cas des strates en genre un. Dans ce cas, les strates sont connexes. Le lemme suivant traite du cas d'un unique zéro. 
\begin{lem}\label{lem:g=1residugen}
L'application résiduelle de $\Omega^{2}\moduli[1](2s;\rec[-2][s])$ est surjective pour $s=1$ et contient les configurations de résidus $(R_{1},\dots,R_{s})$ telles qu'existent $R_{i}$ et $R_{j}$ vérifiant $\frac{R_{i}}{R_{j}}\notin\RR_{+}$ pour $s\geq2$. 
\end{lem}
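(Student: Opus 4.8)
First I would dispose of $s=1$, where there is a single double pole and hence $\espresquad[1](2;-2)=\CC^{\ast}$. Since a double pole always carries a nonzero quadratic residue and figure~\ref{fig:a-a} already exhibits a primitive differential of $\Omega^{2}\moduli[1](2;-2)$, some value $R_{0}\in\CC^{\ast}$ is realized. As the residue map is $\mathrm{GL}^{+}(2,\RR)$-equivariant, the rotation–scaling subgroup $\CC^{\ast}\subset\mathrm{GL}^{+}(2,\RR)$ acts on the quadratic residue by $R\mapsto\lambda^{2}R$; as $\lambda$ runs over $\CC^{\ast}$ this sweeps out all of $\CC^{\ast}$, so the orbit of $R_{0}$ is the whole residue space and the map is surjective.

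\textbf{Le cas $s\geq2$: stratégie.} For $s\geq2$ I would imitate the planar construction of Lemma~\ref{lm:quadgzerogen1}, adapting it so as to produce a \emph{single} zero on a genus-one surface. Fix square roots $r_{i}$ of the $R_{i}$. The $s$ double poles will arise, as in Lemma~\ref{lm:kresidu}, from half-infinite cylinders of circumferences $r_{i}$ glued along $s$ boundary segments of a polygon $\calG$, so that the pole attached to $r_{i}$ has residue $R_{i}$. The new ingredient is that $\calG$ must carry a handle: I would take its boundary to consist of the $s$ segments $r_{i}$ together with four segments arranged as the vectors $(v_{1},v_{2};-v_{2},-v_{1})$ of the genus-one polar parts used in the proof of Lemma~\ref{lem:geq1} (figure~\ref{fig:a-a}), the two copies of $v_{1}$ and of $v_{2}$ being identified by a rotation of angle $\pi$. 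This identification creates one handle and collapses all vertices of $\calG$ to a single cone point, placing the differential in $\Omega^{2}\moduli[1](2s;(-2^{s}))$; because the $v_{i}$ are glued by a genuine rotation, the holonomy group is $\{\pm1\}$ and the differential is automatically primitive.

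\textbf{Fermeture et non-dégénérescence.} Requiring that the boundary of $\calG$ close up forces, exactly as in Lemma~\ref{lem:geq1bis}, a relation of the form
\[
2(v_{1}+v_{2})=\sum_{i=1}^{s}r_{i},
\]
so with $v_{1}=v_{2}$ the handle vectors are a fixed nonzero rational multiple of $\sum_{i}r_{i}$. Here the hypothesis that the $R_{i}$ do not all lie on a common ray is precisely what is needed: two of the $r_{i}$ may be chosen $\RR$-linearly independent, so after an appropriate choice of signs of the square roots the sum $\sum_{i}r_{i}$ is nonzero and $v_{1},v_{2}\neq0$, and more importantly the construction is genuinely two-dimensional rather than collapsing onto a line. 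The main work, and the step I expect to be the real obstacle, is to guarantee that $\calG$ is embedded (free of self-intersection) for \emph{every} admissible configuration. I would treat this by the same case analysis as in Lemma~\ref{lm:quadgzerogen1}, ordering the $r_{i}$ by increasing argument and adjusting signs according to whether the residues lie on two, three, or more rays; the single-cone-point, genus-one setting is in fact more forgiving than the two-zero genus-zero one, since there is only a single angle to control and no residue configuration on a ray to accommodate. A short Gauss–Bonnet (Euler-characteristic) count then confirms that the unique cone point has angle $(2s+2)\pi$, i.e. the zero has order $2s$, and that the genus equals one, completing the construction.
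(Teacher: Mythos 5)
Your proposal is correct and follows essentially the same route as the paper: for $s=1$ nonemptiness (figure~\ref{fig:a-a}) plus the scaling action, and for $s\geq 2$ the very same polygon — the concatenated $r_{i}$ carrying half-infinite cylinders, closed up by four segments each equal to $\tfrac{1}{4}\sum_{i}r_{i}$ and identified in two pairs by rotation of angle $\pi$, which yields a primitive genus-one differential with a single zero of order $2s$. The only point where you are less precise — the embeddedness of $\calG$, which you expect to require a case analysis over rays as in Lemma~\ref{lm:quadgzerogen1} — is handled in the paper by a single normalization you in fact half-anticipate: choose the square roots $r_{i}$ with arguments in $\left]-\pi,0\right]$ and concatenate them by increasing argument, so that the resulting path is convex and lies strictly on one side of its chord $(DE)$ (the hypothesis $R_{i}/R_{j}\notin\RR_{+}$ guaranteeing it is not straight), making the polygon automatically embedded with no further cases.
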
 

\begin{proof}
La figure~\ref{fig:a-a} montre que la strate $\quadomoduli[1](2;-2)$ est non vide. Nous supposerons donc que les strates ont $s\geq2$ pôles d'ordre~$-2$. Par hypothèse, il existe des racines carrées $r_{i}$ des $R_{i}$ qui génèrent $\CC$ comme $\RR$-espace vectoriel. Sans perte de généralité, on peut supposer que l'argument de chaque $r_{i}$ est dans $\left]-\pi,0\right]$.
Nous concaténons alors les $r_{i}$ par argument croissant. L'intérieur de ce segment brisé se trouve dans le demi-plan supérieur ouvert déterminé par la droite $(DE)$ où $D$ et $E$ sont respectivement les points initial et final de la concaténation.
Nous joignons $D$ à $E$ par quatre segments $v_{i}$ d'égale longueur de la façon suivante. Le segment brisé est formé en concaténant les~$v_{i}$ par $i$ croissant avec $v_{1}=v_{2}$, $v_{3}=v_{4}$ et l'angle au-dessus de l'intersection entre $v_{2}$ et $v_{3}$ est égal à~$\pi$.  Notons que ce segment brisé  se trouve dans le demi-plan inférieur à la droite~$(DE)$.

Nous collons maintenant des demi-cylindres infinis aux segments $r_{i}$ de ce polygone. La surface plate est obtenue en collant $v_{1}$ à $v_{3}$ et $v_{2}$ à $v_{4}$ par translation et rotation. On vérifie facilement que cette différentielle possède un unique zéro. De plus, la différentielle associée est primitive de genre un car on la déconnecte en coupant les courbes correspondant aux $v_{i}$ et une autre courbe fermée quelconque.
\end{proof}

\begin{lem}\label{lem:g=1quadgen}
L'application résiduelle de $\Omega^{2}\moduli[1](2s;\rec[-2][s])$ est surjective si $s$ est impair et contient $\espresk[1][2](2s;\rec[-2][s])\setminus \CC^{\ast}\cdot (1,\dots,1)$ si $s$ est pair.
\end{lem}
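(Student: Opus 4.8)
The plan is to reduce the statement to a single family of configurations using the previous lemma together with the symmetry of the residual map, and then to exhibit explicit flat surfaces. First I would apply Lemma~\ref{lem:g=1residugen}, which already realizes every tuple $(R_1,\dots,R_s)$ whose entries do not all lie on one ray issued from the origin. Because the residual map is $\mathrm{GL}^+(2,\mathbb{R})$-equivariant (as noted before Proposition~\ref{prop:coeurdege}) and the similarity $z\mapsto\lambda z$ multiplies each quadratic residue by $\lambda^2$, multiplying all the $R_i$ by a common scalar of $\CC^\ast$ preserves the image. Thus it suffices to realize the configurations lying on a single ray, and after one such rotation-scaling we may assume $R_i=r_i^2$ with $r_i\in\RR_{>0}$. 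The unique such tuple belonging to the excluded set $\CC^\ast\cdot(1,\dots,1)$ is the one with all $r_i$ equal; so for $s$ even I must realize all positive tuples with the $r_i$ not all equal, and for $s$ odd these together with the all-equal tuple.

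For the tuples with the $r_i$ not all equal I would adapt the polygon construction of Lemma~\ref{lem:g=1residugen}. Writing $T=\sum_i r_i$, I lay the segments $r_1,\dots,r_s$ end to end along the real axis from $0$ to $T$ and close the polygon from $T$ back to $0$ by four segments $v_1=v_2$, $v_3=v_4$ satisfying $2(v_1+v_3)=-T$; since the $r_i$ now carry no vertical extent, the role played by the spanning hypothesis of Lemma~\ref{lem:g=1residugen} is taken over by choosing the $v_i$ with nonzero imaginary part, so that the lower boundary dips strictly below the axis and the polygon is embedded. Gluing a half-infinite cylinder to each segment $r_i$ (whence a double pole of quadratic residue $r_i^2$ by Lemma~\ref{lm:kresidu}) and identifying $v_1$ with $v_3$ and $v_2$ with $v_4$ by a translation followed by the rotation of angle $\pi$ yields, exactly as in Lemma~\ref{lem:g=1residugen}, a primitive quadratic differential of $\quadomoduli[1](2s;(-2^s))$ with a single cone point of angle $(2s+2)\pi$. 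The verification to carry out is that this gluing produces genus $1$ with a single zero and that the rotation by $\pi$ renders the holonomy nontrivial; the hypothesis that the $r_i$ are not all equal is what guarantees a non-degenerate, embedded choice of the four segments.

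It remains to realize the all-equal tuple $(1,\dots,1)$ for $s$ odd, which is the only point where the parity of $s$ intervenes. Here the previous polygon degenerates, so I would instead glue the $s$ unit cylinders cyclically and monitor the rotation number $\rot$ of the resulting genus-$1$ surface through the indices of the loops crossing the cylinders, in the spirit of the constructions in Lemma~\ref{lem:geq1}. By Lemma~\ref{lem:rotprim} the differential is primitive exactly when $\rot$ is odd, and the main obstacle of the proof is precisely this arithmetic: showing that for $s$ odd the indices can be arranged so that $\rot$ is odd—producing a primitive differential with a single zero and all residues equal—whereas no such cyclic assembly escapes an even rotation number when $s$ is even. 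The non-realizability of $(1,\dots,1)$ for $s$ even is not needed here, since only the inclusion is asserted; it will be established independently in Lemma~\ref{lem:nonsurjunzero}.
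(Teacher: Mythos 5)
Your opening reduction (Lemma~\ref{lem:g=1residugen} plus the $\CC^{\ast}$-equivariance of the residual map, to restrict to tuples of strictly positive reals) is correct and is exactly how the paper begins. The gap is in your main construction. A gluing of two boundary segments \emph{par translation et rotation d'angle $\pi$} is the restriction of an isometry $z\mapsto -z+c$; such a map can identify the segment $v_{1}$ with the segment $v_{3}$ only if these segments are parallel of equal length, i.e. $v_{3}=\pm v_{1}$ as vectors. Combined with your closing condition $2(v_{1}+v_{3})=-T\neq 0$, the case $v_{3}=-v_{1}$ is excluded, so $v_{3}=v_{1}$ (and likewise $v_{4}=v_{2}$); with $v_{1}=v_{2}$ this forces all four segments to equal $-T/4$. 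They are then real, the lower boundary lies on the real axis, and the polygon has empty interior. In other words, the non-degeneracy you ask for (segments with nonzero imaginary part, dipping strictly below the axis) is incompatible with the identifications you prescribe: no choice of the $v_{i}$ makes this polygon close up into a half-translation surface. This degeneration is intrinsic to the polygon method when all residues lie on one ray --- it is precisely why Lemma~\ref{lem:g=1residugen} excludes that case --- and it is why the paper switches to a different construction here: the boundary circles of the $s$ half-infinite cylinders (of circumferences $r_{1}\leq\dots\leq r_{s}$) are cut into segments and glued in a chain, the last cylinder being cut into four segments of which two, of equal length $l$, are identified with each other by a rotation of angle $\pi$; that single fold produces the handle, the unique zero of order $2s$, and the non-trivial holonomy.

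Your treatment of the all-equal tuple for $s$ odd is also not a proof: you reduce it to the claim that a cyclic assembly of $s$ unit cylinders can be arranged so that the rotation number is odd, but you neither exhibit the assembly nor compute any index, and parity of the rotation number is not the only point --- one must also check that the gluing yields genus $1$ with a single zero of order $2s$ (a purely translational cyclic gluing gives the square of an abelian differential, while folding adjacent equal segments creates cone angles $\pi$, i.e. simple poles, which do not belong to the stratum). In the paper no separate argument is needed for this tuple: the chain construction above works uniformly for $s$ odd, because the final cylinder must carry three free segments of lengths $l+\epsilon+l$ with $2l+\epsilon=(r_{s}-r_{s-1})+\dots+(r_{3}-r_{2})+r_{1}-\epsilon$, which is solvable with $l>0$ even when all $r_{i}$ are equal; only the even-$s$ variant requires the inequality $(r_{s-1}-r_{s-2})+\dots+(r_{3}-r_{2})+r_{1}<r_{s}$, and this is exactly where the hypothesis that the residues are not all equal enters. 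The parity dichotomy of the statement is thus carried by one inequality inside a single construction, not by rotation-number arithmetic.
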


\begin{proof}
D'après le lemme~\ref{lem:g=1residugen}, il suffit de considérer le cas où les résidus sont $(r_{1}^{2},\dots,r_{s}^{2})$ avec $r_{i}>0$.

Dans le cas où $s$ est impair, nous classons les $r_{i}$ par ordre croissant. Coupons le segment de longueur $r_{1}$ en deux segments de longueur $\epsilon$ et $r_{1}-\epsilon$. Ensuite, nous coupons le segment de longueur $r_{2}$ en deux segments de longueur $r_{1}-\epsilon$ et  $r_{2}-r_{1}+\epsilon$ en identifiant le premier avec le second du découpage précédent. Nous procédons ainsi pour les $s-1$ premiers cylindres et obtenons en bout de chaîne un segment de longueur $(r_{s-1}-r_{s-2})+\dots+(r_{2}-r_{1})+\epsilon$. Le dernier cylindre est découpé en quatre segments. Le premier d'entre eux est identifié à celui que nous venons d'obtenir. Les trois autres segments se répartissent une longueur $(r_{s}-r_{s-1})+\dots+(r_{3}-r_{2})+r_{1}-\epsilon$ qui est strictement positive. Comme $\epsilon$ peut être choisi arbitrairement petit, nous pouvons répartir cette longueur en trois segments $l+\epsilon+l$ et identifier les deux paires de segments de longueur égale de façon à fermer la surface.

Dans le cas où $s$ est pair, nous découpons le segment de longueur $r_{1}$ en deux segments de longueur $\epsilon$ et $r_{1}-\epsilon$. Ensuite, nous coupons le segment de longueur $r_{2}$ en deux segments de longueur $\alpha$ et $r_{2}-\alpha$. Nous procédons ainsi avec tous les cylindres de largeur $r_{2}$ à $r_{s-1}$ de façon à obtenir une chaîne de $s-2$ cylindres dont les deux segments de bord sont de longueur $\alpha$ et $(r_{s-1}-r_{s-2})+\dots+(r_{3}-r_{2})+\alpha$. Enfin, nous découpons dans le dernier cylindre de largeur $r_{s}$ quatre segments correspondant aux bords restants. La seule condition que nous devons vérifier est que $(r_{s-1}-r_{s-2})+\dots+(r_{3}-r_{2})+r_{1}<r_{s}$, de façon à ce qu'il puisse exister un segment de longueur $\alpha$ strictement positive. L'hypothèse selon laquelle les résidus ne sont pas tous égaux est suffisante.
\end{proof}

Nous traitons maintenant les strates de genre un avec deux zéros.
\begin{lem}\label{lem:g=1quadspe}
L'application résiduelle $\appresk[1][2](a_{1},a_{2};\rec[-2][s])$ avec $s$ pair et $(a_{1},a_{2})\neq(s-1,s+1)$ est surjective.
\end{lem}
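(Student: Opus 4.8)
L'application résiduelle étant équivariante pour l'action de ${\rm GL}^{+}(2,\mathbb{R})$, il suffit de réaliser chaque configuration de résidus à multiplication par un scalaire de $\CC^{\ast}$ près. Nous séparons donc les configurations $(R_{1},\dots,R_{s})$ selon qu'elles sont, ou non, proportionnelles à $(1,\dots,1)$.

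Pour une configuration qui n'est pas proportionnelle à $(1,\dots,1)$, le plan est de partir de la strate à un seul zéro. Comme $s$ est pair, le lemme~\ref{lem:g=1quadgen} affirme que l'image de l'application résiduelle de $\Omega^{2}\moduli[1](2s;(-2^{s}))$ est exactement $\espresk[1][2](2s;(-2^{s}))\setminus\CC^{\ast}\cdot(1,\dots,1)$; elle contient donc notre configuration. La différentielle primitive ainsi obtenue n'est pas le carré d'une différentielle abélienne, de sorte que la proposition~\ref{prop:eclatZero} permet d'éclater son unique zéro d'ordre $2s=a_{1}+a_{2}$ en deux zéros d'ordres $a_{1}$ et $a_{2}$ sans modifier les résidus. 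L'opération étant locale, l'holonomie reste non triviale et la différentielle obtenue est une différentielle primitive de $\Omega^{2}\moduli[1](a_{1},a_{2};(-2^{s}))$ ayant les résidus voulus. Ceci réalise toutes les configurations non proportionnelles à $(1,\dots,1)$.

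Il reste le cas des résidus proportionnels à $(1,\dots,1)$, pour lequel on doit construire directement une différentielle de $\Omega^{2}\moduli[1](a_{1},a_{2};(-2^{s}))$ dont tous les résidus quadratiques valent~$1$. Quitte à échanger $a_{1}$ et $a_{2}$, nous supposons $a_{1}\leq a_{2}$; comme $a_{1}$ et $a_{2}$ sont impairs de somme $2s$, l'hypothèse $(a_{1},a_{2})\neq(s-1,s+1)$ équivaut à $a_{1}\leq s-3$. Le plan est d'adapter la construction de la preuve du lemme~\ref{lem:g=1quadgen} dans le cas $s$ pair: on concatène $s$ demi-cylindres de circonférence~$1$ le long d'un escalier de liens-selles, puis on referme le bord du polygone par quatre segments $v_{1}=v_{2}$, $v_{3}=v_{4}$ identifiés par rotation d'angle~$\pi$ et translation, comme dans le lemme~\ref{lem:g=1residugen}. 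Cette dernière identification assure simultanément que la surface est de genre~$1$ et que la différentielle est primitive (holonomie non triviale). La nouveauté par rapport au cas à un seul zéro est que l'on dispose ici de deux singularités coniques: on répartit l'angle total $(2s+4)\pi$ de façon à affecter au grand zéro l'excédent d'angle, ce qui relâche l'inégalité de fermeture.

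La difficulté principale sera de vérifier que le polygone ainsi obtenu est non dégénéré \emph{exactement} sous l'hypothèse $a_{1}\leq s-3$. L'inégalité de fermeture, qui exige l'existence d'un segment résiduel de longueur strictement positive pour loger les quatre segments $v_{i}$, est satisfaite dès que $a_{1}\leq s-3$ et dégénère en une égalité lorsque $a_{1}=s-1$, ce qui est cohérent avec l'obstruction démontrée au lemme~\ref{lem:nonsurjdeuxzero}. Une fois cette non-dégénérescence acquise, il restera à vérifier, comme dans les preuves précédentes, que la différentielle associée possède bien exactement deux zéros d'ordres $a_{1}$ et $a_{2}$, $s$ pôles doubles de résidu quadratique~$1$, et qu'elle est primitive de genre~$1$, ce qui conclura.
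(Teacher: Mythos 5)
The first half of your proposal is correct and is exactly the paper's reduction: Lemma~\ref{lem:g=1quadgen} realizes every configuration outside $\CC^{\ast}\cdot(1,\dots,1)$ in $\Omega^{2}\moduli[1](2s;(-2^{s}))$, and since such a differential is primitive, Proposition~\ref{prop:eclatZero} lets you split its unique zero into zeros of orders $a_{1},a_{2}$ without changing the residues.

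The gap is in the second half, which is the actual content of the lemma: realizing $(1,\dots,1)$ when $s$ is even and $a_{1}\leq s-3$. What you give there is a plan, not a proof. You propose to adapt the staircase construction of Lemma~\ref{lem:g=1quadgen}, closed off by four segments identified by rotation as in Lemma~\ref{lem:g=1residugen}, and you assert that having two cone points lets one ``park'' the excess angle at the bigger zero so that the closing inequality becomes exactly $a_{1}\leq s-3$; but you explicitly defer both the non-degeneracy verification and the check of the orders, the genus and the primitivity, and these are precisely the delicate points. With all circumferences equal to $1$, every length in such a staircase lies in a rigid family of coincidences --- this is the very mechanism behind the obstructions of Lemmas~\ref{lem:nonsurjunzero} and~\ref{lem:nonsurjdeuxzero} --- and in these polygon constructions the cone angles are dictated by the combinatorics of the identifications, not distributed at will; nothing in your text shows that one can simultaneously obtain two odd cone angles $(a_{1}+2)\pi$, $(a_{2}+2)\pi$ at distinct points, unit residues at all $s$ poles, and nontrivial holonomy. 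The paper handles this case by a different and genuinely inductive route: the induction hypothesis produces, in each admissible stratum with $s$ poles, a differential with residues $(1,\dots,1)$ carrying a \emph{non-horizontal} saddle connection between the two zeros and a non-horizontal closed one at the zero of maximal order; the base case $s=2$ is given by explicit flat pictures, and the inductive step cuts along one of these saddle connections and glues in a parallelogram carrying two new half-infinite cylinders of circumference $1$, which adds $(+2,+2)$ to the two orders or $(+4,0)$ to the maximal one, reaching every $(a_{1},a_{2})\neq(s-1,s+1)$. Until your non-degeneracy claim is actually established, your argument does not prove the lemma; if you want a one-shot construction instead of the induction, you must carry out that verification in full.
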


\begin{proof}
Par éclatement de zéro, le lemme~\ref{lem:g=1quadgen} implique qu'il suffit de prouver que $(1,\dots,1)$ est dans l'image de l'application résiduelle de ces strates. Nous prouvons ce résultat par récurrence sur le nombre (pair) de pôles d'ordre~$-2$. L'hypothèse de récurrence est que dans chaque strate avec  $s$ pair et $(a_{1},a_{2})\neq(s-1,s+1)$ il existe une différentielle dont les résidus quadratiques sont $(1,\dots,1)$ qui possède un lien-selle entre les deux zéros et un lien-selle fermé reliant le zéro d'ordre maximal à lui même. Ces liens-selles sont choisis non-horizontaux. 
\par
Pour $s=2$, il y les strates $\Omega^{2}\moduli[1](5,-1;-2,-2)$ et $\Omega^{2}\moduli[1](2,2;-2,-2)$ à considérer. Des différentielles quadratiques dans ces strates avec résidus quadratiques $(1,1)$ sont représentées dans la figure~\ref{fig:exceptquadra} en blanc.  L'existence des liens-selles comme ci-dessus est claire.
\par
Supposons par récurrence qu'il existe une différentielle de $\Omega^{2}\moduli[1](a_{1},a_{2};\rec[-2][s])$ avec résidus quadratiques $(1,\dots,1)$ et $(a_{1},a_{2})\neq(s-1,s+1)$. Si $a_{1}\geq a_{2}$, nous construisons des différentielles quadratiques dans les strates $\Omega^{2}\moduli[1](a_{1}+2,a_{2}+2;\rec[-2][s+2])$ et $\Omega^{2}\moduli[1](a_{1},a_{2}+4;\rec[-2][s+2])$ satisfaisant à l'hypothèse de récurrence.
\par
Pour ajouter $4$ à l'ordre du zéro d'ordre maximal, nous coupons la différentielle le long du lien-selle fermé $\gamma$ connectant ce zéro à lui même. Nous prenons un parallélogramme dont l'un des segments est donné par la période de $\gamma$ et les autres arêtes sont les vecteurs~$1$. Puis nous collons des demi-cylindres infinis de circonférence $1$ aux segments de longueur~$1$. Enfin nous collons les autres segments du parallélogramme aux bords de la surface. La différentielle obtenue à partir de la différentielle à gauche de la figure~\ref{fig:exceptquadra} est représentée à gauche de la figure~\ref{fig:exceptquadrabis}. Pour ajouter~$2$ à l'ordre des deux zéros, il suffit de faire la même construction en coupant un lien-selle entre les deux zéros. Cette construction est représentée à droite de la figure~\ref{fig:exceptquadrabis}.

 \begin{figure}[htb]
\begin{tikzpicture}
\begin{scope}[xshift=-3cm]

     \foreach \i in {1,2,...,4}
  \coordinate (a\i) at (0,2*\i/3); 
        \foreach \i in {1,2,...,4}
  \coordinate (b\i) at (1,2*\i/3); 
  \coordinate (e1) at (2,8/3);
\coordinate (e2) at (2,2);
   
    \fill[fill=black!10] (a1)   -- (a4) -- ++(0,.6) --++(1,0) -- (b4) -- ++(80:.6)-- ++(1,0) -- (e1) -- (e2)  -- ++(-80:1.5) -- ++(-1,0) -- (b3) -- (b1)-- ++(0,-.6) --++(-1,0) --cycle;

       \foreach \i in {1,2,4}
   \fill (a\i)  circle (2pt);
       \foreach \i in {1,2,...,4}
   \fill (b\i)  circle (2pt);

\draw (a1)--+(0,-.6);
\draw (b1)--+(0,-.6);
\draw (a4)--+(0,.6);
\draw (b4)--+(0,.6);

   \fill (e1)  circle (2pt);
      \fill (e2)  circle (2pt);

\draw (a1)-- (a2) coordinate[pos=.5] (c1) -- (a3) coordinate[pos=.5] (c2) -- (a4) coordinate[pos=.5] (c3);
\draw (b1)-- (b2) coordinate[pos=.5] (d1) -- (b3) coordinate[pos=.5] (d2);
\draw (e2) -- (e1)  coordinate[pos=.5] (d3);

 \fill[white] (a3)  circle (2pt);
\draw (a3)  circle (2pt);

\draw (b3)--+(-80:1.5);
\draw (e2)--+(-80:1.5);
\draw (e1)--+(80:.6);
\draw (b4)--+(80:.6);
      
\node[left] at (c1) {$1$};
\node[left] at (c2) {$2$};
\node[right,rotate=180] at (c3) {$2$};
\node[left] at (d1) {$3$};
\node[left] at (d2) {$1$};
\node[left,rotate=180] at (d3) {$3$};

\end{scope}

\begin{scope}[xshift=3cm]

     \foreach \i in {1,2,...,4}
  \coordinate (a\i) at (0,2*\i/3); 
        \foreach \i in {1,2,...,4}
  \coordinate (b\i) at (1,2*\i/3); 
   \coordinate (e1) at (2,8/3);
\coordinate (e2) at (2,2);
   
       \fill[fill=black!10] (a1)   -- (a4) -- ++(0,.6) --++(1,0) -- (b4) -- ++(80:.6)-- ++(1,0) -- (e1) -- (e2)  -- ++(-80:1.5) -- ++(-1,0) -- (b3) -- (b1)-- ++(0,-.6) --++(-1,0) --cycle;
       
       \foreach \i in {2,3}
   \fill (a\i)  circle (2pt);
       \foreach \i in {2,3}
   \fill (b\i)  circle (2pt);

\draw (a1)--+(0,-.6);
\draw (b1)--+(0,-.6);
\draw (a4)--+(0,.6);
\draw (b4)--+(0,.6);

      \fill (e2)  circle (2pt);

\draw (a1)-- (a2) coordinate[pos=.5] (c1) -- (a3) coordinate[pos=.5] (c2) -- (a4) coordinate[pos=.5] (c3);
\draw (b1)-- (b2) coordinate[pos=.5] (d1) -- (b3) coordinate[pos=.5] (d2);
\draw (e2) -- (e1)  coordinate[pos=.5] (d3);

\draw (b3)--+(-80:1.5);
\draw (e2)--+(-80:1.5);
\draw (e1)--+(80:.6);
\draw (b4)--+(80:.6);

       \foreach \i in {1,4}
      \fill[white] (a\i)  circle (2pt);
             \foreach \i in {1,4}
      \draw (a\i)  circle (2pt);
             \foreach \i in {1,4}
      \fill[white] (b\i)  circle (2pt);
             \foreach \i in {1,4}
      \draw (b\i)  circle (2pt);

   \fill[white] (e1)  circle (2pt);
   \draw (e1) circle (2pt);
      
\node[left] at (c2) {$1$};
\node[left] at (c1) {$2$};
\node[right,rotate=180] at (c3) {$2$};
\node[left] at (d1) {$3$};
\node[left] at (d2) {$1$};
\node[left,rotate=180] at (d3) {$3$};

\end{scope}
\end{tikzpicture}
\caption{Différentielle dans la strate $\Omega^{2}\moduli[1](9,-1;\rec[-2][4])$ à gauche et $\Omega^{2}\moduli[1](4,4;\rec[-2][4])$ à droite dont les résidus sont $(1,1,1,1)$}
\label{fig:exceptquadrabis}
\end{figure}
\par
Pour conclure, il suffit de remarquer que toutes les strates  $\Omega^{2}\moduli[1](a_{1},a_{2};\rec[-2][s])$ satisfaisant $(a_{1},a_{2})\neq(s-1,s+1)$ peuvent s'obtenir en ajoutant $2$ à l'ordre des deux zéros ou $4$ à l'ordre du zéro d'ordre maximal.
\end{proof}

Nous traitons maintenant les strates générales sous la condition que tous les pôles sont doubles.
\begin{lem}\label{lem:ggentousres}
L'application résiduelle de chaque composante de $\quadomoduli(a_{1},\dots,a_{n};\rec[-2][s])$, distinctes des strates $\Omega^{2}\moduli[1](2s;\rec[-2][s])$ et $\Omega^{2}\moduli[1](s+1,s-1;\rec[-2][s])$ avec $s$ pair, est surjective.
\end{lem}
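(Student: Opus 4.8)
L'énoncé affirme la surjectivité de l'application résiduelle sur chaque composante connexe des strates $\quadomoduli(a_{1},\dots,a_{n};(-2^{s}))$ avec $g\geq1$, à l'exception des deux familles exceptionnelles $\Omega^{2}\moduli[1](2s;(-2^{s}))$ et $\Omega^{2}\moduli[1](s+1,s-1;(-2^{s}))$ avec $s$ pair. Le fil directeur est une réduction au genre~$1$ et aux cas à un seul zéro, déjà traités par les lemmes précédents. Je procéderais en trois temps, en exploitant les deux opérations fondamentales de la section~\ref{sec:pluridiffentre}: l'éclatement de zéro (proposition~\ref{prop:eclatZero}) et la couture d'anse (proposition~\ref{prop:attachanse}).

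Premièrement, je traiterais le genre~$1$ avec un seul zéro, qui est déjà couvert: les lemmes~\ref{lem:g=1residugen} et~\ref{lem:g=1quadgen} établissent que l'application résiduelle de $\Omega^{2}\moduli[1](2s;(-2^{s}))$ est surjective lorsque $s$ est impair, et a pour image $\espresk[1][2]\setminus\CC^{\ast}\cdot(1,\dots,1)$ lorsque $s$ est pair. Deuxièmement, pour le genre~$1$ avec au moins deux zéros, dont au moins deux impairs, la surjectivité s'obtient par éclatement de zéro à partir de la strate à un seul zéro $\Omega^{2}\moduli[1](\sum a_{i};(-2^{s}))$. La seule obstruction potentielle est la configuration $\CC^{\ast}\cdot(1,\dots,1)$ lorsque $s$ est pair: c'est précisément là qu'intervient le lemme~\ref{lem:g=1quadspe}, qui montre que $(1,\dots,1)$ est réalisable dès que $(a_{1},a_{2})\neq(s-1,s+1)$. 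Les seules strates qui restent hors d'atteinte sont donc exactement $\Omega^{2}\moduli[1](2s;(-2^{s}))$ (un seul zéro) et $\Omega^{2}\moduli[1](s+1,s-1;(-2^{s}))$, les deux familles exclues de l'énoncé.

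Troisièmement, pour le genre $g\geq2$, j'utiliserais la couture d'anse. Comme rappelé à la fin de la section~\ref{sec:rapquad}, toute composante connexe (y compris en genre~$1$) s'obtient par couture d'anses successives à partir d'une différentielle de genre zéro, combinée à l'éclatement de singularités. Puisque la couture d'anse (proposition~\ref{prop:attachanse}) ne modifie pas les invariants aux pôles, et que l'application résiduelle est équivariante pour l'action de ${\rm GL}^{+}(2,\mathbb{R})$, il suffira de réaliser chaque configuration de résidus sur une surface de départ convenable de genre inférieur, puis de coudre l'anse appropriée pour atteindre la composante voulue. Concrètement, je partirais d'une différentielle de $\Omega^{2}\moduli[1](\mu')$ avec les bons résidus (où $\mu'$ est obtenue en décrémentant l'ordre d'un zéro), dont la surjectivité est acquise par les étapes précédentes, et j'appliquerais la couture d'anse sur ce zéro. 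La surjectivité pour $g\geq2$ découle alors directement de la surjectivité en genre~$1$ sur les strates non exceptionnelles.

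**Obstacle principal.** La difficulté centrale sera de garantir que chaque composante connexe est effectivement atteinte, c'est-à-dire de contrôler le \emph{nombre de rotation} (ou, en genre $\geq2$, la distinction hyperelliptique/non-hyperelliptique) produit par la couture d'anse. Il faudra vérifier qu'en cousant des anses de nombres de rotation appropriés — en particulier une anse de nombre de rotation maximal pour la composante hyperelliptique et une anse de nombre de rotation strictement inférieur pour l'autre, conformément à la proposition~6.3 de~\cite{chge} — on peut réaliser toutes les composantes sans que les familles exceptionnelles de genre~$1$ ne contaminent le genre supérieur. Le point délicat est que les obstructions des lemmes~\ref{lem:nonsurjunzero} et~\ref{lem:nonsurjdeuxzero} sont spécifiques au genre~$1$: il faudra s'assurer que dès que l'on coud une anse (augmentant le genre à $2$), la configuration $(1,\dots,1)$ redevient réalisable, ce qui se ramène à disposer, en genre~$1$, d'une surface de départ portant cette configuration dans une strate \emph{non} exceptionnelle — fournie par le lemme~\ref{lem:g=1quadspe}.
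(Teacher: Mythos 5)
Votre plan coïncide avec celui de l'article pour le genre $1$ : mêmes lemmes (\ref{lem:g=1residugen}, \ref{lem:g=1quadgen}, \ref{lem:g=1quadspe}) et même usage de l'éclatement de zéros pour $n\geq3$. En revanche, votre traitement du genre $g\geq2$ comporte une lacune réelle. La couture d'anse (proposition~\ref{prop:attachanse}) préserve le nombre de zéros (elle remplace un zéro d'ordre $a_{0}$ par un zéro d'ordre $a_{0}+4$), et l'éclatement ne fait qu'augmenter ce nombre. Une strate de genre $2$ à zéro unique, $\Omega^{2}\moduli[2](2s+4;(-2^{s}))$, ne peut donc s'obtenir par ces deux opérations qu'à partir de la strate de genre $1$ à zéro unique $\Omega^{2}\moduli[1](2s;(-2^{s}))$, qui pour $s$ pair est précisément exceptionnelle : d'après le lemme~\ref{lem:nonsurjunzero}, la configuration $(1,\dots,1)$ n'y est pas réalisable. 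Or le lemme~\ref{lem:g=1quadspe}, sur lequel repose votre dernier paragraphe, ne fournit que des strates à \emph{deux} zéros, et la couture d'anse à partir de celles-ci ne produit que des strates de genre $2$ ayant au moins deux zéros. Votre argument laisse donc sans preuve le cas des résidus proportionnels à $(1,\dots,1)$, avec $s$ pair, dans les strates à zéro unique de genre $g\geq2$, ainsi que dans les strates qui ne s'atteignent que par éclatement de celles-ci (par exemple $\Omega^{2}\moduli[2](1,7;(-2^{2}))$, dont l'unique ancêtre admissible par couture d'anse est la strate exceptionnelle $\Omega^{2}\moduli[1](1,3;(-2^{2}))$).

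C'est exactement ce point que la preuve de l'article règle par une construction différente, absente de votre plan : on prend une différentielle de $\Omega^{2}\moduli[1](2s+4;(-2^{s+2}))$ dont les résidus sont $(1,\dots,1,-1,-1)$, qui existe par le lemme~\ref{lem:g=1residugen} puisque ces résidus ne sont pas tous sur un même rayon ; on identifie l'un à l'autre les deux pôles doubles de résidu $-1$, ce qui crée un nœud non séparant et augmente le genre sans toucher au zéro ; puis on lisse la différentielle entrelacée obtenue grâce au lemme~\ref{lem:lisspolessimples}, ce qui ne modifie ni le zéro ni les autres résidus. On obtient ainsi $(1,\dots,1)$ dans $\Omega^{2}\moduli[2](2s+4;(-2^{s}))$ ; l'éclatement de zéros et la couture d'anse donnent ensuite toutes les strates de genre $g\geq2$, et la proposition~6.3 de \cite{chge} garantit que toutes les composantes connexes sont atteintes. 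Pour combler votre lacune, il vous faut incorporer ce type de dégénérescence auto-recollée (recollement de deux pôles d'une même composante), et non la seule couture d'anse par courbe elliptique.
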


\begin{proof}
Si $g=1$, alors nous avons montré le résultat pour au plus deux zéros dans les lemmes précédents. Pour les strates avec $n\geq3$ zéros, nous utilisons l'éclatement des zéros à partir des strates ayant deux zéros.

Si $g\geq2$ et $s\geq2$, il suffit de montrer que l'image de l'application résiduelle des strates de la forme $\Omega^{2}\moduli[2](2s+4;\rec[-2][s])$ avec $s$ pair contient $(1,\dots,1)$. En effet, si c'est le cas l'éclatement des zéros et la couture d'anse impliqueront le résultat pour toutes les strates.
Pour cela, prenons une différentielle quadratique de la strate $\Omega^{2}\moduli[1](2s+4;\rec[-2][s+2])$ dont les résidus quadratiques sont $(1,\dots,1,-1,-1)$ (cette différentielle existe en vertu du lemme~\ref{lem:g=1residugen}). Nous formons une différentielle quadratique entrelacée en collant les deux pôles avec les résidus quadratiques~$-1$. D'après la proposition~\ref{lem:lisspolessimples}, cette différentielle entrelacée est lissable et la différentielle obtenue par lissage possède les propriétés  souhaitées.  De plus, en vertu de la proposition~6.3 de \cite{chge}, toutes les composantes non-hyperelliptiques connexes peuvent être obtenues de cette façon.

Dans le cas où $g\geq2$ et $s=1$ il existe des composantes hyperelliptiques et non-hyperelliptiques lorsqu'il n'y a qu'un zéro ou deux d'ordres égaux. S'il n'y a qu'un zéro, les deux composantes s'obtiennent par couture d'anse à partir de la strate $\quadomoduli[1](2,-2)$.  On en déduit le résultat dans les deux cas, puis par éclatement des zéros le cas où $n\geq2$.
\end{proof}

\smallskip
\par
\section{Applications}
\label{sec:appli}

Dans cette section finale, nous montrons comment nos résultats permettent d'obtenir facilement le fait que les strates sont (presque) toutes non vides. De plus, nous montrons que le nombre de cylindres disjoints sur une strate est bornée et que cette borne est atteinte.

\subsection{Différentielles quadratiques d'aire finie}
\label{sec:plurifini}

Dans ce paragraphe nous donnons une nouvelle preuve de la caractérisation de \cite{masm} des strates de différentielles quadratiques vides énoncée dans la proposition~\ref{prop:stratesvides}. Ce résultat dit que les strates primitives $\quadomoduli(a_{1},\dots,a_{n})$ en genre $g\geq1$ sont vides si et seulement si $\mu=\emptyset$ ou $\mu=(-1,1)$ en genre $1$ ou $\mu=(4)$ et $\mu=(1,3)$ en genre~$2$.
\smallskip
\par
Toutes les différentielles quadratiques de type $\mu = \emptyset$ sont la puissance d'une différentielle abélienne de $\omoduli[1](\emptyset)$. De plus, la strate $\quadomoduli[1](-1,1)$ est isomorphe à la strate de $1$-formes ayant seulement un pôle et un zéro simples. Le théorème d'Abel implique que cette dernière est vide. Pour les deux strates vides de genre $2$, il suffit de remarquer que le théorème de Riemann-Roch implique que les différentielles quadratiques sont sommes de carrés de différentielles abéliennes. 
\smallskip
\par
Nous montrons que les autres strates sont non vides. 
Soit $\quadomoduli[g](a_{1},\dots,a_{n})$ une strate de genre $g\geq 2$ avec $a_{i}>-2$. Si $(a_{1},\dots,a_{n})$ est différent de $(4g-4)$ et $(2g-1,2g-3)$, alors, d'après le théorème~\ref{thm:geq1}, la strate $\quadomoduli[1](a_{1},\dots a_{n};\rec[-4][g-1])$ contient une différentielle primitive $(X_{0},\omega_{0})$ dont tous les résidus quadratiques sont nuls. On obtient une différentielle quadratique entrelacée en attachant le carré d'une différentielle abélienne sur un tore à chaque pôle de $X_{0}$.
Cette différentielle entrelacée est lissable par le lemme~\ref{lem:lissdeuxcomp}. Les lissages sont clairement des différentielles quadratiques primitives. Donc ces strates sont non vides.

Enfin il reste le cas des strates $\Omega^{2}\moduli[g](4g-4)$ et $\Omega^{2}\moduli[g](2g-1,2g-3)$. En utilisant l'éclatement de zéros, il suffit de montrer que la strate $\Omega^{2}\moduli[g](4g-4)$ est non vide pour tout $g\geq3$. Nous savons que $\Omega^{2}\moduli[2](4(g-1);\rec[-4][g-2])$ contient une différentielle primitive à résidus quadratiques nuls aux pôles (voir Théorème 1.1). On obtient une différentielle quadratique entrelacée en attachant à chaque pôle le carré d'une différentielle de $\omoduli[1](\emptyset)$. Les différentielles quadratiques obtenues par lissage sont dans $\Omega^{2}\moduli[g](4g-4)$ comme souhaité.

\subsection{Cylindres dans les différentielles quadratiques d'aire finie}
\label{sec:cylindres}

Un {\em cylindre} dans une surface plate est une famille continue maximale de géodésiques fermées parallèles. Quand il est d'aire finie, son bord est formé de liens-selles. Soit $\quadomoduli(a_{1},\dots,a_{n})$ une strate de différentielles quadratiques (primitives) avec $\nim$ zéros d'ordres impairs et $\npa$ zéros d'ordres pairs.
Nous allons montrer la proposition~\ref{prop:cylindresquad} qui assure que le nombre maximal de cylindres disjoints dans une différentielle de cette strate est $g+\npa+\tfrac{\nim}{2}-1$ et que cette borne est optimale.
\smallskip
\par
Soit $\xi$ une différentielle quadratique avec $t$ cylindres disjoints. En faisant tendre la hauteur des cylindres vers l'infini, on obtient (après normalisation) une différentielle quadratique entrelacée dont tous les \noeuds sont d'ordre $-2$. Dans chaque composante irréductible, il y a au moins un zéro (ou un pôle simple) et la somme totale des ordres des singularités est paire. Puisque la dégénérescence n'ajoute que des pôles doubles, chaque composante irréductible contient un nombre pair de zéros d'ordre impair. Le nombre $c$ de composantes irréductibles vaut au plus $\npa+\tfrac{\nim}{2}$. Le genre d'un graphe est la différence entre le nombre d'arêtes et le nombre de sommets moins un. Il est clair que le genre du graphe dual de la courbe stable vaut au plus~$g$. Autrement dit $t-(c-1) \leq g$ et donc $t \leq c+g-1 \leq g+\npa+\tfrac{\nim}{2}-1$.
\smallskip
\par
Dans chacune des strates, nous allons construire une différentielle réalisant le nombre maximal de cylindres disjoints. Nous formerons une différentielle entrelacée dont le lissage par le lemme~\ref{lem:lisspolessimples} donnera une différentielle avec les propriétés souhaitées. Dans chaque cas, les $\npa+\tfrac{\nim}{2}$ composantes irréductibles contiendront soit un unique zéro d'ordre pair, soit une paire de singularités d'ordres impairs.
Notons que la valence d'un sommet avec un unique zéro pair est  $\geq3$ tandis que celle d'un sommet avec une paire d'impairs est $\geq1$. Le cas où la valence est $1$ est réalisé lorsque les deux singularités sont des pôles simples.
\par
Traitons d'abord le cas des strates de genre zéro. Considérons un zéro pair d'ordre $a$. On lui associe le  carré d'une différentielle abélienne de $\omoduli[0](\frac{a}{2};\rec[-1][2+\frac{a}{2}])$. Pour une paire de singularités d'ordres impairs $a_{1},a_{2}$ de somme $a$, on lui associe une différentielle de $\omoduli[0](a_{1},a_{2};\rec[-2][2+\frac{a}{2}])$.  Nous traiterons ces différentielles de genre zéro comme les composantes irréductibles de valence $2+\frac{a}{2}$ de la différentielle entrelacée. Il est bien connu que toute distribution de valences de somme $2s-2$ est réalisable par un arbre à~$s$ sommets. Nous pouvons donc assembler les composantes irréductibles selon l'arbre. Comme le graphe est un arbre, quitte à multiplier les différentielles de chaque composante par des constantes, nous pouvons supposer que la somme des résidus sur chaque arête est nulle. Nous obtenons ainsi une différentielle entrelacée qui, après lissage, donne une différentielle dans $\quadomoduli[0](\mu)$ réalisant une famille de $\npa+\tfrac{\nim}{2} -1$ cylindres disjoints.
\par
Le cas général se démontre par récurrence sur le genre. On supposera la borne réalisée dans toute strate primitive de genre au plus $g$. Considérons une strate $\quadomoduli[g+1](a_{1},\dots,a_{n})$. Si l'un de ces ordres, disons $a_{1}$, vérifie $a_{1} \geq 3$ et $a_{1} \neq 4$, alors la borne est réalisée dans la strate $\quadomoduli(a_{1}-4,a_{2},\dots,a_{n})$ si elle est non vide. On obtient une différentielle avec un cylindre de plus dans la strate originale par couture d'anse. Par la proposition~\ref{prop:stratesvides}, les seules strates que l'on ne peut atteindre ainsi sont $\quadomoduli[2](-1,5)$, $\quadomoduli[3](1,7)$ et $\quadomoduli[3](8)$. Le point (ii) du théorème~1.2 de \cite{getaab} permet d'obtenir une différentielle abélienne dans $\quadomoduli[0](4,\rec[-1][6])$ dont les résidus $(r,1,1,-r,-1,-1)$. Identifions par paires les pôles dont les résidus sont respectivement $(A,-A)$, $(1,1)$ et $(-1,-1)$. Le carré de cette différentielle entrelacée est lissable et son lissage est une différentielle quadratique primitive avec trois cylindres. Le théorème~\ref{thm:geq0quad2} permet de conclure de manière analogue pour les deux autres strates.
\par
Il nous reste à traiter le cas où les ordres des singularités sont dans $\lbrace-1, 1, 2,4 \rbrace$. Sur une géodésique d'un cylindre déjà existant, il est facile d'insérer un parallélogramme dont une paire de côtés opposés sera collée sur un nouveau cylindre. On ajoute ainsi un zéro d'ordre~$4$ et le nombre de cylindres disjoints augmente de $2$. Cette chirurgie permet de traiter tous les cas ayant un 
zéro d'ordre $4$ sauf $\quadomoduli[2](-1,1,4)$ et $\quadomoduli[3](4,4)$. 
Dans le premier cas, il s'agit de construire une différentielle de $\quadomoduli[2](-1,1,4)$ ayant trois cylindres. Le point~(ii) du théorème~1.2 de \cite{getaab} donne l'existence d'une différentielle abélienne de résidus abéliens $(r,r,-r+\epsilon,-r-\epsilon)$ dont le carré est dans $\quadomoduli[0](4;\rec[-2][4])$. Ensuite, le théorème~\ref{thm:geq0quad2} permet de réaliser la configuration de résidus quadratiques $(r-\epsilon)^{2},(r+\epsilon)^{2}$ dans la strate $\quadomoduli[0](-1,1;-2,-2)$. L'identification naturelle permet d'obtenir une différentielle primitive de $\quadomoduli[2](-1,1,4)$ avec trois cylindres disjoints.
Dans le second cas, il s'agit de construire une différentielle de $\quadomoduli[3](4,4)$ ayant quatre cylindres. Partant de deux différentielles qui sont le carré d'une différentielle abélienne de $\quadomoduli[0](2;\rec[-1][4])$ de résidus abéliens $(r,r,-r+\epsilon,-r-\epsilon)$, le bon choix de recollement fournit la différentielle adéquate.
\par
Traitons enfin le cas où les ordres des singularités sont $-1$, $1$ et $2$. Étant donnée une décomposition $(a_{1},\dots,a_{n})$ de $4g-4$, nous remplaçons $g+1$ occurrences de $2$ ou de paires $(1,1)$ par une paire $(-1,-1)$. On obtient alors une décomposition de $-4$. Pour la strate de genre zéro correspondante, nous considérons, comme dans le paragraphe dédié, un arbre avec les valences adéquates telles que chaque paire $(-1,-1)$ obtenue précédemment corresponde à un sommet. Cela donne une différentielle entrelacée réalisant la borne. Pour chaque composante irréductible contenant une paire de pôles simples obtenue à partir d'un zéro double, nous remplaçons la différentielle de $\quadomoduli[0](-1,-1;-2)$ de résidu $r^{2}$ par le carré d'une différentielle de $\quadomoduli[0](1;-1,-1,-1)$ de résidus abéliens $(r,-\frac{r}{2},-\frac{r}{2})$. Puis on identifie les deux cylindres de circonférence $-\frac{r}{2}$ pour augmenter le genre et le nombre de cylindres de un. Dans l'autre cas, nous utilisons une différentielle quadratique de $\quadomoduli[0](1,1;-2,-2,-2)$ réalisant des résidus quadratiques $(r^{2}, r_{1}^{2}, r_{1}^{2})$ avec $r$ et $r_{1}$ deux réels non commensurables. Nous obtenons ainsi les différentielles réalisant la borne dans les cas restants.

\printbibliography

\end{document}